\documentclass[11pt]{amsart}
\usepackage[utf8]{inputenc}

\usepackage{overpic, amssymb, times, graphicx, tikz-cd}
\usepackage[all]{xy}
\usepackage[backref=page]{hyperref}
\usepackage{verbatim}

\hypersetup{
    colorlinks=true,
    linkcolor=blue,
    filecolor=blue,      
    urlcolor=blue,
    citecolor=blue,
}
\urlstyle{same}


\DeclareMathOperator{\area}{area}
\DeclareMathOperator{\Span}{span}

\DeclareMathOperator{\im}{im}
\DeclareMathOperator{\dom}{dom}
\DeclareMathOperator{\Diff}{Diff}

\DeclareMathOperator{\ind}{ind}

\DeclareMathOperator{\Symp}{Symp}
\DeclareMathOperator{\Int}{int}

\DeclareMathOperator{\Id}{Id}
\DeclareMathOperator{\Maslov}{M}
\DeclareMathOperator{\CZ}{CZ}
\DeclareMathOperator{\bcs}{bcs}
\DeclareMathOperator{\Flow}{Flow}

\DeclareMathOperator{\SL}{SL}

\DeclareMathOperator{\PD}{PD}
\DeclareMathOperator{\tr}{tr}
\DeclareMathOperator{\rot}{rot}
\DeclareMathOperator{\lk}{lk}
\DeclareMathOperator{\tb}{tb}
\DeclareMathOperator{\wl}{wl}
\DeclareMathOperator{\cycword}{cw}
\DeclareMathOperator{\word}{w}
\DeclareMathOperator{\cross}{cr}
\DeclareMathOperator{\sgn}{sgn}
\DeclareMathOperator{\Diag}{Diag}
\DeclareMathOperator{\const}{const}
\DeclareMathOperator{\Ret}{Ret}

\DeclareMathOperator{\SFT}{SFT}

\newcommand{\Proj}{\mathbb{P}}
\newcommand{\C}{\mathbb{C}}
\newcommand{\R}{\mathbb{R}}
\newcommand{\Z}{\mathbb{Z}}

\newcommand{\Q}{\mathbb{Q}}
\newcommand{\disk}{\mathbb{D}}
\newcommand{\grad}{\nabla}
\newcommand{\bigO}{\mathcal{O}}
\newcommand{\region}{\mathcal{R}}
\newcommand{\Igrading}{\mathcal{I}_{\Lambda}}
\newcommand{\action}{\mathcal{A}}
\newcommand{\energy}{\mathcal{E}}
\newcommand{\rect}{\mathcal{D}}
\newcommand{\delbar}{\overline{\partial}}
\newcommand{\Cinfty}{\mathcal{C}^{\infty}}
\newcommand{\Rthree}{(\R^{3},\xi_{std})}
\newcommand{\Lie}{\mathcal{L}}
\newcommand{\Sthree}{(S^{3},\xi_{std})}

\newcommand{\Circle}{S^{1}}

\newcommand{\LambdaZero}{\Lambda^{0}}
\newcommand{\LambdaPlus}{\Lambda^{+}}
\newcommand{\LambdaMinus}{\Lambda^{-}}
\newcommand{\LambdaPM}{\Lambda^{\pm}}
\newcommand{\SurgL}{\R^{3}_{\LambdaPM}}
\newcommand{\SurgLClosed}{S^{3}_{\LambdaPM}}
\newcommand{\SurgXi}{\xi_{\LambdaPM}}
\newcommand{\SurgLxi}{(\SurgL, \SurgXi)}
\newcommand{\SurgLxiClosed}{(\SurgLClosed, \SurgXi)}
\newcommand{\SurgLPrime}{\R^{3}_{\Lambda}}
\newcommand{\SurgXiPrime}{\xi_{\Lambda}}
\newcommand{\SurgLxiPrime}{(\SurgLPrime, \SurgXiPrime)}
\newcommand{\pxy}{\pi_{xy}}

\newcommand{\SLtwoR}{\SL(2, \R)}

\newcommand{\half}{\frac{1}{2}}
\newcommand{\partialCH}{\partial_{CH}}
\newcommand{\partialLCH}{\partial_{LCH}}
\newcommand{\partialRSFT}{\partial_{RSFT}}
\newcommand{\partialSFT}{\partial_{SFT}}

\newcommand{\be}{\begin{enumerate}}
\newcommand{\ee}{\end{enumerate}}

\newcommand{\Mxi}{(M,\xi)}
\newcommand{\MxiOT}{(M_{OT}, \xi_{OT})}

\newtheorem{thm}{Theorem}[section]

\newtheorem{ex}[thm]{Example}
\newtheorem{assump}[thm]{Assumptions}

\newtheorem{prop}[thm]{Proposition}
\newtheorem{properties}[thm]{Properties}

\newtheorem{defn}[thm]{Definition}

\newtheorem{lemma}[thm]{Lemma}
\newtheorem{cor}[thm]{Corollary}

\newtheorem{q}[thm]{Question}
\newtheorem{rmk}[thm]{Remark}

\newtheorem{warn}[thm]{Warning}

\topmargin0in \textheight8.5in \textwidth6.5in \oddsidemargin0in
\evensidemargin0in

\title{Combinatorial Reeb dynamics on punctured contact $3$-manifolds}
\author{Russell Avdek}
\thanks{The author is partly supported by the grant KAW 2016.0198 from the Knut and Alice Wallenberg Foundation}
\email{russell.avdek@math.uu.se}
\date{\today}

\begin{document}

\begin{abstract}
Let $\LambdaPM = \LambdaPlus \cup \LambdaMinus \subset \Rthree$ be a contact surgery diagram determining a closed, connected contact $3$-manifold $\SurgLxiClosed$ and an open contact manifold $\SurgLxi$. Following \cite{BEE:LegendrianSurgery, Ekholm:SurgeryCurves}, we demonstrate how $\LambdaPM$ determines a family $\alpha_{\epsilon}$ of contact forms for $\SurgLxi$ whose closed Reeb orbits are in one-to-one correspondence with cyclic words of composable Reeb chords on $\LambdaPM$. We compute the homology classes and integral Conley-Zehnder indices of these orbits diagrammatically and develop algebraic tools for studying holomorphic curves in surgery cobordisms between the $\SurgLxi$.

These new techniques are used to describe the first known examples of closed, tight contact manifolds with vanishing contact homology: They are contact $\frac{1}{k}$ surgeries along the right-handed, $\tb=1$ trefoil for $k > 0$, which are known to have non-zero Heegaard-Floer contact classes by \cite{LS:TightI}.
\end{abstract}
\maketitle

\setcounter{tocdepth}{1}
\tableofcontents
\newpage

\section{Introduction}\label{Sec:Introduction}

The main objects of interest in this paper are \emph{contact $3$-manifolds} and their \emph{Legendrian submanifolds}. A \emph{contact form} on an oriented $3$-manifold $M$ is a $1$-form $\alpha \in \Omega^{1}(M)$ for which $\alpha \wedge d\alpha > 0$ with respect to the orientation of $M$. A contact $3$-manifold is a pair $\Mxi$ consisting of an oriented $3$-manifold $M$ together with an oriented $2$-dimensional distribution $\xi \subset TM$ which is the kernel of a contact form $\alpha$ satisfying $d\alpha|_{\xi} > 0$ with respect to the orientation on $\xi$. We say that \emph{$\alpha$ is a contact form for $\Mxi$}. A Legendrian submanifold of $\Mxi$ is a link which is tangent to $\xi$. We'll typically denote Legendrian submanifolds by $\Lambda$ or $\LambdaZero$.

Given a contact $1$-form $\alpha$ for some $\Mxi$ its \emph{Reeb vector field}, $R$, is determined by the equations
\begin{equation*}
    \alpha(R) = 1,\quad d\alpha(R, \ast) = 0.
\end{equation*}
For the purposes of studying invariants of $\Mxi$ and its Legendrian submanifolds defined by counting holomorphic curves \cite{EGH:SFTIntro, EtnyreNg:LCHSurvey, Hutchings:ECHNotes, BiasedSH} we are interested in finding contact forms on a given $\Mxi$ for which $R$ is easy to analyze. Specifically we want to have visibility into the closed orbits of $R$ as well the \emph{chords} of Legendrians $\LambdaZero \subset \Mxi$; that is, the orbits of $R$ parameterized by compact intervals which both begin and end on $\LambdaZero$.

Let $\Rthree$ denote the standard contact structure on Euclidean $3$-space where
\begin{equation*}
    \xi_{std} = \ker(\alpha_{std}),\ \alpha_{std} = dz - y dx
\end{equation*}
and let $\Sthree$ denote the standard contact structure on the unit $3$-sphere $S^{3}$ where
\begin{equation*}
\xi_{std} = \ker\left(\sum_{1}^{2} x_{i}dy_{i} - y_{i}dx_{i}\right).
\end{equation*}
A \emph{contact surgery diagram} is a Legendrian link
\begin{equation*}
\LambdaPM = \LambdaPlus \cup \LambdaMinus \subset \Rthree.
\end{equation*}
Performing contact $\pm 1$ surgery on the components of the $\Lambda^{\pm}$ as defined in \cite{DG:Surgery} produces a contact $3$-manifold which we will denote by $\SurgLxi$. By considering $\Rthree$ as being contained in $\Sthree$ we can view the surgery diagram $\LambdaPM$ as determining a closed contact $3$-manifold $\SurgLxiClosed$, with $\SurgLxi$ obtained by removing a point from $\SurgLxiClosed$. As proved by Ding and Geiges in \cite{DG:Surgery} -- see also \cite{Avdek:ContactSurgery} -- every closed, connected contact $3$-manifold $\Mxi$ can be described as $\SurgLxiClosed$ for some choice of $\LambdaPM$.

For the remainder of this introduction we assume basic familiar with contact surgery, Weinstein handle attachment, and symplectic field theory ($\SFT$). Further background and references will be provided in Section \ref{Sec:Prerecs}.

\subsection{Combinatorial Reeb dynamics on punctured contact $3$-manifolds}

The primary purpose of this article is to describe a family of particularly well-behaved contact forms $\alpha_{\epsilon}$ for $\SurgLxi$ which are determined by the surgery diagram $\LambdaPM$. Our intention is to extend the analysis of Reeb dynamics appearing in work of Bourgeois, Ekholm, and Eliashberg \cite{BEE:LegendrianSurgery, Ekholm:SurgeryCurves} to allow for contact $+1$ surgeries. In particular, the following theorem states that their ``chords-to-orbits correspondence'' is applicable to any closed contact $3$-manifold.\footnote{Contact $-1$ surgery -- also known as Legendrian surgery -- describes how the convex boundaries of Liouville domains are modified by critical-index Weinstein handle attachment.}

\begin{thm}\label{Thm:Main}
Let $\LambdaPM$ be a contact surgery diagram presented in the front projection, where each component is equipped with an orientation. Possibly after a Legendrian isotopy of $\LambdaPM$ which preserves the front projection up to isotopy there is
\be
\item a constant $\epsilon_{0}$,
\item a neighborhood $N_{\epsilon_{0}}$ of $\LambdaPM$ in $\R^{3}$, and 
\item a family of contact forms $\alpha_{\epsilon}$ with Reeb vector fields $R_{\epsilon}$ parameterized by $\epsilon < \epsilon_{0}$ on $\SurgLxi$
\ee
such that the following conditions hold:
\be
\item For any $\epsilon < \epsilon_{0}$, performing contact surgery along a neighborhood $N_{\epsilon} \subset N_{\epsilon_{0}}$ produces $\SurgLxi$ equipped with the contact form $\alpha_{\epsilon}$.
\item $\alpha_{\epsilon} = \alpha_{std}$ on the complement of $N_{\epsilon}$.
\item For any $\epsilon < \epsilon_{0}$ there is a one-to-one correspondence between cyclic words of composable $\partial_{z}$ chords of $\LambdaPM$ and closed orbits of $R_{\epsilon}$ (Theorem \ref{Thm:ChordOrbitCorrespondence}).
\item For a given cyclic word of chords $w$, there exists $\epsilon_{w} < \epsilon_{0}$ such that the orbits of $R_{\epsilon}$ corresponding to $w$ are hyperbolic for $\epsilon < \epsilon_{w}$ (Theorem \ref{Thm:Mod2CZ}).
\item There is pair of sections $(X, Y)$ of $\SurgLxi$ determined by $\LambdaPM$ and its orientation, providing a symplectic trivialization of the restriction of $(\SurgXi, d\alpha_{\epsilon})$ to all closed orbits of $R_{\epsilon}$. The zero-locus $X^{-1}(0) = Y^{-1}(0)$ is a link contained in $(\R^{3} \setminus N_{\epsilon}) \subset \SurgL$ whose connected components are given by transverse push-offs of the components of $\LambdaPM$ with non-zero rotation number (Theorem \ref{Thm:FramingSummary}).
\item The integral Conley-Zehnder indices $\CZ_{X, Y}$ (Theorem \ref{Thm:IntegralCZ}) and homology classes (Theorem \ref{Thm:H1}) of the closed orbits of $R_{\epsilon}$ can be computed combinatorially from the surgery diagram.
\ee
\end{thm}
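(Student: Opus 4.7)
The plan is to build the forms $\alpha_\epsilon$ by patching a standard contact form with explicit local models on thin neighborhoods of $\Lambda^\pm$, then showing that contact surgery can be carried out inside those neighborhoods in a way that is essentially the identity outside. I would first assume after a small Legendrian isotopy that the front projection of $\Lambda^\pm$ is in a generic position (crossings transverse, cusps standard, and any two distinct strands at a common $xy$-coordinate are separated in $z$); this fixes the combinatorial data of $\partial_z$-chords. Next, pick tubular neighborhoods $N_\epsilon$ of width $\epsilon$ about $\Lambda^\pm$ with contactomorphic models of the standard Legendrian neighborhood $J^1\Lambda^\pm$. Outside $N_\epsilon$ keep $\alpha_{std}$; inside each component of $N_\epsilon$, interpolate to a model whose Reeb flow is a small perturbation of $\partial_z$-chord flow transverse to the zero section, and on which the contact $\pm 1$ surgery is realized by a prescribed cut-and-glue. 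For the $-1$ components this is exactly the Bourgeois--Ekholm--Eliashberg Weinstein $2$-handle model, while for the $+1$ components I would use an explicit contact $+1$ model (of the type constructed in the author's \emph{Contact surgery} paper) whose Reeb flow near the belt sphere rotates in a controlled way. Rescaling by $\epsilon$ gives the parameter family.

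With this setup, closed orbits of $R_\epsilon$ must decompose into arcs lying in $\R^3 \setminus N_\epsilon$ where the flow is the standard $\partial_z$ flow, joined by arcs inside $N_\epsilon$ where the handle flow performs a (generically half-)rotation sending one endpoint of a $\partial_z$-chord back to the other. As $\epsilon \to 0$ the interior arcs become arbitrarily short, so each closed orbit limits to a closed concatenation of $\partial_z$-chords of $\Lambda^\pm$, i.e.\ a cyclic word of composable chords; a standard Arzela--Ascoli plus implicit-function-theorem argument then upgrades this limit into a bijection between such cyclic words and genuine orbits of $R_\epsilon$ for $\epsilon$ small. This is the content I would cite as Theorem \ref{Thm:ChordOrbitCorrespondence}. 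Hyperbolicity is obtained by linearizing the return map: each chord arc contributes a shear matrix in a natural trivialization, and each handle crossing contributes a matrix whose type ($\pm$ or elliptic) depends on the sign and rotation count of the handle; for $\epsilon$ small enough (depending on the word $w$) the product has distinct real eigenvalues, giving Theorem \ref{Thm:Mod2CZ}.

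For the framing, I would define $X$ to be the projection of $\partial_x$ into $\xi_{std}$ along $R_{std}=\partial_z$ and $Y$ the projection of a vector field along the fibre direction, extended across $N_\epsilon$ using the handle's trivialization; the zero set of $(X,Y)$ is exactly a transverse pushoff of components with nonzero rotation number (since rotation measures the winding of $\xi_{std}|_\Lambda$ relative to $\partial_x$), yielding Theorem \ref{Thm:FramingSummary}. Finally, in this trivialization the linearized Reeb flow along a cyclic word breaks up as a product of local matrices, one for each chord and each crossing/cusp traversed, and a path-integral summation of their Conley--Zehnder contributions gives $\CZ_{X,Y}$ combinatorially (Theorem \ref{Thm:IntegralCZ}); the homology class is the sum of the classes of each constituent chord arc together with handle core classes (Theorem \ref{Thm:H1}).

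The main obstacle is the contact $+1$ case. The BEE analysis handles only $-1$ (Weinstein) surgery, where one attaches a symplectic handle whose Reeb flow is transparently a small perturbation of the Legendrian's chord flow. Writing down an analogous $+1$ local model with clean Reeb dynamics, verifying that the chord-to-orbit bijection survives, and, most delicately, producing a genuine symplectic trivialization $(X,Y)$ that simultaneously extends across both handle types is where the real work lies; once that model is in place, the remaining items reduce to local computations of linearizations and to an algebraic bookkeeping of crossings, cusps, and handle traversals along each cyclic word.
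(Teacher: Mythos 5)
You defer precisely the construction that the theorem requires, and that is a genuine gap rather than a routine detail. You propose to use the Bourgeois--Ekholm--Eliashberg Weinstein handle for the $-1$ components and ``an explicit contact $+1$ model'' for the $+1$ components, and you concede that producing the latter ``is where the real work lies.'' The paper does not proceed by building two different handle models at all: $\alpha_{\epsilon}$ is defined by cutting out the standard neighborhood $N_{\epsilon}$ --- constructed so that $\alpha_{std}$ pulls back to $dz + p\,dq$ and the Lagrangian projection is affine near the endpoints of chords (Propositions \ref{Prop:GoodAndUnitSpeed} and \ref{Prop:NeighborhoodConstruction}) --- and regluing by the single explicit map of Equation \eqref{Eq:GluingMap}, a model Dehn twist $\tau_{\pm f_{\epsilon}}$ on the top face composed with the compensating $z$-shift $H_{\epsilon}$ so that $dz + p\,dq$ is preserved; the only difference between the two coefficients is the sign of the twist. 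This uniform model is what makes the return dynamics through every handle affine on the overlap rectangles $\rect_{j}$ (Section \ref{Sec:OverlappingRectangles}) and hence makes all six items computable; the Weinstein-handle cobordisms are constructed only afterwards (Section \ref{Sec:SurgeryCobordisms}), engineered so that their boundary restrictions recover $\alpha_{\epsilon}$. Without this (or an equivalent) $+1$-compatible local model, your items (1)--(2) and everything downstream are missing their main ingredient, which is exactly the novelty the theorem is asserting.

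Separately, your route to the chords-to-orbits correspondence would not deliver the statement as claimed. At $\epsilon = 0$ the limit of a closed orbit is a broken concatenation of $\partial_{z}$-chords, not a closed orbit of any fixed Reeb field, so there is no ambient problem to which the implicit function theorem directly applies; and an Arzel\`a--Ascoli argument naturally yields, at best, a correspondence for each fixed cyclic word $w$ once $\epsilon < \epsilon_{w}$ --- the shape of item (4) --- whereas item (3) asserts a bijection between all cyclic words and all closed orbits of $R_{\epsilon}$ for every $\epsilon < \epsilon_{0}$, including orbits of arbitrarily long word length at fixed $\epsilon$. The paper obtains this from the linear model by an elementary dynamical argument: the orbit with word $w$ is the unique fixed point of an explicit affine map on a rectangle, with existence and uniqueness coming from nested convex sets whose $d\alpha_{\epsilon}$-areas shrink geometrically (Theorem \ref{Thm:ChordsToOrbits}), and surjectivity from Lemma \ref{Lemma:ExitEntry}. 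Your sketches of hyperbolicity, the framing $(X, Y)$, $\CZ_{X,Y}$, and $H_{1}$ are consistent in spirit with Sections \ref{Sec:Framing} through \ref{Sec:Homology}, but each rests on the explicit matrix form of the linearized flow, which again is supplied only by the uniform gluing model you have not provided.
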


By ``computed combinatorially'', we mean computed via extensions of methods typically used to set up chain complexes for the Legendrian contact homology ($LCH$) \cite{EtnyreNg:LCHSurvey} or Legendrian rational symplectic field theory ($LRSFT$) \cite{Ng:RSFT} of $\LambdaPM$. Analogous results are stated for chords of Legendrian links $\LambdaZero \subset \SurgLxi$ throughout the paper, providing a ``chord-to-chord'' correspondence with diagrammatically computable Maslov indices. The content of Theorem \ref{Thm:Main} is sufficient to compute some algebraic invariants of tight contact structures on the lens space $L(2, 1)$ and $\Circle \times S^{2}$ as shown in Section \ref{Sec:CHUnknot}.

The dynamics analysis of Theorem \ref{Thm:Main} can be supplemented with a direct limit argument as in \cite[Section 4]{EkholmNg} to obtain a description of the Reeb dynamics on the closed contact manifolds $\SurgLxiClosed$ associated to a contact surgery diagram, which introduces a pair of embedded elliptic orbits.\footnote{See, for example, \cite[Section 4.1]{Bourgeois:ContactIntro} and \cite[Example 1.8]{Hutchings:ECHNotes}.} We will not pursue analysis of closed contact manifolds in this paper as the open manifolds $\SurgLxi$ have particularly friendly geometries which we'll leverage in applications.

\subsection{Constrained topology of holomorphic curves and applications}

The secondary purpose of this article is to develop tools for studying holomorphic curves in symplectizations of the $\SurgLxi$ and in surgery cobordisms between them. Our intention in to make ``hat versions'' of holomorphic curve invariants of $\SurgLxi$ -- as defined by Colin, Ghiggini, Honda, and Hutchings in \cite[Section 7.1]{CGHH:Sutures} -- more computationally accessible. Theorem \ref{Thm:Main} already provides us with rather complete descriptions of the chain complexes underlying such invariants.\footnote{There is some subtlety for $\widehat{ECH}$: In order to compute relative $ECH$ indices, the links underlying collections of simple Reeb orbits should be known, whereas we will describe the homotopy classes of closed Reeb orbits. Such link embeddings can be computed as solutions to matrix arithmetic problems described in Section \ref{Sec:Embeddings}.} In particular, we'll be interested in the hat version of contact homology $(CH)$:
\begin{equation*}
\widehat{CH}\SurgLxiClosed = CH\SurgLxi.
\end{equation*}
Hat versions of other holomorphic curve invariants of $\SurgLxiClosed$ such as embedded contact homology ($\widehat{ECH}$) and the $SFT$ algebra ($\widehat{SFT}$) are defined analogously.\footnote{We use $SFT$ to denote the $SFT$ algebra, while $\SFT$ -- without italics -- refers to Eliashberg, Givental, and Hofer's framework for defining holomorphic curve invariants of contact and symplectic manifolds of \cite{EGH:SFTIntro}.}

We demonstrate the utility of our tools in two applications: First we provide a (slightly) new proof of the vanishing of $CH$ of overtwisted contact manifolds (Eliashberg and Yau \cite{Yau:VanishingCH}) using surgery-theoretic methods (Section \ref{Sec:OTSurgery}). Second, we prove the following theorem (Section \ref{Sec:Trefoil}):

\begin{figure}[h]\label{Fig:TrefoilFront}
	\begin{overpic}[scale=.7]{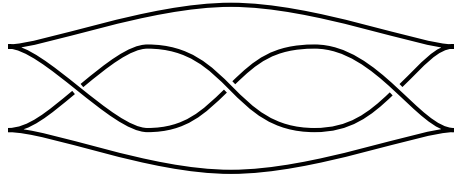}
	\end{overpic}
\caption{The Legendrian trefoil of Theorem \ref{Thm:Trefoil} shown in the front projection.}
\end{figure}

\begin{thm}\label{Thm:Trefoil}
If $\LambdaMinus = \emptyset$ and $\LambdaPlus$ has a component which is a right-handed trefoil, then 
\begin{equation*}
CH\SurgLxiClosed = \widehat{CH}\SurgLxiClosed = 0
\end{equation*}
In particular, contact $\frac{1}{k}$ surgery on the right-handed, $\tb = 1$ trefoil for $k > 0$ produces a closed, tight contact manifold $\SurgLxiClosed$ with vanishing contact homology. See figure \ref{Fig:TrefoilFront}.
\end{thm}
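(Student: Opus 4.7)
The plan is to apply Theorem \ref{Thm:Main} to the surgery diagram $\LambdaPlus$ (recall $\LambdaMinus = \emptyset$ by assumption) and reduce the vanishing of $CH\SurgLxi$ to an explicit computation in the contact homology chain complex, where we exhibit a chain $\eta$ with $\partialCH \eta = 1$. Fix the right-handed trefoil component $T \subset \LambdaPlus$ shown in Figure \ref{Fig:TrefoilFront}. The Chekanov-Eliashberg DGA of $T$ inside $\Rthree$ is classical: it is generated by $5$ Reeb chords, and among them there is a distinguished chord $a$ with
\begin{equation*}
\partialLCH a \;=\; 1 \;+\; w,
\end{equation*}
where $w$ is a sum of non-empty words in the remaining chords of $\LambdaPlus$. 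This identity is the algebraic manifestation of the existence of an augmentation for the right-handed trefoil and is what drives every nontrivial computation with its LCH.

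By the chord-to-orbit correspondence of Theorem \ref{Thm:Main}, every cyclic word of composable $\partial_z$-chords of $\LambdaPlus$ determines a closed Reeb orbit of $\alpha_{\epsilon}$, and Theorem \ref{Thm:IntegralCZ} computes its $\CZ_{X,Y}$-index combinatorially. The length-one cyclic word $(a)$ yields an orbit I denote $\gamma_a$. Using the algebraic framework for holomorphic curves in surgery cobordisms developed in the body of the paper, I would establish
\begin{equation*}
\partialCH \gamma_a \;=\; 1 \;+\; (\text{higher cyclic words of orbits}).
\end{equation*}
The key geometric input is a correspondence between rigid LCH disks with positive puncture at $a$ -- the objects defining $\partialLCH a$ -- and rigid $CH$ curves in the surgery cobordism with positive puncture at $\gamma_a$. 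Under this correspondence the constant term $1$ of $\partialLCH a$ comes from a rigid disk with no negative punctures, which after surgery remains a rigid curve with no negative ends whatsoever and therefore contributes the empty cyclic word, i.e.\ the unit, to $\partialCH \gamma_a$.

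To conclude $CH\SurgLxi = 0$, I would absorb the remaining terms of $\partialCH \gamma_a$ into further chain elements iteratively: the parameter $\epsilon$ from Theorem \ref{Thm:Main} induces an action filtration in which every non-empty cyclic word of orbits has strictly greater action than the unit, so a filtered Gauss-elimination-style cancellation converges to produce $\eta = \gamma_a + \cdots$ with $\partialCH \eta = 1$. The identification $\widehat{CH}\SurgLxiClosed = CH\SurgLxi$ recorded in the introduction then gives $\widehat{CH}\SurgLxiClosed = 0$, and vanishing of the full $CH\SurgLxiClosed$ follows by running the same algebraic elimination on the closed-manifold chain complex (whose only additional generators are covers of the two elliptic orbits at infinity noted in the introduction, which cannot create new cycles above the unit). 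For the ``in particular'' statement, any contact $\tfrac{1}{k}$ surgery on the $\tb = 1$ right-handed trefoil admits a presentation of the assumed form -- with additional $\pm 1$ Legendrian pushoffs realising the $\tfrac{1}{k}$ coefficient via the continued-fraction algorithm of \cite{DG:Surgery} -- and tightness of the resulting closed manifolds is exactly \cite{LS:TightI}.

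The principal obstacle is the moduli-theoretic step: one must verify that no additional rigid holomorphic curve in the surgery cobordism -- a curve with no counterpart in the symplectization of $\Rthree$ -- alters the constant term of $\partialCH \gamma_a$. This requires a careful compactness and transversality analysis tailored to the surgery cobordism, especially to rule out novel disks born from the $+1$ handle that could contribute an extra empty-ended term cancelling the crucial $1$. This is precisely the role of the algebraic tools for holomorphic curves in surgery cobordisms referenced in item (6) of Theorem \ref{Thm:Main} and advertised in the abstract.
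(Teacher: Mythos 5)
There is a genuine gap at the last algebraic step, and it sits exactly where the paper's main new input lives. Knowing $\partialCH \gamma_a = 1 + x$ with $x$ a sum of non-empty monomials does not imply the unit is exact: since $\partialCH^2 = 0$, $x$ is itself a cycle and $[1] = -[x]$ in homology, so you would still need $x$ to be exact, and your action-filtered Gauss elimination provides no reason for that (the unit has the \emph{smallest} action, so there is nothing below $x$ to cancel it against). A toy example shows the failure: the supercommutative DGA with generators $\gamma$ in degree $1$ and $b$ in degree $0$, $\partialCH\gamma = 1 + b$, $\partialCH b = 0$, has homology $\Q[b]/(1+b) \cong \Q \neq 0$ via the augmentation $b \mapsto -1$. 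So the "principal obstacle" you flag -- extra curves corrupting the constant term -- is not the real danger; the presence of \emph{any} negative asymptotics at all must be excluded, not absorbed. This exclusion is precisely what the paper proves and what your proposal lacks: by Theorems \ref{Thm:IntegralCZ} and \ref{Thm:H1}, $\partialCH (r_{4})$ lands in the subalgebra $CC_{0,0}$ generated by $1$, $(r_{1})^{2}$, $(r_{2})^{2}$, $(r_{3})^{2}$, $(r_{1})(r_{2})$, $(r_{1})(r_{3})$, $(r_{2})(r_{3})$, and then the intersection grading $\Igrading$ of Section \ref{Sec:IntersectionGrading}, built from the foliation by the planes $\C_{x,y}$ of Section \ref{Sec:FoliationsAndQuivers}, rules out every such monomial as a negative end by intersection positivity, forcing any index-$1$ curve on $(r_{4})$ to be a plane with $\C_{k}\cdot U = \delta_{k,5}$. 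Only after this does the bubbling result, Theorem \ref{Thm:PlaneBubbling}, applied to the embedded disk $\region_{5}$ with its single positive puncture, give the exact count $\partialCH (r_{4}) = \pm 1$ and hence vanishing.

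Two further mismatches with the paper. First, you invoke a wholesale transfer of $\partialLCH a = 1 + w$ into $\partialCH \gamma_a = 1 + (\text{higher terms})$; no such chord-to-orbit differential correspondence is established for contact $+1$ surgery (the BEE-type formula you have in mind is a $-1$-surgery statement), and the paper only proves the disk-to-plane statement for embedded disks with exclusively positive punctures, which is enough for the constant term but not for your claimed formula. Second, the closed-manifold statement and the general case do not require re-running any elimination: once $CH\SurgLxi = \widehat{CH}\SurgLxiClosed = 0$, the unital algebra map $\widehat{CH}\SurgLxiClosed \rightarrow CH\SurgLxiClosed$ of Theorem \ref{Thm:CHHatOverview} kills $CH\SurgLxiClosed$; several $+1$ components are handled by Liouville functoriality through the cobordisms of Section \ref{Sec:SurgeryCobordisms}; and contact $\frac{1}{k}$ surgery for $k>0$ is by definition $+1$ surgery on $k$ parallel push-offs, so no $-1$ push-offs or continued-fraction expansions are needed (indeed they would violate the hypothesis $\LambdaMinus = \emptyset$).
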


The development of our tools (Section \ref{Sec:FoliationsAndQuivers}) starts with a variation of the construction of transverse knot filtrations of holomorphic curve invariants from \cite[Section 7.2]{CGHH:Sutures}: Lines in $\R^{3}$ directed by $\partial_{z}$ over points $(x, y) \in \R^{2} \setminus \pi_{x,y}(N_{\epsilon})$ determine infinite energy holomorphic planes $\C_{x, y}$ in $\R\times \SurgL$. The $\C_{x, y}$ form a holomorphic foliation whose existence constrains the topology of curves á la proofs of uniqueness-of-symplectic-manifold theorems \cite{Eliash:Filling, Gromov:JCurves, GZ:FourBall, Hind:Filling, McDuff:RationalRuled, McDuff:Filling, Wendl:Foliations}. Counting intersections $\C_{x, y} \cdot U$ of these planes with finite energy curves $U$ asymptotic to collections $\gamma^{\pm}$ of closed $R_{\epsilon}$ orbits yields locally constant, $\Z_{\geq 0}$-valued functions on $\SFT$ moduli spaces -- topological invariants determined by the relative homology classes 
\begin{equation*}
[\pi_{\SurgL}\circ U] \in H_{2}(\SurgL, \gamma^{\pm})
\end{equation*}
of holomorphic curves. Surgery cobordisms may be similarly considered when equipped with special almost complex structures described in Section \ref{Sec:NStandard}. By tracking these intersections, we can
\be
\item show that certain disks appearing in Ng's combinatorially-defined Legendrian $RSFT$ \cite{Ng:RSFT} determine rigid holomorphic planes in $\R \times \SurgL$ (Section \ref{Sec:PlaneBubbling}). This follows a Lagrangian-boundary version of Hofer's bubbling argument \cite{Hofer:OTWeinstein} in which case the $\C_{x, y}\cdot U$ completely dictate the ways in which certain families of holomorphic disks can degenerate into multi-level $\SFT$ buildings.
\item equip the $\widehat{CH}$ chain complexes with a new grading, denoted $\Igrading$, which depends on the surgery diagram (Section \ref{Sec:IntersectionGrading}). Variants of this grading may similarly be applied to any holomorphic curve invariant of $\SurgLxi$.
\ee

In the proof of Theorem \ref{Thm:Trefoil}, we show that $+1$ surgery on the $\tb=1$ trefoil provides a $\CZ_{X, Y} = 2$ closed orbit $\gamma$ of $R_{\epsilon}$ with $\partial_{CH}\gamma = \pm 1 \in \Q$. Computations of Conley-Zehnder indices, homology classes, and $\Igrading$ shows that any $\ind = 1$ rational holomorphic curves positively asymptotic to $\gamma$ must be a plane, and that such planes may be counted using our bubbling argument.

Theorem \ref{Thm:Trefoil} provides the first examples of closed, tight contact manifolds with $CH = 0$.\footnote{Due to $CH$ functoriality under Liouville cobordism, Honda's tight contact manifold which becomes overtwisted after contact $-1$ surgery \cite{Honda:OTSurgery} already provides an example of a contact manifold with convex boundary whose \emph{sutured contact homology} \cite{CGHH:Sutures} is zero.} The tightness of $\frac{1}{k}$ surgeries on the $\tb = 1$ trefoil is provided by computations of Heegaard-Floer ($HF$) contact classes \cite{HKM:ContactClass, OS:ContactClass} by Lisca and Stipsicz  in \cite[Section 3]{LS:TightI}. As the $HF$ contact class contains the same information as the $ECH$ contact class \cite{CGH:HFequalsECH, KLT:HFSW}, and both $ECH$ and $\SFT$ count holomorphic curves of arbitrary topological type -- in particular, arbitrary genus -- it would be interesting to know if there is some $\SFT$ invariant of this contact manifold which is non-vanishing. Broadening the scope of this inquiry, we ask the following:

\begin{q}
For $3$-dimensional contact manifolds, does $CH\Mxi \neq 0$ imply that the $HF=ECH$ contact class of $\Mxi$ is non-zero? Do there exist tight contact manifolds of dimension greater than three with vanishing contact homology?
\end{q}

We note that using the algebraic formalism of \cite{EGH:SFTIntro}, the vanishing of contact homology is equivalent to the vanishing of $SFT$ according to \cite{AlgebraicallyOvertwisted}.

\subsection{Outline of this paper}

In Section \ref{Sec:Prerecs} we outline notation and background information which will be used throughout the rest of the paper. Section \ref{Sec:ChordNotation} is also primarily concerned with notation, associating algebraic data to chords of Legendrian links in $\Rthree$ which will be used to package invariants of chords and closed orbits in the surgered contact manifolds $\SurgLxi$.

Sections \ref{Sec:ModelGeometry} through \ref{Sec:Homology} carry out the computational details of Theorem \ref{Thm:Main} and analogous results for chords of Legendrian links $\LambdaZero \subset \SurgLxi$. In Section \ref{Sec:SurgeryCobordisms} we describe handle-attachment cobordisms between the $\SurgLxi$ associated to surgeries along their Legendrian knots. The construction of these cobordisms -- slight modifications of \cite{Ekholm:SurgeryCurves, Weinstein:Handles} -- provides us with model geometry facilitating analysis of holomorphic curves.

Section \ref{Sec:FoliationsAndQuivers} describes holomorphic curves in symplectizations of and surgery cobordisms between the $\SurgLxi$. The algebraic tools described in that section are prerequisite for the applications appearing in Section \ref{Sec:Applications}, culminating in the proof of Theorem \ref{Thm:Trefoil}.

Content pertaining to Legendrian links $\LambdaZero \subset \SurgLxi$ may be skipped by readers only interested in the applications of Section \ref{Sec:Applications}. This material is included to provide a complete picture of relative $\SFT$ chain complexes in anticipation of their use in future applications.

\subsection*{Acknowledgments}

We send our gratitude to Erkao Bao, Guillaume Dreyer, Tobias Ekholm, Ko Honda, Yang Huang, and Vera Vértesi for interesting conversations. Special thanks goes to Fabio Gironella and András Stipsicz, as well as Georgios Dimitroglou Rizell for their interest in this project and invitations to give talks in their seminars. Finally, we thank our referees for their thoughtful and detailed commentary.

\section{Prerequisites}\label{Sec:Prerecs}

\subsection{General notation}

Throughout this paper $\delta_{\ast, \ast}$ -- with a double subscript -- will denote the Kronecker delta and $\lfloor \ast \rfloor$ will be the floor function $\R \rightarrow \Z$. A \emph{collection} will be a set in which elements are allowed to have non-trivial multiplicity. We use set notation for collections. For example $\{1, 1, 2\}$ is a collection with $\{ 1, 1, 2\} \setminus \{1 \} = \{ 1, 2\}$ and $\{ 1, 1, 2\} \cup \{ 2, 3 \} = \{ 1, 1, 2, 2, 3\}$. We'll use often use collections and ordered collection to organize chords and orbits as they may appear in $CH$, $ECH$, $LCH$, etc.

Unless otherwise specified, we use $I$ to denote a connected $1$ manifold and for a positive number $\epsilon$, we write $I_{\epsilon}=[-\epsilon, \epsilon]$. For $a > 0$, the circle $\R/a\Z$ will be denoted $\Circle_{a}$ and without the subscript, $\Circle = \Circle_{1}$. The unit disk of dimension $n$ and radius $C$ centered about $x \in \R^{n}$ will be denoted $\disk_{C}^{n}(x)$. We'll typically use the simplified notation $\disk^{n}=\disk_{1}^{n}(0)$ and $\disk$ for $\disk^{2}$. The complex projective space will be written $\Proj^{n}$.

For a closed manifold $M$, $\widehat{M}$ will denote the open manifold obtained from $M$ by removing a point or closed disk. When $\Mxi$ is a closed contact manifold, $\widehat{\Mxi}$ will denote $\Mxi$ with a point or standard Darboux disk removed. We say that $\widehat{\Mxi}$ is a \emph{punctured contact manifold}.\footnote{In \cite{CGHH:Sutures}, the notation $M(1)$ is used for what we call $\widehat{M}$.}

For a space $M$, we denote homology and cohomology groups as $H_{\ast}(M)$ and $H^{\ast}(M)$, respectively. Integral coefficients will be assumed unless otherwise explicitly stated. When $M$ is a closed manifold, $\PD$ will be used to denote the Poincaré duality isomorphism in either direction $H_{i} \leftrightarrow H^{\dim(M) - i}$. Abusing notation, we also use $\PD$ to denote the associated isomorphisms for punctured manifolds $\widehat{M}$ in degrees $i \neq 0$. By a \emph{$\Q$ homology sphere}, we mean a closed or punctured $3$-manifold with finite $H_{1}$ (implying that $H_{2} = 0$ by the universal coefficients theorem, cf. \cite[Corollary 3.3]{Hatcher:AlgebraicTopology}).

For a vector bundle $E$ over a manifold $M$, the space of $\Cinfty$ sections will denoted $\Gamma(E)$. The space of nowhere zero sections -- which may be empty -- will be denoted $\Gamma_{\neq 0}(E)$. Provided that $E$ has finite rank $n$ and trivializations $(V_{i})$ and $(W_{i})$ of $E$ over some set $U\subset M$, transformations of the form $\sum_{i, j} a_{i, j}W_{i}\otimes V^{*}_{j}$ can be written as matrices with respect to which we say that $(V_{i})$ is the \emph{incoming basis} and $(W_{i})$ is the \emph{outgoing basis}. In such situations, provided $a_{1},\dots, a_{n}\in \Cinfty(U)$, $\Diag(a_{1}, \dots, a_{n})$ will be the diagonal matrix with $a_{1}$ in the top-left corner and $J_{0}$ will denote standard complex multiplication where applicable. The Euler class of a finite dimensional bundle will be written $e(E)$ and Chern classes will be written $c_{k}(E)$ when the bundle is equipped with a (homotopy class of) complex structure. We will be predominantly interested in the case $E = \xi$ for a $3$-dimensional contact manifold $\Mxi$ in which case the Euler and first Chern classes coincide: $e(\xi) = c_{1}(\xi)$.

\subsection{Vector fields and almost complex structures}

In this section we review vector fields and almost complex structures typically encountered in symplectic and contact geometry, primarily for the purpose of establishing conventions which often vary in the literature. We'll use Option 1 of \cite{Wendl:Signs}. See that article or \cite[Remark 3.3]{MS:SymplecticIntro} for further discussion.\footnote{Regarding work we'll be frequently referencing: Our signs for symplectic forms on cotangent bundles will be opposite that of \cite{Ekholm:SurgeryCurves} and our signs for Hamiltonian vector fields are opposite that of \cite{BH:ContactDefinition, BH:Cylindrical, CGHH:Sutures}.}

Let $(W, \beta)$ be a $2n$-dimensional exact symplectic manifold. That is, $W$ is an oriented $2n$-manifold on which $d\beta$ is symplectic. We call such $\beta$ a \emph{Liouville form} or \emph{symplectic potential}. If $H\in\Cinfty(W)$ is a smooth function with values in $\R$ or $\Circle$, the associated \emph{Hamiltonian vector field}, denoted $X_{H}$, is the unique solution to the equation
\begin{equation*}
    d\beta(\ast, X_{H}) = dH.
\end{equation*}
Clearly $H$ is constant along the flow-lines of $X_{H}$ and $X_{H}$ depends only on $d\beta$ (rather than $\beta$). If $J$ is an almost complex structure for which $g_{J}$, defined by 
\begin{equation*}
    g_{J}(u, v) = d\beta(u, J v),\ u, v \in T_{p}\Sigma
\end{equation*}
is a $J$-invariant Riemannian metric, then
\begin{equation*}
    X_{H} = J\grad H
\end{equation*}
where $\grad H$ is the gradient of $H$ with respect to $g_{J}$ solving $g_{J}(\grad H, \ast) = dH$. We say that such $J$ is a \emph{compatible} almost-complex structure.

The \emph{Liouville vector field}, denoted $X_{\beta}$, on $W$ is the unique solution to the equation 
\begin{equation*}
    d\beta(X_{\beta}, \ast) = \beta.
\end{equation*}
If $W$ is compact and $X_{\beta}$ points outward along the boundary of $W$, we say that the pair $(W, \beta)$ is a \emph{Liouville domain}. Given a function $H\in \Cinfty(W)$, the $1$-form $\beta_{H} = \beta + dH$ is also a primitive for $d\beta$ such that
\begin{equation*}
    X_{\beta_{H}} = X_{\beta} + X_{H}.
\end{equation*}
By our choice of convention, Hamiltonian and Liouville vector fields interact with $d\beta$ as follows:
\begin{equation*}
    \beta(X_{H}) = d\beta(X_{\beta}, X_{H}) = dH(X_{\beta}).
\end{equation*}

Given a contact manifold $\Mxi$ equipped with a contact form $\alpha$, action of the chords and closed orbits of its Reeb vector field may be computed as
\begin{equation*}
    \action(\gamma) = \int_{\gamma}\alpha.
\end{equation*}

\subsection{Contact and symplectic manifolds}

Here we review some contact and symplectic manifolds which will appear throughout this article.

\subsubsection{Cotangent bundles}

Our convention for Liouville forms on the cotangent bundle $T^{*}L$ of a smooth manifold $L$ will be to use the form $(T^{*}L, \lambda_{can})$ with $\lambda_{can} = p_{i} dq_{i}$ in a local coordinate system $(q_{i})$ on $L$. Provided such coordinates on $L$, we use $(p_{i}, q_{i})$ as local coordinates on $T^{*}L$ so that $d\lambda_{can}$ is symplectic with respect to the induced orientation.

\subsubsection{Contactizations}\label{Sec:Contactizations}

Provided an exact symplectic manifold $(W, \beta)$, we have a contact form $dz + \beta$ on $I \times W$. We will refer to both the contact manifold $(I \times W, \ker(dz + \beta))$ and the pair $(\R \times W, dz+\beta)$ as the \emph{contactization of $(W, \beta)$}.

It's easy to see that deformations of an exact symplectic manifold give rise to contactomorphic contactizations. For if $H \in \Cinfty(W, \R)$ then the contactization of $(W, \beta + dH)$ is equivalent to the contactization of $(W, \beta)$ by the transformation
\begin{equation*}
(t, w) \mapsto (t + H, w).
\end{equation*}

We'll further analyze geometry of contactizations in Sections \ref{Sec:ContactizationForms} and \ref{Sec:SteinProduct}. The quintessential example of a contactization is the \emph{$1$-jet space} of a closed manifold, which is the contactization of its cotangent bundle.

\subsubsection{Symplectizations}

Provided $\Mxi$ and $\alpha$ as above, $(\R\times M, e^{t}\alpha)$ is an exact symplectic manifold called the $\emph{symplectization}$ of the pair $(M, \alpha)$. By considering diffeomorphisms of the form $(t, x) \mapsto (t + f(x), x)$ on $\R\times M$ for $f \in \Cinfty(M, (0, \infty))$ it is clear that the symplectization is independent of the choice of $\alpha$ for $\xi$, up to symplectomorphism.

For any constant $C$, we will likewise refer to $([C, \infty)\times M, e^{t}\alpha)$ as the \emph{positive half-infinite symplectization} and $((-\infty, C]\times M, e^{t},\alpha)$ as the \emph{negative half-infinite symplectization} of the pair $(M, \alpha)$. For constants $C < C'$, we will call $([C, C']\times M, e^{t}\alpha)$ a \emph{finite symplectization} of the pair $(M, \alpha)$.

Here we can compute
\begin{equation*}
    X_{\beta} = \partial_{t},\ X_{t} = e^{-t}R.
\end{equation*}
Hence there is a one-to-one correspondence between periodic orbits of $R$ and $1$-periodic orbits of $X_{t}$ by associating to each $\gamma$ in $M$ the loop $(\log(\action(\gamma)), \gamma)$ in the symplectization.

\subsubsection{Liouville cobordisms between closed and punctured contact manifolds}

Here we review some standard vocabulary regarding symplectic cobordisms, modified to deal with punctured contact manifolds. What are sometimes called ``strong symplectic cobordisms'' we will simply refer to as \emph{symplectic cobordisms} for notational simplicity. What are sometimes called ``exact symplectic cobordisms'' we will refer to as \emph{Liouville cobordisms}. Our reasoning is that there exist symplectic cobordisms which are exact symplectic manifolds, but which are not ``exact symplectic cobordisms'' -- cf. \cite[Section 2.4]{Wendl:NonExact}. See that paper or \cite[Chapter 12]{OS:SurgeryBook} for a review of various notions of fillings and cobordisms with emphasis on low dimensions. We will only be concerned with Liouville cobordisms in this paper.

Let $\Mxi$ be a closed contact manifold of dimension $2n + 1$ and $p \in M$ a point. We say that a contact form $\alpha$ for $\xi$ defined on $M \setminus \{ p \}$ is \emph{standard at infinity} if there exists a ball $B_{p}$ about $p\in M$, a positive constant $C$, and a diffeomorphism
\begin{equation*}
\Phi: \big( B_{p} \setminus \{ p\} \big) \rightarrow \big( \R^{2n + 1} \setminus \disk^{2n+1}_{C}(0) \big)
\end{equation*}
such that $\Phi^{*}(dz - y_{i}dx_{i}) = \alpha$ and $|\Phi(\gamma(t))| \rightarrow \infty$ for paths $\gamma(t)$ in $B_{p} \setminus \{ p\}$ tending towards $p$.

A \emph{Liouville cobordism between contact manifolds $(M^{+}, \xi^{+})$ and $(M^{-}, \xi^{-})$} is a compact exact symplectic manifold $(W, \lambda)$ for which 
\be
\item $\partial W = M^{+} - M^{-}$,
\item the Liouville vector field $X_{\lambda}$ points into $W$ along $M^{-}$ and out of $W$ along $M^{+}$, and
\item $\lambda|_{TM^{\pm}}$ is a contact form for $\xi^{\pm}$.
\ee
We call $M^{+}$ the \emph{convex boundary of $(W, \lambda)$} and $M^{-}$ the \emph{concave boundary of $(W, \lambda)$}. We may think of a Liouville domain as cobordism whose concave boundary is the empty set.

A \emph{Liouville cobordism between punctured contact manifolds $\widehat{(M^{+}, \xi^{+})}$ and $\widehat{(M^{-}, \xi^{-})}$} is defined analogously as in the case where the $(M^{\pm}, \xi^{\pm})$ are closed. However we require that there exists a region
\begin{equation*}
I_{C} \times (\R^{2n + 1} \setminus \disk^{2n+1}_{C}(0)) \subset W, \quad \{ \pm C \} \times (\R^{2n + 1} \setminus \disk^{2n+1}_{C}(0)) \subset M^{\pm}
\end{equation*}
along which $\lambda = e^{t}(dz - y_{i}dx_{i})$ and such that the $t = \pm C$ slices provide standard at infinity neighborhoods of the punctures of the $M^{\pm}$.

We won't bother to specify that a Liouville cobordism is between closed or punctured contact manifolds, as it should be clear from the context. In either case, we may define the \emph{completion} of a Liouville cobordism to be the non-compact exact symplectic manifold obtained from a Liouville cobordism by appending a positive half-infinite symplectization to a collar of its convex boundary and a negative half-infinite symplectization to a collar of its concave boundary. We denote the completion of such a cobordism $(W, \lambda)$ as $(\overline{W}, \overline{\lambda})$.

\subsection{Remarks on $\SLtwoR$}

We briefly review some properties of $\SLtwoR$ which will be useful for analyzing Reeb dynamics on contact $3$-manifolds. By definition, $\SLtwoR$ coincides with $\Symp(2, \R)$ -- the space of matrices preserving the standard symplectic form $dx\wedge dy$.

An element $A \in \SLtwoR$ has characteristic polynomial
\begin{equation}\label{Eq:DetTr}
    \det(A - \lambda\Id) = \lambda^{2} - \tr(A)\lambda + 1
\end{equation}
using which, eigenvalues of $A$ can be found using the quadratic formula. The \emph{non-degenerate} elements are those for which $1$ is not an eigenvalue. A non-degenerate element $A$ falls into one of two classes:
\be
\item $A$ is called \emph{elliptic} if its eigenvalues lie on the unit circle or equivalently, $|\tr(A)| < 2$.
\item $A$ is called \emph{hyperbolic} if its eigenvalues are elements of $\R$ or equivalently $|\tr(A)| > 2$.
\ee

Hyperbolic elements are further classified as \emph{positive (resp. negative) hyperbolic} if the eigenvalues are positive (resp. negative) real numbers. The classification of $A \in \SLtwoR$ as elliptic, positive hyperbolic, or negative hyperbolic depends only on the conjugacy class of $A$.

\subsection{Conley-Zehnder indices of Reeb orbits in contact $3$-manifolds}\label{Sec:CZOverview}

Throughout the remaining subsections covering Reeb dynamics and contact homology, we follow the expositions \cite{Bourgeois:ContactIntro} of Bourgeois (which covers all dimensions) and \cite[Section 3.2]{Hutchings:ECHNotes} of Hutchings (which specifically focuses on the $3$-manifolds). Let $\gamma$ be a closed Reeb orbit of a contact manifold $\Mxi$ equipped with a contact form $\alpha$ for $\xi$ whose Reeb vector field will be denoted $R$. We assume $\gamma$ is embedded, comes with a parameterization $\gamma(t)$, and write $\gamma^{k}$ for its $k$-fold iterate with $k > 0$. 

As the Reeb flow preserves $\xi$, the Poincaré return map for time $t=\action(\gamma)$ sends $\xi|_{\gamma(0)}$ to itself and -- provided a symplectic basis of $(\xi|_{\gamma(0)}, d\alpha)$ -- determines a matrix $\Ret_{\gamma}\in \SLtwoR$. The orbit $\gamma$ will be called non-degenerate, elliptic, positive (negative) hyperbolic if the matrix $\Ret_{\gamma}$ has the associated property. We say that the contact form $\alpha$ is \emph{non-degenerate} if all of its Reeb orbits are non-degenerate.\footnote{In practice, one is typically interested in studying sequences of contact forms $\alpha_{n}$ with ``nice'' limiting behavior such that there exists a sequence $C_{n} \rightarrow \infty$ so that the orbits of $\alpha_{n}$ of action $\leq C_{n}$ are non-degenerate. See, for example \cite{BH:ContactDefinition, BH:Cylindrical, Bourgeois:Thesis, Ekholm:SurgeryCurves}. We take a similar approach in this article.}

\begin{rmk}
Having a non-degenerate contact form for which all closed orbits are hyperbolic -- as is the case with the contact forms $\alpha_{\epsilon}$ of Theorem \ref{Thm:Main} -- is generally desirable as branched covers of trivial cylinders over elliptic orbits can have negative index \cite[Section 1]{HT:GluingI}. Likewise, in $ECH$ chain complexes only simple covers of hyperbolic orbits are considered, whereas multiple covers of elliptic orbits cannot be avoided \cite{Hutchings:ECHNotes}. See also \cite{BH:Cylindrical, Rooney:ECH} where analysis of holomorphic maps is simplified by considering only hyperbolic orbits.
\end{rmk}

Suppose that $\gamma$ is a non-degenerate orbit equipped with a framing $s \in \Gamma_{\neq 0}(\xi|_{\gamma})$. By extending $s$ to a symplectic trivialization of the normal bundle $(\xi|_{\gamma}, d\alpha)$ to $\gamma$, we can write the restriction of the linearized flow to $\xi|_{\gamma}$ as a path $\phi = \phi(t)$ in $\SLtwoR$. Then we define the \emph{Conley-Zehnder index of the orbit $\gamma$ with framing $s$}, denoted $\CZ_{s}(\gamma)$, to be the Conley-Zehnder index $\CZ(\phi)$ of the path $\phi$.

If $\gamma$ is hyperbolic, $\phi$ rotates the eigenspaces of $\Ret_{\gamma}$ by an angle $\pi n$  for some $n\in \Z$ in which case
\begin{equation*}
    \CZ_{s}(\gamma^{k}) = k n.
\end{equation*}
Negative hyperbolic orbits have $n$ odd and positive hyperbolic orbits have $n$ even. If $\gamma$ is elliptic, $\phi$ rotates the eigenspaces of $\Ret_{\gamma}$ by some angle $\theta \in \R \setminus 2\pi\Z$ in which case the Conley-Zehnder index is computed
\begin{equation*}
    \CZ_{s}(\gamma^{k}) = 2\left\lfloor \frac{k\theta}{2\pi} \right\rfloor + 1.
\end{equation*}

Note that $\CZ_{s}$ depends only on the isotopy class of the framing $s$. If we write $s + n$ for a framing whose isotopy class is given by twisting $s$ by $n$ meridians, then
\begin{equation}\label{Eq:MeridianTwist}
    \CZ_{s + n}(\gamma^{k}) = \CZ_{s}(\gamma^{k}) - 2nk.
\end{equation}

An orbit $\gamma^{k}$ is \emph{bad} if the parity of its Conley-Zehnder index disagrees with that of the underlying embedded orbit $\gamma$. Orbits which are not bad are \emph{good}. Hence (when $\dim(M) = 3$) the only bad orbits are even covers of negative hyperbolic orbits. See Remarks 1.9.2 and 1.9.6 of \cite{EGH:SFTIntro}.

Note that as $\CZ_{s}(\gamma) \bmod_{2}$ is independent of $s$, so is the property that an orbit is good or bad. We write $\CZ_{2}(\gamma) \in \Z/2\Z$ for the index modulo-$2$ which satisfies
\begin{equation}\label{Eq:CZTwo}
    \sgn \circ \det (\Ret_{\gamma} - \Id) = (-1)^{\CZ_{2} + 1}.
\end{equation}

The following method of computing the Conley-Zehnder index of a path $\phi(t), t \in [0, 1]$ of symplectic matrices is due to Robbin-Salamon \cite{RS:Index}. For a path $\phi: [0, 1] \rightarrow \SLtwoR$ a point $t \in [0, 1]$ is \emph{crossing} if $1$ is an eigenvalue of $\phi(t)$. Writing
\begin{equation}\label{Eq:SympPathDerivative}
    \frac{\partial \phi}{\partial t}(t) = J_{0} S(t) \phi(t)
\end{equation}
for symmetric matrices $S(t)$, we say that a crossing $t$ is \emph{regular} if the quadratic form $\Gamma(t)$ defined as the restriction of $S(t)$ to $\ker(\phi(t) - \Id)$ is non-degenerate. For a path $\phi$ with only regular crossings, we can compute $\CZ(\phi)$ as
\begin{equation}\label{Eq:RSCZ}
\CZ(\phi) = \half \sgn(\Gamma(0)) + \sum_{t > 0\ \text{crossing}}\sgn(\Gamma(t)).
\end{equation}
Also of utility for computation is the \emph{loop property} of $\CZ$ which states that given $k \in \Z$ and a non-degenerate path $\phi$, the path $\tilde{\phi}(t) = e^{i 2\pi k t}\phi(t)$ has
\begin{equation}\label{Eq:CZLoop}
 \CZ(\tilde{\phi}) = 2k + \CZ(\phi).
\end{equation}

\subsection{Holomorphic curves in symplectizations and the index formula}

Now suppose that $\alpha$ is a non-degenerate contact form for some contact $3$-manifold $\Mxi$ and that $J$ is an almost-complex structure which is \emph{adapted to the symplectization} $(\R\times M, e^{t}\alpha)$. That is:
\be
\item $J$ is compatible with $d(e^{t}\alpha)$,
\item it is $t$-invariant and preserves $\xi$, and
\item $J\partial_{t} = R$.
\ee

Let $\gamma^{+} = \{ \gamma^{+}_{1},\dots, \gamma^{+}_{m^{+}} \}$ and $\gamma^{-} = \{ \gamma^{-}_{1},\dots, \gamma^{-}_{m^{-}} \}$ be collections of Reeb orbits with $\gamma^{+}$ non-empty and let $(\Sigma, j)$ be a Riemann surface with marked points $\{ p^{+}_{1},\dots, p^{+}_{m^{+}}, p^{-}_{1}, \dots, p^{-}_{m^{-}} \}$. We write $\Sigma'$ for $\Sigma$ with its marked points removed. We say that $(t, U): \Sigma' \rightarrow \R\times M$ is \emph{holomorphic} if
\begin{equation*}
    \delbar(t, U) = \half \bigr( T(t, U) + J\circ T(t, U)\circ j \bigr)
\end{equation*}
vanishes. If we wish to specify $J$ and $j$, we'll say that the map is \emph{$(J, j)$ holomorphic}. This is equivalent to the conditions
\begin{equation}\label{Eq:DelbarBreakdown}
dt = U^{*}\alpha \circ j,\quad J\pi_{\alpha}\circ TU = \pi_{\alpha}\circ TU \circ j
\end{equation}
where $\pi_{\alpha}: TM \rightarrow \xi$ is the projection $V \mapsto V - \alpha(V)R$. We provide a few simple examples.

\begin{ex}[Trivial strips, planes, and cylinders]\label{Ex:TrivialObjects}
Provided a map $\gamma: I \rightarrow M$ parameterizing a Reeb trajectory for a connected $1$-manifold $I$, $\R \times \im(\gamma) \subset \R \times M$ is an immersion with $J$-complex tangent planes. Some examples of particular interest:
\be
\item If $I$ is compact with non-empty boundary parameterizing a chord of $R$ with endpoints on a Legendrian submanifold $\Lambda$, we'll call $\R \times \im(\gamma)$ a \emph{trivial strip}.
\item If $I = \R$ and the map $\gamma$ is an embedding, we'll say that $\R \times \im(\gamma)$ is a \emph{trivial plane}.
\item If $I = \Circle_{a}$ parameterizing a Reeb orbit of action $a$, then we'll say that $\R \times \im(\gamma)$ is a \emph{trivial cylinder}.
\ee
\end{ex}

Given a holomorphic map $(t, U): (\Sigma, j) \rightarrow (\R\times M, J)$, we say that the puncture $p_{i}^{+}$ is \emph{positively-asymptotic} to the orbit $\gamma_{i}^{+}$ if there exists a neighborhood $[C, \infty) \times \Circle$ of $p_{i}^{+}$ in $\Sigma$ with coordinates $r, \theta$ for which $j$ is the standard cylindrical complex structure so that $t(r, \theta) \rightarrow \infty$ and $U(r, \theta)$ tends to a parameterization of $\gamma^{+}_{i}$ as $r\rightarrow \infty$. Likewise, we say that the puncture $p_{i}^{-}$ is \emph{negatively-asymptotic} to the orbit $\gamma_{i}^{-}$ if $t(r, \theta) \rightarrow -\infty$ and $U(r, \theta)$ tends to a parameterization of $-\gamma^{+}_{i}$ as $r\rightarrow \infty$. Allowing $j$ and the location of the marked points to vary and then modding out by reparameterization in the domain, we write $\mathcal{M}_{(t, U)}$ for the \emph{moduli space of holomorphic maps} asymptotic to the $\gamma^{\pm}$ containing the map $(t, U)$.

The \emph{index} of a holomorphic map as above is defined by the formula
\begin{equation}\label{Eq:DelbarIndex}
\begin{gathered}
\ind((t, U)) = \CZ_{s}(\gamma^{+}) - \CZ_{s}(\gamma^{-}) - \chi(\Sigma') + 2c_{s}(U) \in \Z\\
\CZ_{s}(\gamma^{\pm}) = \sum_{i=1}^{m^{\pm}}\CZ_{s}(\gamma^{\pm}_{i}).
\end{gathered}
\end{equation}
The \emph{relative first Chern class} $c_{s}(U)$ is the signed count of zeros of $U^{*}\xi$ over $\Sigma'$ using a section which coincides with $s$ near the punctures. Note that $\ind$ is independent of $s$. In ideal geometric settings, $\mathcal{M}_{(t, U)}$ is a manifold near the point $(t, U)$ of dimension $\ind((t, U))$.

\begin{rmk}
Here we are disregarding asymptotic markers for orbits which are required for a rigorous functional-analytic setup for moduli spaces and curve counts. We refer to \cite{BH:ContactDefinition, Pardon:Contact} for details.
\end{rmk}

The \emph{energy} of a holomorphic curve is defined
\begin{equation*}
    \energy(t, U) = \int_{\Sigma'}d\alpha = \sum_{1}^{m^{+}}\action(\gamma_{i}^{+}) - \sum_{1}^{m^{-}}\action(\gamma_{i}^{-}).
\end{equation*}
The energy is non-negative and is zero if and only if $(t, U)$ is a branched cover of a trivial cylinder. Energies of curves will be presumed finite unless otherwise explicitly stated.

\subsection{Contact homology and its variants}\label{Sec:SFTOverview}

We now give a brief overview of contact homology and symplectic field theory. As in previous subsections, we focus specifically on the case of contact $3$-manifolds.

For each closed Reeb orbit $\gamma$ with framing $s$, we define its degree as $|\gamma|_{s} = \CZ_{s}(\gamma) - 1 \in \Z$. This degree modulo $2$ will be denoted $|\gamma|$. We write $CC(\alpha)$ for the supercommutative algebra with unit $1$ generated by the good Reeb orbits of $\alpha$ over $\Q$. Here supercommutativity means $\gamma_{1}\gamma_{2} = (-1)^{|\gamma_{1}||\gamma_{2}|}\gamma_{2}\gamma_{1}$. We note that $CC(\alpha)$ has two canonical gradings:
\be
\item The \emph{degree grading} given by $|\gamma_{1}\cdots\gamma_{n}| := \sum_{1}^{n}|\gamma_{i}| \in \Z/2\Z$.
\item The \emph{$H_{1}$ grading} given by $[\gamma_{1}\cdots\gamma_{n}] := \sum_{1}^{n}[\gamma_{i}] \in H_{1}(M)$.
\ee
For $i \in \Z/2\Z$ and $h \in H_{1}(M)$, we will use the notation $CC_{i, h}$ to denote the relevant graded $\Q$-subspaces. The contact homology differential 
\begin{equation*}
    \partialCH: CC_{i, h} \rightarrow CC_{i-1, h}
\end{equation*}
is defined by counting $\ind = 1$ (possibly perturbed) solutions to $\delbar(t, U) = 0$ with one positive puncture, any number of negative punctures, and genus $0$. For such curves $(t, U)$ positively asymptotic to some $\gamma^{+}$ and negatively asymptotic to $\gamma^{-}_{j}$ simultaneously framed with some choice of $s$, Equation \eqref{Eq:DelbarIndex} becomes
\begin{equation}\label{Eq:DelbarCHIndex}
    \ind((t, U)) = |\gamma^{+}|_{s} - \sum_{j}|\gamma^{-}_{j}|_{s} + 2c_{s}(U).
\end{equation}
Each such solution contributes a term to $\partial \gamma^{+}$ of the form $m(\gamma^{+}; \gamma^{-}_{i})\gamma_{1}^{-}\cdots \gamma_{n}^{-}$ with $m(\gamma^{+}; \gamma^{-}_{i})\in\Q$. If there are no negative punctures we get a term of the form $m(\gamma^{+})1$ and we set $\partial_{CH} 1 = 0$. The differential is then extended to products of orbits using the graded Leibniz rule 
\begin{equation*}
    \partialCH (\gamma_{1}\gamma_{2}) = (\partial \gamma_{1})\gamma_{2} + (-1)^{|\gamma_{1}|}\gamma_{1}(\partial \gamma_{2})
\end{equation*}
and to sums of products linearly. 

\begin{defn}
The resulting differential graded algebra $\ker(\partialCH) / \im(\partialCH)$ is defined to be the \emph{contact homology of $\Mxi$}, denoted $CH\Mxi$. As in the case of $CC(\alpha)$, $CH\Mxi$ also has degree and $H_{1}$ gradings. We write $CH_{i, h}\Mxi$ for subspace of $CH\Mxi$ with degree $i$ and $H_{1}$ grading $h$.
\end{defn}

This theory, first proposed in \cite{EGH:SFTIntro} by Eliashberg, Givental, and Hofer has been proven to be rigorously defined and independent of all choice involved by Bao-Honda in \cite{BH:ContactDefinition} and Pardon in \cite{Pardon:Contact}. We defer to these citations for the specifics of how the coefficients $m(\gamma^{+}; \gamma^{-}_{i}) \in \Q$ are computed and details around any required perturbations of $\delbar$. For the purposes of this paper, it suffices to know that for generic $J$ adapted to the symplectization of a contact manifold
\be
\item curves which are somewhere injective may be assumed regular,
\item regularity for these curves may be achieved by perturbations of $J$ in arbitrarily small neighborhoods of the closed orbits of $R$, and
\item that assuming such regularity, the moduli space of holomorphic planes positively asymptotic to a closed, embedded orbit will be a manifold (rather than an orbifold), so that such $\ind = 1$ planes can be counted over $\Z$.
\ee

Additional algebraic structures -- which require more sophisticated underlying chain complexes -- may be constructed as follows:
\be
\item By counting $\ind=1$, genus-$0$ holomorphic curves with arbitrary numbers of positive and negative punctures via a differential $\partialRSFT$, the \emph{rational $SFT$ algebra} ($RSFT$) may be defined.
\item By counting $\ind=1$ holomorphic curves with arbitrary genus and numbers of positive and negative punctures via a differential $\partialSFT$, the \emph{$SFT$ algebra} ($SFT$) may be defined.
\ee
See \cite{EGH:SFTIntro} for a more complete picture or the exposition \cite[Lecture 12]{Wendl:SFTNotes} for further details regarding these invariants.\footnote{At the time of writing, rigorous definitions of $RSFT$ and $SFT$ are under construction using a variety of frameworks. We refer to \cite{BH:ContactDefinition, MZ:RSFT, Pardon:Contact} for accounts of the current state of the development of SFT.} For other $RSFT$-like algebraic structures associated to counts of rational curves with multiple positive punctures see \cite{MZ:RSFT} which constructs such invariants and provides an overview of recent additions to the literature. 

The lecture notes \cite{Bourgeois:ContactIntro} and Section 1.8 of \cite{Pardon:Contact} also contains a rather exhaustive list of additional structures such as grading refinements and twisted coefficient systems for contact homology. We won't address such additional structures in this article, except in the following simple situations.

\begin{prop}[Canonical $\Z$ gradings]\label{Prop:CanonicalZGrading}
The $0\in H_{1}(M)$ part of $CH\Mxi$ is a subalgebra of $CH$. Suppose that $\Gamma_{\neq 0}(\xi)$ is non-empty (equivalently $c_{1}(\xi) = 0$).
\be
\item The $\Z$-valued degree gradings $|\cdot|_{s}$ on $CC(\alpha)$ determine $\Z$-valued gradings on $CH_{\ast, 0}\Mxi$ and are independent of the choice of $s\in \Gamma_{\neq 0}(\xi)$. 
\item Moreover if $H^{1}(M)=H_{2}(M)=0$, then the $\Z$-valued degree gradings $|\cdot|_{s}$ on $CC(\alpha)$ determine $\Z$-valued gradings $CH\Mxi$ which are independent of the choice of $s\in \Gamma_{\neq 0}(\xi)$.
\ee
\end{prop}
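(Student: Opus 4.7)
The plan is to treat the two clauses in turn, using the framing-change formula \eqref{Eq:MeridianTwist} together with the index formula \eqref{Eq:DelbarCHIndex}. For clause (1), I would observe that every $\ind = 1$ curve $(t, U)$ contributing to $\partialCH \gamma^{+}$ projects via $\pi_{M} \circ U$ to a $2$-chain in $M$ cobounding $\gamma^{+} - \sum_{j}\gamma^{-}_{j}$, forcing $[\gamma^{+}] = \sum_{j}[\gamma^{-}_{j}]$ in $H_{1}(M)$. Since the $H_{1}$-grading is additive under the supercommutative product, $CC_{\ast, 0}(\alpha)$ is a $\partialCH$-invariant subalgebra of $CC(\alpha)$, so $CH_{\ast, 0}\Mxi$ is a subalgebra of $CH\Mxi$.

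For clause (2)(a), I would first establish $s$-independence of the $\Z$-grading $|\cdot|_{s}$ on $CC_{\ast, 0}(\alpha)$. Choosing a complex structure on $\xi$ compatible with $d\alpha$, any two sections $s, s' \in \Gamma_{\neq 0}(\xi)$ are related by a smooth map $s'/s : M \to S^{1}$ whose homotopy class $[s'/s] \in [M, S^{1}] = H^{1}(M; \Z)$ records the relative winding. Along an embedded orbit $\gamma$ the two framings differ by $\langle [s'/s], [\gamma]\rangle$ meridians, so by \eqref{Eq:MeridianTwist} the total degree change for a monomial $\gamma_{1}\cdots\gamma_{m}$ is $-2\langle [s'/s], \sum_{i}[\gamma_{i}]\rangle$, which vanishes on $CC_{\ast, 0}(\alpha)$. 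To show the grading descends to $CH_{\ast, 0}\Mxi$, I would use the hypothesis $c_{1}(\xi) = 0$ to extend $s$ to a global nowhere-zero section and pull it back along any holomorphic $U$; the result is a nowhere-zero section of $U^{*}\xi$ agreeing with $s$ at the punctures, so $c_{s}(U) = 0$. Equation \eqref{Eq:DelbarCHIndex} then collapses to $\ind((t, U)) = |\gamma^{+}|_{s} - \sum_{j}|\gamma^{-}_{j}|_{s}$, proving that $\partialCH$ lowers the $\Z$-degree by exactly $1$.

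For clause (2)(b), the additional hypotheses $H^{1}(M) = H_{2}(M) = 0$ eliminate the $[\gamma] = 0$ restriction. Indeed $[M, S^{1}] = H^{1}(M; \Z) = 0$ forces $[s'/s] = 0$ for every pair of nowhere-zero sections, so $|\gamma|_{s'} = |\gamma|_{s}$ holds individually for each orbit. The argument of (2)(a) producing $c_{s}(U) = 0$ and the resulting simplification of the index formula apply verbatim, so $\partialCH$ lowers the $\Z$-degree by $1$ on all of $CC(\alpha)$ and the grading descends to $CH\Mxi$. The main technical point throughout is the careful bookkeeping of the dependence on the framing: once the comparison $|\gamma|_{s'} - |\gamma|_{s} = -2\langle [s'/s],[\gamma]\rangle$ and the identity $c_{s}(U) = 0$ are established, the remaining assertions follow directly from the formulas of Section \ref{Sec:CZOverview}.
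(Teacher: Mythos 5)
Your argument matches the paper's up to a point: the closure of $CC_{\ast,0}$ under $\partialCH$ and products, the comparison $|\gamma|_{s'}-|\gamma|_{s} = -2\langle[s'-s],[\gamma]\rangle$ via maps $M\to\Circle$ and Equation \eqref{Eq:MeridianTwist}, the vanishing of $c_{s}(U)$ for a globally non-vanishing $s$, and the resulting collapse of Equation \eqref{Eq:DelbarCHIndex} so that $\partialCH$ drops the $\Z$-degree by exactly $1$ are all exactly the steps the paper takes (your observation that only the \emph{total} class $\sum_i[\gamma_i]$ needs to vanish for monomials in $CC_{\ast,0}$ is a slightly more careful phrasing than the paper's, and is welcome).

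However, there is a genuine gap: you only produce a $\Z$-grading on the homology of the single chain complex $CC(\alpha)$ for a fixed contact form and almost complex structure, whereas the proposition asserts a grading on $CH\Mxi$ (respectively $CH_{\ast,0}\Mxi$), which is an invariant of the contact manifold. To conclude, you must also check that the isomorphisms identifying the contact homologies computed from different choices of $\alpha$ and $J$ (and of the perturbation data in \cite{BH:ContactDefinition, Pardon:Contact}) respect the $\Z$-degree; otherwise the grading could a priori depend on these auxiliary choices. The paper closes this by noting that these identifications are continuation-type maps counting $\ind = 0$ curves in $\R\times M$ with an almost complex structure adapted to $\alpha$ at the negative end and to $H\alpha$ at the positive end; since $T(\R\times M)$ splits as $\Span_{\R}(\partial_t, J\partial_t)\oplus\xi$, the section $s$ extends over the cobordism to frame the orbits at both ends, the relative Chern term again vanishes, and Equation \eqref{Eq:DelbarCHIndex} forces the $\ind=0$ curves defining the isomorphism to preserve the $\Z$-grading. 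Adding this step (or an equivalent argument) is needed to make your proof complete.
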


We get canonical $\Z$ gradings on $CH$ when we have a non-degenerate Reeb vector field with only homologically trivial Reeb orbits or when $M$ is a $3$-dimensional $\Q$ homology sphere.

\begin{proof}
The fact that $CH_{\ast, 0}\Mxi$ is a subalgebra of $CH\Mxi$ is clear from the fact that $\partial_{CH}$ preserves $H_{1}$ and that $CC_{\ast, 0}$ is closed under products.

Provided $s \in \Gamma_{\neq 0}(\xi)$, extend $s$ to a trivialization $\xi \rightarrow \C$. For our extension, we may use $Js$ for an almost complex structure $J$ on $\xi$. In this way, we see that any other non-vanishing section $s'$ defines a map $M\rightarrow \C^{*} \simeq \Circle$ and recall that homotopy classes of maps to $\Circle$ are in bijective correspondence with elements of $H^{1}$ \cite[Theorem 4.57]{Hatcher:AlgebraicTopology}. Write $[s' - s] \in H^{1}$ for the cohomological element provided by this correspondence. If $\gamma$ is a closed orbit of some $\alpha$ for $\Mxi$, then $[s'-s]\cdot [\gamma]\in \Z$ equals the difference in meridians between the framings of $\xi|_{\gamma}$ determined by $s$ and $s'$. Then $\CZ_{s}(\gamma) - \CZ_{s'}(\gamma)$ will be determined by this framing difference according to Equation \eqref{Eq:MeridianTwist}.

If $[\gamma]=0\in H_{1}$, then the above tells us $\CZ_{s}(\gamma) = \CZ_{s'}(\gamma)$, so that the gradings $|\gamma|_{s}$ on $CC_{\ast, 0}$ are independent of choice of non-vanishing $s$. If $H^{1}(M) = 0$ then $s'$ is necessarily homotopic to $s$, so that all of the gradings $|\cdot|_{s}$ are equivalent on $CC_{\ast, \ast}$. As $s$ is non-vanishing, the $c_{s}$ term in Equation \eqref{Eq:DelbarCHIndex} is always $0$, meaning that $\partial$ always lowers degree $|\cdot|_{s}$ by exactly $1$ and so the $\Z$-valued degree gradings on $CC$ determines a $\Z$ grading on homology.

To complete the proof, we must show that the $\Z$ grading is independent of choices used to compute $CH$. Proofs of invariance of $CH$ (cf. as they appear in \cite{BH:ContactDefinition, Pardon:Contact}) are obtained by considering the symplectization of $(M, \alpha)$ -- for some $\alpha$ -- equipped with almost complex structures which are adapted to $\alpha$ at the negative end $(-\infty, -C]\times M$ of the symplectization and adapted to $H\alpha$ at the positive end $[C, \infty) \times M$ for some $C > 0$ and $H \in \Cinfty(M, (0, \infty))$. In such a scenario, $T (\R \times M)$ can be split as the direct sum $\Span_{\R}(\partial_{t}, J\partial_{t})\oplus \xi$, and we can extend $s$ over $\R \times M$ in the obvious way to frame Reeb orbits at both ends of $\R \times M$. The isomorphisms between the contact homologies of the ends of the cobordism is defined by counting $\ind = 0$ holomorphic curves in $\R \times M$, which by the index formula of Equation \eqref{Eq:DelbarCHIndex} must preserve the $\Z$ grading.
\end{proof}

The variant of contact homology which will be of the most interest to us is the \emph{hat version}, denoted $\widehat{CH}\Mxi$ and defined in \cite{CGHH:Sutures}. To define this theory for $\Mxi$, we can equip $\widehat{M}$ with a standard-at-infinity $\alpha$ for $\widehat{\xi} = \xi|_{\widehat{M}}$, choose an appropriately convex $J$ on $\widehat{\xi}$, and compute $CH$ as above. We describe such $J$ for the $\SurgLxi$ in Section \ref{Sec:NStandard}.\footnote{In \cite{CGHH:Sutures}, less restrictive conditions are placed on $\alpha$ and $J$ to define $\widehat{CH}$ within the framework of the more general sutured contact homology. We choose more restrictive conditions so as to simplify our exposition and avoid general discussion of sutured contact manifolds and their completions as well as to simplify $J$-convexity arguments.}

The following theorem summarizes some properties of $\widehat{CH}$ laid out in the introduction of \cite{CGHH:Sutures} (coupled with some well-known results):

\begin{thm}\label{Thm:CHHatOverview}
The invariant $\widehat{CH}\Mxi$ satisfies the following properties:
\be 
\item For the standard contact $3$-sphere $\Sthree$, $\widehat{CH}\Sthree = \Q1$.
\item If $\Mxi$ is overtwisted, then $\widehat{CH}\Mxi = 0$.
\item For a contact-connected sum $(M_{1},\xi_{1})\# (M_{2}, \xi_{2})$
\begin{equation*}
\widehat{CH}((M_{1},\xi_{1})\# (M_{2}, \xi_{2})) \simeq \widehat{CH}(M_{1},\xi_{1}) \otimes \widehat{CH}(M_{2}, \xi_{2}).   
\end{equation*}
\item The inclusion $\widehat{\Mxi} \rightarrow \Mxi$ induces an algebra homomorphism
\begin{equation*}
\widehat{CH}\Mxi \rightarrow CH\Mxi.
\end{equation*}
Consequently $CH\Mxi \neq 0$ implies $\widehat{CH}\Mxi\neq 0$.
\item A Liouville cobordism $(W, \lambda)$ with convex boundary $\widehat{(M^{+}, \xi^{+})}$ and concave boundary $\widehat{(M^{-}, \xi^{-})}$ determines an algebra homomorphism
\begin{equation*}
\Phi_{(W, \lambda)}: \widehat{CH}(M^{+}, \xi^{+}) \rightarrow \widehat{CH}(M^{-}, \xi^{-}).
\end{equation*}
\item Consequently if $\Mxi$ admits a Liouville filling then both $CH\Mxi$ and $\widehat{CH}\Mxi$ are non-zero.
\ee
\end{thm}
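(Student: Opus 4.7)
The plan is to derive all six items by combining constructions from \cite{CGHH:Sutures} with a few standard SFT computations, so I would organize the proof around the most fundamental piece --- item (5) --- and then deduce the rest. For (5), given a completed Liouville cobordism $(\overline{W}, \overline{\lambda})$ between punctured contact manifolds one chooses a standard-at-infinity, adapted almost complex structure as in Section \ref{Sec:NStandard} and defines $\Phi_{(W,\lambda)}$ by counting $\ind = 0$, genus-$0$ holomorphic curves with one positive puncture and arbitrarily many negative punctures. With (5) in hand, item (4) is induced by the cobordism obtained from the trivial symplectization of $\Mxi$ by ``capping off'' the cylindrical end of the convex boundary with a standard Darboux ball (which is Liouville), applied to (5). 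Item (6) is then immediate: a Liouville filling of $\Mxi$ is a Liouville cobordism to the empty contact manifold, for which $\widehat{CH}(\emptyset) \simeq \Q$, so the unital algebra map $\Phi_{(W,\lambda)}: \widehat{CH}\Mxi \to \Q$ sends $1 \mapsto 1$, forcing $\widehat{CH}\Mxi \neq 0$; the statement for $CH\Mxi$ then follows from (4).

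Items (1) and (2) are direct. For (1), $\widehat{\Sthree}$ is contactomorphic to $\Rthree$, whose standard Reeb field $\partial_{z}$ has no closed orbits, so the chain complex reduces to $\Q\cdot 1$ with $\partialCH \equiv 0$. For (2) I would adapt the Bishop-family argument of Eliashberg and Yau \cite{Yau:VanishingCH}: an overtwisted disk in $\Mxi$ lies in a compact region of $\widehat{M}$, and the degeneration of the associated Bishop family of holomorphic disks produces a rigid holomorphic plane in the symplectization of $\widehat{\Mxi}$ forcing $\partialCH \gamma = \pm 1$ for some Reeb orbit $\gamma$, so $\widehat{CH}\Mxi = 0$. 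An alternative surgery-theoretic proof of the vanishing is given independently in Section \ref{Sec:OTSurgery}, which provides a useful sanity check.

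Item (3) is a standard neck-stretching argument along the connect-sum $S^{2}$: upon stretching the neck in the symplectization of $\widehat{(M_{1},\xi_{1}) \# (M_{2},\xi_{2})}$, finite-energy holomorphic curves decompose into $\SFT$ buildings whose components lie over each $\widehat{(M_{i}, \xi_{i})}$, and the resulting bijection between moduli spaces descends to the claimed tensor product isomorphism. The main obstacle is genuinely analytic, lying in the compactness and gluing required to justify both the cobordism map of (5) and the neck-stretching of (3); however, both are already established in \cite{CGHH:Sutures} for the punctured (sutured with a single convex boundary) setting under standard-at-infinity hypotheses matching our setup, so my role is simply to extract and assemble the statements we need for the applications in Section \ref{Sec:Applications}.
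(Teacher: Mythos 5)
Your overall plan coincides with the paper's at the structural level: items (1)--(4) are quoted from the introduction of \cite{CGHH:Sutures}, and item (5) is obtained exactly as you say, by choosing a standard-at-infinity, adapted (``$N$-standard'') almost complex structure on the completed cobordism, counting $\ind = 0$ rational curves with one positive puncture, and importing the compactness of \cite{CGHH:Sutures}; the paper adds only that closed-manifold Liouville functoriality is already in \cite{BH:ContactDefinition, Pardon:Contact}, that the $t$-invariance of $J$ over the neck joining the punctures prevents curves from escaping, and that the abstract perturbations are localized near closed orbits so they do not disturb convexity. Your sketches of (1)--(3) are consistent with what is being cited.

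The genuine gap is in your item (6). First, a Liouville filling of the closed manifold $\Mxi$ is not a ``Liouville cobordism between punctured contact manifolds'' in the sense required by (5): the definition in Section 2 demands a region $I_{C}\times\big(\R^{2n+1}\setminus\disk^{2n+1}_{C}(0)\big)$ running through the cobordism and joining standard-at-infinity neighborhoods of the punctures of \emph{both} ends, and with empty concave end there is no such neck, no puncture to confine curves near, and no meaning assigned to $\widehat{CH}(\emptyset)\simeq\Q$ in this framework; so (5) cannot be applied to the filling without an additional construction (e.g.\ excising a suitable standard piece of the filling), which you have not supplied. Second, and decisively, your last step is logically backwards: item (4) gives a unital homomorphism $\widehat{CH}\Mxi \rightarrow CH\Mxi$, hence the implication $CH\Mxi \neq 0 \Rightarrow \widehat{CH}\Mxi \neq 0$, and nothing more; knowing the source is nonzero does not show the target is, so ``the statement for $CH\Mxi$ then follows from (4)'' is unjustified. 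The paper runs the argument in the opposite order: closed-manifold Liouville functoriality (from \cite{BH:ContactDefinition, Pardon:Contact}) applied to the filling produces a unital augmentation $CH\Mxi \rightarrow \Q$, so $CH\Mxi \neq 0$, and then (4) yields $\widehat{CH}\Mxi \neq 0$. A smaller instance of the same type mismatch occurs in your item (4), where the proposed cobordism has one closed and one punctured end and so is again not covered by (5); since the paper defers (1)--(4) entirely to \cite{CGHH:Sutures}, that lapse is less serious, but the deduction in (6) needs to be repaired as above.
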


The fifth item, which will refer to as \emph{Liouville functoriality}, does not explicitly appear in the literature for $\widehat{CH}$, though it follows from a simple combination of existing arguments and constructions. Liouville functoriality is established for closed contact manifolds in \cite{BH:ContactDefinition, Pardon:Contact}. To extend the results to punctured contact manifolds, one needs to establish $\SFT$ compactness \cite{SFTCompactness} of (possibly perturbed) moduli spaces of holomorphic curves positively asymptotic to closed orbits of a standard-at-infinity contact form on $\widehat{(M^{+}, \xi^{+})}$ and negatively asymptotic to closed orbits of a standard-at-infinity form on $\widehat{(M^{-}, \xi^{-})}$ in the completion of $(W, \lambda)$. To obtain compactness, we may restrict to almost complex structures $J$ which are $t$-invariant over the neighborhood of the puncture of the $M^{\pm}$ to ensure that sequences of curves cannot escape the completed cobordism through the horizontal boundary of the symplectization of the puncture. Our definition of Liouville cobordism between punctured contact manifolds and the $J$ of Section \ref{Sec:NStandard} ensure that these desired hypotheses are in place. Perturbations of $\delbar$ required to achieve transversality for the counting of curves and gluing of multi-level $\SFT$ buildings can be implemented in arbitrarily small neighborhoods of closed Reeb orbits \cite[Section 5]{BH:ContactDefinition}, so that such perturbations do not interfere with convexity. In this way, the compactness results of \cite[Section 5]{CGHH:Sutures} carry over without issue.

For the last item in Theorem \ref{Thm:CHHatOverview}, Liouville functoriality tells us that a Liouville filling of a closed contact manifold induces an algebra homomorphism from $CH\Mxi$ to $\Q$ (also known as an augmentation). Therefore $CH\Mxi \neq 0$ implying $\widehat{CH}\Mxi \neq 0$ by the fourth item.

\subsubsection{Relative contact homology}

We now briefly review $\SFT$ invariants of a Legendrian link $\Lambda \subset \Mxi$. For the case $\Mxi = \Rthree$, we recommend the exposition \cite{EtnyreNg:LCHSurvey} with the general theory laid out in Section 2.8 of \cite{EGH:SFTIntro}.

Provided a Legendrian link $\Lambda \subset \Mxi$ and a contact form $\alpha$ for $\xi$, consider the space of chords of $R$ which begin and end on $\Lambda$. A chord $r = r(t)$ is \emph{non-degenerate} if it satisfies the transversality condition
\begin{equation*}
    \Flow_{R}^{\action(r)}(T_{r(0)}\Lambda) \pitchfork T_{r(\action(r))}\Lambda \subset \xi_{r(\action(r))}.
\end{equation*}
We then say that the pair $(\alpha, \Lambda)$ is \emph{non-degenerate} if all chords for the pair and all closed orbits of $R$ are non-degenerate. Provided non-degeneracy, we consider a $\Z/2\Z$-graded super-commutative algebra $CC(\alpha, \Lambda)$ generated by the chords of $\Lambda$ and the good closed orbits of $R$.\footnote{We are skipping definition of the gradings of chords in the general case. See \cite{EtnyreNg:LCHSurvey,Ng:RSFT} for gradings in the case of Legendrians in $\Rthree$.} As in the non-relative case $CC(\alpha, \Lambda)$ comes with an additional homological grading, given by the relative homology classes of chords and orbits in $H_{\ast}(M, \Lambda)$.

We may then define a differential 
\begin{equation*}
\partialLCH: CC_{ i, h}(\alpha, \Lambda) \rightarrow CC_{ i-1, h}(\alpha, \Lambda) 
\end{equation*}
for $i \in \Z/2\Z$ and $h \in H_{1}(M, \Lambda)$ as follows: For a chord $r$, $\partial_{LCH}$ counts $\ind=1$ holomorphic disks in the symplectization of $\Mxi$ with
\be
\item A single boundary puncture positively asymptotic to $r$,
\item any number $m$ of boundary punctures negatively asympotic to chords $r^{-}_{i}$ of $\Lambda$,
\item $\partial \disk$ with its punctures removed mapped to the Lagrangian cylinder over $\Lambda$, and
\item $n$ interior punctures negatively asymptotic to closed orbits $\gamma^{-}_{j}$
\ee
Each such disk contributes a term of the form $m(r^{+};r^{-},\gamma^{-})r^{-}_{1}\cdots r^{-}_{m}\gamma^{-}_{1}\cdots\gamma^{-}_{n}$ to $\partial_{LCH}r^{+}$. For a closed orbit $\gamma^{+}$, the differential $\partial_{LCH}\gamma^{+}$ coincides with the contact homology differential of $\gamma^{+}$. The differential is then extended to products and sums of products using the Leibniz rule and linearity as in the case of non-relative contact homology.

\begin{defn}
The resulting differential graded algebra $\ker(\partialLCH)/\im(\partialLCH)$ is defined to be the \emph{Legendrian contact homology} of the triple $(M, \xi, \Lambda)$, denoted $LCH(M, \xi, \Lambda)$. As in the case of $CC_{\Lambda}$, $LCH$ has degree and relative $H_{1}$ gradings.
\end{defn}

The computation $\partial_{LCH}^{2} = 0$ and proof of invariance for links in $\Rthree$ -- in which case there are no closed Reeb orbits -- is carried out in \cite{EES:LegendriansInR2nPlus1} with a proof of the general case sketched in \cite{EGH:SFTIntro}. In the case $\Lambda \subset \Rthree$, a combinatorial version of $LCH$ -- originally due to Chekanov \cite{Chekanov:LCH} -- may be computed by counting immersions of disks into the $xy$-plane with boundary mapped to the Lagrangian projection of $\Lambda$, in which case $\partial_{LCH}^{2} = 0$ may be proved diagrammatically. Additional algebraic structures may derived from the triple $(M, \xi, \Lambda)$ by considering disks with multiple positive punctures as in \cite{Ekholm:Z2RSFT, Ng:RSFT}. Again, we point to \cite{EtnyreNg:LCHSurvey} for further references regarding proofs that the combinatorially and analytically defined invariants coincide for $\Rthree$ as well as extensions and generalizations of $LCH$ in both algebraic and geometric directions.

\subsection{Legendrian knots and links in $\Rthree$}\label{Sec:LegendrianOverview}

Legendrian knots and links will be denoted by $\Lambda$ with sub- and super-scripts. Throughout this article, we assume that each component of $\Lambda$ is equipped with a predetermined orientation. For a Legendrian link $\Lambda$ in a contact manifold $\Mxi$ with contact form $\alpha$ and Reeb vector field $R$
\begin{equation*}
\Flow^{\delta}_{R}(\Lambda)
\end{equation*}
for $\delta> 0$ arbitrarily small will be called the \emph{push-off of $\Lambda$}. The Legendrian isotopy class of the pair $(\Lambda, \Flow^{\delta}_{R}(\Lambda))$ is independent of $R$ and $\delta$. We write $\lambda_{\xi}$ for the Legendrian isotopy class of the push-off.

For a Legendrian link $\Lambda$ in $\Rthree$, the front- and Lagrangian projections will be denoted by $\pi_{xz}$ and $\pxy$ respectively. We will use front projections as our default starting point for analysis of $\Lambda$ from which we will transition to the Lagrangian projection -- see Section \ref{Sec:StandardNeighborhoods}. 

Assuming that $\Lambda$ has a single connected component, its \emph{classical invariants} are
\be
\item the Thurston-Bennequin number $\tb(\Lambda)$,
\item  the rotation number, $\rot(\Lambda)$, which depends on an orientation of $\Lambda$, and
\item the smooth topological knot underlying $\Lambda$.
\ee

In the Lagrangian projection, we may compute $\tb(\Lambda)$ as the writhe and $\rot(\Lambda)$ as winding number. Geometrically, the Thurston-Bennequin number is defined  as the linking number 
\begin{equation*}
\tb(\Lambda) = \lk(\Lambda, \lambda_{\xi})
\end{equation*}
whereas $\rot$ is defined as the degree of the Gauss map of $T\Lambda$ in $\xi_{std}$ with respect to a nowhere vanishing trivialization. If we replace $\R^{3}$ with any contact $\Q$ homology sphere, then $\tb$ is defined for null-homologous Legendrian knots and $\rot$ is defined for all Legendrian knots using the framings of Proposition \ref{Prop:CanonicalZGrading}. See also Definition \ref{Def:RotGeneral}.

Classical invariants of a Legendrian knot $\Lambda \subset \Rthree$ are constrained by the \emph{slice-Bennequin bound} of \cite{Rudolph}:
\begin{equation}\label{Eq:SliceBennequin}
\half(\tb(\Lambda) + | \rot(\Lambda)| + 1) \leq g_{s}(\Lambda) \leq g(\Lambda).
\end{equation}
Here $g_{s}(\Lambda)$ is the smooth slice genus of the topological knot underlying $\Lambda$ and $g(\Lambda)$ is the Seifert genus. See \cite[Section 3]{Etnyre:KnotNotes} for an overview of related results.

Let $\Lambda$ be a Legendrian knot in a contact manifold $\Mxi$. Take a cube $I_{\epsilon}^{3} \subset M, \epsilon > 0$ with coordinates $x, y, z$ such that
\begin{equation*}
\xi = \ker(\alpha_{std}), \quad \Lambda \cap I_{\epsilon}^{3} = \{ y = z = 0 \},
\end{equation*}
and $\partial_{x}$ orients $\Lambda$. Then $\Lambda$ is locally described by the left-hand side of Figure \ref{Fig:ZigZag}. The \emph{positive and negative stabilizations} of $\Lambda$, denoted $S_{+}(\Lambda)$ and $S_{-}(\Lambda)$, are defined as the Legendrian knots determined by modifying $\Lambda$ in the front projection of $I_{\epsilon}^{3}$ as described in the right-hand side of Figure \ref{Fig:ZigZag}. We say that a Legendrian knot $\Lambda$ is \emph{stabilized} if it a positive or negative stabilization of some $\Lambda' \subset \Mxi$.

\begin{figure}[h]
\begin{overpic}[scale=.4]{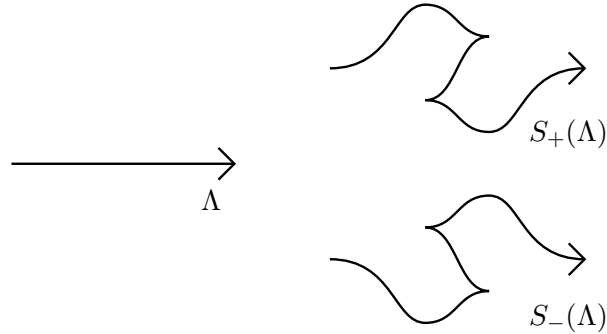}
\put(33, 20){$\Lambda$}
\put(90, 32){$S_{+}(\Lambda)$}
\put(90, 0){$S_{-}(\Lambda)$}
\end{overpic}
\caption{Positive and negative stabilizations of $\Lambda$ described in the front projection as in \cite[Figure 19]{Etnyre:KnotNotes}.}
\label{Fig:ZigZag}
\end{figure}

\subsection{Contact surgery}

Contact surgery -- first defined in \cite{DG:Surgery} -- provides a way of performing Dehn surgery on a Legendrian link $\LambdaPM$ so that the surgered manifold carries a contact structure uniquely determined by $\LambdaPM$ and $\Mxi$. We recommend Ozbagci and Stipsicz's \cite{OS:SurgeryBook} as a general reference.

We take the coefficients of the components of the sublinks $\LambdaPlus$ (resp. $\LambdaMinus$) to be $+1$ (resp. $-1$). Intuitively speaking, contact $-1$ ($+1$) surgery removes a neighborhood of a Legendrian knot of the form $I_{\epsilon} \times I_{\epsilon} \times \Circle$ -- the first coordinate being directed by $\partial_{z}$ -- and then glues it back in using a positive (negative) Dehn twist along $\{\epsilon\}\times I_{\epsilon} \times \Circle$. The construction may be formalized using the gluing theory of convex surfaces. A rigorous account of the construction will be carried out in Section \ref{Sec:ModelGeometry}. For $k\in \Z\setminus \{0\}$ one may analogously perform \emph{contact $\frac{1}{k}$ surgery} on a Legendrian knot $\Lambda$ by applying $-k$ Dehn twists as above. We will take as definition that contact $\frac{1}{k}$ surgery for $k \neq 0$ is given by performing contact $\sgn(k)$ surgery on $|k|$ parallel push-offs of $\Lambda$.

We write
\begin{equation*}
\Lambda = \LambdaPlus \cup \LambdaZero \cup \LambdaMinus \subset \Rthree
\end{equation*}
to specify a Legendrian link $\LambdaZero$ sitting inside of the contact manifold $\SurgLxi$. Since the neighborhoods of the components of $\Lambda$ defining surgery many be chosen to be disjoint from $\LambdaZero$, we may consider it to be a Legendrian link in $\SurgLxi$ post-surgery. The superscript $0$ on $\LambdaZero$ may be thought of as indicating a trivial $\frac{1}{0}=\infty$ surgery in the usual notation of Kirby calculus.

In Section \ref{Sec:SurgeryCobordisms} we will review how contact surgeries may be viewed as the result of handle attachments. We refer the reader to \cite{OS:SurgeryBook} for a review in the low-dimensional case and to \cite{SteinToWeinstein} for the general case.

\begin{thm}\label{Thm:SurgeryOverview}
We summarize some known results about contact surgery relevant to this paper:
\be
\item The contact manifold obtained by contact $+1$ surgery on the Legendrian unknot with $\tb=-1$ yields the standard fillable contact structure $\xi_{std}$ on $\Circle \times S^{2}$.
\item Applying contact $-1$ surgery on a Legendrian knot in $\Mxi$ produces the same contact manifold as is obtained by attaching a Weinstein handle to the convex boundary of the symplectization of $\Mxi$.
\item Then performing $\pm1$ surgery on a Legendrian knot $\Lambda \subset \Mxi$ followed by $\mp1$ surgery on a push-off $\lambda_{\xi}$ leaves $\Mxi$ unchanged.
\item A contact $3$-manifold is overtwisted if and only it can be described as the result of a contact $+1$ surgery along a stabilized Legendrian knot $\Lambda$ in some $\Mxi$.\footnote{One proof is obtained by proving the ``if'' statement using \cite{Ozbagci:Stabilization} and proving ``only if'' by following the proof of Theorem \ref{Thm:OTCH}. Alternatively, one can apply a handle-slide to \cite[Theorem 5.5(2)]{Avdek:ContactSurgery}.}
\item If $\Lambda \subset \Rthree$ satisfies $\tb(\Lambda) = 2g_{s}(\Lambda) -1$, then contact $\frac{1}{k}$ surgery on $\Lambda$ produces a tight contact manifold \cite{LS:TightI} for any $k \in \Z$.
\item For a Legendrian knot $\Lambda \subset \Rthree$ and an integer $k > 0$, contact $\frac{1}{k}$ surgery on $\Lambda$ produces a symplectically fillable contact $3$-manifold if and only if both $k = 1$ and $\Lambda$ bounds a Lagrangian disk in the standard symplectic $4$-disk \cite{PlusOneFilling}.
\ee
\end{thm}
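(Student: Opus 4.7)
\medskip

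The plan is that Theorem \ref{Thm:SurgeryOverview} is a compendium of results already in the literature, so the proposal is to verify each item either by recalling the short argument from the surgery calculus or by citing the stated reference. Items (1)--(3) are built into the foundations of contact surgery. For (1), the quickest route is to recognize that contact $+1$ surgery on the $\tb = -1$ unknot is inverse to contact $-1$ surgery on the same knot by item (3), and that contact $-1$ surgery on the $\tb = -1$ unknot in the standard $\Circle \times S^{2}$ recovers $\Sthree$ via Weinstein handle cancellation; equivalently, one can exhibit an explicit supporting open book for the surgered manifold with annular page and one positive Dehn twist around the core. Item (2) is Weinstein's identification of contact $-1$ surgery with critical index handle attachment along a Legendrian sphere; we will in any case revisit this in Section \ref{Sec:SurgeryCobordisms}. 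Item (3) is the Ding--Geiges cancellation lemma proved in \cite{DG:Surgery} by constructing a contact isotopy across the surgery cobordism.

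For item (4) the ``if'' direction proceeds by producing an overtwisted disk explicitly inside $\SurgLxi$: after a $+1$ surgery on a stabilized Legendrian $\Lambda = S_{\pm}(\Lambda')$, the stabilization zig-zag together with the meridional disk glued in by surgery assemble into a disk with Legendrian boundary having $\tb = 0$, which is the defining characteristic of an overtwisted disk -- this is Ozbagci's argument \cite{Ozbagci:Stabilization}. For the ``only if'' direction I would begin from the Ding--Geiges presentation \cite{DG:Surgery} of an arbitrary overtwisted $\Mxi$ as some $\SurgLxiClosed$, then deploy the handle-slide move of \cite[Theorem 5.5(2)]{Avdek:ContactSurgery} to arrange that at least one component of $\LambdaPlus$ is stabilized without changing the resulting contact manifold; as an alternative, one may read ``only if'' from the proof of Theorem \ref{Thm:OTCH} later in the paper, which exhibits an explicit surgery diagram for any overtwisted contact structure in stabilized form.

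Item (5) is a direct citation of the Lisca--Stipsicz computation \cite[Section 3]{LS:TightI}: the hypothesis $\tb(\Lambda) = 2g_{s}(\Lambda) - 1$ saturates the slice-Bennequin inequality \eqref{Eq:SliceBennequin}, and it is precisely this saturation that forces the Heegaard--Floer contact invariant of $\SurgLxiClosed$ to be non-zero under the Ozsv\'ath--Szab\'o contact surgery exact triangle, giving tightness. Item (6) is cited to Conway--Etnyre--Tosun \cite{PlusOneFilling}: the restriction $k = 1$ arises from $d$-invariant obstructions, and the Lagrangian disk condition is exactly what is needed to extend the Weinstein filling across the surgery handle in a compatible way. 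The main ``obstacle'' here is not a technical one but an expository one: items (5) and (6) are genuinely deep inputs, and the purpose of Theorem \ref{Thm:SurgeryOverview} is to package them, together with the elementary surgery moves (1)--(4), into a single reference point for the applications in Section \ref{Sec:Applications} -- in particular the proof of Theorem \ref{Thm:Trefoil}, where (5) supplies tightness and (6) would obstruct fillability.
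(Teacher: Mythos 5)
Your proposal matches the paper's treatment: Theorem \ref{Thm:SurgeryOverview} is stated there as a compendium with no proof beyond the citations and the footnote on item (4), and your sketch of (4) -- Ozbagci's overtwisted disk for ``if'', and either the proof of Theorem \ref{Thm:OTCH} or a handle slide applied to \cite[Theorem 5.5(2)]{Avdek:ContactSurgery} for ``only if'' -- is exactly the route the footnote indicates; your handling of (1)--(3) via cancellation and Weinstein handles, and of (5)--(6) as direct citations, is likewise what the paper intends.

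One correction so the slip does not propagate: in your alternative justification of item (1), the open book with annular page and a single positive Dehn twist about the core supports the standard tight $\Sthree$, not $\Circle \times S^{2}$; the standard fillable contact structure on $\Circle \times S^{2}$ is supported by the annular open book with \emph{trivial} monodromy. Your primary argument for (1) -- that $+1$ surgery on the $\tb = -1$ unknot is inverted, via item (3), by a $-1$ surgery on a push-off, which is the cancelling Weinstein $2$-handle for the subcritical handle $S^{1}\times D^{3}$ bounding $(\Circle\times S^{2},\xi_{std})$ -- is the correct one and suffices.
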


\section{Notation and algebraic data associated to chords}\label{Sec:ChordNotation}

In this section we describe notation and algebraic data associated to chords of Legendrian links which will be used throughout the remainder of the paper. We take $\Lambda \subset \Rthree$ to be a non-empty link with sub-links $\Lambda^{+}, \Lambda^{-}$, and $\Lambda^{0}$ -- any of which may be empty. We write $\Lambda^{\pm}=\LambdaPlus \cup \LambdaMinus$.

\begin{assump}
It is assumed throughout that $\Lambda$ is \emph{chord generic}, meaning that all chords of $\Lambda$ are non-degenerate and that distinct chords are disjoint as subsets of $\R^{3}$.
\end{assump}

\subsection{Surgery coefficients and chords of $\Lambda$}

It will be convenient to write $\Lambda = \cup \Lambda_{i}$ with the subscript $i$ indexing the connected components of $\Lambda$. Using this notation, we use $c_{i} \in \{-1, 0, +1\}$ to indicate that $\Lambda_{i} \subset \Lambda^{c_{i}}$.

Denote by $r_{j}$ the Reeb chords of $\Lambda$ with the contact form $\alpha_{std} = dz - y dx$, which are in one-to-one correspondence with the double points of the Lagrangian projection $\pxy$. We write $\sgn_{j} \in \{\pm 1\}$ for the sign of the crossing of $\Lambda$ in the Lagrangian projection associated with the chord $r_{j}$ in accordance with the orientation of $\Lambda$.

We define $l^{-}_{j}$ to be the index $i$ of the $\Lambda_{i}$ on which $r_{j}$ begins and $l^{+}_{j}$ to be the index of the component of $\Lambda$ on which $r_{j}$ ends. The \emph{tip} of a chord $r_{j}$ is the point $q_{j}^{+} \in \Lambda_{l^{+}_{j}}$ where the chord ends. The \emph{tail} of $r_{j}$ is the point $q^{-}_{j} \in \Lambda_{l^{-}_{j}}$ at which the chord $r_{j}$ begins. We write the surgery coefficient of the components of $\Lambda$ corresponding to $l^{\pm}_{j}$ as $c_{j}^{\pm}$. That is,
\begin{equation*}
c_{j}^{\pm} = c_{l_{j}^{\pm}}.
\end{equation*}

\subsection{Words of chords}

An ordered pair of chords $(r_{j_{1}}, r_{j_{2}})$ is \emph{composable} if $l^{+}_{j_{1}} = l^{-}_{j_{2}}$.  A \emph{word of Reeb chords for $\Lambda$} is a formal product of chords $w=r_{j_{1}}\cdots r_{j_{n}}$ which which each pair $(r_{j_{k}}, r_{j_{k+1}})$ is composable for $k=1,\cdots,n-1$. 

We say that a word of Reeb chords $r_{j_{1}}\cdots r_{j_{n}}$ is a \emph{word of chords with boundary on $\LambdaZero$} if $r_{j_{1}}$ begins on $\LambdaZero$ and $r_{j_{n}}$ ends on $\LambdaZero$ and all other endpoints of chords touch components of $\LambdaPlus \cup \LambdaMinus$.

A \emph{cyclic word of Reeb chords for $\Lambda$}, denoted $r_{j_{1}}\cdots r_{j_{n}}$, is a word of Reeb chords for which $(r_{j_{n}},r_{j_{1}})$ is composable. Cyclic permutations of cyclic words are considered to be equivalent:
\begin{equation*}
    r_{j_{1}}r_{j_{2}}\cdots r_{j_{n}} = r_{j_{2}}\cdots r_{j_{n}}r_{j_{1}}.
\end{equation*}
When speaking of cyclic words of Reeb chords on $\Lambda$, we will implicitly assume that it is a cyclic word of Reeb chords on $\LambdaPlus \cup \LambdaMinus$.

The \emph{word length} of a word $w$ of Reeb chords is the number of individual chords it contains and will be denoted $\wl(w)$. The actions of each $r_{j}$ will be denoted $\action_{j}$ and the \emph{action} of a word is defined
\begin{equation*}
    \action(r_{j_{1}}\cdots r_{j_{n}}) = \sum_{k=1}^{n}\action_{j_{k}}.
\end{equation*}

\subsection{Capping paths}

Provided a composable pair of chords $(r_{j_{1}}, r_{j_{2}})$, their \emph{capping path} is the unique embedded, oriented segment of $\Lambda_{l^{+}_{j_{1}}} = \Lambda_{l^{-}_{j_{2}}}$, traveling in the direction of the orientation of $\Lambda$ from the tip of $r_{j_{1}}$ to the tail of $r_{j_{2}}$. The capping path will be denoted $\eta_{j_{1}, j_{2}}$. 

The analogously defined path, which travels opposite the orientation of $\Lambda_{l^{+}_{j_{1}}}$ will be denoted $\overline{\eta}_{j_{1}, j_{2}}$ and called the \emph{opposite capping path}. We will use $\zeta_{j_{1}, j_{2}}$ to denote one of either $\eta_{j_{1}, j_{2}}$ or $\overline{\eta}_{j_{1},j_{2}}$. By setting $\overline{\overline{\eta}}_{j_{1},j_{2}} = \eta_{j_{1},j_{2}}$, we can define $\overline{\zeta}_{j_{1},j_{2}}$ in the obvious way.

\subsubsection{Rotation angles and numbers}

Denote by $G$ the Gauss map sending the unit tangent bundle of $\R^{2}$ to $\Circle_{2\pi}$ with 
\begin{equation*}
G(\cos(t)\partial_{x} + \sin(t)\partial_{y}) = t.
\end{equation*}
This determines a map $G_{\Lambda}: \Lambda \rightarrow \Circle_{2\pi}$ assigning to each point in $\Lambda$ the unit tangent vector at that point determined by the orientation on $\Lambda$.

For any path $\zeta: [0, 1] \rightarrow \Lambda$, we can associate an angle $\theta(\zeta)\in \R$ as follows: Composing $\zeta$ with $G_{\Lambda}$ determines a map 
\begin{equation*}
\phi = G_{\Lambda}\circ \zeta: [0, 1] \rightarrow \Circle_{2\pi}.
\end{equation*}
Denoting by $\widetilde{\phi}$ the lift of this map to $\R$, the \emph{rotation angle} of $\zeta$, denoted $\theta(\zeta)$ is defined
\begin{equation*}
    \theta(\zeta) = \widetilde{\phi}(1) - \widetilde{\phi}(0).
\end{equation*}
If $q: \Circle \rightarrow \R^{3}$ is a parameterization of a component $\Lambda_{i}$ of $\Lambda$ then the rotation angle of the associated path $[0, 1]\rightarrow \Lambda_{i}$ is $2\pi \rot(\Lambda_{i})$.

The \emph{rotation angle} of a composable pair $(r_{j_{1}}, r_{j_{2}})$, denoted $\theta_{j_{1}, j_{2}} \in \R$, will later help us to compute Conley-Zehnder indices of closed Reeb orbits. It is defined as $\theta_{j_{1}, j_{2}} = \theta(\eta_{j_{1}, j_{2}})$. We write $\overline{\theta}_{j_{1}, j_{2}}$ for the rotation angle computed with the opposite capping path $\overline{\eta}_{j_{1}, j_{2}}$ whence
\begin{equation}\label{Eq:AngleSum}
    \theta_{j_{1}, j_{2}} - \overline{\theta}_{j_{1}, j_{2}} = 2\pi\rot(\Lambda_{l^{+}_{j_{1}}}).
\end{equation}
The \emph{rotation number of a composable pair of chords $(r_{j_{1}}, r_{j_{2}})$}, denoted $\rot_{j_{1}, j_{2}}$, is defined as
\begin{equation*}
\rot_{j_{1}, j_{2}} = \left\lfloor \frac{\theta_{j_{1}, j_{2}}}{\pi} \right\rfloor \in \Z.
\end{equation*}

\subsubsection{Crossing monomials}

Now we define the \emph{crossing monomials} which will later facilitate our computations of the homology classes of Reeb orbits of the $R_{\epsilon}$. Consider a collection of variables $\mu_{i}$ indexed by the connected components $\Lambda_{i}$ of $\Lambda$.

The \emph{crossing monomial of a chord $r_{j}$}, denoted $\cross_{j}$, is defined by the equation
\begin{equation}\label{Eq:ChordCrossingMonomial}
\cross_{j} = \half \left(\left(c_{j}^{-} + \sgn_{j}\right)\mu_{l^{-}_{j}} + \left(c^{+}_{j} + \sgn_{j}\right)\mu_{l^{+}_{j}} \right) \in \bigoplus \Z \mu_{i}.
\end{equation}

\begin{figure}[h]
\begin{overpic}[scale=.8]{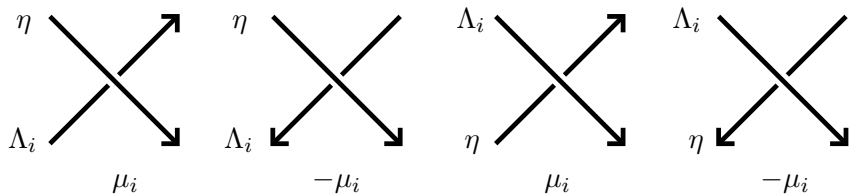}
\put(-4, 15){$\eta$}
\put(-5, 0){$\Lambda_{i}$}
\put(8, -5){$\mu_{i}$}

\put(23, 15){$\eta$}
\put(22, 0){$\Lambda_{i}$}
\put(33, -5){$-\mu_{i}$}

\put(52, 0){$\eta$}
\put(51, 15){$\Lambda_{i}$}
\put(62, -5){$\mu_{i}$}

\put(80, 0){$\eta$}
\put(78, 15){$\Lambda_{i}$}
\put(89, -5){$-\mu_{i}$}
\end{overpic}
\vspace*{.7cm}
\caption{Each subfigure gives a local picture of a crossing of a capping path $\eta$ with a component of $\Lambda$ in the Lagrangian projection. Labelings of the strands appears at the left of each subfigure, with the local contribution to the crossing number appearing below. Each subfigure may be rotated by $\pi$.}
\label{Fig:CrossingPairs}
\end{figure}

The \emph{crossing monomial of a composable pair of chords $(r_{j_{1}}, r_{j_{2}})$}, denoted $\cross_{j_{1}, j_{2}}$ is defined by the following formula:
\begin{equation*}
\cross_{j_{1}, j_{2}} = \sum_{q^{-}_{j^{-}} \in \Int(\eta_{j_{1}, j_{2}})} \sgn_{j^{-}} \mu_{l^{+}_{j^{-}}} + \sum_{q^{+}_{j^{+}} \in \Int(\eta_{j_{1}, j_{2}})} \sgn_{j^{+}} \mu_{l^{-}_{j^{+}}} \in \bigoplus \Z \mu_{i}.
\end{equation*}
The contributions are as described in Figure \ref{Fig:CrossingPairs}.

\begin{rmk}[Crossing monomials for connected $\Lambda$]\label{Rmk:CrossingConnectedLambda}
When $\Lambda$ consists of a single connected component we get a single surgery coefficient $c$ and a single $\mu$. In this case $\cross_{j}  = (c + \sgn_{j})\mu \in 2\mu\Z$ and $\cross_{j_{1}, j_{2}} = m \mu$ where $m$ is the number of times the interior capping path $\eta_{j_{1}, j_{2}}$ touches tip the tips and tails of chords, counted with signs given by the $\sgn_{j}$.
\end{rmk}

\subsection{Broken closed strings}\label{Sec:BCSandMaslov}

We temporarily work with an arbitrary contact $3$-manifold $\Mxi$ containing a Legendrian submanifold $\Lambda$. Equip $\Mxi$ with a contact form $\alpha$ and write $\kappa_{j}$ for the chords of $\Lambda$, which will be assumed non-degenerate. Words of chords with boundary on $\Lambda$ and cyclic words of chords on $\Lambda$ are defined as above in the obvious fashion.

Let $\kappa_{k}$, $k=1,\dots, n$ be a sequence of chords on $\Lambda$ and let $a_{k} \in \{\pm 1\}$. Let $\zeta_{k}$ be a collection of oriented arcs $\zeta_{k}:[0, 1]\rightarrow \Lambda$ starting at the endpoint (starting point) of $\kappa_{k}$ if $a_{k}$ is positive (negative) and ending at the starting point (ending point) of $\kappa_{k+1}$ if $a_{k+1}$ is positive (negative). Assume that the $a_{k}$ and $\zeta_{j_{k}}$ are such that
\begin{equation}\label{Eq:BrokenClosedString}
b = (a_{1}\kappa_{1})\ast \zeta_{1}\ast \cdots \ast (a_{k}\kappa_{n})\ast \zeta_{n}
\end{equation}
forms a closed, oriented loop, where $\ast$ denotes concatenation and $\pm \kappa_{k}$ is $\kappa_{k}$ parameterized with positive (negative) orientation.

\begin{defn}
We call a map $b$ as in Equation \eqref{Eq:BrokenClosedString} a \emph{broken closed string on $\Lambda$}. We call the $a_{k}$ \emph{asymptotic indicators}. We consider broken closed strings which differ by cyclic rotation of indices involved to be equivalent and say that a broken closed string is \emph{parameterized} if a fixed ordering of the indices is in use. We also consider broken closed strings which differ by homotopy of the $\zeta_{k}$ (relative to their endpoints) to be equivalent.
\end{defn}

\begin{ex}\label{Ex:BoundaryBCS}
Let $(t, U)$ be a holomorphic map from a disk with boundary punctures $\{ p_{j}\}$ removed $\disk\setminus \{p_{j}\}$ to the symplectization of $\Mxi$, with boundary punctures asymptotic to chords of $\Lambda$. Suppose that
\be
\item the $p_{j}$ are indexed according to their counterclockwise ordering along $\partial \disk$,
\item the $p_{j}$ are $a_{j}$-asymptotic to chords $\kappa_{j}$ ($a_{j} = 1$ for positively asymptotic and $a_{j}=-1$ for negatively asymptotic), and
\item $U(\partial \disk \setminus \{ p_{j}\}) \subset \Lambda$ with $\zeta_{j}$ denoting the restriction of $U$ to the component of $\partial \disk \setminus \{ p_{j}\}) \subset \Lambda$ whose oriented boundary is $p_{j+1} - p_{j}$.
\ee
With the data $a_{j}$, $\kappa_{j}$, and $\zeta_{j}$ specified by $(t, U)$ as above, Equation \eqref{Eq:BrokenClosedString} is a broken closed string on $\Lambda$. Of particular interest are broken closed strings determined by disks appearing in the $LRSFT$ differential for Legendrian links in $\Rthree$ \cite{Ng:RSFT}.

This may be generalized in the obvious way to holomorphic maps $(t, U)$ whose domain is a compact Riemann surface $(\Sigma, j)$ decorated with interior punctures (asymptotic to closed Reeb orbits) and boundary punctures (asymptotic to chords). Then any boundary component of $\Sigma$ determines a broken closed string on $\Lambda$.
\end{ex}

\begin{defn}\label{Def:HolomorphicBoundaryComponent}
A broken closed string determined by a holomorphic map as in Example \ref{Ex:BoundaryBCS} will be called a \emph{holomorphic boundary component}.
\end{defn}

We note that the $\zeta_{k}$ in the definition of a holomorphic boundary component may be constant: For example, if $(t, U)$ is a trivial strip with domain $\R \times I_{\epsilon}$ for some chord $\kappa$, consider
\begin{equation*}
b = \kappa \ast \zeta_{1} \ast (-\kappa) \ast \zeta_{2}
\end{equation*}
with $\zeta_{1}$ being a constant path with value the tip of $\kappa$ and $\zeta_{2}$ a constant path with value the tail of $\kappa$.

\begin{ex}\label{Ex:OrbitBCS}
Suppose that $\LambdaPM$ is a contact surgery diagram and let $w = r_{j_{1}}\cdots r_{j_{n}}$ be a cyclic word of composable Reeb chords on $\Lambda$. There are $2^{n}$ parameterized broken closed strings associated to this cyclic word, given by all of the ways that we may choose orientations for the capping path starting at the tip of each $r_{j_{k}}$ and ending at the tail of each $r_{j_{k+1}}$:
\begin{equation*}
\begin{aligned}
\{& r_{j_{1}}\ast \eta_{j_{1},j_{2}} \ast \cdots \ast r_{j_{n}}\ast \eta_{j_{n},j_{1}},\\
&r_{j_{1}}\ast \eta_{j_{1},j_{2}} \ast \cdots \ast r_{j_{n}}\ast \overline{\eta}_{j_{n},j_{1}},\\
&\dots\\
&r_{j_{1}}\ast \overline{\eta}_{j_{1},j_{2}} \ast \cdots \ast r_{j_{n}}\ast \eta_{j_{n},j_{1}},\\
&r_{j_{1}}\ast \overline{\eta}_{j_{1},j_{2}} \ast \cdots \ast r_{j_{n}}\ast \overline{\eta}_{j_{n},j_{1}}\}.
\end{aligned}
\end{equation*}
\end{ex}

\begin{defn}\label{Def:OrbitString}
We call each of the broken closed strings described in Example \ref{Ex:OrbitBCS} an \emph{orbit string} associated to $w$.
\end{defn}

When dealing with orbit strings, the $r_{j}$ are determined by the indices of the capping paths involved, and so will be omitted from our notation.

Note that a broken closed string on a Legendrian submanifold of dimension $n$ in a contact manifold of dimension $2n + 1$ for $n > 1$ is uniquely determined by its chords up to homotopy through broken closed strings. We will later see in Section \ref{Sec:H1Orbits} that a parameterized capping string provides instructions for homotoping a Reeb orbit of $\SurgLxi$ into the complement of a neighborhood of $\LambdaPM$ in $\R^{3}$.

\subsection{Maslov indices of broken closed strings}

Here we define Maslov indices on broken closed strings on Legendrians in contact $3$-manifolds, which are relevant to index computation of holomorphic curves. Essentially, we are packaging terminology appearing in the above subsection so as to be cleanly plugged into index computations appearing in \cite{EES:LegendriansInR2nPlus1, Ekholm:Z2RSFT}.  See Section \ref{Sec:IndexFormulae}.

We assume that $\dim(M)=3$ and that we are working with $\kappa_{k}, a_{k}, \zeta_{k}$ for $k=1,\dots,n$ as described in the previous subsection determining a broken closed string $b$ whose domain we take to be $\dom(b) = \Circle$. We remark on the case $\dim(M) > 3$ later in this subsection. Our discussion follows \cite[Section 3]{Ekholm:Z2RSFT}. We write $q^{-}_{k} \in \Lambda$ for the starting point of each $\kappa_{k}$ and $q^{+}_{k}$ for its endpoint.

We assume that $\xi$ is equipped with an adapted almost complex structure $J$ and suppose that we have a trivialization $s: \xi|_{\im(b)}\rightarrow \C$ of $\xi$ over the image of a broken closed string $b$ in $M$ which identifies the symplectic structure $d\alpha$ and complex structure $J$ on the target with the standard symplectic and complex structures on $\C$. The trivialization $s$ provides us with an identification
\begin{equation*}
    b^{*}\xi \simeq \C\times \Circle
\end{equation*}
Denote by $\mathcal{L}(\xi)\rightarrow M$ the bundle whose fiber $\mathcal{L}(\xi|_{x})\simeq \Circle_{\pi}$ over a point $x\in M$ is the space of unoriented Lagrangian subspaces -- that is unoriented real lines -- in $(\xi_{x}, d\alpha)$.\footnote{We use the circle of radius $\pi$, $\Circle_{\pi}$, rather than $\Circle_{2\pi}$ due to our ignoring the orientations of the lines involved.} Then $s$ likewise determines an identification
\begin{equation*}
    b^{*}\mathcal{L}(\xi) \simeq \Circle_{\pi}\times \Circle.
\end{equation*}
Over the subset of $\Circle$ parameterizing the $\zeta_{k}$, we have a section of this bundle determined by the unoriented Gauss map:
\begin{equation*}
    t \mapsto T_{b(t)}\Lambda \subset \xi_{b(t)}
\end{equation*}
Using $s$, this section determines a map $\phi^{G}$ over this subset to $\Circle_{\pi}$. We now describe how to extend this section over the subset of $\Circle$ parameterizing the $a_{k}\kappa_{k}$. 

For each chord $\kappa_{k}$, the time $t\in [0, \action(\kappa_{k})]$ flow of $R$ determines a path in $\SLtwoR$ by writing $\Flow_{R}^{t}(\xi_{q^{-}_{k}})$ in the standard basis of $\R^{2}$ determined by $s$. This likewise determines a section of $\mathcal{L}(\xi)$ over the chord by $\Flow_{R}^{t}(T_{q^{-}_{k}}\Lambda)$. As we've assumed that $\kappa_{k}$ is non-degenerate,
\begin{equation*}
\Flow_{R}^{\action(\kappa_{k})}(T_{q^{-}_{k}}\Lambda) \neq T_{q^{+}_{k}}\Lambda
\end{equation*}
as Lagrangian subspaces of $\xi_{q^{+}_{k}}$. In order to assign a Maslov number to $b$, we must make a correction to obtain a closed loop of Lagrangian subspaces:
\be
\item If $a_{k}=1$, then the orientation of $b$ and the chord coincide. To form a closed loop we join $\Flow_{R}^{\action(\kappa_{k})}(T_{q^{-}_{k}}\Lambda)$ to $T_{q^{+}_{k}}\Lambda$ by making the smallest possible clockwise rotation to $\Flow_{R}^{\action(\kappa_{k})}(T_{q^{-}_{k}}\Lambda)$.
\item If $a_{k}=-1$, then the orientation of $b$ and the chord disagree. To form a closed loop of Lagrangian subspaces along $b$, we start at the endpoint of the chord, follow the negative flow of $R$, and then join $\Flow_{R}^{-\action(\kappa_{k})}(T_{q^{+}_{k}}\Lambda)$ to $T_{q^{-}_{k}}\Lambda$ by making the smallest clockwise rotation possible.
\ee

Denote by $\phi_{b, s}:\Circle \rightarrow \Circle_{\pi}$ the map obtained.

\begin{defn}\label{Def:MaslovIndex}
We call the degree of the map $\phi_{b, s}$ described above the \emph{Maslov index of the broken closed string $b$ with respect to the framing $s$}, denoted $\Maslov_{s}(b) \in \Z$. It is easy to see that $\Maslov_{s}(b)$ does not depend on the cyclic ordering of its indices involved so that it is well-defined.
\end{defn}

The following easily follows from the construction of $\Maslov_{s}$.

\begin{prop}
Let $b$ be a broken closed string on $\Lambda \subset \Mxi$ with a trivialization $s$ of $\xi|_{\im(b)}$. Smooth homotopies of such trivializations $s$ leave $\Maslov_{s}(b)$ unchanged. The $\bmod_{2}$ reduction of $\Maslov_{s}(b)$ is independent of $s$, so that we may define $\Maslov_{2}(b)\in \Z/2\Z$ as an invariant of $b$. 

Now suppose that $\Gamma_{\neq 0}(\xi)$ is non-empty as in Proposition \ref{Prop:CanonicalZGrading} which clearly applies to any $\Lambda \subset \Rthree$:
\be
\item If $b$ is homologically trivial in $M$ then $\Maslov_{s}(b)$ is independent of $s \in \Gamma_{\neq 0}(\xi)$. 
\item If $H_{2}(M) = H^{1}(M) = 0$ then $\Maslov_{s}(b)$ is independent of $s \in \Gamma_{\neq 0}(\xi)$, regardless of the homotopy class of $b$ in $M$.
\ee
\end{prop}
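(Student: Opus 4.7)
The plan is to mirror the framing-change argument used in Proposition~\ref{Prop:CanonicalZGrading}, replacing Conley-Zehnder indices of symplectic paths with Maslov indices of the loop of unoriented Lagrangian subspaces defining $\phi_{b, s}$. First, for the homotopy invariance statement, I would note that a smooth homotopy $s_{\tau}$ of trivializations of $\xi|_{\im(b)}$ produces a continuous family of maps $\phi_{b, s_{\tau}}: \Circle \rightarrow \Circle_{\pi}$, and the degree of such a map is an integer-valued homotopy invariant. The boundary corrections made at each chord endpoint (the smallest clockwise rotation required to connect $\Flow_{R}^{\pm \action(\kappa_{k})}$ of the tangent line at the start to the tangent line at the end) vary continuously with $s$, so they do not interfere with this continuity.

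Next, for the $\bmod_{2}$ statement, given two trivializations $s, s'$ of $\xi|_{\im(b)}$, their comparison is a map $u: \Circle \rightarrow \U(1) \simeq \Circle_{2\pi}$ of some winding number $w(s, s') \in \Z$. A rotation by $\theta \in \Circle_{2\pi}$ in the fiber of $\xi$ acts on the Lagrangian Grassmannian $\Circle_{\pi}$ as rotation by $\theta \bmod \pi$, so the induced loop in $\Circle_{\pi}$ has winding $2 w(s, s')$ via the double cover $\Circle_{2\pi} \rightarrow \Circle_{\pi}$. Consequently, changing trivialization shifts the Maslov index by
\begin{equation*}
\Maslov_{s'}(b) - \Maslov_{s}(b) = -2 \, w(s, s') \in 2\Z,
\end{equation*}
which is the Maslov analogue of Equation~\eqref{Eq:MeridianTwist}. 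This establishes the $\bmod_{2}$ invariance and thereby defines $\Maslov_{2}(b) \in \Z/2\Z$.

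For the two statements concerning $\Gamma_{\neq 0}(\xi)$, I would copy the setup from the proof of Proposition~\ref{Prop:CanonicalZGrading} verbatim: extend any $s \in \Gamma_{\neq 0}(\xi)$ to a complex trivialization of $\xi$ via $Js$, so that any other $s' \in \Gamma_{\neq 0}(\xi)$ corresponds to a map $M \rightarrow \C^{*} \simeq \Circle$ whose homotopy class is represented by an element $[s' - s] \in [M, \Circle] = H^{1}(M; \Z)$. Evaluated on the loop $b$, the winding number $w(s, s')$ along $\im(b)$ equals the pairing $\langle [s' - s], [b] \rangle$ where $[b] \in H_{1}(M; \Z)$. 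Combined with the framing-change formula above,
\begin{equation*}
\Maslov_{s'}(b) - \Maslov_{s}(b) = -2 \, \langle [s' - s], [b] \rangle .
\end{equation*}
If $[b] = 0 \in H_{1}(M; \Z)$, this difference vanishes for every $s, s' \in \Gamma_{\neq 0}(\xi)$, proving the first claim. If instead $H^{1}(M; \Z) = 0$, every map $M \rightarrow \Circle$ is null-homotopic, so $s'/s$ is null-homotopic and thus $s'$ and $s$ are homotopic through non-vanishing sections; the first item then yields $\Maslov_{s}(b) = \Maslov_{s'}(b)$ for any $b$.

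The main obstacle, though modest, is verifying the factor of $2$ in the framing-change formula carefully. The subtlety is that we use a full complex trivialization of $\xi$ (fibers modeled on $\U(1) = \Circle_{2\pi}$) while $\Maslov_{s}$ is read off from the induced trivialization of the bundle $\mathcal{L}(\xi)$ of unoriented Lagrangian subspaces (fibers $\Circle_{\pi}$). The degree doubling under the quotient $\Circle_{2\pi} \rightarrow \Circle_{\pi}$ by the antipodal action, which encodes the fact that Lagrangian lines are unoriented, is exactly what drives both the $\bmod_{2}$ invariance and the pairing formula with $H^{1}$.
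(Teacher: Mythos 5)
Your proof is correct and follows exactly the route the paper intends: it asserts the proposition ``easily follows from the construction of $\Maslov_{s}$'' with no written proof, the expected argument being precisely your adaptation of the framing-change computation from Proposition \ref{Prop:CanonicalZGrading}, with the factor of $2$ coming from the degree-doubling of $\Circle_{2\pi} \rightarrow \Circle_{\pi}$ for unoriented Lagrangian lines. Only a cosmetic remark: in the $H^{1}(M)=0$ case the final step is an appeal to the homotopy-invariance statement (or to the vanishing of the pairing $\langle [s'-s],[b]\rangle$), not to enumerated item (1), so adjust that attribution.
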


\subsection{Generalizations and comparison with existing conventions}

\subsubsection{Generalized crossing signs and Maslov indices}

Crossing signs generalize to $n$-dimensional Legendrian submanifolds inside contact manifolds of dimension $2n+1$ as follows. As above, consider a generic chord $\kappa$ on an oriented Legendrian submanifold $\Lambda \subset \Mxi$ parameterized by an interval $[0, a]$ given by the flow of some $R$. Then we may define $\sgn(\kappa)$ by
\begin{equation*}
\big( \wedge^{n}T_{\kappa(a)}\Lambda \big) \wedge \big(\wedge^{n}\Flow^{a}_{R}(T_{\kappa(0)}\Lambda) \big) = \sgn(\kappa) \big( \wedge^{2n}\xi_{\kappa(a)} \big)
\end{equation*}
as an orientation on $\xi_{\kappa(a)}$. Note that $\sgn(\kappa)$ is independent of the orientation of $\Lambda$ if and only if $\Lambda$ is connected. However the product of $\sgn$ over the chords appearing in a broken closed string is always independent of choice of orientation.

We also briefly address generalizations of the Maslov index to higher dimensions. Provided a contact manifold $\Mxi$ of dimension $2n+1$, we write $\mathcal{L}(2n) = U(n)/O(n)$ for the space of (unoriented) Lagrangian planes in the standard symplectic vector space and define the bundle
\begin{equation*}
\mathcal{L}(2n) \hookrightarrow \mathcal{L}(\xi) \twoheadrightarrow M    
\end{equation*}
as above without modification. Provided a trivialization
\begin{equation*}
    s: b^{*}(\xi) \rightarrow \C^{n}\times \Circle
\end{equation*}
we can view the sections of $b^{\ast}\mathcal{L}(\xi)$ as maps from $\Circle$ to $U(n)/O(n)$, in which case $\Maslov_{s}(b)$ may be defined and computed as the usual Maslov index of loops in the Lagrangian Grassmanian. See, for example \cite[Theorem 2.35]{MS:SymplecticIntro}. The required ``clockwise rotation'' correction in arbitrary dimensions is described by the paths $\mathrm{f}_{j}(s)$ appearing in Section 5.9 of \cite{EES:LegendriansInR2nPlus1}.

\subsubsection{Conventions for capping paths}
We briefly address how our conventions for capping paths and rotation angles differ from those used to construct gradings in Legendrian contact homology. See, for example, the exposition \cite[Section 3.1]{EtnyreNg:LCHSurvey}. Assume that $\Lambda \subset \Rthree$ consists of a single component and has a designated basepoint $\ast$ not coinciding with the tip or tail of any chord.

For a chord $r_{j}$, exactly one of $\eta_{j, j}$ or $\overline{\eta}_{j, j}$ will pass through $\ast$. Denoting by $\phi_{j}$ the rotation angle of the path not passing through $\ast$, the $LCH$ grading is defined -- by a slight manipulation of conventional notation -- as
\begin{equation*}
    |r_{j}| = \left\lfloor \frac{\phi_{j}}{\pi} \right\rfloor - 1
\end{equation*}
This is very similar to our computation of rotation numbers except that
\be
\item knots along which we are performing surgery do not have basepoints,
\item our capping paths do not necessarily begin and end at endpoints of the same chord, and
\item our capping paths follow the orientation of $\Lambda$ by default.
\ee
We will see that our conventions for computation arises naturally when computing Conley-Zender indices of Reeb orbits of the $R_{\epsilon}$ using the framing construction of Section \ref{Sec:Framing}. This convention is also convenient as it will simplify the statements of homology classes of closed Reeb orbits in Section \ref{Sec:Homology}.

Our framing construction can be modified so as naturally lead to computations of rotation angles using basepoints as in $LCH$. See Remark \ref{Rmk:MuToLambdaFraming}. By Equation \eqref{Eq:AngleSum}, if $\rot(\Lambda) = 0$ then our computation of rotation angles coincide when the endpoints of a capping path lie over the same chord:
\begin{equation*}
    \theta_{j, j} = \overline{\theta}_{j, j} = \phi_{j}.
\end{equation*}

\subsubsection{Conventions for broken closed strings}
In \cite[Definition 3.1]{Ng:RSFT}, broken closed strings have discontinuities at Reeb chords, whereas our broken closed strings are continuous maps. We have chosen to define broken closed strings to include the data of the chords in question, so as reduce ambiguity when discussing chords on Legendrians contained in surgered contact manifolds $\SurgLxi$.

\section{Model geometry for Legendrian links and contact surgery}\label{Sec:ModelGeometry}

In this section we construct neighborhoods of Legendrian links and then perform contact surgery on $\LambdaPM$ using these neighborhoods to describe the contact manifolds $\SurgLxi$ and the contact forms $\alpha_{\epsilon}$.

Our strategy is to develop highly specialized models for the objects involved in contact surgery, determining Reeb vector fields on surgered contact manifolds which are linear in a way which will be made precise in Section \ref{Sec:OverlappingRectangles}. The main benefits of this approach are that the proofs of the following will be considerably simplified:
\be 
\item the chord-to-orbit (Theorem \ref{Thm:ChordOrbitCorrespondence}) and chord-to-chord (Theorem \ref{Thm:ChordsToChords}) correspondences.
\item Conley-Zehnder index (\ref{Thm:IntegralCZ}) and Maslov index (Theorem \ref{Thm:MaslovComputation}) computations.
\ee
We will also be able to determine the embeddings of simple closed orbits in surgered manifolds as fixed points of explicitly defined affine endomorphisms of $\R^{2}$ (Section \ref{Sec:Embeddings}). While we don't pursue computation in this paper, we anticipate this being of utility in future applications.

The primary disadvantage to our contact forms being so specialized is that surgery cobordisms between the $\SurgLxi$ will be less explicitly defined and will require greater effort in their construction (Section \ref{Sec:SurgeryCobordisms}). Furthermore, we will be imposing restrictions on the Lagrangian projections of Legendrian links in the style of \cite{Ng:ComputableInvariants}, so that our analysis -- which is applicable to all Legendrian \emph{isotopy classes} $\LambdaPM$ -- will not be applicable to all chord generic Legendrian links in $\Rthree$.

\begin{rmk}
Our approach to contact surgery is quite similar to that of Foulon and Hasselblatt in \cite{FH:Anosov}. In that paper surgery is defined using a model Dehn twist as in our Section \ref{Sec:ModelDehnTwists}. 

In \cite{BEE:LegendrianSurgery, Ekholm:SurgeryCurves} Bourgeois, Ekholm, and Eliashberg describe surgeries as the result of critical-index Weinstein handle attachments and then study the resulting Reeb dynamics. This contrasts with our approach in that we will first describe our contact forms $\alpha_{\epsilon}$ and then build specialized Weinstein handles have the $\alpha_{\epsilon}$ as the restriction of their Liouville form  to their contact boundaries.

The approaches to contact surgery in this article, \cite{FH:Anosov}, and \cite{BEE:LegendrianSurgery, Ekholm:SurgeryCurves} all have at least one feature in common: Shrinking the size of the surgery locus is used to control Reeb dynamics.
\end{rmk}

\subsection{Almost complex structures, metrics, and the Gauss map}

We will want our Legendrians and their neighborhoods to interact nicely with an almost complex structures $J_{0}$ and a metric $g_{\R^{3}}$ which we now describe.

Define vector fields $X, Y \in \Gamma(\xi_{std})$ by lifting the derivatives of the usual coordinates:
\begin{equation*}
    X = \partial_{x} + y\partial_{z},\quad Y=\partial_{y}.
\end{equation*}
We define a complex structure $J_{0}$ on $\xi_{std}$ as the lift of the usual complex structure on $\R^{2} = \C$:
\begin{equation}\label{Eq:Jstd}
    J_{0}X = Y,\quad J_{0}Y = -X.
\end{equation}
This determines an almost complex structure adapted to the symplectization $(\R\times\R^{3}, e^{t}\alpha_{std})$ which we'll also call $J_{0}$, defined
\begin{equation*}
J_{0}\partial_{t} = \partial_{z},\quad J_{0}\partial_{z} = -\partial_{t}.
\end{equation*}
This almost complex structure determines a $J_{0}$-invariant metric $g_{\R^{3}}$ on $\R^{3}$, defined
\begin{equation*}
g_{\R^{3}}(u, v) = \alpha(u)\alpha(v) + d\alpha(\pi_{\alpha} u, J_{0}\pi_{\alpha} v), \quad \pi_{\alpha}(u) = u - \alpha(u)\partial_{z} \in \xi_{std}.
\end{equation*}
The metric yields a simple formula for the lengths of vectors in $\xi_{std}$:
\begin{equation}\label{Eq:LegVectorLengths}
Z = aX + bY \in \xi_{std} \quad \implies \quad |Z| = \sqrt{a^{2} + b^{2}}.
\end{equation}

\subsection{Good position and Lagrangian resolution}\label{Sec:LagrangianResolution}

\begin{defn}\label{Def:GoodPosition}
We say that a Legendrian link $\Lambda \subset \R^{3}$ is in \emph{good position} if it is chord generic and for each double point $(x_{0},y_{0})\in\R^{2}$ of its Lagrangian projection $\pxy(\Lambda)$ there exists a neighborhood within which
\be
\item the over-crossing arc admits a parameterization satisfying $(x, y)(q) = (x_{0} + q, y_{0} - q)$ and
\item the under-crossing arc admits a parameterization satisfying $(x, y)(q) = (x_{0} + q, y_{0} + q)$
\ee
\end{defn}

Good position guarantees that the Gauss map of a parameterization of $\Lambda$ evaluates to $\frac{3\pi}{4}$ or $\frac{7\pi}{4}$ near an over-crossing and to $\frac{\pi}{4}$ or $\frac{5\pi}{4}$ near an under-crossing.\footnote{In \cite{BEE:LegendrianSurgery}, it is presumed that the tangent map of Reeb flow along a chord $r$ sends $T_{r(0)}\Lambda \subset \xi$ to the subspace $JT_{r(a)}\Lambda$, which is achieved by an appropriate choice of almost complex structure on the contact hyperplane of the manifold containing $\Lambda$. In our case, this is achieved by assuming that $\Lambda$ is in good position. We will see in the proof of Theorem \ref{Thm:IntegralCZ} that our analysis is contingent upon this assumption. Similarly precise perturbations of Legendrian submanifolds near endpoints of chords appear in \cite{EES:LegendriansInR2nPlus1} for the purpose of guaranteeing transversality of moduli spaces used to compute differentials for the contact homology of Legendrians in $(\R^{2n+1}, \xi_{std})$.} Likewise, the condition ensures that capping paths of composable pairs of chords satisfy
\begin{equation*}
\theta_{j_{1}, j_{2}} \bmod_{2\pi} \in \left\{ \frac{\pi}{2}, \frac{3\pi}{2} \right\}.    
\end{equation*}

\begin{figure}[h]\begin{overpic}[scale=.6]{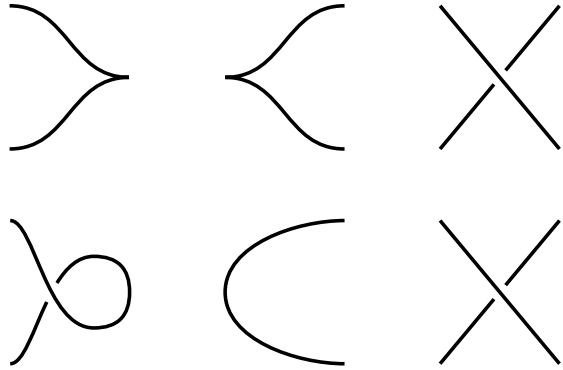}
\end{overpic}
\caption{The first row of subfigures shows segments of a Legendrian link appearing in the front projection. Directly below each subfigure is how it appears in the Lagrangian resolution.}
\label{Fig:LagrangianResolution}
\end{figure}

\begin{prop}\label{Prop:GoodAndUnitSpeed}
Provided a front projection of a Legendrian link $\Lambda$, we may perform a Legendrian isotopy so that the following properties are satisfied:
\be
\item $\Lambda$ is in good position.
\item The Lagrangian diagram is obtained by resolving singularities of the front as depicted in Figure \ref{Fig:LagrangianResolution}.
\item The arc length of each connected component of $\Lambda$ with respect to $g_{\R^{3}}$ is $1$.
\ee
\end{prop}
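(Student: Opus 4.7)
The plan is to establish the three conditions by a sequence of Legendrian isotopies, each preserving the conditions achieved in prior steps.

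First, I would pass from the given front projection to a Lagrangian projection via the standard resolution procedure of Ng \cite{Ng:ComputableInvariants}, whose effect on cusps and crossings is the one recorded in Figure \ref{Fig:LagrangianResolution}. The resolution is realized by a Legendrian isotopy, renders $\Lambda$ chord generic, and enforces the correspondence between over/under-crossings in $\pxy(\Lambda)$ and the strands of larger/smaller slope $dz/dx$ in the front. Second, to arrange good position, I would modify $\Lambda$ inside a small disjoint disk around each double point $(x_{0},y_{0})$ of $\pxy(\Lambda)$ by reparameterizing the two local branches as $q\mapsto(x_{0}+q,y_{0}-q)$ (over-strand) and $q\mapsto(x_{0}+q,y_{0}+q)$ (under-strand), then Legendrian lifting by $z=\int y\,dx$ with the constant of integration chosen to match the $z$-heights of the surrounding Legendrian. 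Smooth interpolation to the unmodified complement gives a $C^{1}$-small Legendrian perturbation which does not introduce or destroy crossings, so the Lagrangian resolution of Step~1 is preserved.

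Third, I would rescale to arrange unit arc length. From Equation \eqref{Eq:LegVectorLengths} and the Legendrian relation $\dot z = y\dot x$, the $g_{\R^{3}}$-length of a Legendrian arc equals the Euclidean length of its image under $\pxy$. The contact dilations $(x,y,z)\mapsto (\lambda x,\lambda y,\lambda^{2}z)$ preserve $\xi_{std}$, scale this length by $|\lambda|$, and preserve tangent slopes in $\pxy$, so they preserve both the local forms of Step~1 and the good position of Step~2. To carry out independent rescalings of the separate components of $\Lambda$, I would first isotope $\Lambda$ so that the distinct components lie in pairwise disjoint cubes, apply the appropriate dilation inside each cube via a compactly supported contact isotopy, and then isotope back to a standard configuration.

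The main obstacle is Step~2: the local modification near a double point must patch smoothly to the ambient Legendrian without creating or resolving crossings elsewhere. This reduces to the standard fact that any smooth curve in the $(x,y)$-plane admits a Legendrian lift determined up to an additive $z$-constant, so prescribing $C^{1}$-boundary data on a small disk forces the interior lift to match the exterior $z$-heights; shrinking the supports until they are mutually disjoint and disjoint from all other crossings of $\pxy(\Lambda)$ then guarantees that simultaneous modification at every double point is a valid Legendrian isotopy. A secondary, but minor, subtlety is that the contact dilation used in Step~3 scales $z$-coordinates nonlinearly, so the cube-separation must be large enough that the dilations can be glued to the identity outside without reintroducing intersections; this is arranged by performing Step~3 at a sufficiently small scale to keep the local normal form of Step~2 intact.
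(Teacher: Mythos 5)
Your Steps 1 and 2 match the paper's first step (the paper arranges good position inside Ng's resolution procedure by controlling $\frac{\partial z}{\partial x}$ near the relevant strands, which is essentially your local-lift argument), but Step 3 has a genuine gap. To give the components independent rescalings you propose to isotope the components of $\Lambda$ into pairwise disjoint cubes, dilate inside each cube, and ``isotope back.'' For a link whose components are nontrivially linked --- which is the typical case for the surgery diagrams the proposition is meant to serve --- the components simply cannot be placed in disjoint balls, so this separation step is impossible. Even for split links the scheme does not close up: arc length with respect to $g_{\R^{3}}$ is not an isotopy invariant, so the final ``isotope back to a standard configuration'' (needed to restore the crossings between different components and hence the prescribed Lagrangian resolution) will generically destroy the normalization $\ell = 1$ you just arranged, and may also destroy good position. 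So unit arc length for every component is not actually achieved.

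The paper sidesteps this entirely. It applies a single global dilation $\phi_{\rho}(x,y,z)=(\rho x,\rho y,\rho^{2}z)$ to the whole link, which preserves good position (it preserves angles in $\xi_{std}$) and the Lagrangian diagram up to scaling, choosing $\rho$ so that every component has length $\leq 1$ (exactly $1$ if $\Lambda$ is connected). Then, for a disconnected link, it \emph{increases} each component's length back up to exactly $1$ by a $C^{0}$-small modification supported on an arc away from all crossings: add $T\tilde{y}$ to the $y$-coordinate with $\int\tilde{y}\,dx=0$, $\sup|\tilde{y}|$ small and $\int|\partial\tilde{y}/\partial x|\,dx$ large, lifting by $z\mapsto z+T\int\tilde{y}\,dx$. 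This creates no new double points, leaves neighborhoods of crossings untouched, and by continuity in $T$ hits length exactly $1$. The moral difference is that lengthening a single component locally is cheap, whereas shrinking components independently (your route) requires moving them apart, which is obstructed by linking; if you replace your Step 3 with a global shrink followed by local lengthening, the rest of your argument goes through.
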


\begin{proof}
The proof proceeds in three steps. The first step establishes the first two desired properties of $\Lambda$. This step is essentially the proof of Proposition 2.2 from \cite{Ng:ComputableInvariants} and so we will omit the details. The only modification required to ensure a link is in good position after Legendrian isotopy is to control $\frac{\partial z}{\partial x}$ of a parameterization of $\Lambda$ near the right-pointing cusps and what are called ``exceptional segments'' in that proof. In particular, $\frac{\partial z}{\partial x}$ can be made quadratic with highest-order coefficient $\half$ ($-\half$) on neighborhoods of the positive (negative) endpoints of chords.

In our second step, we modify $\Lambda$ so that the arc length of each component is arbitrarily small while maintaining our desired conditions on the Lagrangian projection. For $\rho > 0$, consider the linear transformation $\phi_{\rho}$ of $\R^{3}$, defined $\phi_{\rho}( x, y, z) = (\rho x, \rho y, \rho^{2}z)$. Then $\phi_{\rho}^{\ast}\alpha_{std} = \rho^{2}\alpha_{std}$ so that each $\phi_{\rho}$ is a contact transformation. The map $\phi_{\rho}$ also has the following useful properties:
\be
\item It preserves the angles of vectors in $\xi_{std}$.
\item If $\Lambda_{i}$ is a Legendrian curve with arc-length $\ell$, then $\phi_{\rho}(\Lambda_{i})$ has arc-length $\rho \ell$.
\ee
Take the family of Legendrians $\phi_{e^{-T}}(\Lambda)$, $T \in [0, T_{0}]$ with $T_{0}$ large enough so that each connected component of $\phi_{e^{-T_{0}}}(\Lambda)$ will have arc length $\leq 1$. This interpolation between $\Lambda$ and $\phi_{e^{-T_{0}}}(\Lambda)$ determines a $1$-parameter family of Legendrian submanifolds and so may be realized by a Legendrian isotopy.

In the case that $\Lambda$ connected, we choose $T_{0}$ so that the arc-length is exactly equal to $1$ after the isotopy and conclude the proof. When $\Lambda$ is disconnected a final, third step is required. In this step we increase the arc-lengths of the connected components of $\Lambda$ so that they are all $1$ while preserving good position and the smooth isotopy type of the Lagrangian projection.

\begin{figure}[h]\begin{overpic}[scale=.8]{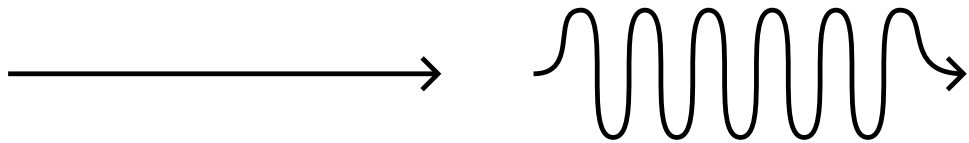}
\end{overpic}
\caption{Locally modifying a Legendrian in the Lagrangian projection by a rapidly oscillating function $\tilde{y}$ to increase its arc length.}
\label{Fig:Wiggle}
\end{figure}

We demonstrate how to increase arc-lengths so as to achieve the desired result. Consider a segment of $\Lambda_{i}$ along which the $x$-derivative is non-zero, parameterized via the $x$ variable $x \mapsto (x, y(x), z(x))$ with $x \in [-\delta, \delta]$ for an arbitrarily small positive constant $\delta$. We assume that the Lagrangian projection of the segment does not touch any double points. Let $\tilde{y} \in \Cinfty([-\delta, \delta])$ be a function with compact support contained in $(-\delta, \delta)$ and for which $\int_{-\delta}^{\delta} \tilde{y}dx = 0$. Consider perturbations $\Lambda_{i, T}$ of $\Lambda_{i}$ parameterized by $T \in [0, 1]$ which modify $\Lambda_{i}$ along our segment to take the form
\begin{equation*}
x \mapsto \left(x, y + T\tilde{y}, z + T\int_{-\delta}^{x}\tilde{y}dx\right).
\end{equation*}
The vanishing of the integral of $\tilde{y}$ ensures that the $z$-values at the endpoints of the segment are unaffected by the perturbation. By making $\sup \tilde{y}$ small and $\int |\frac{\partial \tilde{y}}{\partial x}|dx$ very large, we can ensure that for $T \in [0, 1]$ our perturbations introduce no new double points in the Lagrangian projection and that $\Lambda_{i, T}$ has arc-length as large as we like, say $2$ when $T=1$. See Figure \ref{Fig:Wiggle}. Hence for some $T_{0} \in [0, 1]$ the arc length of $\Lambda_{i, T_{0}}$ will be exactly $1$.

Apply such perturbations to each connected component of $\Lambda$ so that no new double points are created and neighborhoods of double points are unaffected. Each perturbation is realizable by a Legendrian isotopy. Thus we have obtained a Legendrian isotopy of $\Lambda$ having all of the desired properties.
\end{proof}

\begin{figure}[h]\begin{overpic}[scale=.8]{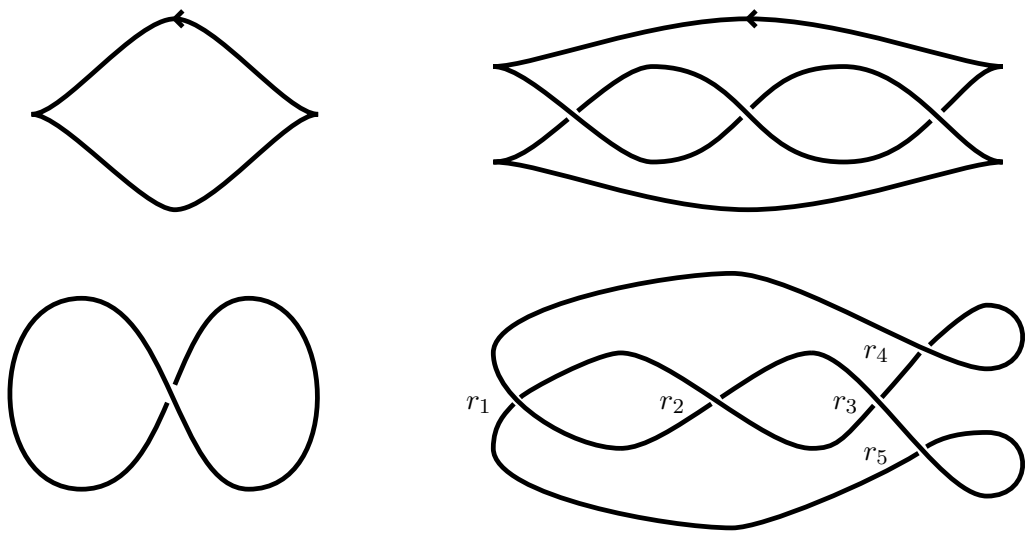}
\put(45, 12){$r_{1}$}
\put(64, 12){$r_{2}$}
\put(81, 12){$r_{3}$}
\put(84, 17){$r_{4}$}
\put(84, 7){$r_{5}$}
\end{overpic}
\caption{The left column shows Legendrian $\tb=-1$ unknot in the front and Lagrangian projections. A right-handed trefoil knot with $\tb = 1$ and $\rot=0$ is shown in the front and Legendrian projections on the right. The Reeb chords of the Lagrangian projection of the trefoil are labeled $r_{i}$.}
\label{Fig:LagrangianResolutionEx}
\end{figure}

Provided $\Lambda$ as a front projection diagram, we call the Lagrangian projection of (an isotopic copy of) $\Lambda$ obtained as in the above proposition the \emph{Lagrangian resolution} of the front diagram. Figure \ref{Fig:LagrangianResolutionEx} displays Lagrangian resolutions of an unknot and a trefoil. Following \cite{Ng:ComputableInvariants}, we say that a front projection of a Legendrian link $\Lambda$ is \emph{nice} if there exists some $x_{0} \in \R$ for which all right-pointing cusps have $x$-value $x_{0}$. Its not difficult to see that any $\Lambda$ can be isotoped to have a nice front projection.

\subsection{Conventions for link diagrams}

We will not concern ourselves with specific requirements of good position or arc-length when drawing Legendrian links in the Lagrangian projection and consider such a diagram to be valid if it recovers the Lagrangian projection of a Legendrian link after an isotopy of the $xy$-plane. In particular, we will not take care to ensure that angles at crossings are precise or that the components of $\R^{2}\setminus \pxy(\Lambda)$ satisfy the area requirements of \cite[Section 2]{Etnyre:KnotNotes}.

Throughout, Legendrian knots with surgery coefficient $+1$ will be colored blue and knots with surgery coefficient $-1$ will be colored red. If the coefficient of a knot is not already determined or the knot corresponds to a component of $\LambdaZero$, it will be colored black.

\subsection{Standard neighborhoods}\label{Sec:StandardNeighborhoods}

Before stating the properties we will want our neighborhoods of $\Lambda$ to have, we will create model neighborhoods near under- and over-crossings of chords. The neighborhood construction is completed in Proposition \ref{Prop:NeighborhoodConstruction}.

\subsubsection{Model neighborhoods near endpoints of chords}

Here we describe a construction of a neighborhood of $\Lambda$ along the arcs described in Definition \ref{Def:GoodPosition}. We can reparameterize the arcs to have unit speed so that they take the form
\begin{equation*}
q \mapsto (x_{0} + \frac{q}{\sqrt{2}}, y_{0} - \frac{q}{\sqrt{2}}, z_{0} + \frac{y_{0}q}{\sqrt{2}} - \frac{q^{2}}{4})
\end{equation*}
near an over-crossing and
\begin{equation*}
q \mapsto (x_{0} + \frac{q}{\sqrt{2}}, y_{0} + \frac{q}{\sqrt{2}}, z_{0} + \frac{y_{0}q}{\sqrt{2}} + \frac{q^{2}}{4})
\end{equation*}
along an under-crossing. For $\epsilon > 0$ sufficiently small, we extend these embeddings to embeddings of $I_{\epsilon}\times I_{\epsilon} \times I_{2\epsilon}$ into $\R^{3}$ using coordinates $(z, p, q)$. Near an over-crossing, this embedding takes the form
\begin{equation}\label{Eq:OvercrossingNeighborhoodModel}
\Phi^{+}_{x_{0},y_{0},z_{0}}(z, p, q)= (x_{0} - \frac{p}{\sqrt{2}} + \frac{q}{\sqrt{2}}, y_{0} - \frac{p}{\sqrt{2}} - \frac{q}{\sqrt{2}}, z_{0} + z + y_{0}\frac{q-p}{\sqrt{2}} + \frac{p^{2}}{4} + \frac{pq}{2} - \frac{q^{2}}{4}).
\end{equation}
Near an under-crossing arc this takes the form
\begin{equation}\label{Eq:UndercrossingNeighborhoodModel}
\Phi^{-}_{x_{0},y_{0},z_{0}}(z, p, q)= (x_{0} + \frac{p}{\sqrt{2}} + \frac{q}{\sqrt{2}}, y_{0} - \frac{p}{\sqrt{2}} + \frac{q}{\sqrt{2}}, z_{0} + z + y_{0}\frac{p+q}{\sqrt{2}} - \frac{p^{2}}{4} + \frac{pq}{2} + \frac{q^{2}}{4}).
\end{equation}

\begin{properties}\label{Properties:CrossingNeighborhoods}
The following properties are satisfied by the $\Phi^{\pm}_{x_{0},y_{0},z_{0}}$:
\be
\item $\Phi^{\pm}_{x_{0},y_{0},z_{0}}(0, 0, q)$ provides a parameterization of $\Lambda$ with unit speed.
\item $(\Phi^{\pm}_{x_{0},y_{0},z_{0}})^{*}\alpha_{std} = dz + p dq$.
\item With respect to the basis $P=\partial_{p}, Q=\partial_{q}-p\partial_{z}$, $J_{0} = \left( \begin{smallmatrix}
0 & -1 \\ 1 & 0
\end{smallmatrix}\right)$.
\item $\pxy\circ \Phi^{\pm}_{x_{0}, y_{0}, z_{0}}$ is an affine map.
\item The images of $\pxy\circ \Phi^{\pm}_{x_{0},y_{0},z_{0}}$ overlap in squares of the form $I_{\epsilon}\times I_{\epsilon}$ near a crossing (see Figure \ref{Fig:OverlappingRectangles}).
\ee
\end{properties}

\subsubsection{Neighborhood construction}

We now assume that $\Lambda$ satisfies the conclusions of Proposition \ref{Prop:GoodAndUnitSpeed}.

\begin{prop}\label{Prop:NeighborhoodConstruction}
For $\epsilon_{0}$ sufficiently small, there exists a neighborhood $N_{\epsilon_{0}, i}$ of each $\Lambda_{i}$ parameterized by an embedding
\begin{equation*}
    \Phi_{i}: I_{\epsilon_{0}}\times I_{\epsilon_{0}} \times \Circle \rightarrow \R^{3}
\end{equation*}
with coordinates $(z, p, q)$ such that the following conditions are satisfied:
\be
\item $\Phi_{i}^{\ast}\alpha_{std}= dz + p dq$.
\item The $N_{\epsilon_{0}, i}$ are disjoint.
\item $\Phi_{i}(0, 0, q)$ provides a unit-speed parameterization of $\Lambda_{i}$.
\item $J_{0}$ is $z$-invariant in $N_{\epsilon_{0}, i}$ and with respect to the basis $P = \partial_{p}, Q = \partial_{q} - p\partial_{z}$ it satisfies
\begin{equation*}
\Phi_{i}^{\ast}J_{0} = \left( \begin{smallmatrix}
0 & -1 \\ 1 & 0
\end{smallmatrix}\right) + \bigO(p).
\end{equation*}
\item Near the endpoints $(x_{j},y_{j},z_{j}^{\pm})$ with $z_{j}^{+} > z_{j}^{-}$ of each chord $r_{j}$ of touching $\Lambda$, we can can find a matrix of the form $M=\Diag(1, 1,1)$ or $\Diag(1, -1, -1)$ so that
\begin{equation*}
\Phi_{i}(z, p, q) = \Phi_{x_{j},y_{j},z_{j}^{\pm}}^{\pm}\circ M(z, p, q-q_{j}^{\pm})
\end{equation*}
where the $\Phi_{x_{j},y_{j},z_{j}^{\pm}}$ are as in Properties \ref{Properties:CrossingNeighborhoods}.
\ee
\end{prop}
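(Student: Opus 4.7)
The plan is to globalize the local models $\Phi^{\pm}_{x_j,y_j,z_j^{\pm}}$ from Properties \ref{Properties:CrossingNeighborhoods} -- which already provide parameterized neighborhoods near each chord endpoint of $\Lambda$ -- to a tubular parameterization of each component $\Lambda_i$. After the Legendrian isotopy of Proposition \ref{Prop:GoodAndUnitSpeed}, $\Lambda_i$ is in good position with a unit-speed arc-length parameterization $\gamma_i: \Circle \to \R^3$, so all the infinitesimal data needed to specify $\Phi_i$ is already fixed along $\Lambda_i$: the tangent $T_i(q) := \gamma_i'(q) \in \xi_{std}$, the $\xi_{std}$-normal $N_i(q)$ (of unit length in $g_{\R^3}$, with sign chosen so that $P|_{p=0} = N_i$ in the local models $\Phi^{\pm}$), and the Reeb direction $\partial_z$. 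The strategy is to first produce a tubular embedding realizing this data, then normalize the contact form by a Moser argument.

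For the first step, construct a smooth embedding $\Psi_i: I_{\epsilon} \times I_{\epsilon} \times \Circle \to \R^3$ with $\Psi_i(0,0,q) = \gamma_i(q)$, $\partial_p \Psi_i|_{p=0}(q) = N_i(q)$, and $\Psi_i(z,p,q) = \Psi_i(0,p,q) + (0,0,z)$, for instance by extending $N_i$ to a $z$-invariant vector field and using its flow. A direct calculation using $|T_i|_{g_{\R^3}} = 1$ (Equation \eqref{Eq:LegVectorLengths}) shows $\Psi_i^{*}\alpha_{std} = dz + p\,dq + O(p^2)$ on a neighborhood of $\Lambda_i$. A Moser isotopy -- $z$-invariant and fixing $\Lambda_i$ pointwise -- then modifies $\Psi_i$ to the desired $\Phi_i$ with $\Phi_i^{*}\alpha_{std} = dz + p\,dq$ exactly. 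Near each chord endpoint of $\Lambda_i$, the model $\Phi^{\pm}_{x_j,y_j,z_j^{\pm}}$ (possibly composed with $M = \Diag(1,-1,-1)$ to align the orientation of $\Lambda_i$ with $+\partial_q$) already satisfies $\alpha_{std} = dz + p\,dq$ exactly and shares its $1$-jet along $\Lambda_i$ with $\Psi_i$; by Weinstein's uniqueness theorem, $\Psi_i$ may be chosen to agree with the prescribed local models on small chord-endpoint neighborhoods, so that the Moser isotopy can be cut off with support outside those neighborhoods. Finally, shrink $\epsilon_0 > 0$ so that the images $\Phi_i(I_{\epsilon_0}^2 \times \Circle)$ are pairwise disjoint.

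Properties (1), (3), (5) then hold by construction and Property (2) by the choice of $\epsilon_0$. For Property (4), observe that $J_0$ is $z$-translation invariant on $\xi_{std}$ -- the frame $(X, Y)$ is $z$-invariant on $\R^3$ -- and $\Phi_i$ intertwines $z$-translation in the source with Reeb translation in $\R^3$, so $\Phi_i^{*}J_0$ is $z$-invariant on $N_{\epsilon_0, i}$. At $p = 0$ one has $Q|_{p=0} = T_i$ and $P|_{p=0} = N_i$, and the sign convention on $N_i$ gives the matrix $\bigl(\begin{smallmatrix} 0 & -1 \\ 1 & 0 \end{smallmatrix}\bigr)$ along $\Lambda_i$; smoothness in $p$ then yields the $O(p)$ expansion off the central curve. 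The main obstacle is the simultaneous normalization of the contact form and matching to the prescribed local models at chord endpoints, since a generic Moser flow would perturb $\Phi_i$ there; the resolution is that both candidate contact forms already agree on the chord-endpoint regions, so the Moser vector field vanishes there and the normalizing isotopy can be taken with support disjoint from those regions.
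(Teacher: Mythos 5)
Your overall route is the same as the paper's: build a $z$-invariant tube around $\Lambda_{i}$ with the prescribed $1$-jet along the core (and equal to the crossing models $\Phi^{\pm}$ near chord endpoints, after possibly composing with $M$), then normalize the pullback of $\alpha_{std}$ to $dz + p\,dq$ by a correction supported away from the chord-endpoint regions, where the two forms already agree. However, there is a genuine gap in the normalization step and in how Property (4) is deduced from it. Prescribing only the $1$-jet of $\Psi_{i}$ along $\Lambda_{i}$ gives $\Psi_{i}^{*}\alpha_{std} = dz + p\,dq + p\,f_{1}(q)\,dp + \bigO(p^{2})$ with $f_{1}$ generically nonzero (it depends on how you extend $N_{i}$ off $\Lambda_{i}$), so your claimed $\bigO(p^{2})$ error is not justified. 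This matters because if you feed an error that is only $\bigO(p)$ into a Moser-type isotopy, the correcting vector field is only $\bigO(p)$: its flow fixes $\Lambda_{i}$ pointwise but need not fix the $1$-jet of the embedding along $\Lambda_{i}$, so $\Phi_{i\ast}\partial_{p}$ can acquire a component along $T\Lambda_{i}$. In that case the matrix of $J_{0}$ in the frame $(P,Q)$ at $p=0$ is a conjugate of the standard rotation, not the standard rotation itself, and the statement ``smoothness in $p$ yields the $\bigO(p)$ expansion'' fails at its constant term. A second, related omission: exact equality of contact \emph{forms} does not follow from a generic Moser/Gray argument (which only controls the contact structure up to a conformal factor); you need to use the $z$-invariance of the setup to reduce to the annulus and then correct the primitive.

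The fix is exactly the sequence of corrections in the paper's proof: first absorb the $p\,f_{1}(q)\,dp$ term by a shift purely in the $z$ (Reeb) direction by $\half p^{2}f_{1}$ -- an $\bigO(p^{2})$ modification that does not move the $(p,q)$-projection; after this the discrepancy $\sigma$ from $p\,dq$ is $\bigO(p^{2})$, so the Moser vector field $X_{\sigma}$ is $\bigO(p^{2})$ and its time-one flow preserves the $1$-jet along $\Lambda_{i}$; finally the remaining exact discrepancy is killed by one more $z$-shift. All three corrections vanish identically near the chord endpoints (since the models $\Phi^{\pm}$ already pull back $\alpha_{std}$ to $dz + p\,dq$ on the nose), which is the same support argument you use, and they preserve the frame $(P,Q)$ to first order in $p$, which is what Property (4) actually requires. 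With these adjustments your argument becomes essentially the paper's proof; also note that, because the annulus is open and the result must remain an embedding, one cannot simply quote the closed-manifold Moser theorem, which is why the paper runs the argument by hand.
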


\begin{proof}
Presuming that $\Lambda_{i}$ is parameterized with a variable $q$ with respect to which it has unit speed, we pick an arbitrarily small positive constant $\epsilon_{1}$ and define a map $I_{\epsilon_{1}} \times \Circle \rightarrow \R^{3}$ as
\begin{equation*}
\phi_{1}: (p, q) \mapsto \exp_{\Lambda_{i}(q)}\Big(-p J_{0}\frac{\partial \Lambda_{i}}{\partial q}(q) + h_{1}(p, q) \Big)
\end{equation*}
where $h_{1} \in \Cinfty(I_{\epsilon_{1}}\times \Circle, \R^{3})$ vanishes up to second order in $p$ and is chosen so that it produces the map
\begin{equation*}
(p, q) \mapsto \Phi_{x_{j},y_{j},z_{j}^{\pm}}^{\pm}\circ M(0, p, q - q^{\pm}_{j})
\end{equation*}
near the endpoints of the chords of $\Lambda$ as in the statement of the proposition. Here $\exp$ is the exponential map with respect to the metric $g_{\R^{3}}$ and the matrix $M$ is as in the statement of the proposition. 

Since the tangent map of the exponential map is the identity along the zero-section,
\begin{equation*}
T\exp_{\Lambda_{i}(q)} = \Id: T_{\Lambda_{i}(q)}\R^{3} \rightarrow T_{\Lambda_{i}(q)}\R^{3}
\end{equation*}
and $h_{1}$ is $\bigO(p^{2})$, we have
\begin{equation*}
\begin{aligned}
\frac{\partial \phi_{1}}{\partial p}|_{p=0} &= -J_{0}\frac{\partial \Lambda_{i}}{\partial q} + \frac{\partial h_{1}}{\partial p}|_{p = 0} = -J_{0}\frac{\partial \Lambda_{i}}{\partial q} = -J_{0}e^{J_{0}G_{i}}X = -e^{J_{0}G_{i}(q)}Y\\
\frac{\partial \phi_{1}}{\partial q}|_{p=0} &= \frac{\partial}{\partial q}\exp_{\Lambda_{i}(q)}(0) = \frac{\partial \Lambda_{i}}{\partial q} = e^{J_{0}G_{i}}X.
\end{aligned}
\end{equation*}
Therefore the tangent map for $\phi_{1}$ can be expressed along $\{ p = 0 \}$ as a matrix
\begin{equation}\label{Eq:ExpTangentMap}
T\phi_{1}|_{p=0} = -J_{0}e^{J_{0}G_{i}(q)}
\end{equation}
with incoming basis $(P, Q)$ and outgoing basis $(X, Y)$. This map will be an embedding when restricted to some $I_{\epsilon_{1}}\times \Circle$ for $\epsilon_{1}$ sufficiently small. 

From Equation \eqref{Eq:ExpTangentMap} we compute $\phi_{1}^{*}d\alpha_{std}= dp\wedge dq$ along $\{p = 0\}$. More generally, we can write $\phi_{1}^{*}d\alpha_{std} = Fdp\wedge dq$ for some smooth function $F$ satisfying $F|_{\{p=0\}} = 1$. Hence $F$ will be strictly positive on some tubular neighborhood of $\{p = 0\} \subset I_{\epsilon_{1}}\times \Circle$ so that $\phi_{1}^{*}d\alpha_{std}$ will be symplectic on some $I_{\epsilon_{2}}\times \Circle$ for $\epsilon_{2}$ sufficiently small. 

Applying a fiber-wise Taylor expansion to $\phi_{1}^{*}\alpha_{std}$ along the annulus $I_{\epsilon_{2}}\times \Circle$, we write
\begin{equation*}
\phi_{1}^{*}\alpha_{std} = (f_{0} + p f_{1} + p^{2}f_{2} + f_{hot})dp + (g_{0} + p g_{1} + p^{2}g_{2} + g_{hot})dq
\end{equation*}
where
\be
\item $f_{hot}$ and $g_{hot}$ are functions of $p$ and $q$ which are $\bigO(p^{3})$ and
\item $f_{0},\dots, g_{2}$ are functions of $q$.
\ee
As $\Lambda_{i}$ is Legendrian and $J_{0}$ preserves the contact structure, we must have $f_{0} = g_{0} = 0$. Then computing
\begin{equation*}
\phi_{1}^{*}d\alpha_{std} = d\phi_{1}^{*}\alpha_{std} = \left( g_{1} + p\left(2g_{2} - \frac{\partial f_{1}}{\partial q}\right) -p^{2}\frac{\partial f_{2}}{\partial q} + \frac{\partial g_{hot}}{\partial p} - \frac{\partial f_{hot}}{\partial q} \right)dp\wedge dp,
\end{equation*}
we must have $g_{1} = 1$ so that 
\begin{equation*}
    \phi_{1}^{*}\alpha_{std} = (p f_{1} + p^{2}f_{2} + f_{hot})dp + (p + p^{2}g_{2} + g_{hot})dq.
\end{equation*}

We can eliminate the $f_{1}$ term in this equation with a perturbation in the $z$-direction. With $h_{2} = \half p^{2}f_{1}$ we have $dh_{2} = p f_{1}dp + \half p^{2}\frac{\partial f_{1}}{\partial q}dq$. Hence
\begin{equation*}
    \phi_{2}(p, q) = \phi_{1}(p, q) - (0, 0, h_{2}(p, q)),
\end{equation*}
admits an expansion of the form
\begin{equation*}
    \phi_{2}^{*}\alpha_{std} = (p^{2}f_{2} + f_{hot})dp + (p + p^{2}g_{2} + g_{hot})dq.
\end{equation*}
To ensure that this map is an embedding, we restrict its domain to $I_{\epsilon_{3}}\times\Circle$ for some sufficiently small $\epsilon_{3} \leq \epsilon_{2}$. We note that the $f_{2}$ and $g_{2}$ here may differ from those in the Taylor expansion of $\phi_{1}^{\ast}\alpha_{std}$.

Now we'll apply a Moser argument as in \cite[Section 3.2]{MS:SymplecticIntro} to modify $\phi_{2}$ by precomposing it with an isotopy to produce a map $\phi_{3}$ so that $\phi_{3}^{*}d\alpha_{std} = dp\wedge dq$. Due to the facts that the annulus is not closed and that we'll require the result to be an codimension $1$ embedding, we cannot simply quote \cite[Section 3.2]{MS:SymplecticIntro}. 

Writing $\phi_{2}^{*}\alpha_{std} = p dq + \sigma$ and solving for a vector field $X_{\sigma}$ satisfying $dp\wedge dq(\ast, X_{\sigma}) = \sigma$ we see that $\sigma$ and $X_{\sigma}$ have coefficient functions vanishing up to second order:
\begin{equation*}
    \sigma = \bigO(p^{2})dp + \bigO(p^{2})dq,\quad X_{\sigma} = \bigO(p^{2})\partial_{p} + \bigO(p^{2})\partial_{q}.
\end{equation*}
Writing $\Flow_{X_{\sigma}}^{t}$ for the time $t$ flow of $X_{\sigma}$, choose $\epsilon_{4} \leq \epsilon_{3}$ so that $\Flow_{X_{\sigma}}^{t}(I_{\epsilon_{4}}\times\Circle) \subset I_{\epsilon_{3}}\times \Circle$ for all $t \in [0, 1]$ and define
\begin{equation*}
    \phi_{3}(p, q) = \phi_{2}\circ \Flow_{X_{\sigma}}^{1}(p, q): I_{\epsilon_{4}}\times \Circle \rightarrow \R^{3}.
\end{equation*}
The Moser argument shows that $\phi_{3}^{*}d\alpha_{std} = dp\wedge dq$ as desired. Moreover our conditions on $X_{\sigma}$ imply that $\Flow_{X_{\sigma}}^{1}$ must agree with the identity mapping up to third order along $\{ p = 0 \}$. Hence we can continue to write $\phi_{3}^{*}\alpha_{std} = pdq + \sigma$ for some $\sigma$ which vanishes up to second-order in $p$. Using $\phi_{3}^{*}d\alpha_{std} = dp\wedge dq$, we know that $\sigma$ is closed, and since it mush vanish along $\{ p=0 \}$, we conclude that it is exact. Hence $\phi_{3}^{*}\alpha_{std} = pdq + dh_{4}$ for some $h_{4} \in \Cinfty(I_{\epsilon_{3}}\times\Circle, \R)$. Possible restricting to some $I_{\epsilon_{4}}\times\Circle$, we define
\begin{equation*}
    \phi_{4}(p, q) = \phi_{3}(p, q) - (0, 0, h_{4})
\end{equation*}
so that $\phi_{4}$ is an embedding whence $\phi_{4}^{*}\alpha_{std} = p dq$. Now define
\begin{equation*}
\Phi_{i}(z, p, q) = \phi_{4}(p, q) + (0, 0, z).
\end{equation*}
Restricting to some $I_{\epsilon_{0}}\times I_{\epsilon_{0}}\times\Circle$ for $\epsilon_{0}$ sufficiently small, we can ensure that $\sqcup \Phi_{i}$ is an embedding. By construction of the $\Phi_{i}$, we have
\begin{equation*}
\Phi^{*}_{i}\alpha_{std} = dz + p dq.
\end{equation*}

Regarding the formula for $J_{0}$ in the basis $(P, Q)$, note that this is satisfied for the map $\phi_{1}$ and that subsequent perturbations -- $\phi_{2}, \phi_{3}, \phi_{4}$ -- preserve $(P, Q)$ up to second order in $p$. The $z$-invariance of $J_{0}$ is clear from the definition of the $\Phi_{i}$ and $z$-invariance of the almost complex structure on $\xi_{std}$. 

For the last condition stated in the proposition, we note that $\phi_{1}$ produces the desired result by definition of the function $h_{1}$. As all other required conditions are satisfied by $\phi_{1}$ where the last condition is required to be satisfied as per Properties \ref{Properties:CrossingNeighborhoods}. The perturbations of $\phi_{1}$ carried out in the remainder of the proof are trivial where this condition is required to be satisfied. Indeed, near the endpoints of chords $h_{2}$ (used to define $\phi_{2}$), $\sigma$ (used to define $\phi_{3}$), and $h_{4}$ (used to define $\phi_{4}$) all vanish.
\end{proof}

\begin{assump}\label{Assump:StandardNeighborhood}
We assume throughout the remainder of this article that the Legendrian link $\Lambda$ is in good position, has unit arc-length with respect to $g_{\R^{3}}$, and write
\begin{equation*}
N_{\epsilon} = \cup_{i} N_{\epsilon, i}
\end{equation*}
for a neighborhood of $\Lambda$ as described in the above proposition with $\epsilon \leq \epsilon_{0}$. We call the set $\{z = 0 \} \subset N_{\epsilon, i}$ the \emph{ribbon} of $\Lambda_{i}$. From the above proof, we may assume that the image of the projection of the ribbon of $\Lambda_{i}$ to the $xy$-plane coincides with the image of the projection of $N_{\epsilon, i}$.
\end{assump}

\subsection{Transverse push-offs}

The boundary of the ribbon of a component $\Lambda_{i}$ of $\Lambda$ consists of two knots
\begin{equation}
    T^{+}_{i, \epsilon} = \{ z=0, p=\epsilon \}\quad T^{-}_{i, \epsilon} = \{ z=0, p=-\epsilon \}.
\end{equation}

\begin{defn}\label{Def:Pushoff}
With $\epsilon$ fixed, the knots $T^{+}_{i, \epsilon}$ and $T^{-}_{i, \epsilon}$ will be called the \emph{positive and negative transverse push-offs} of $\Lambda_{i}$. We orient both of these knots so that $\partial_{q} > 0$ in the coordinate system on $N_{\epsilon, i}$.
\end{defn}

The positive (negative) transverse push-off is positively (negatively) transverse to $\xi_{\Lambda}$. Because these knots live on the boundary of $N_{\epsilon, i}$, we may consider them as living within either $\SurgLxi$ or $\Rthree$.

\subsection{Model Dehn twists}\label{Sec:ModelDehnTwists}

In this and the following subsection we describe contact forms on $\SurgL$ which will facilitate analysis on Reeb orbits after contact $\pm 1$-surgery. We begin by providing an explicit model for a Dehn twist and then describe the gluing map used to define contact $\pm 1$-surgery explicitly.

Provided a smooth function $f: \R \rightarrow \Circle$, we define $\tau_{f} \in \Diff^{+}(\R \times \Circle)$ by
\begin{equation*}
    \tau_{f}(p, q) = (p , q + f(p)).
\end{equation*}
and note that $\tau_{-f} = \tau^{-1}_{f}$. We'll call the map $\tau_{f}$ a \emph{positive (resp. negative) Dehn twist by $f$} if:
\be
\item the derivative of $f$ has compact support in $\R$.
\item $\int_{\R}\frac{\partial f}{\partial p}dp = -1$ (resp. $+1$).
\ee
A positive (resp. negative) Dehn twist by $f$ is a positive (resp. negative) Dehn twist in the usual sense of the expression. We compute
\begin{equation}\label{Eq:TwistBetaPullback}
    \tau_{f}^{\ast}p dq = p dq + p\frac{\partial f}{\partial p}dp,\quad \tau_{f}^{*}(dp\wedge dq) = dp\wedge dq
\end{equation}
so that $\tau_{f}$ is always a symplectomorphism with respect to $dp\wedge dq$ but does not preserve $p dq$ unless $f$ is constant. For any $f$ and $\epsilon > 0$ we write
\begin{equation*}
    f_{\epsilon}(p) = f\left(\frac{p}{\epsilon}\right).
\end{equation*}

\begin{assump}\label{Assump:TwistProperties}
Throughout the remainder of this paper, $f$ will denote a function for which $\tau_{f}$ is a negative Dehn twist whose derivative $\frac{\partial f}{\partial p}$ is
\be
\item non-negative,
\item an even function of $p$,
\item supported on $I_{1}=[-1, 1]$, and
\item bounded in absolute value point-wise by $1$.
\ee
\end{assump}

\begin{figure}[h]\begin{overpic}[scale=1.0]{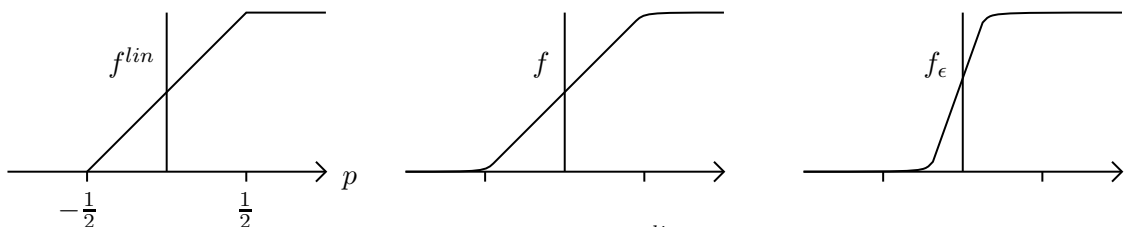}
\put(30, 0){$p$}
\put(4.5, -3){$-\half$}
\put(20.5, -3){$\half$}
\put(9, 10){$f^{lin}$}
\put(47, 10){$f$}
\put(82, 10){$f_{\epsilon}$}
\end{overpic}
\caption{The functions $f^{lin}$, $f$, and $f_{\epsilon}$.}
\label{Fig:Flin}
\end{figure}

We think of $f$ as being a smooth approximation to a piece-wise linear function $f^{lin}$ defined
\begin{equation}\label{Eq:fLinearApprox}
f^{lin}(p) =
  \begin{cases}
    0, & p\in \left(-\infty, -\half\right] \\
    p + \half, & p \in I_{\half} \\
    1 & p \in \left[\half, \infty\right).
  \end{cases}
\end{equation}
See Figure \ref{Fig:Flin}.

The following proposition gathers some properties of the deviation of twists by $f_{\epsilon}$ from preserving $p dq$ as described in Equation \eqref{Eq:TwistBetaPullback}.

\begin{prop}\label{Prop:TwistProperties}
Suppose that $f$ satisfies Assumptions \ref{Assump:TwistProperties} and for $\epsilon \in (0, 1)$ define
\begin{equation*}
    H_{\epsilon}(p) = \int_{-\infty}^{p}P\frac{\partial f_{\epsilon}}{\partial p}(P)dP.
\end{equation*}
Then $H$ is well-defined, is zero on the complement of $I_{\epsilon}$, symmetric, and satisfies $-\epsilon \leq H_{\epsilon} \leq 0$ point-wise.
\end{prop}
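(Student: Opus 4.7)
The plan is to check the four assertions about $H_\epsilon$ one at a time, each being a direct consequence of the compact support, evenness, and normalization conditions imposed on $\frac{\partial f}{\partial p}$ in Assumptions \ref{Assump:TwistProperties}.

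First I would verify that $H_\epsilon$ is well-defined and vanishes off $I_\epsilon$. Since $\frac{\partial f}{\partial p}$ is supported on $I_1$, the rescaled derivative $\frac{\partial f_\epsilon}{\partial p}(P) = \epsilon^{-1}\frac{\partial f}{\partial p}(P/\epsilon)$ is supported on $I_\epsilon$, so the integrand has compact support and the improper integral converges. For $p \le -\epsilon$ the domain of integration misses the support, giving $H_\epsilon(p) = 0$ trivially. For $p \ge \epsilon$ we get $H_\epsilon(p) = \int_{\R} P\,\frac{\partial f_\epsilon}{\partial p}(P)\,dP$, and this vanishes because the integrand is odd: $P$ is odd in $P$ while $\frac{\partial f_\epsilon}{\partial p}$ inherits evenness from $\frac{\partial f}{\partial p}$.

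Symmetry $H_\epsilon(-p) = H_\epsilon(p)$ then follows from the substitution $P' = -P$ together with the evenness of $\frac{\partial f_\epsilon}{\partial p}$:
\begin{equation*}
H_\epsilon(-p) = \int_{-\infty}^{-p} P\,\frac{\partial f_\epsilon}{\partial p}(P)\,dP = -\int_{p}^{\infty} P'\,\frac{\partial f_\epsilon}{\partial p}(P')\,dP' = \int_{-\infty}^{p}P'\,\frac{\partial f_\epsilon}{\partial p}(P')\,dP' = H_\epsilon(p),
\end{equation*}
where in the penultimate equality we used that the integral over all of $\R$ is zero.

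Finally I would establish the sign and size bounds. The upper bound $H_\epsilon \le 0$ splits into two cases: for $p \in [-\epsilon,0]$ the integrand $P\,\frac{\partial f_\epsilon}{\partial p}(P)$ is $\le 0$ because $P \le 0$ and the derivative is non-negative; for $p \in [0,\epsilon]$, using that $H_\epsilon(\epsilon) = 0$, we rewrite $H_\epsilon(p) = -\int_p^\epsilon P\,\frac{\partial f_\epsilon}{\partial p}(P)\,dP$, and the integrand is now $\ge 0$. For the lower bound, on $I_\epsilon$ we simply estimate
\begin{equation*}
|H_\epsilon(p)| \le \int_{-\epsilon}^{\epsilon} |P|\,\frac{\partial f_\epsilon}{\partial p}(P)\,dP \le \epsilon \int_{-\epsilon}^{\epsilon}\frac{\partial f_\epsilon}{\partial p}(P)\,dP = \epsilon,
\end{equation*}
using $|P| \le \epsilon$ on $I_\epsilon$ and the normalization $\int_\R \frac{\partial f_\epsilon}{\partial p} = 1$. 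None of the steps seems to pose a real obstacle — the content is bookkeeping of signs and parity; the only mild subtlety is noticing that one must use the vanishing of $\int_\R P\,\frac{\partial f_\epsilon}{\partial p}(P)\,dP$ both to extend the ``zero outside $I_\epsilon$'' statement to the right half-line and to control $H_\epsilon$ on $[0,\epsilon]$ via the complementary integral.
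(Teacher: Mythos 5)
Your proof is correct and follows essentially the same elementary route as the paper: direct sign, parity, and support bookkeeping applied to the defining integral. The only cosmetic differences are that your bound $|H_{\epsilon}|\leq\epsilon$ uses the Dehn-twist normalization $\int_{\R}\frac{\partial f_{\epsilon}}{\partial p}\,dP=1$ together with $|P|\leq\epsilon$ on the support, whereas the paper uses $\sup\left|\frac{\partial f}{\partial p}\right|\leq 1$ together with $\int_{-\epsilon}^{\epsilon}|P|\,dP=\epsilon^{2}$, and your non-positivity on $[0,\epsilon]$ via $H_{\epsilon}(\epsilon)=0$ and the complementary integral replaces the paper's appeal to the just-established symmetry $H_{\epsilon}(p)=H_{\epsilon}(-p)$.
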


\begin{proof}
The first two statements are clear from the compact support and symmetry of the derivative of $f$. Then using the fact that $\frac{\partial f_{\epsilon}}{\partial p}$ is supported on $I_{\epsilon}$ we have
\begin{equation*}
|H_{\epsilon}(p)| = \frac{1}{\epsilon}\left|\int_{-\epsilon}^{p}P\frac{\partial f}{\partial p}\left(\frac{P}{\epsilon}\right)dP\right| \leq \frac{1}{\epsilon}\sup_{p}\left|\frac{\partial f}{\partial p}\right|\int_{-\epsilon}^{\epsilon}|P|dP = \epsilon\sup_{p}\left|\frac{\partial f}{\partial p}\right| \leq \epsilon.
\end{equation*}

For $p \leq 0$, $H_{\epsilon}(p)$ is an integral of a non-positive function and so must be non-positive. Then for $p \geq 0$
\begin{equation*}
H_{\epsilon}(p) = H_{\epsilon}(0) + \int_{0}^{p}P\frac{\partial f_{\epsilon}}{\partial p}(P)dP = H_{\epsilon}(0) - \int_{-p}^{0}P\frac{\partial f_{\epsilon}}{\partial p}(P)dP = H_{\epsilon}(-p)
\end{equation*}
by the symmetry of the derivative of $f$.
\end{proof}

\subsection{Gluing maps}\label{Sec:GluingMaps}

Now we define the gluing maps to define contact surgery on $\Lambda$ and contact forms $\alpha_{\epsilon}$ on the surgered manifold $\SurgL$. 

Let $\epsilon_{0}$ be a sufficiently small as described in Proposition \ref{Prop:NeighborhoodConstruction} and choose $\epsilon \in (0, \epsilon_{0})$. We decompose a neighborhood of each $\partial N_{\epsilon, i}$ into top, side, and bottom pieces as shown in Figure \ref{Fig:TopBottomSide}:
\be
\item $T_{\delta, \epsilon} = \{ z \geq \epsilon - \delta \}$,
\item $S_{\delta, \epsilon} = \{ |p| \geq \epsilon - \delta \}$,
\item $B_{\delta, \epsilon} = \{ z \leq -\epsilon + \delta \}$.
\ee
To perform contact surgery along $\Lambda_{i}$ with surgery coefficient $c_{i}$ we define a map $\phi_{c_{i}, f, \epsilon, \delta}$ in coordinates $(z, p, q)$ as follows: 
\begin{equation}\label{Eq:GluingMap}
\phi_{c, f, \epsilon, \delta}(z, p, q) \sim
  \begin{cases}
    (z - c_{i} H_{\epsilon}(p), p, q + c f_{\epsilon}(p)), & \text{along}\ T_{\delta, \epsilon} \\
    (z, p, q), & \text{along}\ S_{\delta, \epsilon} \cup B_{\delta, \epsilon}
  \end{cases}
\end{equation}
where $H_{\epsilon}$ is as described in Proposition \ref{Prop:TwistProperties}. Due to the properties of $f_{\epsilon}$ and $H_{\epsilon}$ described in the previous section, we have that $\phi_{c, f, \epsilon, \delta}$ agrees on the overlaps of the top, bottom, and sides of $N_{\epsilon, i}$ for $\delta$ sufficiently small. Therefore the map determines a smooth gluing.

\begin{figure}[h]\begin{overpic}[scale=.8]{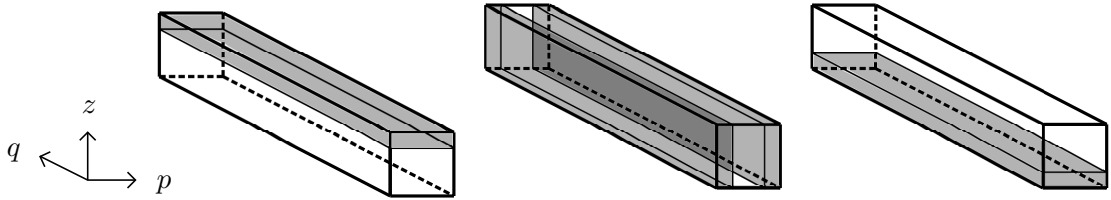}
\put(11, 1){$p$}
\put(-3, 4){$q$}
\put(4, 8){$z$}
\end{overpic}
\caption{From left to right, the top, side, and bottom pieces of our neighborhood are shaded.}
\label{Fig:TopBottomSide}
\end{figure}

The tangent map of the gluing map is given
\begin{equation}\label{Eq:TPhi}
T\phi_{c_{i}, f, \epsilon, \delta} = \partial_{z}\otimes\left(dz - c_{i} p \frac{\partial f_{\epsilon}}{\partial p}dp\right) + \partial_{p}\otimes dp + \partial_{q}\otimes\left(dq + c_{i} \frac{\partial f_{\epsilon}}{\partial p}dp\right)
\end{equation}
along $T_{\delta, \epsilon}$ and $T\phi_{c_{i}, f, \epsilon, \delta}=\Id$ along $S_{\delta, \epsilon}\cup B_{\delta, \epsilon}$ so that
\begin{equation*}
\phi_{c_{i}, f, \epsilon, \delta}^{*}(dz + pdq) = dz + pdq.
\end{equation*}
The gluing map therefore determines a contact form $\alpha_{c_{i}, f, \epsilon, \delta}$ on the manifold $\R^{3}_{\Lambda_{i}}$ obtained by performing the surgery and hence a contact structure $\SurgXi = \ker(\alpha_{c_{i}, f, \epsilon, \delta})$ on this manifold. Shrinking $\delta$ amounts to a restriction of the domain of the map and so does not affect the associated contact manifold.

\begin{defn}\label{Def:AlphaEpsilon}
For $\epsilon \in (0, \epsilon_{0})$ we write $\alpha_{\epsilon}$ for the contact form on $\SurgL$ determined by performing surgery using the gluings $\phi_{c_{i}, f, \epsilon, \delta}$ as described in Equation \eqref{Eq:GluingMap} to each connected component $N_{\epsilon, i}$ of $N_{\epsilon}$. The Reeb vector field of $\alpha_{\epsilon}$ will be denoted $R_{\epsilon}$.
\end{defn}

\section{Chords-to-orbits and chords-to-chords correspondences}\label{Sec:Orbits}

In this section we study the dynamics of the Reeb vector fields $R_{\epsilon}$ for the contact forms $\alpha_{\epsilon}$ for $\SurgLxi$ as described in Definition \ref{Def:AlphaEpsilon}. Our results are summarized by the following:

\begin{thm}\label{Thm:ChordOrbitCorrespondence}
There exist one-to-one correspondences between:
\be
\item Closed orbits of $R_{\epsilon}$ in $\SurgLxi$ and cyclic words of chords on $\LambdaPM \subset \Rthree$.
\item Chords of $R_{\epsilon}$ with boundary on $\LambdaZero \subset \SurgLxi$ and words of chords with boundary on $\LambdaZero \subset \Rthree$.
\ee
\end{thm}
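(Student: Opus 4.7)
My plan is to read off the Reeb dynamics from the explicit local structure constructed in Section \ref{Sec:ModelGeometry}. The first step is to observe that $R_{\epsilon} = \partial_{z}$ in every chart on $\SurgLxi$. Inside a tube $N_{\epsilon,i}$ this is immediate from $\alpha_{\epsilon} = dz + p\,dq$ (Proposition \ref{Prop:NeighborhoodConstruction}); outside the tubes we have $\alpha_{\epsilon}=\alpha_{std}$; and the gluing map $\phi_{c_{i},f,\epsilon,\delta}$ pulls $\alpha_{std}$ back to itself by Equation \eqref{Eq:TPhi}, so the Reeb fields on the two sides of each gluing agree. Because the tube embedding constructed in Proposition \ref{Prop:NeighborhoodConstruction} satisfies $\Phi_{i}(z,p,q) = \phi_{4}(p,q)+(0,0,z)$, the vector $\partial_{z}$ in tube coordinates coincides with $\partial_{z}$ in the ambient $\R^{3}$ coordinates as well.

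Consequently, any trajectory of $R_{\epsilon}$ is a vertical line in the current chart, punctuated by discrete ``teleports'' whenever it crosses the top of a surgery tube. The teleport is read directly from Equation \eqref{Eq:GluingMap}: when a trajectory inside $N_{\epsilon,i}$ with $c_{i}=\pm 1$ reaches tube coordinates $(\epsilon,p,q)$, the gluing identifies it with the point at coordinates $(\epsilon-c_{i}H_{\epsilon}(p),\,p,\,q+c_{i}f_{\epsilon}(p))$. In other words the $q$-coordinate along $\Lambda_{i}$ is shifted by $c_{i}f_{\epsilon}(p)$ and $z$ by $-c_{i}H_{\epsilon}(p)$, both controlled by $\epsilon$. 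Trajectories that remain in a tube around a component of $\LambdaZero$ experience no teleport, so a chord of $R_{\epsilon}$ with endpoints on $\LambdaZero$ must begin and end on the central loop $\{p=z=0\}$ of such a tube.

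Given a cyclic word of composable chords $w=r_{j_{1}}\cdots r_{j_{n}}$ on $\LambdaPM$, I would construct $\gamma_{w}$ piece by piece. Near each crossing of $\pxy(\LambdaPM)$ corresponding to $r_{j_{k}}$ I include a short vertical arc traversing the crossing, and I glue consecutive arcs together using a teleport at the top of $N_{\epsilon,l^{+}_{j_{k}}}=N_{\epsilon,l^{-}_{j_{k+1}}}$ whose $q$-shift traces the capping path $\eta_{j_{k},j_{k+1}}$. The required $p$-value $p_{k}$ is then determined by the modular equation
\begin{equation*}
c_{l^{+}_{j_{k}}}\,f_{\epsilon}(p_{k}) \;\equiv\; q^{-}_{j_{k+1}}-q^{+}_{j_{k}} \pmod{1},
\end{equation*}
which has a solution in $(-\epsilon,\epsilon)$ because $f_{\epsilon}$ runs monotonically from $0$ to $1$ on that interval (and is strictly monotone near $p=0$ for a generic choice of $f$). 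The chord-to-chord correspondence is built identically, except that the first and last arcs terminate on components of $\LambdaZero$; composability of the word forces every intermediate teleport to occur at a component of $\LambdaPM$.

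The main obstacle will be the converse: extracting a cyclic word from an arbitrary closed orbit, and a word from an arbitrary $\LambdaZero$-chord, of $R_{\epsilon}$. The natural tool is a Poincar\'e-style return map on the union of tube-top cross-sections $\{z=\epsilon\}\subset N_{\epsilon,i}$ for $\Lambda_{i}\subset\LambdaPM$, together with the ribbons $\{z=0\}\subset N_{\epsilon,i}$ for $\Lambda_{i}\subset\LambdaZero$. One must verify that the fixed points (respectively orbit segments) of this map are in bijection with cyclic words (respectively words) of chords, and in particular rule out ``spurious'' orbits that could repeatedly teleport within a single surgery tube without ever passing through a chord crossing. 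Exploiting the smallness of $\epsilon$, the bound $|H_{\epsilon}|\leq\epsilon$ from Proposition \ref{Prop:TwistProperties}, and the rigidity of $\Lambda$ in $\R^{3}$ away from its crossings, one expects $\pxy\circ\gamma$ for any closed orbit $\gamma$ to be forced to visit only crossings of $\pxy(\LambdaPM)$, which then read off the desired cyclic word.
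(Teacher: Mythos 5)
Your description of the local dynamics is accurate ($R_{\epsilon}=\partial_{z}$ in each chart, with the ``teleport'' at the tops of the handles read off from Equation \eqref{Eq:GluingMap}), and this matches the paper's setup. The genuine gap is in your existence step: the values $p_{k}$ are not free parameters that can be fixed independently at each crossing by your modular equation. Because the two strands at a crossing meet transversally (good position), the Reeb flow along the chord $r_{j_{k}}$ from $\rect^{ex}_{j_{k}}$ to $\rect^{en}_{j_{k}}$ interchanges the roles of $p$ and $q$ (this is the map $(p,q)\mapsto(-q,p)$ appearing in Proposition \ref{Prop:AnExplicit}), so the $p$-coordinate with which the trajectory traverses the handle $N_{\epsilon, l^{+}_{j_{k}}}$ is dictated by the exit data at the previous crossing, not chosen. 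Solving your equation crossing-by-crossing produces a broken pseudo-orbit, not a trajectory of $R_{\epsilon}$. What is actually required is a fixed point of the composition of all the transition maps around the cyclic word, and moreover a proof that this fixed point is \emph{unique}, since uniqueness is part of the claimed one-to-one correspondence and your proposal never addresses it. This is exactly what the paper's Theorem \ref{Thm:ChordsToOrbits} supplies: using the linearity of $f_{\epsilon}$ near $p=0$ (Assumptions \ref{Assump:DiskIntersection}) the transition maps become affine, and an iterate-flow-and-trim argument shows the nested quadrilaterals $\dom(G_{nk})\cap S_{nk}$ have areas contracting geometrically, so they intersect in a single point which is the unique fixed point; the chord case (Theorem \ref{Thm:ChordsToChords}) is handled by flowing an arc of $\LambdaZero$ through the handles of the word and exhibiting a single transverse intersection with the down-flow of $\LambdaZero$ along the last chord. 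An alternative repair of your approach would be to verify directly that $\Id-A$ is invertible for the composed affine map $A+b$ and that the fixed point lies in the correct rectangles, but some such global argument is unavoidable.

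The converse direction, which you flag as the main obstacle and leave as an expectation, is in fact the easy half and is essentially Lemma \ref{Lemma:ExitEntry}: since $R_{\epsilon}=\partial_{z}$ away from the handles and the gluing occurs only at the handle tops, a trajectory that exits a handle top outside the rectangles $\rect^{ex}_{j}$, or enters a handle bottom outside the $\rect^{en}_{j}$, has $z$ unbounded above or below and so cannot close up (and cannot be a chord of $\LambdaZero$ beyond its endpoints); in particular no ``spurious'' orbit can stay in a single tube. So that part of your sketch completes easily, but without the fixed-point existence argument and the uniqueness statement the bijection is not established.
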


A description of the correspondences will be given below.

\begin{defn}
Via the above theorem, we use the notation
\begin{equation*}
(r_{j_{1}}\cdots r_{j_{n}})
\end{equation*}
to denote either a closed orbit of $R_{\epsilon}$ or a chord of $\LambdaZero \subset \SurgLxi$ whose underlying word is $r_{j_{1}}\cdots r_{j_{n}}$.
\end{defn}

After establishing Theorem \ref{Thm:ChordOrbitCorrespondence}, we estimate the actions of chords and closed orbits in $\SurgLxi$ in Section \ref{Sec:ActionEstimates}. Then in Section \ref{Sec:Embeddings}, we describe equations whose solutions determine the embeddings of closed Reeb orbits and allow exact calculation of their actions. While we do not provide a closed form solutions to these equations, their analysis provides the following

\begin{thm}\label{Thm:Mod2CZ}
For each $n > 0$, there exists $\epsilon_{n}$ such that for all $\epsilon \leq \epsilon_{n}$ all orbits $\gamma$ of word length $\leq n$ are hyperbolic with
\begin{equation*}
\CZ_{2}(\gamma) = \sum_{k=1}^{n} \left(\rot_{j_{k},j_{k+1}} +\  \delta_{1, c_{j_{k}}^{+}}\right) \in \Z/2\Z.
\end{equation*}
Moreover, if either $\LambdaPlus = \emptyset$ or $\LambdaMinus = \emptyset$, then all closed orbits of $R_{\epsilon}$ are hyperbolic for all $\epsilon < \min\{\half, \epsilon_{0}\}$.
\end{thm}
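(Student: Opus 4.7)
The plan is to write the Poincaré return map of an orbit $\gamma = (r_{j_{1}}\cdots r_{j_{n}})$ as an explicit product in $\SLtwoR$, read off hyperbolicity from the trace, and then use Equation \eqref{Eq:CZTwo} to get $\CZ_{2}$ from the sign of $\det(\Ret_{\gamma} - \Id)$. I trivialize $\xi_{std}$ along $\gamma$ using the basis $(P, Q) = (\partial_{p}, \partial_{q} - p\partial_{z})$ from Proposition \ref{Prop:NeighborhoodConstruction}, stitched across the gluing maps $\phi_{c, f, \epsilon, \delta}$ of Section \ref{Sec:GluingMaps}.

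The linearization decomposes into three types of pieces. First, along each chord $r_{j_{k}}$ inside a neighborhood $N_{\epsilon, i}$ we have $\alpha_{\epsilon} = dz + p\,dq$ by item (1) of Proposition \ref{Prop:NeighborhoodConstruction}, so $R_{\epsilon} = \partial_{z}$ and the linearized flow is the identity in the $(P, Q)$ basis. Second, traveling along the capping path $\eta_{j_{k}, j_{k+1}}$ in $\R^{3}\setminus N_{\epsilon}$, $R_{\epsilon}$ is again $\partial_{z}$ but the outgoing $(P, Q)$-frame at the tip of $r_{j_{k}}$ differs from the incoming one at the tail of $r_{j_{k+1}}$ by a rotation $R_{k}\in \SLtwoR$ through angle $\theta_{j_{k}, j_{k+1}}$; good position forces $\sin\theta_{j_{k},j_{k+1}} = \pm 1$ with sign $(-1)^{\rot_{j_{k},j_{k+1}}}$. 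Third, as the flow line re-enters $N_{\epsilon}$ across the top face $T_{\delta, \epsilon}$, the tangent map of $\phi_{c, f, \epsilon, \delta}$ recorded in Equation \eqref{Eq:TPhi} contributes at $p=0$ the shear
\[
S_{k} = \begin{pmatrix} 1 & 0 \\ c_{j_{k}}^{+} f'_{\epsilon}(0) & 1 \end{pmatrix},
\]
whose off-diagonal coefficient has magnitude $f'(0)/\epsilon \to \infty$ as $\epsilon \to 0$. Multiplying gives $\Ret_{\gamma}(\epsilon) = R_{n} S_{n}\cdots R_{1} S_{1}$ in $\SLtwoR$ (up to cyclic reindexing), with the framing of $\xi|_{\gamma}$ used implicitly to close the loop.

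For the hyperbolicity statement I argue as follows. For a fixed word length bound $n$ only finitely many cyclic words exist, and inside each resulting product of $n$ rotations and $n$ shears the shear entries dominate as $\epsilon \to 0$; consequently $|\tr\Ret_{\gamma}(\epsilon)|\to \infty$, and picking $\epsilon_{n}$ small enough to guarantee $|\tr| > 2$ for every word of length $\leq n$ proves the main assertion. In the one-sign case $\LambdaPlus=\emptyset$ or $\LambdaMinus=\emptyset$, all shears $S_{k}$ share the same sign and hence cannot cancel; using Assumptions \ref{Assump:TwistProperties} (non-negative, symmetric, sup-bounded derivative $f'$) one estimates $|\tr\Ret_{\gamma}|$ directly and sees that it exceeds $2$ uniformly in $n$ as soon as $\epsilon < \tfrac{1}{2}$.

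To extract $\CZ_{2}$ from Equation \eqref{Eq:CZTwo}, I expand $\det(\Ret_{\gamma} - \Id)$ as a polynomial in $f'_{\epsilon}(0)$; the leading (degree $n$) monomial as $\epsilon \to 0$ is $\prod_{k}(\text{off-diagonal entry of } R_{k})\cdot \prod_{k} c_{j_{k}}^{+} f'_{\epsilon}(0)^{n}$ up to a universal positive sign. The first factor contributes the parity $\sum_{k}\rot_{j_{k}, j_{k+1}}$, and each coefficient $c_{j_{k}}^{+}\in\{\pm 1\}$ contributes a sign whose parity is exactly $\delta_{1, c_{j_{k}}^{+}}$ (it is $-1$ precisely when the surgery is contact $+1$, since in our conventions from Section \ref{Sec:ModelDehnTwists} the associated Dehn twist is negative). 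Summing mod $2$ gives the stated formula for $\CZ_{2}(\gamma)$.

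\emph{Main obstacle.} The substantive difficulty is the sign bookkeeping tying Steps 2, 3 and 5 together: one must verify that the over/under-crossing data governing good position, the direction of the Gauss map along $\eta_{j_{k}, j_{k+1}}$, and the sign convention for $\tau_{c f_{\epsilon}}$ combine coherently so that the leading monomial of $\det(\Ret_{\gamma} - \Id)$ has exactly the claimed sign rather than its negation. The uniform hyperbolicity in the one-sign case (no dependence on $n$) is the secondary technical point, and relies on exploiting the monotonicity built into Assumptions \ref{Assump:TwistProperties} rather than just a limiting argument.
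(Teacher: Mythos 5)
Your proposal follows essentially the same route as the paper: write $\Ret_{\gamma}$ in the $(X,Y)$-type frame as a product of $\pm\pi/2$-rotations (forced by good position, so $\sin\theta_{j_k,j_{k+1}}=(-1)^{\rot_{j_k,j_{k+1}}}\neq 0$) and shears with entries of size $\epsilon^{-1}$, extract hyperbolicity for words of length $\leq n$ from the nonvanishing leading $\epsilon^{-n}$ term of $\tr(\Ret_{\gamma})$, and read off $\CZ_{2}$ from $\det(\Ret_{\gamma}-\Id)=2-\tr(\Ret_{\gamma})$ via Equation \eqref{Eq:CZTwo}, with the leading sign giving exactly $\sum(\rot_{j_k,j_{k+1}}+\delta_{1,c^{+}_{j_k}})$. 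The only stylistic difference is in the one-sign case, where the paper notes that $\Ret_{\gamma}=\pm M_a^{n}$ is a power of a single fixed hyperbolic matrix and diagonalizes it to get $|\tr|>2$ uniformly in $n$, which is cleaner than your appeal to positivity/monotonicity of $f'$ but amounts to the same computation.
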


Throughout this section $\gamma$ will denote a closed orbit of $R_{\epsilon}$ and $\kappa$ will denote a chord of $\LambdaZero \subset \SurgLxi$.

\subsection{Overlapping rectangles}\label{Sec:OverlappingRectangles}

In order to state our chord-to-orbit and chord-to-chord correspondences we need to introduce the objects which will define them -- embedded squares in $\SurgLxi$ which record the positions of Reeb orbits as they propagate through the manifold. Along the way, we slightly refine the specifications of the function $f$ in our surgery construction so as to reduce our analysis of dynamics of $R_{\epsilon}$ to analysis of affine linear transformations.

With $\epsilon$ -- the constant which governs the size of $N_{\epsilon}$ -- sufficiently small, the projection of $N_{\epsilon}$ to the $xy$-plane will have overlaps only at rectangles centered about double points of the Lagrangian projection of $\Lambda$. There is a unique rectangle $\rect_{j} \subset \R^{2}$ for each chord $r_{j}$. As per Assumptions \ref{Assump:StandardNeighborhood} and Properties \ref{Properties:CrossingNeighborhoods}, each $\rect_{j}$ is the image of a map of the form
\begin{equation*}
   (p, q) \mapsto (x_{0} + p - q, y_{0} + p + q) 
\end{equation*}
for $(p, q) \in I_{\epsilon_{1}}\times I_{\epsilon_{2}}$ for some $\epsilon_{i}\in (0, \infty)$ with $(x_{0}, y_{0}) \in \R^{2}$ being the coordinates of the double point of $\Lambda$ in the $xy$-plane corresponding to $r_{j}$.

We write $\rect^{ex}_{j}$ for the lift of this disk to the top of the $N_{\epsilon, l^{-}_{j}}$ and $\rect^{en}_{j}$ for the lift to the bottom of $N_{\epsilon, l^{+}_{j}}$. The superscripts are indicative of the fact that closed orbits of $R_{\epsilon}$ enter $N_{\epsilon}$ through the $\rect^{en}_{j}$ and exit $N_{\epsilon}$ through the $\rect^{ex}_{j}$. See Lemma \ref{Lemma:ExitEntry} below.

Again using Assumptions \ref{Assump:StandardNeighborhood} and Properties \ref{Properties:CrossingNeighborhoods}, we have that each $\rect^{en}_{j}, \rect^{ex}_{j}$ can be described as 
\begin{equation}\label{Eq:DDefinition}
    \{ z = \pm\epsilon,\ q \in [q^{\pm}_{j} - \delta, q^{\pm}_{j} + \delta]\}
\end{equation}
for some $\delta \in (0, \infty)$ with respect to the coordinates $(z,p,q)$ provided by Proposition \ref{Prop:NeighborhoodConstruction} on the ``outside'' of the surgery handle.

\begin{figure}[h]\begin{overpic}[scale=.8]{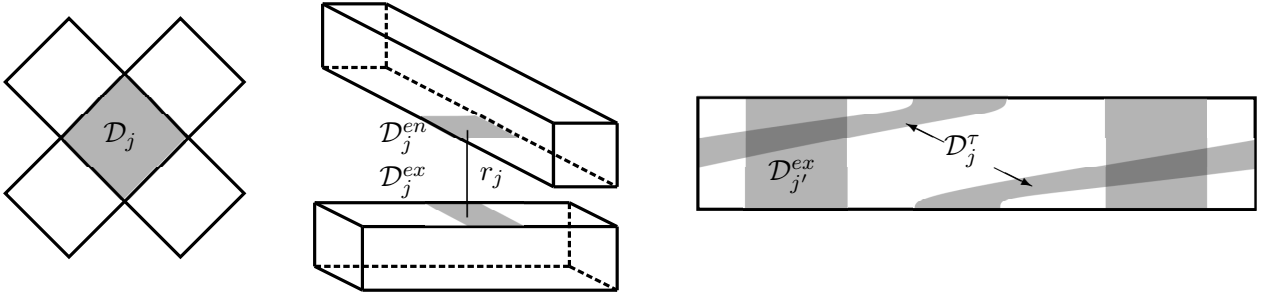}
    \put(8, 12){$\rect_{j}$}
    \put(30, 12){$\rect^{en}_{j}$}
    \put(30, 8.5){$\rect^{ex}_{j}$}
    \put(38, 9){$r_{j}$}
    \put(61, 9){$\rect^{ex}_{j'}$}
    \put(75, 11){$\rect^{\tau}_{j}$}
    \put(75, 12){\vector(-2, 1){3}}
    \put(79, 10){\vector(2, -1){3}}
\end{overpic}
\caption{On the left, we see the $xy$-projection of the ribbon of $\Lambda$ overlapping at a rectangle $\rect_{j}$. In the middle -- with a slightly offset point of view -- we see the $\rect^{\ast}_{j}$ touching the endpoints of a chord $r_{j}$. Here the boxes represent portions of $N_{\epsilon}$. On the right, we see $\tau_{f_{\epsilon}}$ applied to one rectangle intersecting other rectangles. In this portion of the diagram $\partial_{p}$ points upward and $\partial_{q}$ points to the left.}
\label{Fig:OverlappingRectangles}
\end{figure}

If we flow $\rect^{en}_{j}$ through the surgery handle in which it is contained, we will see it pass through the top $\{ z=\epsilon \}$ in a set $\rect^{\tau}_{j}$ which when projected onto the $(p, q)$ coordinates is of the form
\begin{equation*}
    \rect^{\tau}_{j} = \tau^{c_{j}^{+}}_{f_{\epsilon}}(\{ q \in [q_{0} - \delta, q_{0} + \delta]\})
\end{equation*}
for some $q_{0} \in \Circle$ and $\delta > 0$. This set will intersect the each $\rect^{ex}_{j'}$ for $j'\neq j$ in a connected set diffeomorphic to a square. These intersections are depicted as the dark gray regions in the right-hand side of Figure \ref{Fig:OverlappingRectangles}.

\begin{assump}\label{Assump:DiskIntersection}
For a fixed $\epsilon$, we refine our choice of $f$ in Assumptions \ref{Assump:TwistProperties} so that it is affine with derivative equal to $1$ on a some $I_{1-\delta} \subset I_{1}$ with $\delta$ chosen sufficiently small so that each $\rect^{\tau}_{j} \cap \rect^{ex}_{j'}$ with $j\neq j'$ is determined by a pair of linear inequalities
\begin{equation*}
    \rect^{\tau}_{j} \cap \rect^{ex}_{j'} = \{ q \in [q_{0} - \delta_{1}, q_{0} + \delta_{1}],\  a + b q \in [\delta_{2}, \delta_{3}] \}
\end{equation*}
for constants $a, b, \delta_{1},\delta_{2}, \delta_{3}$.
\end{assump}

\begin{properties}\label{Properties:DiskIntersection}
Under Assumptions \ref{Assump:DiskIntersection}, we have that at any point $(p, q) \in \rect^{en}_{j}$ for which $\tau_{f_{\epsilon}}(p, q) \in \rect^{ex}_{j'}$ then
\begin{equation*}
\frac{\partial f_{\epsilon}}{\partial p}(p) = \frac{1}{\epsilon}\quad H_{\epsilon}(p) = H_{\epsilon}(0) + \frac{p^{2}}{2\epsilon}
\end{equation*}
where $i = l^{+}_{j}$. At such points we can write $\tau^{c_{i}}_{f_{\epsilon}}$ as
\begin{equation*}
(p, q) \mapsto \left(p, q + \half + \frac{c_{i}p}{\epsilon}\right).
\end{equation*}
\end{properties}

\subsection{Cyclic words from Reeb orbits}

Here we prove the easy part of the of the (cyclic words) $\leftrightarrow$ (closed orbits) correspondence, showing that each $\gamma$ uniquely determines a cyclic word of chords on $\LambdaPlus \cup \LambdaMinus$.

\begin{lemma}\label{Lemma:ExitEntry}
Any closed orbit $\gamma$ of $R_{\epsilon}$ must pass through $N_{\epsilon}$. Every time $\gamma$ enters $N_{\epsilon}$, it must pass through some $\rect^{en}_{j}$ and every time it exists $N_{\epsilon}$ it must pass through some $\rect^{ex}_{j}$.
\end{lemma}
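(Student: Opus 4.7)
The plan splits the lemma into two claims, each following from the observation that $R_{\epsilon}$ is ``vertical'' in suitable coordinates.

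For the first claim, I would note that on $\R^{3} \setminus N_{\epsilon}$ we have $\alpha_{\epsilon} = \alpha_{std}$, hence $R_{\epsilon} = \partial_{z}$ in the external coordinates. Any integral curve there is a vertical line along which the $z$-coordinate grows monotonically and without bound, so it cannot be periodic. Therefore a closed orbit $\gamma$ of $R_{\epsilon}$ cannot be contained in $\R^{3}\setminus N_{\epsilon}$ and must meet $N_{\epsilon}$.

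For the second claim, I would use that on each $N_{\epsilon, i}$ the coordinates $(z, p, q)$ supplied by Proposition \ref{Prop:NeighborhoodConstruction} pull back the contact form to $dz + p\, dq$, so $R_{\epsilon} = \partial_{z}$ internally as well. The side faces $\{|p| = \epsilon\}$ are flow-invariant, while the bottom $\{z = -\epsilon\}$ and top $\{z = +\epsilon\}$ are flow-transverse. Because the gluing $\phi_{c_{i}, f, \epsilon, \delta}$ is the identity on the sides and bottom, an orbit entering $N_{\epsilon}$ from the outside does so through the bottom face of some $N_{\epsilon, i}$, and exits occur through the top (after applying the gluing twist). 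A closed orbit $\gamma$ therefore decomposes into an alternating sequence of internal arcs in $N_{\epsilon, i_{k}}$ running from bottom to top, and external vertical arcs integral to $\partial_{z}$ in ambient $(x, y, z)$-coordinates.

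The refinement to crossing rectangles is then forced by compactness of $\gamma$. Each external arc is a finite vertical segment, so its two endpoints share the same $xy$-projection $(x_{0}, y_{0})$. The lower endpoint lies on (the gluing-image of) the top of some $N_{\epsilon, i_{k}}$ and the upper endpoint on the bottom of $N_{\epsilon, i_{k+1}}$, hence
\[
(x_{0}, y_{0}) \in \pxy(N_{\epsilon, i_{k}}) \cap \pxy(N_{\epsilon, i_{k+1}}).
\]
The $N_{\epsilon, i}$ are pairwise disjoint in $\R^{3}$ by Proposition \ref{Prop:NeighborhoodConstruction}, so overlaps in the $xy$-plane occur only near double points of $\pxy(\Lambda)$, and by Section \ref{Sec:OverlappingRectangles} these overlaps are precisely the rectangles $\rect_{j}$. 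Hence $(x_{0}, y_{0}) \in \rect_{j^{*}}$ for some chord $r_{j^{*}}$, and the upward orientation of the external flow pins $i_{k} = l^{-}_{j^{*}}$ and $i_{k+1} = l^{+}_{j^{*}}$. By definition the exit point then lies on the lift of $\rect_{j^{*}}$ to the top of $N_{\epsilon, l^{-}_{j^{*}}}$, i.e., in $\rect^{ex}_{j^{*}}$, and the entry point lies on the lift to the bottom of $N_{\epsilon, l^{+}_{j^{*}}}$, i.e., in $\rect^{en}_{j^{*}}$.

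The main obstacle I anticipate is ruling out the possibility that $\gamma$ enters $N_{\epsilon}$ through a bottom point of some $N_{\epsilon, i}$ whose $xy$-projection lies outside every rectangle $\rect_{j}$. For such an entry the backward external trajectory is a vertical line at an $xy$-value lying in only one $\pxy(N_{\epsilon, i})$, with no other neighborhood above or below it along that line, so the trajectory would descend through $\R^{3} \setminus N_{\epsilon}$ all the way to $z = -\infty$ without ever re-entering $N_{\epsilon}$. The resulting external arc would have infinite $z$-extent, contradicting compactness of $\gamma$; a symmetric argument handles exits. Formalizing this ``genericity'' of non-crossing bottom points is the only non-trivial step, and it follows directly from the disjointness of the $N_{\epsilon, i}$ in $\R^{3}$ together with the description of their projections as tubular neighborhoods of $\pxy(\Lambda)$.
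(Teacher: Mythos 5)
Your argument is correct and follows essentially the same route as the paper: both rest on the fact that $R_{\epsilon}$ is $\partial_{z}$ outside $N_{\epsilon}$ (and vertical in the internal coordinates), so a closed orbit must meet $N_{\epsilon}$, and any exit or entry away from the rectangles $\rect^{ex}_{j}$, $\rect^{en}_{j}$ would force $z(\gamma)$ to be unbounded, contradicting closedness. Your extra bookkeeping with the overlap of the $xy$-projections is just a more explicit version of the same observation.
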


\begin{proof}
The Reeb vector field $R_{\epsilon}$ agrees with $\partial_{z}$ on the complement of $N_{\epsilon}$ and flows $\rect^{ex}_{j}$ into $\rect^{en}_{j}$. The orbit $\gamma$ must pass through $N_{\epsilon}$ as otherwise $z(\gamma)$ would take on arbitrarily large values implying that $\gamma$ is not closed. If when passing through some component $N_{\epsilon, i}$ of the surgery handles $\gamma$ exits the top of $N_{\epsilon, i}$ in the complement of the $\rect^{ex}_{j}$ then again $z(\gamma)$ would tend to $\infty$ as we follow the trajectory of the orbit. Likewise, if $\gamma$ enters some $N_{\epsilon, i}$ in the complement of the $\rect^{en}_{j}$ then following the orbit backwards in time, we see that $z(\gamma)$ is unbounded from below.
\end{proof}

Then $\gamma$ must intersect some non-empty finite collection of the $\rect^{ex}_{j}$. Let $j_{1}\cdots j_{n}$ be the indices of the $\rect^{ex}_{j}$ through which $\gamma$ passes, ordered in accordance with a parameterization of $\gamma$. 

\begin{defn}
We define the \emph{cyclic word map} as
\begin{equation*}
    \cycword(\gamma) = r_{j_{1}}\cdots r_{j_{n}}
\end{equation*}
and write $\wl(\gamma)$ for the word length of $\cycword(\gamma)$.
\end{defn}

\subsection{Reeb orbits from cyclic words}

In this section we describe how a cyclic word of composable Reeb chords uniquely determines an closed orbit of $R_{\epsilon}$. Let $r_{j_{1}}\cdots r_{j_{n}}$ be a cyclic word and consider the squares $\rect^{\ast}_{j_{k}}, \ast = en, ex, \tau$ as described in the previous subsection.

\begin{figure}[h]\begin{overpic}[scale=.7]{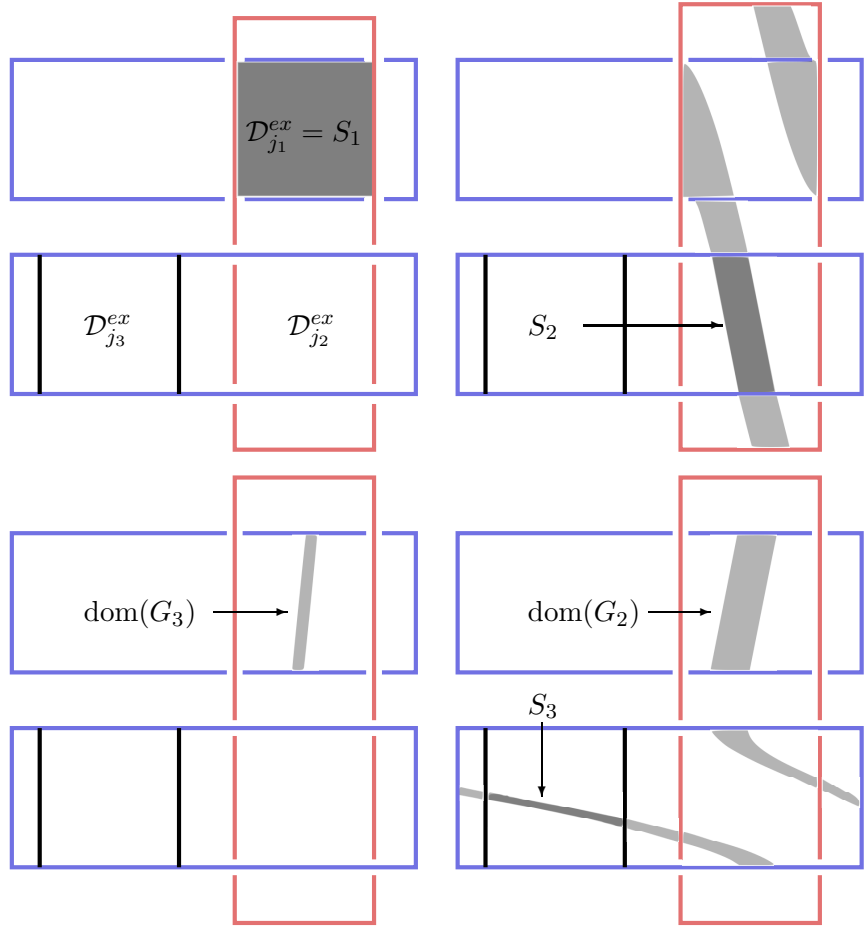}
        \put(25.5, 85){$\rect^{ex}_{j_{1}} = S_{1}$}
        \put(30, 64){$\rect^{ex}_{j_{2}}$}
        \put(8, 64){$\rect^{ex}_{j_{3}}$}
        \put(56, 64){$S_{2}$}
        \put(62, 65){\vector(1,0){15}}
        \put(56, 33){$\dom(G_{2})$}
        \put(69, 34){\vector(1, 0){6.75}}
        \put(8, 33){$\dom(G_{3})$}
        \put(22,34){\vector(1,0){8}}
        \put(56, 23){$S_{3}$}
        \put(57.5,22){\vector(0,-1){8}}
    \end{overpic}
    \vspace{5mm}
\caption{Following the sub-figures clockwise we see the sets $S_{k}$ and $\dom(G_{k})$ drawn schematically. The $S_{k}$ are shaded dark gray. Each rectangle represents the ribbon of some component of $\Lambda$ cut at some value of $q$, layered over each other as indicated by the crossings so that the value of $z$ increases as we traverse each sub-figure clockwise. Within each rectangle, the sides of shorter (longer) length are directed by $p$ ($q$, respectively). In the top-right we see $F^{h}_{1}\circ F^{co}_{1}(S_{1})$ as a subset of the top of $N_{\epsilon, l^{+}_{j_{1}}} = N_{\epsilon, l^{-}_{j_{2}}}$. Taking the intersection of this set with $\rect^{ex}_{j_{2}}$ determines $S_{2}$.}
\label{Fig:IterativeDynamics}
\end{figure}

\begin{thm}\label{Thm:ChordsToOrbits}
For $\epsilon \leq \epsilon_{0}$ and each word $w = r_{j_{1}}\cdots r_{j_{n}}$, there is a unique closed Reeb orbit $\gamma_{w}$ of $R_{\epsilon}$ for which $\cycword(\gamma_{w}) = w$.
\end{thm}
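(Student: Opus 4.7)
The plan is to reduce the existence and uniqueness of a closed orbit of $R_{\epsilon}$ with prescribed cyclic word $w = r_{j_{1}}\cdots r_{j_{n}}$ to finding a fixed point of an iterated Poincaré-style return map on $\rect^{ex}_{j_{1}}$. By Lemma \ref{Lemma:ExitEntry}, any closed orbit must enter and exit each surgery handle through the rectangles $\rect^{en}_{j}$, $\rect^{ex}_{j}$, so tracking an orbit chord-by-chord is the natural dynamical setup. For each $k \in \{1,\ldots,n\}$ (indices cyclic) I define a partial map $G_{k}:\dom(G_{k}) \subset \rect^{ex}_{j_{k}} \to \rect^{ex}_{j_{k+1}}$ as follows: starting at a point of $\rect^{ex}_{j_{k}}$ in the top of $N_{\epsilon, l^{-}_{j_{k}}}$, flow by $\partial_{z}$ through $\R^{3}\setminus N_{\epsilon}$ until entering $\rect^{en}_{j_{k}}$ at the bottom of $N_{\epsilon, l^{+}_{j_{k}}}$; inside the handle the Reeb flow is $\partial_{z}$ again, and at the top the gluing applies the Dehn twist $\tau^{c^{+}_{j_{k}}}_{f_{\epsilon}}$ carrying the entry rectangle to the sheared region $\rect^{\tau}_{j_{k}}$. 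The domain $\dom(G_{k})$ is the preimage of $\rect^{ex}_{j_{k+1}}$ under this passage (compare Figure \ref{Fig:IterativeDynamics}), so closed orbits of $R_{\epsilon}$ with cyclic word $w$ correspond bijectively to fixed points of the composition $G = G_{n} \circ \cdots \circ G_{1}$.

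The essential computational input is that $G$ is \emph{affine}. By Properties \ref{Properties:DiskIntersection}, on each relevant overlap the twist $\tau^{c}_{f_{\epsilon}}$ restricts to the affine shear $(p, q) \mapsto (p,\, q + \tfrac{1}{2} + \tfrac{c p}{\epsilon})$, and by the explicit model embeddings $\Phi^{\pm}$ of Properties \ref{Properties:CrossingNeighborhoods} the change of coordinates between the $(p,q)$-systems for the two handles meeting at a common double point is a planar isometry (in fact a rotation by $\pm \pi/2$). Consequently each $G_{k}$ is the restriction of an affine self-map of $\R^{2}$ whose linear part $A_{k}$ is such a planar isometry composed with the shear $\bigl(\begin{smallmatrix} 1 & 0 \\ c^{+}_{j_{k}}/\epsilon & 1 \end{smallmatrix}\bigr)$. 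Writing $G(x) = A x + b$ with $A = A_{n}\cdots A_{1}$, closed orbits with cyclic word $w$ correspond exactly to solutions of $(I - A)x = b$ lying in the iterated affine domain.

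Existence and uniqueness of a fixed point then reduce to $\det(I - A) \neq 0$. Since $\det A_{k} = \pm 1$, the identity $\det(I - A) = 1 - \tr A + \det A$ presents $\det(I - A)$ as a polynomial in $\epsilon^{-1}$ of degree at most $n$; its leading contribution is $\bigl(\prod c^{+}_{j_{k}}\bigr)\epsilon^{-n}$ times a nonzero scalar determined by the rotations at the double points traversed by $w$. This polynomial is not identically zero, so after possibly shrinking $\epsilon_{0}$ we obtain $\det(I - A) \neq 0$ for all $\epsilon < \epsilon_{0}$, yielding a unique affine fixed point $x_{*}$.

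The main obstacle is the final consistency check: one must verify that $x_{*}$ and each iterate $G_{k} \circ \cdots \circ G_{1}(x_{*})$ actually lies in the affine region of its respective twist (Assumptions \ref{Assump:DiskIntersection}), rather than drifting into the non-affine transition region of $f$. This is handled by enlarging the affine middle of $f$ and further shrinking $\epsilon_{0}$: an explicit $\bigO(\epsilon)$ bound on the location of $x_{*}$ coming from the structure of $A$ and $b$, together with an inductive estimate on successive iterates, keeps the whole trajectory inside the linearized region. In the converse direction, monotonicity of $\partial f_{\epsilon}/\partial p$ from Assumptions \ref{Assump:TwistProperties} rules out any additional orbit passing through a non-affine portion of some twist and closing up with cyclic word $w$.
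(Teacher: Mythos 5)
Your reduction of the theorem to a fixed-point problem for the iterated affine return map on $\rect^{ex}_{j_{1}}$ is the same starting point as the paper, but the mechanism you use for existence and uniqueness — invertibility of $\Id-A$ via the leading $\epsilon^{-n}$ term of $\det(\Id-A)=1-\tr A+\det A$ — has a quantifier problem that the paper's argument is specifically built to avoid. The theorem asserts, for a \emph{fixed} $\epsilon\leq\epsilon_{0}$, a unique orbit for \emph{every} cyclic word $w$, of arbitrary length $n$. Domination by the top-order term only holds once $\epsilon$ is small compared to the lower-order coefficients, and these grow with $n$ (there are $\binom{n}{K}$ terms of order $\epsilon^{-K}$ in Equation \eqref{Eq:MatrixReturn}), so what your argument produces is a threshold $\epsilon_{w}$ depending on the word — exactly the shape of Theorems \ref{Thm:Mod2CZ} and \ref{Thm:IntegralCZ} — and ``after possibly shrinking $\epsilon_{0}$'' cannot deliver the statement for all words at a fixed $\epsilon$. (Note also that ``the polynomial in $\epsilon^{-1}$ is not identically zero'' does not by itself exclude zeros at particular values of $\epsilon\in(0,\epsilon_{0}]$.) When both $\LambdaPlus$ and $\LambdaMinus$ are nonempty, hyperbolicity of the return matrix is only guaranteed for word length $\leq n$ and $\epsilon\leq\epsilon_{n}$, so nondegeneracy of $\Id-A$ is not available for free at fixed $\epsilon$. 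The paper never inverts $\Id-A$: it shows the trimmed sets $S_{k}$ and the domains $\dom(G_{k})$ are convex quadrilaterals cut out by linear inequalities stretching across $\rect^{ex}_{j_{1}}$ in transverse directions (hence intersect), and that each trimming, being symplectic and confined to the affine part of the twist, loses a definite fraction $(1-\delta)$ of area, so the nested sets $U_{k}$ shrink to the unique fixed point — uniformly in $w$ for all $\epsilon\leq\epsilon_{0}$. Indeed, in Section \ref{Sec:Embeddings} the invertibility of $\Id-A$ is \emph{deduced from} Theorem \ref{Thm:ChordsToOrbits}, not used to prove it.

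The second gap is existence. Even where $\det(\Id-A)\neq 0$, the unique fixed point of the affine extension to $\R^{2}$ must be shown to lie in the actual iterated domain inside $\rect^{ex}_{j_{1}}$; otherwise there is no orbit with word $w$ and the existence clause fails. You defer this to an unproven ``$\bigO(\epsilon)$ bound on the location of $x_{*}$ together with an inductive estimate on successive iterates,'' but this consistency check is precisely where the geometric content of the paper's proof lives (the inductive description of $S_{k}$ and $\dom(G_{k})$ and the transverse-stretching property), and it cannot be waved through. Relatedly, the worry about trajectories drifting into the non-affine part of $f_{\epsilon}$ is already foreclosed by Assumptions \ref{Assump:DiskIntersection} and Properties \ref{Properties:DiskIntersection}: any point of an entry rectangle whose image under the twist lands in an exit rectangle lies where $f_{\epsilon}$ is affine with derivative $\epsilon^{-1}$, so no separate argument is needed there — and the one you offer, ``monotonicity of $\partial f_{\epsilon}/\partial p$,'' is not among the hypotheses of Assumptions \ref{Assump:TwistProperties} (only non-negativity, evenness, support, and a pointwise bound are assumed), so that final step has no support as stated.
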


Our logic follows directly from arguments in \cite[Section 6.1]{BEE:LegendrianSurgery} -- carried out in detail in \cite{Ekholm:SurgeryCurves} -- which are simplified by our reduction of dynamics to that of affine transformations in Section \ref{Sec:OverlappingRectangles}.

\begin{proof}
The proof follows from an analysis of $\Flow_{R_{\epsilon}}^{t}$ applied to the disk $\rect^{ex}_{j_{1}}$. Recall that this disk is contained in the ``top'' of a surgery handle $N_{\epsilon, l^{-}_{j_{1}}}$. 

Write $S_{1} = \rect^{ex}_{j_{1}}$ and let $G_{1}=\Id_{S_{1}}$. Consider the following iterative process for which Figure \ref{Fig:IterativeDynamics} serves as a visual aid:
\be
\item \textbf{Flow through the handle complement}: There is a function $t(p, q)$ solving for the minimal $t > 0$ such that $\Flow_{R_{\epsilon}}^{t(p, q)}$ applied to $(p, q)\in S_{1}$ is an element of the square $\rect^{en}_{j_{1}}$ directly above $S_{1}$. Write $F^{co}_{1}(p, q)=\Flow_{R_{\epsilon}}^{t(p, q)}(p, q)$ whose image is the square $S_{1}'$, which is contained in the bottom of $N_{\epsilon, l^{+}_{j_{1}}}$. By the results of Section \ref{Sec:OverlappingRectangles}, $S'_{1}=\rect^{en}_{j_{1}}$. Briefly, $F^{co}_{1}$ is the flow of our square $\rect^{en}_{j_{1}}$ through the handle complement.
\item \textbf{Flow through the handle}: Similarly define a function $F^{h}_{1}$ which flows $S'_{1} \subset \{ z=-\epsilon \} \subset N_{\epsilon, l^{+}_{j_{1}}}$ up to the top, $\{z=\epsilon\}$, of the surgery handle. The square $F^{h}(S'_{1})$ will appear in the coordinates $(z, p, q)$ on the ``outside'' of the surgery handle as the application of a (positive or negative) Dehn twist to $S'_{1}$. That is, in the notation of Section \ref{Sec:OverlappingRectangles}, $F^{h}_{1}(S'_{1})=\rect^{\tau}_{j_{2}}$ is the flow through the handle.
\item \textbf{Trim}: We write $S_{2}=F^{h}_{1}(S'_{1}) \cap \rect^{ex}_{j_{2}}$ for the intersection of $F^{h}_{1}(S'_{1})$ with the next square in the sequence $\rect^{ex}_{j_{k}}$ determined by $w$. Then $S_{2}$ is contained in the top of $N_{\epsilon, l^{-}_{j_{2}}}$. We get a diffeomorphism $G_{2}$ from $\dom(G_{2}) = (F^{h}_{1}\circ F^{co}_{1})^{-1}(S_{2})\subset S_{1}$ to $\im(G_{2}) = S_{2}$ by $G_{2} = F^{h}_{1}\circ F^{co}_{1}$.
\item \textbf{Repeat}: We now inductively repeat the process by applying it to $S_{k} \subset \rect^{ex}_{j_{k}}$. We analogously define $F^{co}_{k}, F^{h}_{k}$ with domain $S_{k}$ then apply $F^{h}_{k}\circ F^{co}_{k}$ to flow $S_{k}$ up through the next handle in the sequence $N_{\epsilon, l^{+}_{j_{k}}}$ whose image we trim to define $S_{k+1}$. This determines a diffeomorphism 
\begin{equation*}
G_{k+1} = F^{h}_{k}\circ F^{co}_{k}\circ \cdots F^{h}_{1}\circ F^{co}_{1}: (\dom(G_{k+1}) \subset S_{1}) \rightarrow (S_{k+1} \subset \rect^{ex}_{j_{k+1}}).
\end{equation*}
\ee

Making use of the results in Section \ref{Sec:OverlappingRectangles} we have the following collection of observations:
\be
\item Each $F^{co}_{k}$ -- considered with domain $\rect^{ex}_{j_{k}}$ in which $S_{k}$ is contained -- is an affine transformation with respect to the $(p, q)$ coordinates of the components of $N_{\epsilon}$. Each $F^{co}_{k}$ sends $\rect^{ex}_{j_{k}}$ diffeomorphically to $\rect^{en}_{j_{k}}$ and is a symplectomorphism with respect to $d\alpha_{\epsilon}$.
\item Each $F^{h}_{k}$ -- considered with domain $\rect^{en}_{j_{k}}$ -- is non-linear as can by seen by looking where $p$ is extremal. It is also a symplectomorphism with respect to $d\alpha_{std}$. The restriction of $F^{h}_{k}$ to $(F^{h}_{k})^{-1}(\rect^{ex}_{j})$ for each $j$ is an affine transformation by Properties \ref{Properties:CrossingNeighborhoods}.
\item We see by induction that $S_{k}$ is a connected, non-empty quadrilateral determined by a non-degenerate pair of linear inequalities, one of which is of the form $q \in [q_{0} - \delta, q_{0} + \delta]$.
\item Combining the above -- with the fact that a composition of affine transformations is an affine transformation -- $\dom(G_{k})$ is a quadrilateral determined by a pair of linear inequalities, one of which is the trivial $p \in [-\epsilon, \epsilon]$.
\item Each trimming step monotonically decreases the area with respect to $d\alpha_{\epsilon}$ and for each $k$ we have $\dom(G_{k+1}) \subsetneq \dom(G_{k})$:
\begin{equation*}
    0 < \int_{S_{k+1}}d\alpha_{\epsilon} < \int_{S_{k}}d\alpha_{\epsilon},\quad 0 < \int_{\dom(G_{k+1})}d\alpha_{\epsilon} < \int_{\dom(G_{k})}d\alpha_{\epsilon}.
\end{equation*}
\ee

\begin{figure}[h]\begin{overpic}{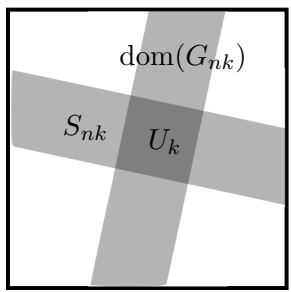}
    \put(50, 50){$U_{k}$}
    \put(20, 55){$S_{nk}$}
    \put(40, 80){$\dom(G_{nk})$}
\end{overpic}
\caption{overlaps of the sets $S_{nk}$ and $\dom(G_{nk})$ within in the set $S_{1}=\rect^{ex}_{j_{1}}$.}
\label{Fig:IterativeOverlaps}
\end{figure}

Now observe that $\dom(G_{n})$ stretches across $\rect^{ex}_{j_{1}}$ in the $p$ direction and that $S_{n}$ stretches across $\rect^{ex}_{j_{1}}$ in the $q$ direction. Since both sets are convex, $U_{1} = \dom(G_{n}) \cap S_{n}$ must be nonempty and convex. We likewise define $U_{k}$ as the intersection of $S_{nk}$ with $\dom(G_{nk})$ for all $k > 0$. See Figure \ref{Fig:IterativeOverlaps}.

The $U_{k}$ satisfy $U_{k+1} \subsetneq U_{k}$ and we claim that $\area(U_{k}) \rightarrow 0$ as $k \rightarrow \infty$. To see this, recall that $f_{\epsilon}$ is linear on $p \in [-\epsilon(1 - \delta), \epsilon(1 - \delta)]$ where $\delta \in (0, 1)$ is as described in Assumptions \ref{Assump:DiskIntersection}. By the conditions which characterize $\delta$, for each $k \geq 0$ the set of points in $S_{k}$ which reach $S_{k+1}$ via the map $F^{h}_{k}\circ F^{co}_{k}$ must be contained in the set $S^{lin}_{k} = \{ p \in [-\epsilon(1 - \delta), \epsilon(1 - \delta)] \} \cap S_{k}$. By the fact that $S^{k}$ is a rectangle stretching across the $p$-coordinate of the annulus, $\area(S^{lin}_{k}) = (1 - \delta)\area(S_{k})$. By the definition of $S^{lin}_{k}$ and the fact that $F^{h}_{k}\circ F^{co}_{k}$ is symplectic, $\area(S_{k+1}) \leq \area(S^{lin}_{k})$. Inductively, we conclude $\area(S_{k}) \leq (1 - \delta)^{k - 1}\area(S_{1})$. Since $U_{k}$ is contained in $S_{nk}$, our claim is established.

By our construction, any Reeb orbit with word $w^{k}$ must intersect $\rect^{ex}_{j_{1}}$ at a point in $\dom(G_{nk})$ which is sent to itself via $G_{nk}$. Hence such a point of intersection must lie in $U_{k}$. By considering multiple covers of the orbit $\gamma_{w}$ -- whose existence we seek to establish -- we see that if such a point of intersection lies in $U_{1}$ then it must lie in $U_{k}$ for all $k > 0$. We therefore define 
\begin{equation*}
U_{\infty}  = \cap_{1}^{\infty} U_{k} \subset \rect^{ex}_{j_{1}}
\end{equation*}
which by our previous analysis consists of a single point. 

To complete our proof, it suffices to show that $G_{nk}(U_{\infty}) = U_{\infty}$ for all $k > 0$. This amounts to unwinding the definitions established in the proof so far. If we write $k = k_{1} + k_{2}$ for any pair of natural numbers $k_{1}, k_{2}$, then we must have
\begin{equation*}
    G_{nk_{1}}(\dom(G_{nk})) \subset \dom(G_{nk_{2}})
\end{equation*}
as otherwise $G_{nk_{2}}\circ G_{nk_{1}}(\dom(G_{nk}))$ would not be contained in $\rect^{ex}_{j_{1}}$. On the other hand, $\dom(G_{nk}) \subsetneq \dom(G_{nk_{1}})$ implies that
\begin{equation*}
    G_{nk_{1}}(\dom(G_{nk})) \subsetneq S_{nk_{1}}.
\end{equation*}
Combining the above two equations we conclude that
\begin{equation*}
    U_{\infty} = \cap_{1}^{\infty}(S_{nk}\cap \dom(G_{nk})) = (\cap_{1}^{\infty}S_{nk}) \cap (\cap_{1}^{\infty}\dom(G_{nk}))
\end{equation*}
satisfies $G_{nk}(U_{\infty})=U_{\infty}$.
\end{proof}

\subsection{Reeb chords of $\LambdaZero$ after surgery}

In this section, we describe open-string versions of our results for closed Reeb orbits, establishing the chords-to-chord correspondence of Theorem \ref{Thm:ChordsToChords}.

\begin{defn}
Suppose that a chord $\kappa$ of $\LambdaZero \subset \SurgLxi$ passes through a sequence of the $\rect^{\ast}_{j}$ of the form
\begin{equation*}
    \rect^{en}_{j_{1}}, \rect^{ex}_{j_{2}}, \rect^{en}_{j_{2}},\dots, \rect^{ex}_{j_{n-1}}, \rect^{en}_{j_{n-1}}, \rect^{ex}_{j_{n}}.
\end{equation*}
Then we write $\word(\kappa) = r_{j_{1}}\cdots r_{j_{n}}$. We call the association $\kappa \mapsto \word(\kappa)$ the \emph{word map}.
\end{defn}

\begin{thm}\label{Thm:ChordsToChords}
For each $\epsilon < \epsilon_{0}$, the word map $\word$ determines a one-to-one correspondence between words of chords with boundary on $\LambdaZero \subset \Rthree$ and Reeb chords of $\LambdaZero \subset \SurgLxi$ determined by the contact form $\alpha_{\epsilon}$. For each such word $w$, the associated chord $\kappa_{w}$ is non-degenerate for all $\epsilon < \epsilon_{0}$.
\end{thm}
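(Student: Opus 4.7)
The plan is to adapt the iterative-flow argument in the proof of Theorem \ref{Thm:ChordsToOrbits} by replacing the $2$-dimensional initial squares with $1$-dimensional arcs on $\Lambda^{0}$. I would first verify that $\word$ is well-defined by a direct analog of Lemma \ref{Lemma:ExitEntry}: a Reeb chord $\kappa$ of $\Lambda^{0}$ in $\SurgLxi$ either lies in a single unsurgered ribbon of $\Lambda^{0}$ (yielding a length-one word $r_{j}$ for the underlying chord of $\Lambda$ with both endpoints on $\Lambda^{0}$), or else enters and exits surgery handles of $\Lambda^{\pm}$ in a forced alternating pattern of $\rect^{en}$'s and $\rect^{ex}$'s, whose endpoints on $\Lambda^{0}$ force the sequence to begin with $\rect^{en}_{j_{1}}$ and to end with $\rect^{ex}_{j_{n}}$ as in the theorem statement.

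For a given word $w = r_{j_{1}}\cdots r_{j_{n}}$, I would construct $\kappa_{w}$ as follows. Let $S_{0}$ be a $1$-dimensional arc of candidate starting points on the component $\Lambda_{l^{-}_{j_{1}}} \subset \Lambda^{0}$, parameterized by $q_{0}$ in a neighborhood of $q^{-}_{j_{1}}$. Flowing $S_{0}$ by $R_{\epsilon} = \partial_{z}$ for time $\epsilon$ through the unsurgered ribbon and intersecting with $\rect^{ex}_{j_{1}}$ yields a $1$-dimensional arc $S_{1} \subset \rect^{ex}_{j_{1}}$. Applying the same $F^{co}_{k}$, $F^{h}_{k}$, and trim-to-$\rect^{ex}_{j_{k+1}}$ operations from the orbit proof for $n-1$ iterations produces $S_{n} \subset \rect^{ex}_{j_{n}}$, and flowing through the handle complement lands the arc inside $\rect^{en}_{j_{n}} \subset N_{\epsilon, l^{+}_{j_{n}}}$, which sits in the unsurgered ribbon of $\Lambda_{l^{+}_{j_{n}}} \subset \Lambda^{0}$. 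Since $\partial_{z}$-flow in this ribbon preserves $(p, q)$, the trajectory terminates on $\Lambda^{0}$ precisely when the image in $\rect^{en}_{j_{n}}$ has vanishing tip-ribbon $p$-coordinate.

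Unique existence and non-degeneracy would then follow from the affine structure provided by Assumption \ref{Assump:DiskIntersection} and Properties \ref{Properties:DiskIntersection}: the composition of iterative maps is affine on the trimmed regions, and the induced map $q_{0} \mapsto p(q_{0})$ is affine with slope $\pm 1$ coming from the final $90^{\circ}$ rotation $F^{co}_{n}$, giving a unique zero $q_{0}^{*}$. The non-vanishing slope simultaneously yields the transversality condition defining non-degeneracy of $\kappa_{w}$ for all $\epsilon < \epsilon_{0}$. The main obstacle is confirming that $q_{0}^{*}$ actually lies in the valid subinterval $I_{n} \subset S_{0}$ cut out by the intermediate rectangle constraints; this step parallels the nested-intersection argument of Theorem \ref{Thm:ChordsToOrbits} and uses the fact that the condition $\tilde{q}_{n}(q_{0}^{*}) = 0$ places $q_{0}^{*}$ at the center of the final trimming interval, together with the magnifying behavior of the Dehn twists $F^{h}_{k}$ (which scale $q$-variations by $1/\epsilon$ on their linear regions), to ensure that the intermediate constraints are simultaneously satisfied.
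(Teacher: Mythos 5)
Your proposal follows essentially the same route as the paper's proof: replace the squares of the proof of Theorem \ref{Thm:ChordsToOrbits} by arcs coming from $\LambdaZero$, push them through the same flow-through-complement, flow-through-handle, trim iteration using the affine structure of Assumptions \ref{Assump:DiskIntersection}, and extract existence, uniqueness and non-degeneracy from a single transverse intersection at the end; that is exactly how the paper argues, and the structure is sound.

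One quantitative claim in your key step is wrong, though it does not derail the argument. The map $q_{0}\mapsto p(q_{0})$ is not affine of slope $\pm 1$ ``coming from the final $90^{\circ}$ rotation'': each passage through a surgered handle shears the $q$-coordinate at rate $c^{+}_{j_{k}}\epsilon^{-1}$ (Properties \ref{Properties:DiskIntersection}), and the final change of coordinates converts the accumulated $q$-derivative into the $p$-derivative, so for a word of length $n$ the slope has magnitude of order $\epsilon^{-(n-1)}$. Only its non-vanishing matters, and the cleanest way to see both this and your ``main obstacle'' is the paper's formulation: inductively, each arc $A_{k}$ is a segment of the form $\{(aq+b,\,q)\}$ with $a\neq 0$ wrapping across the full $q$-extent of the next rectangle in the sequence (this is precisely where the $\epsilon^{-1}$ magnification of the twists enters), while flowing $\LambdaZero$ down along the final chord $r_{j_{n}}$ gives a constant-$q$ set through $\rect^{ex}_{j_{n}}$. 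A graph over $q$ meets a constant-$q$ line in exactly one point, automatically transversally, which delivers existence within the trimming constraints, uniqueness, and non-degeneracy all at once, with no separate argument needed to locate $q_{0}^{*}$ ``at the center of the final trimming interval.''
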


The chords with word length $1$ are those which exist for $\LambdaZero \subset \R^{3}$ prior to surgery, while the rest of the chords in Theorem \ref{Thm:ChordsToChords} are created after the performance of surgery along $\LambdaPM$.

\begin{proof}
The proof is analogous to the proof of Theorem \ref{Thm:ChordsToOrbits}, although considerably simpler.

Let $w = r_{j_{1}}\cdots r_{j_{n}}$ be a word of chords on $\LambdaZero$ with word length $n>1$. By Equation \ref{Eq:OvercrossingNeighborhoodModel}, flowing $\LambdaZero$ up to $N_{\epsilon}$ along a chord sends $\LambdaZero$ to a strand in $N_{\epsilon}$ of the form $q=q_{0}$ which we call $A^{'}_{1}$. Flow this arc up to the top of $N_{\epsilon}$ and take its intersection with $\rect^{ex}_{j_{2}}$ to obtain an arc we'll call $A_{1}$. Define arcs $A_{k}$ for $k>1$ as follows:
\be
\item \textbf{Flow through the handle complement}: Flow $A_{k-1} \subset \rect^{ex}_{j_{k}}$ up to $\rect^{en}_{j_{k}}$ using the map $F^{co}_{k}$ as in the proof of Theorem \ref{Thm:ChordsToOrbits}.
\item \textbf{Flow through the handle}: Now we apply the map $F^{h}_{k}$ as defined in Theorem \ref{Thm:ChordsToOrbits} to flow $F^{co}_{k}(A_{k-1})$ up to the top of $N_{\epsilon}$.
\item \textbf{Trim}: Define $A_{k} = F^{h}_{k}\circ F^{co}_{k}(A_{k-1}) \cap \rect^{ex}_{j_{k}}$.
\item \textbf{Repeat}: Repeat the above steps until we obtain an arc $A_{n}\subset \rect^{ex}_{j_{n}}$.
\ee

Again, following the logic of the proof of Theorem \ref{Thm:ChordsToOrbits} using the linearity conditions of Section \ref{Sec:OverlappingRectangles}, each $A_{k}\subset \rect^{ex}_{j_{k}}$ is a line segment which wraps across $\rect^{ex}_{j_{k}}$ in the $q$-direction. In other words, each admits a parameterized of the form:
\begin{equation*}
    A_{k} = \{ (aq + b, q):\ q\in [q_{0}-\delta, q_{0} + \delta] \}
\end{equation*}
for some constants $a\neq 0, b, q_{0}, \delta$. Since flowing $\LambdaZero$ downward to $\rect^{ex}_{j_{n}}$ along the chord $r_{j_{n}}$ is a set of the form $q = q_{0}$, the intersection of this set with $A_{k}$ consists of a single point. Since $A_{k}$ wraps across $\rect^{ex}_{j_{n}}$ in the $q$ direction, we must have that this intersection is transverse. By construction, the collection of such intersections are in one-to-one correspondence with the collection of chords of $\LambdaZero \subset \SurgLxi$.

For words of length $1$, the restriction of $R_{\epsilon}$ to the complement of the surgery handles is $\partial_{z}$ so that words of length $1$ correspond exactly to the chords of $\LambdaZero$ present prior to surgery.
\end{proof}

\subsection{Action estimates}\label{Sec:ActionEstimates}

To obtain refined estimates of the actions of the chords and orbits of $R_{\epsilon}$ we'll need the following lemmas. The first lemma tells us how much time it takes to flow from the top of $N_{\epsilon}$ to the bottom in a neighborhood of a chord $r_{j}$.

\begin{lemma}\label{Lemma:ActionHandleComplement}
Let $r_{j}$ be some chord of $\Lambda \subset \Rthree$ with action $\action(r_{j})$ and parameterize the disk $\rect^{ex}_{j} \subset \partial N_{\epsilon}$ with coordinates $(p, q)$ as in Equation \eqref{Eq:UndercrossingNeighborhoodModel}. Then for each $(p, q) \in \rect^{en}_{j}$, there exists a minimal-length chord from $\rect^{ex}_{j}$ to $\rect^{en}_{j}$ starting at $(P, Q)$ with action
\begin{equation*}
    t= \action(r_{j}) - 2\epsilon - pq.
\end{equation*}
\end{lemma}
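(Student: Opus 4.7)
The plan is to exploit the fact, noted in Theorem \ref{Thm:Main}(2), that $\alpha_\epsilon = \alpha_{std}$ on the complement of $N_\epsilon$, so that $R_\epsilon = \partial_z$ there. Any chord from $\rect^{ex}_j$ to $\rect^{en}_j$ that stays in this complement is therefore just a vertical segment, and the minimal-length such chord starting at a given point of $\rect^{ex}_j$ is the first vertical ray to hit $\partial N_\epsilon$ again. For $\epsilon$ small enough (so that the two neighborhoods $N_{\epsilon, l_j^-}$ and $N_{\epsilon, l_j^+}$ overlap in the $xy$-plane exactly over $\rect_j$) this first hit is forced to lie on $\rect^{en}_j$, with uniqueness manifest.

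Next I will parameterize both rectangles using the explicit chord-endpoint models of Properties \ref{Properties:CrossingNeighborhoods}: by the last item of Proposition \ref{Prop:NeighborhoodConstruction}, a point of $\rect^{ex}_j$ is $\Phi^-_{x_j, y_j, z_j^-}(\epsilon, p_1, q_1)$ with $(p_1,q_1) \in I_\epsilon \times [-\delta,\delta]$, and a point of $\rect^{en}_j$ is $\Phi^+_{x_j, y_j, z_j^+}(-\epsilon, p_2, q_2)$ (absorbing any $M = \Diag(1,\pm 1,\pm 1)$ into the coordinates). Equating the $x$- and $y$-components using Equations \eqref{Eq:OvercrossingNeighborhoodModel} and \eqref{Eq:UndercrossingNeighborhoodModel} gives the two linear equations
\begin{equation*}
p_1 + q_1 = -p_2 + q_2, \qquad -p_1 + q_1 = -p_2 - q_2,
\end{equation*}
whose unique solution is $(p_2, q_2) = (-q_1, p_1)$. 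This establishes the correspondence between the starting point $(P,Q) := (p_1,q_1) \in \rect^{ex}_j$ and the endpoint $(p, q) := (p_2, q_2) \in \rect^{en}_j$.

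Finally I will compute the $z$-difference, which is the required action $t$. Substituting the matching above into the $z$-components of $\Phi^\pm$, the linear $y_j$-terms from the two models cancel (they both contribute $y_j(p_1+q_1)/\sqrt{2}$), and the remaining quadratic terms collapse:
\begin{equation*}
t = z(P_{en}) - z(P_{ex}) = (z_j^+ - z_j^-) - 2\epsilon + \tfrac{p_2^2}{4} + \tfrac{p_2 q_2}{2} - \tfrac{q_2^2}{4} + \tfrac{p_1^2}{4} - \tfrac{p_1 q_1}{2} - \tfrac{q_1^2}{4} = \action(r_j) - 2\epsilon - p_1 q_1,
\end{equation*}
using $p_1 q_1 = -p_2 q_2$ at the last step. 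This matches the stated formula after translating between the $(P,Q)$ and $(p,q)$ conventions (the two rectangles carry the same product $PQ = -pq$ in our parameterization, which accounts for the sign).

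The calculation is essentially forced once the explicit models of Proposition \ref{Prop:NeighborhoodConstruction} are in hand; the only non-mechanical step is the matching of parameterizations, and the only subtlety is confirming that the straight-up $\partial_z$ ray from $\rect^{ex}_j$ truly exits $N_\epsilon$ and re-enters through $\rect^{en}_j$ rather than some other face of $\partial N_\epsilon$, which follows from taking $\epsilon$ smaller than $\epsilon_0$ so the Lagrangian-projection overlap of the two ribbons is exactly $\rect_j$ (Assumptions \ref{Assump:StandardNeighborhood}).
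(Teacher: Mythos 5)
Your proof is correct and takes essentially the same route as the paper's (which omits the details): the paper likewise uses that $R_{\epsilon} = \partial_{z}$ on the complement of $N_{\epsilon}$, records the coordinate identification $p^{en} = -q^{ex}$, $q^{en} = p^{ex}$ (your $(p_{2}, q_{2}) = (-q_{1}, p_{1})$), and computes the difference of the $z$-coordinates from Equations \eqref{Eq:OvercrossingNeighborhoodModel} and \eqref{Eq:UndercrossingNeighborhoodModel}. You have simply carried out the computation the paper leaves to the reader, including the cancellation of the $y_{j}$-terms, the collapse of the quadratic terms to $-p_{1}q_{1}$, and the sign translation between the coordinates on $\rect^{ex}_{j}$ and $\rect^{en}_{j}$.
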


\begin{proof}
This is a straight-forward calculation, so we omit the details. For a given $j$, write $(p^{ex}, q^{ex})$ and $(p^{en}, q^{en})$ for the coordinates on $\rect^{ex}_{j}$ and $\rect^{en}_{j}$ provided by Equations \eqref{Eq:UndercrossingNeighborhoodModel} and \eqref{Eq:OvercrossingNeighborhoodModel} respectively. Then $p^{en} = -q^{ex}$ and $q^{en} = p^{ex}$. Plug these into the equations provided to compute the differences in the $z$ coordinates and consider the fact that $R_{\epsilon}=\partial_{z}$ on $\R^{3} \setminus N_{\epsilon}$.
\end{proof}

Our second lemma tells how much time it takes for an orbit to flow through one of the surgery handles.

\begin{figure}[h]\begin{overpic}[scale=2.25]{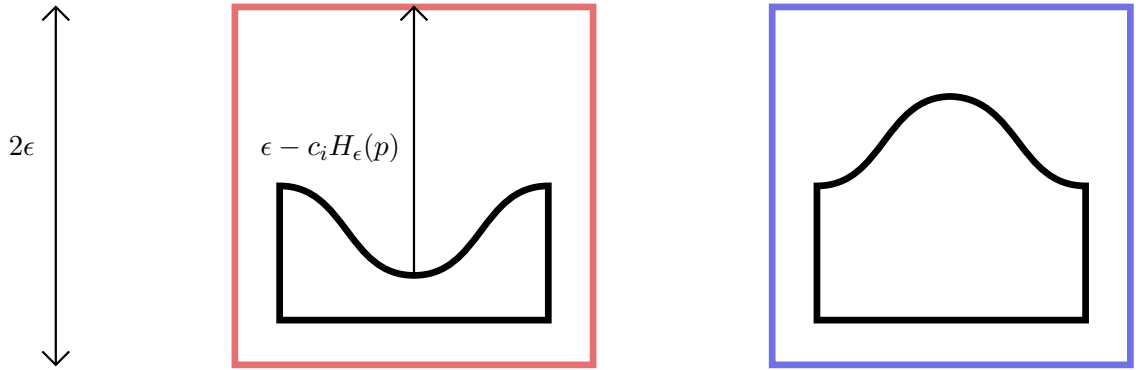}
\put(-3, 20){$2\epsilon$}
\put(20, 20){$\epsilon - c_{i}H_{\epsilon}(p)$}
\end{overpic}
\caption{The squares represent $\{ q = q_{0} \} \subset N_{\epsilon, i}$ slices of the $N_{\epsilon}$ at components $N_{\epsilon, i}$ with surgery coefficient $c_{i}=-1$ (left) and $c_{i}=1$ (right). The black arcs represent the boundaries of the gluing region, as it intersects each slice.}
\label{Fig:NeighborhoodActionDelta}
\end{figure}

\begin{lemma}\label{Lemma:ActionThroughHandle}
For some $j$, again consider coordinates $(p, q)$ on $\rect^{ex}_{j} \subset \partial N_{\epsilon}$ as provided by Equation \eqref{Eq:UndercrossingNeighborhoodModel}. Then the time it takes a point in $\rect^{en}_{j}$ to reach this point via the flow of $R_{\epsilon}$ is
\begin{equation*}
t = 2\epsilon + c_{i}H_{\epsilon}(p).
\end{equation*}
\end{lemma}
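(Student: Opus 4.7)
The plan is a direct calculation by integrating $R_\epsilon = \partial_z$ through the handle and using the gluing map to translate between inside and outside $z$-coordinates.

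Write $i = l^+_j$ so the flow passes through the handle $N_{\epsilon, i}$. By the construction in Section \ref{Sec:GluingMaps}, $\alpha_\epsilon$ pulls back to $dz + p\, dq$ in both the inside coordinates $(z, p, q)$ on $N_{\epsilon, i}$ and in the outside coordinates obtained from $\Phi_i$ on the ambient $\Rthree$, so in either chart $R_\epsilon = \partial_z$ and both $p$ and $q$ are preserved by the flow. The two charts are identified on the overlap region $T_{\delta, \epsilon}$ by the map $\phi_{c_i, f, \epsilon, \delta}$ of Equation \eqref{Eq:GluingMap}, sending an inside point $(z, p, q)$ to the outside point $(z - c_i H_\epsilon(p),\, p,\, q + c_i f_\epsilon(p))$.

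Next I would parameterize the orbit starting at a point $(-\epsilon, p, q_0) \in \rect^{en}_j$ (in inside coordinates) as $t \mapsto (-\epsilon + t, p, q_0)$, and solve for the time $t$ at which it reaches the outside slice $\{z = \epsilon\}$ on which the target disk sits. Inverting the $z$-component of $\phi_{c_i, f, \epsilon, \delta}$, an outside $z$-value of $\epsilon$ corresponds to the inside $z$-value $\epsilon + c_i H_\epsilon(p)$. Proposition \ref{Prop:TwistProperties} gives $|c_i H_\epsilon(p)| \leq \epsilon$, so this inside value lies in the interior of $N_{\epsilon, i}$ (and within $T_{\delta, \epsilon}$, after enlarging $\delta$ within $(0, \epsilon)$ if needed). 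The elapsed time is therefore
\begin{equation*}
t = (\epsilon + c_i H_\epsilon(p)) - (-\epsilon) = 2\epsilon + c_i H_\epsilon(p),
\end{equation*}
as claimed; the outside $q$-coordinate at the exit is $q_0 + c_i f_\epsilon(p)$, which is the Dehn twist selecting the rectangle $\rect^{ex}_{j'}$ onto which the orbit continues.

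The only step requiring care is the sign convention on the gluing: one must confirm that $\phi_{c_i, f, \epsilon, \delta}$ is being applied in the direction inside $\to$ outside coordinates, so that we are really inverting this map to find the inside height corresponding to outside height $\epsilon$. This is consistent with Figure \ref{Fig:NeighborhoodActionDelta}, which depicts the effective outside top of the handle as the curve $z = \epsilon - c_i H_\epsilon(p)$, matching the correction $+c_i H_\epsilon(p)$ in our formula.
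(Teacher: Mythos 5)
Your proof is correct and is essentially the paper's own argument: the paper likewise just reads the traversal time off the $z$-shift $-c_{i}H_{\epsilon}(p)$ in the top part of the gluing map \eqref{Eq:GluingMap}, citing Proposition \ref{Prop:TwistProperties} and Figure \ref{Fig:NeighborhoodActionDelta}. One justification sentence should be adjusted, though: since $H_{\epsilon}\leq 0$, the value $\epsilon + c_{i}H_{\epsilon}(p)$ exceeds $\epsilon$ when $c_{i}=-1$, so it is not an interior $z$-value of the handle, and no choice of $\delta$ (which in any case must remain small so that the pieces of \eqref{Eq:GluingMap} agree on the overlaps of $T_{\delta,\epsilon}$ with the sides and bottom) places the relevant point in $T_{\delta,\epsilon}$; in that case the final stretch of the trajectory lies in the outside chart, while for $c_{i}=+1$ the point at outside height $\epsilon$ may sit below the collar as well. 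The clean statement, valid for both signs, is that $dz$ agrees in the two charts on their overlap, so the elapsed time is the inside traversal time $2\epsilon$ plus the signed outside-$z$ distance $c_{i}H_{\epsilon}(p)$ from the seam $\{ z_{\mathrm{out}} = \epsilon - c_{i}H_{\epsilon}(p) \}$ to the target plane $\{ z_{\mathrm{out}} = \epsilon \}$, which is exactly your formula.
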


This becomes obvious if we look at the graph of the ``top'' part of the gluing map of Equation \eqref{Eq:GluingMap}. See Figure \ref{Fig:NeighborhoodActionDelta}. By comparing Proposition \ref{Prop:TwistProperties} with the definition of the gluing map in Equation \eqref{Eq:GluingMap}, actions increase slightly as we pass through a surgery handle with coefficient $-1$ and decrease slightly as we pass through a surgery handle with coefficient $+1$.

\begin{prop}\label{Prop:ActionEstimate}
For all closed Reeb orbits $\gamma$ of $R_{\epsilon}$, we have
\begin{equation*}
    |\action(\gamma) - \action(\cycword(\gamma))| < 3\epsilon \wl(\gamma).
\end{equation*}
For each chord $r$ of $R_{\epsilon}$ with boundary on $\LambdaZero \subset \SurgLxi$, we have
\begin{equation*}
    |\action(r) - \action(\word(\gamma))| < 3 \epsilon \wl(r).
\end{equation*}
\end{prop}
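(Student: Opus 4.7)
The plan is to decompose each Reeb trajectory of $R_{\epsilon}$ into its alternating traversals of the surgery handles $N_{\epsilon, i}$ and the complementary region $\R^{3}\setminus N_{\epsilon}$, and to sum the time contributions computed by Lemma \ref{Lemma:ActionHandleComplement} and Lemma \ref{Lemma:ActionThroughHandle}. The key point is that the leading $\pm 2\epsilon$ terms supplied by these two lemmas cancel exactly between adjacent segments, leaving only the small corrections $-p_{k}q_{k}$ and $c_{l^{+}_{j_{k}}} H_{\epsilon}(p_{k})$ as the total error.

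First I would treat closed orbits. Let $\gamma$ be a closed Reeb orbit with $\cycword(\gamma) = r_{j_{1}}\cdots r_{j_{n}}$, and let $(p_{k}, q_{k})$ denote the coordinates on $\rect^{en}_{j_{k}}$ of the point where $\gamma$ enters the handle $N_{\epsilon, l^{+}_{j_{k}}}$. Applying Lemma \ref{Lemma:ActionHandleComplement} to the $n$ complementary segments and Lemma \ref{Lemma:ActionThroughHandle} to the $n$ handle segments and summing, the fixed $-2\epsilon$ from the former cancels against the $+2\epsilon$ from the latter, producing
\begin{equation*}
\action(\gamma) = \action(\cycword(\gamma)) + \sum_{k=1}^{n}\left(-p_{k}q_{k} + c_{l^{+}_{j_{k}}}H_{\epsilon}(p_{k})\right).
\end{equation*}
By Proposition \ref{Prop:TwistProperties} we have $|H_{\epsilon}(p_{k})| \leq \epsilon$, while the neighborhood parameterizations of Proposition \ref{Prop:NeighborhoodConstruction} (and the restrictions in Assumption \ref{Assump:DiskIntersection}) force $|p_{k}|, |q_{k}| \leq \epsilon$, so $|p_{k}q_{k}| \leq \epsilon^{2}$. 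Each summand is therefore bounded in absolute value by $\epsilon + \epsilon^{2} < 2\epsilon$ for $\epsilon < 1$, yielding $|\action(\gamma) - \action(\cycword(\gamma))| < 2\epsilon \wl(\gamma) < 3\epsilon \wl(\gamma)$.

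For a chord $\kappa$ with $\word(\kappa) = r_{j_{1}}\cdots r_{j_{n}}$, the argument is analogous, except that the first and last complementary segments are partial: $\kappa$ starts at $\{p=z=0\}\subset N_{\epsilon, l^{-}_{j_{1}}}$ and ends on $\LambdaZero\subset N_{\epsilon, l^{+}_{j_{n}}}$. Because $c = 0$ along $\LambdaZero$, the Reeb flow on these neighborhoods is simply $\partial_{z}$, so each of the two partial segments contributes $\action(r_{j_{\ast}}) - \epsilon$ plus a $-pq$-type correction, rather than $\action(r_{j_{\ast}}) - 2\epsilon$. Combined with the $n-1$ intermediate handle traversals, the $\pm \epsilon$ and $\pm 2\epsilon$ fixed pieces again cancel, and the total error is a sum of at most $2n-1$ correction terms each of magnitude $< 2\epsilon$, whence $|\action(\kappa) - \action(\word(\kappa))| < 3\epsilon \wl(\kappa)$.

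The proof is essentially a bookkeeping exercise once Lemmas \ref{Lemma:ActionHandleComplement} and \ref{Lemma:ActionThroughHandle} are in hand; the only point requiring care is verifying the precise cancellation of the $\pm 2\epsilon$ terms at each junction between a handle traversal and an adjacent complement traversal, and confirming that the coordinate bounds on $(p_{k}, q_{k})$ — in particular the $q$-diameter of $\rect^{en}_{j_{k}}$ — are uniform in $k$ and comparable to $\epsilon$.
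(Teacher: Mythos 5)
Your proposal is correct and matches the paper's argument, which simply observes that the estimate follows from Lemma \ref{Lemma:ActionHandleComplement}, Lemma \ref{Lemma:ActionThroughHandle}, and Proposition \ref{Prop:TwistProperties}; you have supplied exactly the bookkeeping (cancellation of the $\pm 2\epsilon$ terms and bounding the $-pq$ and $c_{i}H_{\epsilon}(p)$ corrections) that the paper leaves implicit. The only minor caveat is that the $q$-extent of the rectangles is of order $2\epsilon$ rather than $\epsilon$, but this still gives a per-letter error below $3\epsilon$, so the stated bound holds.
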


This is obvious from Lemmas \ref{Lemma:ActionHandleComplement} and \ref{Lemma:ActionThroughHandle} together with Proposition \ref{Prop:TwistProperties}.

\subsection{Calculating orbit embeddings}\label{Sec:Embeddings}

Let $\gamma = (r_{j_{1}}\cdots r_{j_{n}})$ be a closed orbit of some $R_{\epsilon}$. Let $(p_{k},q_{k})$ be coordinates on the squares $\rect^{ex}_{j_{k}}$ described by Equation \eqref{Eq:OvercrossingNeighborhoodModel}. Suppose that in these coordinates, $\gamma$ passes through the points $(P_{k},Q_{k})$. If $\gamma$ is simply covered and we compute the exact values of the $(P_{k},Q_{k})$, then we can see the knot formed by $\gamma$ inside of $\SurgL$ and be able to compute the action $\action(\gamma)$ exactly. In this section, we describe how these $(P_{k},Q_{k})$ can be calculated. The analysis here will be the starting point for the computation of Conley-Zehnder indices.

In the above notation, we can describe $(P_{1}, Q_{1})$ as a fixed point of an affine transformation 
\begin{equation*}
A + b: \R^{2} \rightarrow \R^{2},\quad A \in \SLtwoR, b\in \R^{2}
\end{equation*}
as follows.

Starting at (a subset of) $\rect^{ex}_{j_{k}}$, apply $\Flow_{R_{\epsilon}}$ to pass through the handle complement to $\rect^{en}_{j_{k}}$ and then through the surgery handle $N_{\epsilon, l^{+}_{j_{k}}}$ to $\rect^{ex}_{j_{k+1}}$. As we are only interested in the set of points in $\rect^{ex}_{j_{k}}$ along which the $\tau_{\pm f_{\epsilon}}$ are linear, we can write this as a map $A_{k} + b_{k}$ with $A_{k} \in \SLtwoR$. The $b_{k} \in \R^{2}$ term is required by Equation \eqref{Eq:OvercrossingNeighborhoodModel} centering the $q$ coordinate about the endpoint of the Reeb chord $r_{k+1}$.

Hence we may write
\begin{equation}\label{Eq:AtoAn}
\begin{aligned}
A+b &= (A_{n} + b_{n})\circ \cdots \circ (A_{1} + b_{1})\\
	&= (A_{n}\cdots A_{1}) + (A_{n}\cdots A_{2}) b_{1} + \cdots + A_{n}b_{n-1} + b_{n}
\end{aligned}
\end{equation}
with $(P_{1},Q_{1})$ being the fixed point of this map. By linearity of the equations involved and our prior knowledge (Theorem \ref{Thm:ChordsToOrbits}) that there exists a unique fixed point, we may as well consider the $A_{k} + b_{k}$ to be transformations of $\R^{2}$. We can then solve for $u_{1} = (P_{1},Q_{1})$ as
\begin{equation*}
\begin{aligned}
u_{1} &= (\Id - A)^{-1}b\\
&= (\Id - A_{n}\cdots A_{1})^{-1}\big( A_{n}\cdots A_{2} b_{1} + \cdots + A_{n}b_{n-1} + b_{n})
\end{aligned}
\end{equation*}
Provided $u_{1}$, we can then find the $u_{k} = (P_{k}, Q_{k})$ by applying the $(A_{k} + b_{k})$:
\begin{equation*}
u_{k+1} = A_{k}u_{k} + b_{k} = (A_{k} + b_{k})\cdots (A_{1} + b_{1})u_{1}.
\end{equation*}

\begin{prop}\label{Prop:AnExplicit}
In the above notation,
\begin{equation*}
\begin{gathered}
(-1)^{\rot_{j_{k},j_{k+1}}}A_{k} =  \begin{pmatrix}
0 & -1 \\ 1 & -\frac{c_{j_{k}}}{\epsilon}
\end{pmatrix} = J_{0}\begin{pmatrix}
1 & -\frac{c_{j_{k}}}{\epsilon} \\ 0 & 1
\end{pmatrix},\\
(-1)^{\rot_{j_{k},j_{k+1}}}b_{k} = \begin{pmatrix}
0 \\ \half - d_{j_{k},j_{k+1}}
\end{pmatrix} = J_{0}\begin{pmatrix}
\half - d_{j_{k},j_{k+1}} \\ 0
\end{pmatrix}
\end{gathered}
\end{equation*}
where $d_{j_{k},j_{k + 1}}$ is the minimal length of a capping path for the pair $(r_{j},r_{j_{k+1}})$ projected to the $xy$ plane using the standard Euclidean metric on $\R^{2}$.
\end{prop}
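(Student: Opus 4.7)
The plan is to express $A_k + b_k$ as the composition of three elementary affine maps: (i) the Reeb flow through the handle complement from $\rect^{ex}_{j_k}$ at the top of $N_{\epsilon, l^{-}_{j_k}}$ to $\rect^{en}_{j_k}$ at the bottom of $N_{\epsilon, l^{+}_{j_k}}$; (ii) the Dehn twist through the surgery handle $N_{\epsilon, l^{+}_{j_k}}$ in its linear region; and (iii) a translation changing the origin from the model chart centered at the tip $q^{+}_{j_k}$ to the one centered at the tail $q^{-}_{j_{k+1}}$. Each piece is affine by Assumption~\ref{Assump:DiskIntersection} and Properties~\ref{Properties:DiskIntersection}, so the computation reduces to identifying each factor and assembling them.

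For (i), the Reeb vector field is $\partial_z$ on the handle complement, so points are identified by matching $(x,y)$-coordinates. Using Equations~\eqref{Eq:UndercrossingNeighborhoodModel} and \eqref{Eq:OvercrossingNeighborhoodModel} as in the proof of Lemma~\ref{Lemma:ActionHandleComplement}, this identification reads $(p,q) \mapsto (-q, p) = J_0(p,q)^T$ in model coordinates. For (ii), Properties~\ref{Properties:DiskIntersection} present the twist in the relevant rectangle as $(p,q) \mapsto (p, q + \half + c^{+}_{j_k} p/\epsilon)$, which is the shear $\begin{pmatrix} 1 & 0 \\ c^{+}_{j_k}/\epsilon & 1 \end{pmatrix}$ followed by the translation $(0, \half)^T$. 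Composing (i) and (ii) immediately gives
\[
\begin{pmatrix} 1 & 0 \\ c^{+}_{j_k}/\epsilon & 1 \end{pmatrix} J_0 \;=\; J_0 \begin{pmatrix} 1 & -c^{+}_{j_k}/\epsilon \\ 0 & 1 \end{pmatrix} \;=\; \begin{pmatrix} 0 & -1 \\ 1 & -c^{+}_{j_k}/\epsilon \end{pmatrix},
\]
which recovers both displayed forms of the linear part in the proposition. For (iii), the $q$-origin shift equals the arc-length $\ell_{j_k, j_{k+1}} \in [0,1)$ of the positive capping path $\eta_{j_k, j_{k+1}}$; by Assumption~\ref{Assump:StandardNeighborhood} and the orthonormality of $(X,Y)$ under $g_{\R^3}$ recorded in \eqref{Eq:LegVectorLengths}, this $\ell_{j_k, j_{k+1}}$ is also the Euclidean length of $\pxy(\eta_{j_k, j_{k+1}})$.

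To produce the overall prefactor $(-1)^{\rot_{j_k, j_{k+1}}}$ and convert the translation into a quantity involving the minimal distance $d_{j_k, j_{k+1}}$, I would carry the orientation-matching matrices $M^{\pm} \in \{\Id, \Diag(1, -1, -1)\}$ of Proposition~\ref{Prop:NeighborhoodConstruction} through all three steps. Good position pins the tangent angles at chord endpoints to the discrete set $\{-\pi/4, 3\pi/4\}$ at over-crossings and $\{\pi/4, 5\pi/4\}$ at under-crossings, so $\theta_{j_k, j_{k+1}} \equiv \pi/2$ or $3\pi/2 \pmod{2\pi}$, and consequently $(-1)^{\rot_{j_k, j_{k+1}}} = m^{+}_{j_k} m^{-}_{j_{k+1}}$, where $m^{\pm} \in \{\pm 1\}$ records the sign of $M^{\pm}$ on the $(p,q)$-plane. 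Selecting the integer lift of $q^{-}_{j_{k+1}} - q^{+}_{j_k} \in \Circle_1$ for which the post-twist local coordinate lies in the linear region of $f_\epsilon$ turns the raw shift into a constant of magnitude $|\half - \ell_{j_k, j_{k+1}}| = \half - d_{j_k, j_{k+1}}$, with its sign absorbed into the product of $M^{\pm}$-signs.

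The main obstacle is this last step of bookkeeping: verifying that the four sign flips from $M^{\pm}$ combine with the choice of integer wrap to reproduce the single prefactor $(-1)^{\rot_{j_k, j_{k+1}}}$ on both $A_k$ and $b_k$ simultaneously, and to replace $\half - \ell_{j_k, j_{k+1}}$ by $\half - d_{j_k, j_{k+1}}$ in the second component of $b_k$. Once this is in place, the second equivalent form in the proposition follows from the identity $(0, \half - d)^T = J_0(\half - d, 0)^T$.
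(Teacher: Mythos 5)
Your decomposition is essentially the paper's own proof. The paper likewise writes $A_{k}+b_{k}$ as a composition of elementary affine maps: the coordinate change $(p,q)\mapsto(-q,p)$ across the handle complement, the linearized twist $(p,q)\mapsto\left(p,\,q+\half+\frac{c^{+}_{j_{k}}}{\epsilon}p\right)$ from Properties \ref{Properties:DiskIntersection}, a translation in $q$ centering the chart at the tail of $r_{j_{k+1}}$, and a final identification of the handle-top coordinates with those of $\rect^{ex}_{j_{k+1}}$; your computation of the linear part agrees with it exactly. The step you flag as the ``main obstacle'' is precisely this last identification, which the paper dispatches in one line by asserting that the change of coordinates onto $\rect^{ex}_{j_{k+1}}$ is $(p,q)\mapsto(-1)^{\rot_{j_{k},j_{k+1}}}(p,q)$ (good position pins the capping-path rotation angle to $\frac{\pi}{2}$ or $\frac{3\pi}{2}$ modulo $2\pi$, so the two $(p,q)$-frames differ by $\pm\Id$ with sign recorded by $\rot_{j_{k},j_{k+1}}\bmod 2$); the paper does not track the matrices $M$ of Proposition \ref{Prop:NeighborhoodConstruction} any more explicitly than you propose to, so your sketch is at worst more detailed, not off-route. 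Note also that your worry about obtaining the prefactor on $A_{k}$ and $b_{k}$ simultaneously dissolves with the paper's ordering of the composition: the translation by $d^{in}_{k,k+1}$ is performed while still in the handle-top coordinates, and the sign enters as a single linear coordinate change applied last, so it automatically multiplies both the linear and the constant parts of the affine map at once. Finally, your identity $\left|\half-\ell_{j_{k},j_{k+1}}\right|=\half-d_{j_{k},j_{k+1}}$ is the right way to pass from the oriented capping-path length to the minimal one and matches the paper's (implicit) identification of the shift magnitude $d^{in}_{k,k+1}$ with $d_{j_{k},j_{k+1}}$ in the statement of the proposition.
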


\begin{proof}
We can determine $A_{k} + b_{k}$ as a composition of the following elementary mappings:
\be
\item The change of coordinates from $\rect^{ex}_{j_{k}}$ to $\rect^{en}_{j_{k}}$ which we see when flowing points $(p, q)$ through the handle complement:
\begin{equation*}
(p, q) \mapsto (-q, p).
\end{equation*}
\item The flow from $\rect^{en}_{j_{k}}$ to the top of $N_{\epsilon, l_{j_{k}}^{+}}$ which, according to Properties \ref{Properties:DiskIntersection} is given
\begin{equation*}
(p, q) \mapsto \left(p, q + \half + \frac{c_{j_{k}}^{+}}{\epsilon}p\right)
\end{equation*}
\item A shift in the $q$ coordinate so that $(0, 0)$ is identified with the tail of $r_{j_{k+1}}$. Here $d^{in}_{k,k+1}$ is the magnitude of this shift when $\Lambda_{j_{k}}$ is parameterized with $\Phi_{i^{+}}$ as in Proposition \ref{Prop:NeighborhoodConstruction}:
\begin{equation*}
(p, q) \mapsto (p, q - d^{in}_{k,k+1})
\end{equation*}
\item A mapping of the coordinates on the top of $N_{\epsilon, l^{+}_{j_{k}}}$ to $\rect^{ex}_{k+1}$.
\begin{equation*}
(p, q) \mapsto (-1)^{\rot_{j_{k},j_{k+1}}}(p, q).
\end{equation*}
\ee
The result of composing the above maps produces
\begin{equation*}
(p, q) \mapsto (-1)^{\rot_{j_{k},j_{k+1}}}\left(-q, p + \half - d^{in}_{k, k+1} - \frac{c_{j_{k}}^{+}}{\epsilon}q\right).
\end{equation*}
\end{proof}

\subsection{Hyperbolicity and the $\Z/2\Z$ index}

For a given closed orbit $\gamma = (r_{j_{1}}\cdots r_{j_{n}})$, we can use the above formula to write its Poincaré return map as $\Ret_{\gamma} = A_{n}\cdots A_{1}$ where the $A_{k}$ are given by Equation \eqref{Eq:AtoAn}. By using the calculation of the $A_{k}$ in Proposition \ref{Prop:AnExplicit}, we have an explicit representation of $\Ret_{\gamma}$ as
\begin{equation}\label{Eq:MatrixReturn}
\begin{aligned}
(-1)^{\rot}\Ret_{\gamma} &= \prod_{K=1}^{n}J_{0}\begin{pmatrix}
1 & -c^{+}_{j_{n + 1 - K}}\epsilon^{-1} \\
0 & 1
\end{pmatrix}\\
&= J_{0}\begin{pmatrix}
1 & -c^{+}_{j_{n}}\epsilon^{-1} \\
0 & 1
\end{pmatrix}\cdots J_{0}\begin{pmatrix}
1 & -c^{+}_{j_{1}}\epsilon^{-1} \\
0 & 1
\end{pmatrix}\\
&= J_{0}^{n} + \sum_{K=1}^{n} \left( \sum_{k\in I_{K}} \left(\prod_{i=1}^{K} -c^{+}_{j_{k_{i}}}\right)M_{k}\right) \epsilon^{-K}\\
\rot &= \sum_{K=1}^{n} \rot_{j_{K},j_{K+1}} \\
M_{k} &=  J_{0}^{n-k_{K}}\Diag(0,1)J_{0}^{k_{K} - k_{K-1} -1} \cdots J_{0}^{k_{2} - k_{1} - 1}\Diag(0,1)J_{0}^{k_{1} - 1}\\
I_{K} &= \{ k = (k_{1},\dots,k_{K})\ :\ 1\leq k_{1} < \cdots < k_{K} \leq n \}.
\end{aligned}
\end{equation}
The equality in the third line involving the $M_{k}$ easily follows from an induction on $n$.

Observe that $I_{n}$ consists of a single element $(1,\dots, n)$ so that the $K=n$ term in the above formula is
\begin{equation}\label{Eq:Mn}
\epsilon^{-n}(\prod^{n}_{k=1} -c^{+}_{j_{k}})\Diag(0, 1) = \epsilon^{-n}(-1)^{\#(c^{+}_{j_{k}} = 1)}\Diag(0, 1)
\end{equation}
Thus for a fixed word, $\tr(\Ret_{\gamma})$ can be expressed as a polynomial in $\epsilon^{-1}$ whose highest-order term given by the above expression. 

\begin{proof}[Proof of Theorem \ref{Thm:Mod2CZ}]
For $\epsilon_{w}$ sufficiently small the $\epsilon^{-n}$ term in the polynomials for $\tr(\Ret_{\gamma})$ determines their sign for all $\epsilon < \epsilon_{w}$ and words of length $\leq n$ as there are only finitely many cyclic words less than a given length. Possibly making $\epsilon_{w}$ smaller, we can guarantee that the absolute values of the traces are bounded below by $2$. To compute $\CZ_{2}$ we apply Equations \eqref{Eq:DetTr} and \eqref{Eq:CZTwo} noting that $\det (\Ret - \Id) = 2 - \tr(\Ret)$.

If one of $\LambdaPlus$ or $\LambdaMinus$ is empty, then each orbit of word length $n$ has return map
\begin{equation*}
\Ret_{\gamma} = \pm M_{a}^{n},\quad M_{a} = \begin{pmatrix}
0 & -1 \\ 1 & a
\end{pmatrix},\quad a = \pm \epsilon^{-1}.
\end{equation*}
If $\epsilon < \half$, then $M_{a}$ is hyperbolic and so is conjugate to $\Diag(\lambda, \lambda^{-1})$ with 
\begin{equation*}
\lambda = \half (a + \sqrt{a^{2} - 4}),\quad \lambda^{-1} = \half (a - \sqrt{a^{2} - 4})
\end{equation*}
implying that $\Ret_{\gamma}$ is conjugate to $\pm\Diag(\lambda^{n}, \lambda^{-n})$. In this case, it's clear that $|\tr(\Ret_{\gamma})| > 2$ independent of $n$, implying that all closed orbits of $R_{\epsilon}$ are hyperbolic for $\epsilon < \half$.
\end{proof}

\section{The semi-global framing $(X, Y)$}\label{Sec:Framing}

Having computed the $\Z/2\Z$ Conley-Zehnder indices of the closed Reeb orbits of $R_{\epsilon}$ we now seek to compute $\Z$-valued indices with respect to a framing as well as Maslov indices of broken closed strings on $\LambdaZero \subset \SurgLxi$. 

In this section we describe sections of $\SurgXi$ which we will later use to compute these indices. This will allow us to draw a cycle representing $\PD(c_{1}(\SurgXi)) = \PD(e(\SurgXi))$ as a link in the Lagrangian projection: See Figure \ref{Fig:Rot1Unknot} for an example. The results of this section are summarized as follows:

\begin{thm}\label{Thm:FramingSummary}
For each $\epsilon < \epsilon_{0}$ there are sections $X, Y \in \Gamma(\xi_{\Lambda})$ such that the following conditions hold:
\be
\item $(X, Y) = (\partial_{x}+y\partial_{z}, \partial_{y})$ on $\SurgL \setminus N_{\epsilon} \simeq \R^{3}\setminus N_{\epsilon}$.
\item $(X, Y)$ is a symplectic basis of $(\SurgXi, d\alpha_{\epsilon})$ at each point contained in a closed Reeb orbit of $R_{\epsilon}$.
\item $X^{-1}(0)=Y^{-1}(0)$ is a union of connected components of $\cup_{i}T^{c_{i}}_{i}$ where the $T^{\pm}_{i}$ are the transverse push-offs of the $\Lambda_{i}$ as described in Definition \ref{Def:Pushoff}.
\ee
Using the $(X, Y)$, the first Chern class of $\SurgXi$ may be computed as
\begin{equation*}
\PD(c_{1}(\xi_{\LambdaPM})) = \sum_{\Lambda_{i} \subset \LambdaPM} -c_{i}\rot(\Lambda_{i})[T^{c_{i}}_{i}] = \sum_{\Lambda_{i} \subset \LambdaPM} \rot(\Lambda_{i})\mu_{i} \in H_{1}(\SurgL).
\end{equation*}
\end{thm}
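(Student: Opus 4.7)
The proof has two parts: constructing sections $(X, Y)$ satisfying (i)--(iii), and then computing $c_1(\xi_{\LambdaPM})$ by Poincar\'e duality. On $\SurgL \setminus N_\epsilon$, set $(X, Y) = (\partial_x + y\partial_z, \partial_y)$; this gives (i) automatically and is a symplectic basis for $d\alpha_{std} = d\alpha_\epsilon$. Inside each $N_{\epsilon, i}$, the pair $(P, Q) = (\partial_p, \partial_q - p\partial_z)$ from Proposition \ref{Prop:NeighborhoodConstruction} is a non-vanishing symplectic basis for $d\alpha_\epsilon$. The task is then to interpolate between the exterior $(X, Y)$ and an interior $(P, Q)$-adapted frame on a collar of $\partial N_{\epsilon, i}$. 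For components $\Lambda_i \subset \LambdaZero$, no modification is needed since the gluing is trivial and $(X, Y)$ extends unobstructed.

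The winding obstruction to a non-vanishing interpolation is computed on the ribbon of $\Lambda_i$. Equation \eqref{Eq:ExpTangentMap} in the proof of Proposition \ref{Prop:NeighborhoodConstruction} identifies the change-of-basis between $(P, Q)$ and $(X, Y)$ at the ribbon as the rotation $-J_0 e^{J_0 G_i(q)}$, where $G_i$ is the Gauss map of $\Lambda_i$. Going once around $\Lambda_i$ this rotation winds by $2\pi\rot(\Lambda_i)$, so any continuous interpolation must produce $\rot(\Lambda_i)$ zeros counted with signs. I would design the interpolation to carry no winding on the component of the collar at $p = -c_i \epsilon$ and to concentrate the full $-c_i\rot(\Lambda_i)$ winding on the component at $p = c_i\epsilon$, placing all zeros along the transverse push-off $T^{c_i}_i$. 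The $c_i$ side is the natural choice given the direction of the Dehn-twist shear $T\phi_{c_i}|_\xi$ of Equation \eqref{Eq:TPhi}, which governs how the $(P, Q)$ frame behaves after the top gluing of $N_{\epsilon, i}$.

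Property (ii) then holds because closed Reeb orbits of $R_\epsilon$ traverse the handle interiors via the rectangles $\rect^{en}_j$ and $\rect^{ex}_j$ in $\{z = \pm\epsilon\}$ (Section \ref{Sec:OverlappingRectangles}), avoiding the push-offs at $\{z = 0, |p| = \epsilon\}$, and the interpolation passes through symplectic bases throughout. For the first Chern class, $\xi_{\LambdaPM}$ is an oriented rank-$2$ real bundle so $c_1 = e$, and the Poincar\'e dual of the Euler class is represented by the zero locus of a generic section counted with local winding multiplicities. Applied to $X$, this yields $\PD(c_1(\xi_{\LambdaPM})) = \sum_{\Lambda_i \subset \LambdaPM}(-c_i\rot(\Lambda_i))[T^{c_i}_i]$, the first equality. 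The second equality $\sum\rot(\Lambda_i)\mu_i$ then follows from the identification $[T^{c_i}_i] = -c_i\mu_i \in H_1(\SurgL)$, the meridian convention for surgery components to be reconciled with the homology computation in Section \ref{Sec:Homology}.

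The main obstacle is the explicit construction of the interpolation together with careful sign bookkeeping: confirming that the zeros concentrate on $T^{c_i}_i$ with multiplicity $-c_i\rot(\Lambda_i)$, rather than on $T^{-c_i}_i$ or with the opposite sign. This demands tracking the orientations from the Gauss-map winding, the direction of the Dehn-twist shear, and the push-off orientation of Definition \ref{Def:Pushoff} consistently through the entire construction.
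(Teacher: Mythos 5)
Your overall strategy matches the paper's: extend the exterior frame $(X,Y)$ into each surgery handle, measure the obstruction by Gauss-map winding, concentrate the zeros along a transverse push-off, compute $c_{1}=e(\SurgXi)$ from the zero locus, and finish with $[T^{c_{i}}_{i}]=-c_{i}\mu_{i}$ (that last identification is correct). The genuine gap is in the step that actually determines the side and the multiplicity. The winding you extract from Equation \eqref{Eq:ExpTangentMap} is the relative winding of $(X,Y)$ against $(P,Q)$ in the longitudinal ($q$) direction; by itself it does not force the zero locus to be $-c_{i}\rot(\Lambda_{i})$ times a longitude on the $p=c_{i}\epsilon$ wall. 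What pins this down is the winding of $X$, written in the interior frame $(P_{in},Q_{in})$ through the gluing map, around the meridian of the reglued solid torus --- equivalently around the boundaries of the squares $S^{\pm}_{q_{0},\epsilon,\delta}$ in the paper's proof. Along the glued top the transition is Equation \eqref{Eq:XYTop}, where the Gauss map is evaluated at the twisted coordinate $q+c_{i}f_{\epsilon}(p)$; as $p$ crosses the handle this argument makes one full loop in the direction determined by $c_{i}$, which is exactly what produces the count $-c_{i}\rot(\Lambda_{i})$ (Equation \eqref{Eq:XLoopHomotopy}). The mechanism you cite instead --- ``the direction of the Dehn-twist shear'' from Equation \eqref{Eq:TPhi} --- cannot supply this: the shear factor is a family of unipotent matrices, null-homotopic in $\SLtwoR$ and hence winding-free, so it contributes nothing to the zero count. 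Since replacing $-c_{i}\rot[T^{c_{i}}_{i}]$ by the alternatives (opposite sign, or zeros on the other wall) flips the answer from $\rot(\Lambda_{i})\mu_{i}$ to $-\rot(\Lambda_{i})\mu_{i}$ in $H_{1}(\SurgL)$, this deferred computation is the substance of the theorem rather than routine bookkeeping.

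A secondary point concerns the second condition of the theorem. Your justification only says closed orbits avoid the push-offs, but orbits traverse the full height of the handle, so they pass through any collar of the top and bottom faces where your interpolation lives. You must arrange the interpolation so that the change of frame stays in $\SLtwoR$ (i.e.\ remains a symplectic change of basis) except in a thin slab against the $p=c_{i}\epsilon$ wall, and then use that orbit points satisfy $|p|\leq\epsilon(1-\delta)$ by Assumptions \ref{Assump:DiskIntersection} and \ref{Assump:FlowProperties}. This is precisely what the explicit formulas of Section \ref{Sec:FramingExtension} accomplish, and those explicit formulas are also what the Conley--Zehnder and Maslov computations of Section \ref{Sec:CZMaslov} consume later, which is why the paper does not leave the interpolation abstract.
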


The classes $\mu_{i}$ are given by a standard presentation of $H_{1}(\SurgL)$ determined by the surgery diagram, which we will describe in Section \ref{Sec:Homology}.

Theorem \ref{Thm:FramingSummary} may be compared with \cite[Proposition 2.3]{Gompf:Handlebodies} where a similar result is stated for Chern classes integrated over $2$-cycles in Stein surfaces and with \cite[Section 3]{EO:OBInvariants} where Chern classes are computed when performing surgery along Legendrians lying in pages of open book decompositions.

\begin{figure}[h]\begin{overpic}[scale=.7]{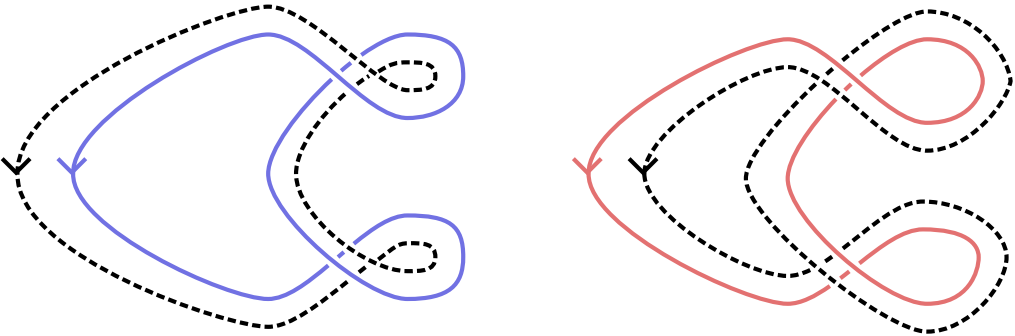}\end{overpic}
\caption{Here we consider contact $\pm 1$-surgery on the Legendrian unknot with $\rot(\Lambda) = 1$. In each case, Theorem \ref{Thm:FramingSummary} provides a framing of $\xi_{\Lambda}$ on the complement of a transverse push-off of $\Lambda$ which travels along the right hand side of $\Lambda$ when the surgery coefficient is $+1$ and along the left side of $\Lambda$ when the coefficient is $-1$. These push-offs are depicted as the dashed, black circles.}
\label{Fig:Rot1Unknot}
\end{figure}

For notational simplicity, we assume throughout this section that $\LambdaPM$ has a single connected component unless otherwise stated. Accordingly, we temporarily drop the indices $i$ appearing in the notation of Section \ref{Sec:ChordNotation}. The surgery coefficient of this knot will be denoted $c$.

Our framing is constructed in three steps: 
\be
\item We start with a framing of $\xi_{std}$ over the complement of $N_{\epsilon}$ and express it in terms of our local coordinate system $(z, p ,q)$ along the boundary of our surgery handles.
\item Next, we describe an explicit extension of this framing throughout most of the handle. We will need this explicit description to compute Conley-Zehnder and Maslov indices in Section \ref{Sec:CZMaslov}.
\item Finally, we describe the zero locus of this extension.
\ee

\subsection{Change of bases between trivializations}

Consider the following pairs of sections of $\xi_{std}$ and $\SurgXi$, which form symplectic bases:
\begin{equation*}
\begin{gathered}
X = \partial_{x} + y\partial_{x},\quad Y = \partial_{y}\\
P_{in} = \partial_{p},\quad Q_{in} = \partial_{q} - p\partial_{z}\\
P_{out} = \partial_{p},\quad Q_{out} = \partial_{q} - p\partial_{z}.
\end{gathered}
\end{equation*}
These come from
\be
\item the coordinate systems $(x, y, z)$ on $\R^{3}$, 
\item the coordinates $(z, p, q)$ on $N_{\epsilon}$ viewed ``from the outside'' of the surgery handle prior to surgery, and 
\item $(z, p, q)$ on $N_{\epsilon}$ viewed ``from the inside'' of the surgery handle after surgery,
\ee
respectively. After performing surgery, the pairs $(X, Y)$ and $(P_{in}, Q_{in})$ are well defined on the complement of a neighborhood of the form $N_{\epsilon'}$ for some $\epsilon' < \epsilon$. Our strategy will be to apply a series of change-of-basis transformations to extend the framing $(X, Y)$ of $\SurgXi$ throughout the surgery handle in so far as cohomological obstruction -- $c_{1}(\SurgXi)$ -- will allow.

First we describe change-of-bases from $(P_{in}, Q_{in})$ to $(P_{out}, Q_{out})$. Following Equation \eqref{Eq:TPhi} the restriction of the tangent map of the gluing map $\phi_{c, f, \epsilon, \delta}$  -- defined in Equation \eqref{Eq:GluingMap} -- to $\SurgXi$ can be written
\begin{equation}\label{Eq:TangentGluingMap}
T\phi_{c, f, \epsilon, \delta}(z, p, q)|_{\xi} =
    \begin{pmatrix}
        1 & 0 \\
        c\frac{\partial f_{\epsilon}}{\partial p}(p) & 1
        \end{pmatrix}
\end{equation}
along $T_{\delta, \epsilon}$ and as $\Diag(1, 1)$ along $B_{\delta, \epsilon} \cup S_{\delta, \epsilon}$. Here the incoming basis is $(P_{in}, Q_{in})$, the outgoing basis is $(P_{out}, Q_{out})$, and coordinates $(z, p, q)$ correspond to the coordinate system inside of the surgery handle.

Now we describe change-of-bases from $(P_{out}, Q_{out})$ to $(X, Y)$. To this end, let $G$ be the Gauss map for a parametrization of $\Lambda$ as described in Section \ref{Sec:StandardNeighborhoods}. Using the construction of $N_{\epsilon}$ in Proposition \ref{Prop:NeighborhoodConstruction}, we can write the change of basis at a point $(p, q, z)$
\begin{equation}\label{Eq:TangentInclusionMap}
    E(p, q)e^{J_{0}(G(q) - \frac{\pi}{2})}
\end{equation}
where $E = \Diag(1, 1) + \bigO(p)$. Here the incoming basis is $(P_{out}, Q_{out})$, the outcoming basis is $(X, Y)$, and coordinates $(z, p, q)$ correspond to the coordinate system on ``the outside'' -- the complement of the surgery handle in $N_{\epsilon}$.

By composing the changes of bases described above in Equations \eqref{Eq:TangentGluingMap} and \eqref{Eq:TangentInclusionMap} and then inverting we can write $(X, Y)$ in the basis $(P_{in}, Q_{in})$ on a neighborhood of $\partial N_{\epsilon}$ as follows: Along $B_{\delta, \epsilon}\cup S_{\delta, \epsilon}$ the change of basis is given by
\begin{equation}\label{Eq:XYBottom}
e^{J_{0}(\frac{\pi}{2}- G(q))}E^{-1}(p, q).
\end{equation}
Along $T_{\delta, \epsilon}$ the transition map is
\begin{equation}\label{Eq:XYTop}
    \begin{pmatrix}
        1 & 0 \\
        -c\frac{\partial f_{\epsilon}}{\partial p}(p) & 1
        \end{pmatrix}e^{J_{0}(\frac{\pi}{2}- G(q + c f_{\epsilon}(p)))}E^{-1}(p, q + c f_{\epsilon}(p)).
\end{equation}
Here the incoming basis is $(X, Y)$, the outcoming basis is $(P_{in}, Q_{in})$, and coordinates $(z, p, q)$ correspond to the coordinate system inside of the surgery handle. Then where they are defined, Equations \eqref{Eq:XYBottom} and \eqref{Eq:XYTop} provide $X$ and $Y$ as a linear combination of $P_{in}, Q_{in}$ by multiplying the above expressions on the left by $\left(\begin{smallmatrix}1 \\ 0\end{smallmatrix}\right)$ and $\left(\begin{smallmatrix}0 \\ 1\end{smallmatrix}\right)$, respectively.

\subsection{Framing extension up to obstruction}\label{Sec:FramingExtension}

We use the above equations to extend the framing $(X, Y)$ of $\SurgXi$ inside of the surgery handle. To this end, let $\delta> 0$ be an arbitrarily small constant and consider a smooth function $\nu:I_{\epsilon} \rightarrow [0, 1]$ with the following properties:
\be
\item $\nu(-\epsilon) = 0$ and $\nu(\epsilon)=1$,
\item all of its derivatives vanish outside of $I_{\epsilon - \delta}$.
\ee

\begin{figure}[h]\begin{overpic}[scale=.7]{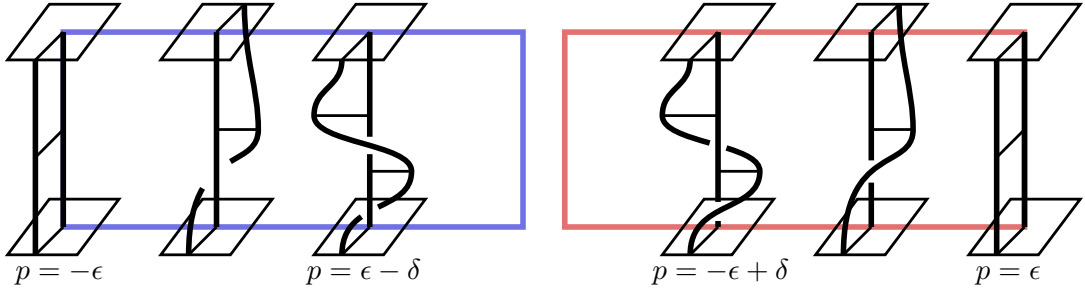}
\put(1, -2){$p=-\epsilon$}
\put(28, -2){$p=\epsilon-\delta$}
\put(60, -2){$p=-\epsilon + \delta$}
\put(90, -2){$p=\epsilon$}
\end{overpic}
\vspace{5mm}
\caption{On the left we have the extension of the framing $(X, Y)$ through the surgery handle over a square of the form $\{ q = q_{0}, p < \epsilon - \delta\}$ in $N_{\epsilon}$ when $c=1$ and $\rot(\Lambda) = 1$. On the right is the case $c=-1$, $\rot(\Lambda) = 1$. Here $\partial_{p}$ points to the right, $\partial_{q}$ points in to the page, and $\partial_{z}$ points upwards. Along the bottom of the square, the framing is constant. At each $p$, the framing twists with respect to the trivialization $(\partial_{p}, \partial_{q} - p\partial_{z})$ according to the twisting of Gauss map along the path in $\Lambda$ from $q_{0}$ to $q_{0} + f_{\epsilon}(p)$. For $c=1$, moving from left to right, we eventually get to $p_{0} = \epsilon - \delta$ such that $f_{\epsilon}(p) = 1$ for $p > p_{0}$.}
\label{Fig:XYExtension}
\end{figure}

When $c=1$, we use Equations \eqref{Eq:XYBottom} and \eqref{Eq:XYTop} to extend the definitions of $(X, Y)$ over the set $\{ p < \epsilon - \delta\} \subset N_{\epsilon}$ using the family of matrices
\begin{equation}\label{Eq:XYExtensionPlus1}
\begin{gathered}
\begin{pmatrix}
1 & 0 \\
-\frac{\partial f_{\epsilon}}{\partial p}(\zeta^{+}) & 1
\end{pmatrix}e^{J_{0}(\frac{\pi}{2}- G(\eta^{+}))}E^{-1}\left(\zeta^{+}, \eta^{+}\right), \\
\zeta^{+}(z, p) = p\nu(z) - \epsilon(1-\nu(z))\\
\eta^{+}(z, p, q) = q + f_{\epsilon}\left(\zeta^{+}(z, p)\right).
\end{gathered}
\end{equation}
Note that $\zeta^{+} = -\epsilon$ along $\{ z = -\epsilon \} \cup \{ p = -\epsilon \}$ and that $\zeta^{+} = p$ along $z=\epsilon$. By these properties and the properties of $f_{\epsilon}$ and its derivatives in Section \ref{Sec:GluingMaps}, we have that this family of matrices agrees with Equation \eqref{Eq:XYBottom} along $B_{\delta, \epsilon}$ and with Equation \eqref{Eq:XYTop} along $T_{\delta, \epsilon}$.

Likewise when $c=-1$, we extend the definitions of $(X, Y)$ over the set $\{ p > -\epsilon + \delta\} \subset N_{\epsilon}$ using the family of matrices which provide $(X, Y)$ in the basis $(P_{in}, Q_{in})$
\begin{equation}\label{Eq:XYExtensionMinus1}
\begin{gathered}
\begin{pmatrix}
1 & 0 \\
\frac{\partial f_{\epsilon}}{\partial p}(\zeta^{-}) & 1
\end{pmatrix}e^{J_{0}(\frac{\pi}{2}- iG(\eta^{-}))}E^{-1}(\zeta^{-}, \eta^{-})), \\
\zeta^{-}(z, p) = p\nu(z) + \epsilon(1 - \nu(z))\\
\eta^{-}(z, p, q) = q - f_{\epsilon}(\zeta^{-}(z, p)).
\end{gathered}
\end{equation}
Note that $\zeta^{-} = \epsilon$ along $\{ z = -\epsilon \} \cup \{ p = \epsilon \}$ and that $\zeta^{-} = p$ along $z=\epsilon$. As in the $c=1$ case, this family of matrices agrees with Equation \eqref{Eq:XYBottom} along $B_{\delta, \epsilon}\cup S_{\delta, \epsilon}$ and with Equation \eqref{Eq:XYTop} along $T_{\delta, \epsilon}$.

The extension of the fields $(X, Y)$ through the surgery handle $N_{\epsilon}$ is illustrated in Figure \ref{Fig:XYExtension}.

\subsection{Obstruction to global definition of $(X, Y)$}

The Chern class $c_{1}(\SurgXi)$ agrees with the Euler class of $\SurgXi$ and so can be represented as the zero locus of a generic section $s\in\Gamma(\SurgXi)$. In attempting to extend the definition of $(X, Y)$ over the squares $S^{+}_{q_{0}, \epsilon, \delta} := \{ q=q_{0}, \epsilon - \delta \leq p \leq \epsilon \}$ when $c=1$ and $S^{+}_{q_{0}, \epsilon, \delta} := \{ q=q_{0}, -\epsilon \leq p \leq -\epsilon + \delta \}$ when $c=-1$ we may complete the proof of Theorem \ref{Thm:FramingSummary}.

\begin{proof}[Proof of Theorem \ref{Thm:FramingSummary}]
We attempt to extend $X$ throughout the entirety of the handle, assuming that $\delta$ is small enough so that $f_{\epsilon}$ is constant on each component of $[-\epsilon, -\epsilon + \delta) \cup (\epsilon - \delta, \epsilon]$. For the case of $+1$-contact surgery we study Equation \eqref{Eq:XYExtensionPlus1}.  We orient $S^{+}_{q_{0}, \epsilon, \delta}$ so that $\partial_{q}$ points positively through it. Parameterize the oriented boundary of each $S^{+}_{q_{0}, \epsilon, \delta}$ with a piece-wise smooth curve $\gamma = \gamma(t)$ so that
\begin{equation*}
\frac{\partial \gamma}{\partial t} = \begin{cases}
    \partial_{z} & p=\epsilon - \delta\\
    \partial_{p} &  z=\epsilon\\
    -\partial_{z} & p=\epsilon\\
    -\partial_{p} & z=\epsilon.
\end{cases}
\end{equation*}

Applying the vector $\left(\begin{smallmatrix}1 \\ 0\end{smallmatrix}\right)$ to the left of Equation \eqref{Eq:XYTop} gives us the section $X$ as a linear combination of $P_{in}$ and $Q_{in}$ along $\gamma$. By throwing away the shearing and rescaling terms in Equation \eqref{Eq:XYExtensionPlus1}, this section is homotopic through non-vanishing sections of $\SurgXi$ to a section of the form
\begin{equation}\label{Eq:XLoopHomotopy}
t \mapsto   \begin{cases}
    e^{-J_{0}G(q_{0} + f_{\epsilon}(t\epsilon))}, & p=\epsilon - \delta, t\in [-1, 1]\\
	e^{-J_{0}G(q_{0} + f_{\epsilon}(\epsilon))}, & \{ z=-\epsilon \} \cup \{p=\epsilon\} \cup \{z=\epsilon\}.
  \end{cases}
\end{equation}
This is homotopic to
\begin{equation*}
t \mapsto e^{\const - 2\pi i t\rot(\Lambda) }, \quad t\in [0, 1].
\end{equation*}
Therefore a generic extension of $X$ over each $S^{+}_{q_{0}, \epsilon, \delta}$ will have $-\rot(\Lambda)$ zeros counted with multiplicity. Taking a generic extension of $X$ over $\{ p > \epsilon - \delta \}$ will then be an oriented link which transversely intersects each square with multiplicity $\rot(\Lambda)$. Pushing this zero locus through the side $ p = \epsilon $ of the surgery handle provides $\PD(c_{1}(\SurgXi)) = -\rot(\Lambda)\lambda_{\xi}$.

The case $c=-1$ is similar: We only check signs. Consider a parameterization of boundary of the square $S^{-}_{q_{0}, \epsilon, \delta}$ with a loop $\gamma$ satisfying
\begin{equation*}
\frac{\partial \gamma}{\partial t} = \begin{cases}
    -\partial_{z} & p=-\epsilon + \delta\\
    -\partial_{p} & z=-\epsilon\\
    \partial_{z} & p=-\epsilon\\
    \partial_{p} & z=\epsilon.
\end{cases}
\end{equation*}
Then following Equation \eqref{Eq:XYExtensionMinus1} the analog of Equation \eqref{Eq:XLoopHomotopy} for the $c=-1$ case is
\begin{equation*}
t \mapsto   \begin{cases}
e^{-J_{0}G(q_{0} - f_{\epsilon}(t\epsilon))}, & p=-\epsilon + \delta, t\in [-1, 1]\\
e^{-J_{0}G(q_{0} - f_{\epsilon}(\epsilon))}, & \{ z=\epsilon \} \cup \{p=-\epsilon\} \cup \{z=-\epsilon\}
  \end{cases}
\end{equation*}
so that the zero locus of the extension of the vector field $X$ throughout the handle is homologous to $\rot(\Lambda) \lambda_{\xi}$.
\end{proof}

\begin{rmk}\label{Rmk:MuToLambdaFraming}
We sketch how the framing $(X, Y)$ can be modified so that its zero locus is contained in a union of meridians of the $\Lambda_{i}$. Take a meridian $\mu_{i}$ of $\Lambda_{i}$ and handle-slide it through $N_{\epsilon}$ to obtain a longitude $-c_{i}\lambda_{i}$ which we may take to be $-c_{i}T^{c_{i}}$. 

This homotopy, say parameterized by $[0, 1]$ may be chosen so that the surface $S$ it sweeps out is an embedded cylindrical cobordism parameterized by an annulus $[0, 1]\times \Circle$. Then we can find a family $(X_{t}, Y_{t})$ of sections of $\xi_{\Lambda}$ whose zero-loci are contained in $\{t\} \subset \Circle$, so that $(X_{1}, Y_{1})$ will vanish along some union of the $\Lambda_{i}$ as desired.

If $\gamma$ is a Reeb orbit of $R_{\epsilon}$ then according to Equation \eqref{Eq:MeridianTwist} we can compute $\CZ_{X_{1}, Y_{1}}$ from $\CZ_{X, Y}$ by counting the number of intersections of $\gamma$ with $S$, which measures the meridonial framing difference.
\end{rmk}

\subsection{Rotation numbers and Chern classes in arbitrary contact $3$-manifolds}

We briefly state how the above can be generalized to understand how $c_{1}$ changes after contact surgery on an arbitrary contact manifold $\Mxi$. A section $s\in \Gamma(\xi)$ determines a homotopy class of oriented trivialization of $\xi$ on the complement of $s^{-1}(0)$ by considering $\xi_{x} = \Span_{\R}(s_{x}, J s_{x})$ for an almost complex structure $J$ on $\xi$ compatible with $d\alpha$ for a contact form $\alpha$ for $\xi$ and $x \in M \setminus s^{-1}(0)$. Suppose that $s$ is transverse to the zero section and non-vanishing along a neighborhood $N_{\epsilon}$ of $\Lambda$. Write $\eta_{s}$ for the oriented link 
\begin{equation*}
T_{s} = s^{-1}(0) \subset (M \setminus N_{\epsilon}) = (M_{\Lambda} \setminus N_{\epsilon}).
\end{equation*}

\begin{defn}\label{Def:RotGeneral}
The \emph{rotation number} $\rot_{s}(\Lambda)$ is the winding number of $\partial_{q}$ in $\xi_{\Lambda}$ determined by the trivialization of $\xi|_{\Lambda}$ provided by $s$.
\end{defn}

Note that changing the orientation of $\Lambda$ multiplies the rotation number by $-1$ and that $\rot_{s}$ agrees with the standard definition of the rotation number for oriented Legendrians in $\Rthree$ if we take $s \in \Gamma_{\neq 0}(\xi_{std})$. More generally, the rotation number of a Legendrian knot $\Lambda$ is canonically defined whenever $\xi$ admits a non-vanishing section and at least one of $H^{1}(M)=0$ or $[\LambdaZero] = 0 \in H_{1}(M)$ holds as in Proposition \ref{Prop:CanonicalZGrading}. 

We note that Definition \ref{Def:RotGeneral} may be applied to Legendrian knots in $\SurgLxi$ even when these hypotheses are not satisfied: If such a knot $\LambdaZero$ is contained in $\R^{3} \setminus N_{\epsilon} = \SurgL \setminus N_{\epsilon}$, then $\rot_{X, Y}(\LambdaZero)$ may be computed using the typical methods for Legendrian knots in $\Rthree$ as described in Section \ref{Sec:LegendrianOverview}. This follows immediately from the first condition listed in Theorem \ref{Thm:FramingSummary}.

\begin{prop}\label{Prop:C1General}
Following the notation in the preceding discussion and writing $\lambda_{\xi}$ for a longitude of $\Lambda$ determined by $\xi$, and $\mu$ for a meridian of $\Lambda$, the Chern class $c_{1}(\xi_{\Lambda})$ for the contact manifold $(M_{\Lambda}, \xi_{\Lambda})$ obtained by performing contact $\pm 1$-surgery on $\Lambda \subset M$ is determined by the formula
\begin{equation*}
    \PD(c_{1}(\xi_{\Lambda})) = [T_{s}] \mp \rot_{s}(\Lambda)\lambda_{\xi} = [T_{s}] \pm \rot_{s}(\Lambda)\mu \in H_{1}(M_{\Lambda}).
\end{equation*}
\end{prop}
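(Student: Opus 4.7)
The plan is to follow the proof of Theorem \ref{Thm:FramingSummary} essentially verbatim, with two replacements: the global framing $(X, Y) = (\partial_x + y\partial_z, \partial_y)$ of $\xi_{std}$ is replaced by the symplectic trivialization $(s, Js)$ of $(\xi, d\alpha)$ on $M \setminus T_s$ (where $J$ is a compatible almost complex structure on $\xi$), and the Gauss map $G$ of $\Lambda \subset \R^2$ is replaced by the angle function $\tau: \Circle \to \R/2\pi\Z$ measuring the rotation of the Legendrian tangent $\partial_q|_\Lambda$ with respect to $(s, Js)$. By Definition \ref{Def:RotGeneral}, the total winding of $\tau$ around $\Lambda$ is $\rot_s(\Lambda)$.

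First I would perturb $s$ to be transverse to the zero section and to have $T_s = s^{-1}(0)$ disjoint from $N_\epsilon$; this is possible since $s$ is non-vanishing on $N_\epsilon$ by hypothesis. Next, the Legendrian Weinstein neighborhood theorem allows me to assume that $N_\epsilon$ carries standard coordinates $(z, p, q) \in I_\epsilon \times I_\epsilon \times \Circle$ with $\alpha = dz + p dq$ and $\Lambda = \{z = p = 0\}$, so that the contact $\pm 1$-surgery can be performed via the gluing maps $\phi_{c, f, \epsilon, \delta}$ of Section \ref{Sec:GluingMaps} exactly as before.

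The framing $(s, Js)$ is then extended through the surgery handle using formulas analogous to Equations \eqref{Eq:XYExtensionPlus1} and \eqref{Eq:XYExtensionMinus1}, with $G$ replaced by $\tau$ and $E(p, q)$ replaced by a change-of-basis matrix of the form $\Id + O(p)$ recording how $(s, Js)$ deviates from $(P_{out}, Q_{out})$ off the zero section $\Lambda$. Extension is blocked over squares $S^\pm_{q_0, \epsilon, \delta}$ parallel to the transverse push-off of $\Lambda$. Homotoping as in Equation \eqref{Eq:XLoopHomotopy} (discarding shearing and rescaling terms), the obstruction loop around $\partial S^\pm_{q_0, \epsilon, \delta}$ has winding number $-\rot_s(\Lambda)$ when $c = +1$ and $+\rot_s(\Lambda)$ when $c = -1$. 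Counting zeros generically and pushing the resulting link off the handle, the extension fails along a cycle homologous to $\mp \rot_s(\Lambda)\, \lambda_\xi$. Combined with $[T_s]$ from outside the handle, this yields the first equality $\PD(c_1(\xi_\Lambda)) = [T_s] \mp \rot_s(\Lambda)\lambda_\xi$. For the second equality I invoke the standard surgery relation in $H_1(M_\Lambda)$: contact $\pm 1$-surgery kills $\mu + c\lambda_\xi$ on $\partial N_\epsilon$, so $\lambda_\xi = -c\mu$ and $\mp \rot_s(\Lambda)\lambda_\xi = \pm \rot_s(\Lambda)\mu$.

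The main technical point, which I expect to be mild but requires care, is verifying that the $E(p,q)$ change-of-basis term (which depends on the unspecified $J$ and on the identification of $N_\epsilon$ with the standard neighborhood) does not contribute to the obstruction homotopy class. This should follow from the fact that it satisfies $E = \Id + O(p)$ on a contractible neighborhood of $\Lambda$, hence is null-homotopic through $\GL^+(2,\R)$-valued maps after the reduction of Equation \eqref{Eq:XLoopHomotopy}, leaving the entire winding contribution to come from the angle function $\tau$, exactly as in the proof of Theorem \ref{Thm:FramingSummary}.
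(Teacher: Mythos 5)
Your proposal is correct and follows essentially the same route as the paper, which proves Proposition \ref{Prop:C1General} precisely by rerunning the proof of Theorem \ref{Thm:FramingSummary} with $X$ replaced by $s$ (and the Gauss map replaced by the winding of $T\Lambda$ in the trivialization determined by $s$). Your added care about perturbing $s$ off $N_{\epsilon}$, the Weinstein neighborhood identification, and the null-homotopy of the $E = \Id + \bigO(p)$ term are reasonable elaborations of details the paper leaves implicit.
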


This can be proved using the same strategy as Theorem \ref{Thm:FramingSummary}, replacing $X$ with $s$.

\section{Conley-Zehnder and Maslov index computations}\label{Sec:CZMaslov}

The goal of this section is to compute the integral Conley-Zehnder indices of closed orbits of the $R_{\epsilon}$ and the Maslov indices of broken closed strings on $\LambdaZero \subset \SurgLxi$ using the framing $(X, Y)$ defined in Section \ref{Sec:Framing}.

\begin{thm}\label{Thm:IntegralCZ}
For each $n > 0$, there exists $\epsilon_{0}$ such that for all $\epsilon\leq \epsilon_{0}$ all orbits $\gamma$ of word length $\leq n$ are hyperbolic with
\begin{equation*}
\CZ_{X, Y}(\gamma) = \sum_{k=1}^{n} (\rot_{j_{k},j_{k+1}} +\ \delta_{1, c_{j_{k}}^{+}}) \in \Z.
\end{equation*}
\end{thm}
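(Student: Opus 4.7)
By Theorem~\ref{Thm:Mod2CZ}, for $\epsilon$ small every orbit of word length at most $n$ is hyperbolic, so $\CZ_{X,Y}(\gamma)\in\Z$ is well defined and already known modulo $2$; the plan is to pin down the integer lift by a direct Robbin--Salamon calculation \eqref{Eq:RSCZ}, combined with the loop property \eqref{Eq:CZLoop}.

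First I would decompose $\gamma$ into arcs alternately lying in the handle complement and in some surgery handle $N_{\epsilon, l^+_{j_k}}$. On a complement arc, $R_{\epsilon}=\partial_{z}$ preserves the frame $(X,Y)=(\partial_{x}+y\partial_{z},\partial_{y})$ of Theorem~\ref{Thm:FramingSummary}, so the linearized flow is the identity path and contributes nothing to $\CZ_{X,Y}$. Inside a handle the local frame $(P_{in},Q_{in})=(\partial_{p},\partial_{q}-p\partial_{z})$ supplied by Proposition~\ref{Prop:NeighborhoodConstruction} is likewise preserved by $R_{\epsilon}$, so the linearized flow in the $(X,Y)$-trivialization equals the change-of-basis path $t\mapsto M(t)^{-1}M(0)$, where $M(t)$ records $(X,Y)$ in the $(P_{in},Q_{in})$ basis at $\gamma(t)$. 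The explicit form of $M(t)$ is given by the framing extensions \eqref{Eq:XYExtensionPlus1}/\eqref{Eq:XYExtensionMinus1} and factors schematically as
\begin{equation*}
M(t)=\left(\begin{smallmatrix}1 & 0\\ -c^{+}_{j_{k}}\,\partial f_{\epsilon}/\partial p(\zeta(t)) & 1\end{smallmatrix}\right)\,e^{J_{0}(\pi/2-G(\eta(t)))}+\bigO(p),
\end{equation*}
with $\eta(t)$ sweeping the capping path $\eta_{j_{k},j_{k+1}}$ as $t$ increases, and $\zeta(t)$ activating the shear only near the top of the handle.

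Second, I would analyse the two factors separately. The rotation factor $e^{-J_{0}G(\eta(t))}$ rotates by $-\theta_{j_{k},j_{k+1}}$ as we traverse the handle; by the loop property \eqref{Eq:CZLoop} this contributes exactly $\rot_{j_{k},j_{k+1}}=\lfloor\theta_{j_{k},j_{k+1}}/\pi\rfloor$ half-turns (good position ensures $\theta_{j_{k},j_{k+1}}\bmod\pi\neq 0$, so the floor is unambiguous). The shear factor $\left(\begin{smallmatrix}1 & 0\\ -c^{+}_{j_{k}}/\epsilon & 1\end{smallmatrix}\right)$ is parabolic; after a small rotational perturbation to produce regular crossings, a direct application of \eqref{Eq:RSCZ} at the single crossing $t=0$ shows that its contribution is $+1$ when $c^{+}_{j_{k}}=+1$ and $0$ when $c^{+}_{j_{k}}=-1$, exactly $\delta_{1,c^{+}_{j_{k}}}$. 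Summing over the handles yields the stated formula, whose reduction mod $2$ recovers Theorem~\ref{Thm:Mod2CZ} as an internal consistency check.

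The main obstacle is handling the parabolic shear cleanly: its endpoint is degenerate, so \eqref{Eq:RSCZ} cannot be applied directly, and a controlled perturbation argument is needed to distinguish the two signs of $c^{+}_{j_{k}}$. A secondary care is that the rotation and shear factors do not commute, so the ``concatenation'' above must be justified up to $\SLtwoR$-conjugations that leave $\CZ$ unchanged, and the integer contributions from each handle must be summed correctly despite the non-trivial composition order in $\Ret_{\gamma}$.
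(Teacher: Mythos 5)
Your setup matches the paper's: the frame $(X,Y)$ is flow-invariant outside the handles, so the linearized return map in this trivialization is a concatenation of change-of-basis paths through the handles, and indeed the paper encodes exactly this as the paths $\mathcal{F}_{j_{k},j_{k+1}}(t)$ of Equation \eqref{Eq:ModelReturnMap} and computes $\CZ$ via Robbin--Salamon \eqref{Eq:RSCZ} and the loop property \eqref{Eq:CZLoop}. However, there is a genuine gap at the step ``Summing over the handles yields the stated formula.'' The Robbin--Salamon index is additive under catenation of the \emph{full} path, but the path along the $k$-th handle segment is $t\mapsto \mathcal{F}_{j_{k},j_{k+1}}(t)\,\Ret_{k-1}$, where $\Ret_{k-1}$ is the accumulated partial product, not the local factor $\mathcal{F}_{j_{k},j_{k+1}}(t)$ alone. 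Crossings (times where $1$ is an eigenvalue) and their signatures on that segment depend on $\Ret_{k-1}$, so the per-handle contribution cannot be read off from the rotation-plus-shear factor in isolation, and ``summing local indices'' is false for general products in $\SLtwoR$. This is not a secondary bookkeeping issue to be absorbed by conjugation invariance (conjugation leaves $\CZ$ of a loop-based path unchanged but does not make the segmentwise indices intrinsic); it is the heart of the matter. The paper resolves it by induction on word length: it computes the two leading terms of $\Ret_{n}$ as a polynomial in $\epsilon^{-1}$ (Equations \eqref{Eq:MatrixReturn}, \eqref{Eq:Mn}, \eqref{Eq:ReturnMapPenultimate}), uses $|\tr(\Ret_{n})|\sim\epsilon^{-n}$ to rule out crossings at segment endpoints and in the $\bigO(\epsilon)$ correction, and then locates and evaluates every crossing of $\tilde{\phi}(t)\Ret_{n}$ over the final segment by splitting it into four subintervals (a $\pi/4$ rotation, the shear, the remaining rotation, and full loops), checking nondegeneracy of each crossing form by hand. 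Some $\epsilon$-smallness threshold per word length is unavoidable, and your argument never uses it beyond citing hyperbolicity.

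Two smaller points. First, your per-factor split is not literally correct even for word length one: the rotation path $e^{J_{0}t\theta}$ started at the identity has Robbin--Salamon index $2\lfloor\theta/2\pi\rfloor+1$ (the $t=0$ crossing contributes $\tfrac{1}{2}\sgn(\Gamma(0))=1$), not $\lfloor\theta/\pi\rfloor$; in the paper's base case the identity $\rot+\delta_{1,c}$ only emerges after adding the shear's contribution $\tfrac{1}{2}(c-(-1)^{\rot})$, whose sign analysis is where the surgery coefficient actually enters. Second, the shear crossing needs no ad hoc ``small rotational perturbation'': composed with the preceding rotation (and, in the inductive step, with $\Ret_{n}$) the crossing is regular, and one must verify this by computing the crossing form on the one-dimensional kernel, as the paper does; introducing an artificial perturbation of the path risks changing the index unless you control it relative to the fixed endpoints.
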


\begin{thm}\label{Thm:MaslovComputation}
Let $b$ be a broken closed string on $\LambdaZero \subset \SurgLxi$ of the form
\begin{equation*}
b = \zeta_{1}\ast (a_{1}\kappa_{1}) \ast \cdots \zeta_{n}\ast (a_{n}\kappa_{n})
\end{equation*}
where each $\zeta_{k}$ is a path in $\LambdaZero$ and each $\kappa_{k}$ is a chord of $\LambdaZero$ with respect to $R_{\epsilon}$. By Theorem \ref{Thm:ChordsToChords}, we can write 
\begin{equation*}
\kappa_{k} =(r_{k_{1}}\cdots r_{k_{n_{k}}})
\end{equation*}
for some word of chords with boundary on $\LambdaZero \subset \Rthree$. In this notation,
\begin{equation*}
\Maslov_{X, Y}(b) = \sum_{1}^{n} \bigg(\frac{\theta(\zeta_{k})}{\pi} - 
    \half + a_{k}m_{X, Y}(\kappa_{k})\bigg), \quad
m_{X, Y}(\kappa_{k}) = \sum_{l=1}^{n_{k}-1}\left(\rot_{j_{k_{l}},j_{k_{l+1}}} + \delta_{1, c_{k_{l}}}^{+}\right).
\end{equation*}
\end{thm}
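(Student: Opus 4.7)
The plan is to compute $\Maslov_{X, Y}(b)$ directly from Definition \ref{Def:MaslovIndex}, expressing the degree of $\phi_{b, (X, Y)}:\Circle \rightarrow \Circle_{\pi}$ as a sum of local contributions split between the arcs $\zeta_{k}$, the Reeb chords $\kappa_{k}$, and the smallest-clockwise-rotation corrections at the joining endpoints of the chords. Since $\LambdaZero$ is disjoint from $N_{\epsilon}$, Theorem \ref{Thm:FramingSummary} forces $(X, Y) = (\partial_{x}+y\partial_{z}, \partial_{y})$ along every $\zeta_{k}$, so the unoriented Gauss map of $T\LambdaZero$ in this frame coincides with the standard Gauss map of $\LambdaZero$. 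The restriction of $\phi_{b, (X, Y)}$ to the arc portion of the domain therefore sweeps an angle of $\theta(\zeta_{k})$ in $\Circle_{\pi}$ and contributes $\theta(\zeta_{k})/\pi$ to the degree, accounting for the first summand of the formula.

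For each chord $\kappa_{k}$, I would use Theorem \ref{Thm:ChordsToChords} to expand $\kappa_{k}$ into a sequence of $\partial_{z}$-subchords $r_{k_{1}}, \ldots, r_{k_{n_{k}}}$ alternating with transits through surgery handles. On every handle-complement segment $R_{\epsilon} = \partial_{z}$ preserves $(X, Y)$ pointwise, so no rotation accumulates there. On each transit through a handle $N_{\epsilon, l^{+}_{k_{l}}}$ between $r_{k_{l}}$ and $r_{k_{l+1}}$, composing the tangent map of the gluing \eqref{Eq:TangentGluingMap} with the framing extension of Section \ref{Sec:FramingExtension} and the change-of-basis formulas \eqref{Eq:XYBottom}--\eqref{Eq:XYTop} produces, modulo $\pi$, a rotation of $\rot_{j_{k_{l}}, j_{k_{l+1}}} + \delta_{1, c_{k_{l}}^{+}}$ units. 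This is essentially the same computation that underlies Theorem \ref{Thm:IntegralCZ}, now executed in the unoriented Lagrangian Grassmannian rather than in a path in $\SLtwoR$. Summing over $l$ yields $m_{X, Y}(\kappa_{k})$, with the prefactor $a_{k}$ reflecting that a negative asymptotic reverses the direction of the trajectory and hence the sign of the accumulated rotation.

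The remaining $-\half$ per chord arises from the smallest-clockwise-rotation correction prescribed in Definition \ref{Def:MaslovIndex}. Good position (Definition \ref{Def:GoodPosition}) places the unoriented tangent to $\Lambda$ at the tip of every $\partial_{z}$-chord at an angle differing by $\pi/2$ modulo $\pi$ from the corresponding tangent at its tail, so after stripping off the integer handle twist already captured by $m_{X, Y}(\kappa_{k})$, the Reeb-propagated Lagrangian line differs from $T_{q^{+}_{k}}\LambdaZero$ by $\pi/2$ modulo $\pi$. The smallest clockwise rotation closing this gap is $-\pi/2$, giving a $-\half$ contribution to the degree regardless of $a_{k}$. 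The principal obstacle will be this last residual-angle claim: one must transport the good-position tangent data through the full composition of all change-of-basis and Dehn-twist maps and verify that the nonintegral part of the total rotation along $\kappa_{k}$ is exactly $-\pi/2$ modulo $\pi$ in the clockwise sense. This is a local but delicate bookkeeping exercise, slightly more intricate than but parallel to the trace-sign calculation driving Theorem \ref{Thm:IntegralCZ}.
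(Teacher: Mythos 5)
Your proposal follows essentially the same route as the paper's proof: both compute $\Maslov_{X,Y}(b)$ directly from Definition \ref{Def:MaslovIndex}, with the arcs $\zeta_{k}$ contributing $\theta(\zeta_{k})/\pi$ because $(X,Y)$ is the standard frame off $N_{\epsilon}$, the handle transits contributing $\pi(\rot_{j_{k_{l}},j_{k_{l+1}}}+\delta_{1,c^{+}_{k_{l}}})$ each via the linearized-flow matrices $\mathcal{F}_{j_{1},j_{2}}$ underlying Theorem \ref{Thm:IntegralCZ} (with sign $a_{k}$ from reversing the chord), and the clockwise correction supplying the $-\half$. The residual-angle step you flag as the principal obstacle is exactly what the paper settles by the explicit shear-plus-rotation computation (Equations \eqref{Eq:MaslovRotationShear} and \eqref{Eq:MaslovRotationRot}), showing the correction angle is $-\pi/2+\bigO(\epsilon)$, so your sketch matches the paper's argument.
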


Proving the above theorems requires further analysis of Equations \eqref{Eq:XYExtensionPlus1} and \eqref{Eq:XYExtensionMinus1}. The analysis will provide an expression of the linearized flow of $R_{\epsilon}$ as a path of matrices in $\SLtwoR$ with entries in $\R[\epsilon^{-1}]$ determined by $\cycword(\gamma)$. Analysis of the highest-order terms of these polynomials gave us the proof of Theorem \ref{Thm:Mod2CZ}. Analysis of the second-highest-order terms of these polynomials will yield a formula for integral Conley-Zehnder indices, $\CZ_{X, Y}$.

\subsection{Matrix model for the linearized flow}

With respect to the coordinate system $(z, p, q)$ inside of the surgery handles $R_{\epsilon} = \partial_{z}$. Hence computing the restriction of the linearized to flow to $\SurgXi$ from the bottom ($z=-\epsilon$) to a point above it ($z > -\epsilon$) in the surgery handle with respect to $(X, Y)$ amounts to writing $(X, Y)_{-\epsilon, p, q}$ in the basis $(X, Y)_{z, p, q}$. We write $F_{i}(z, p, q) \in \SLtwoR$ for this path of matrices associated to points $(z, p, q)$ in the component of $N_{\epsilon}$ associated to $\Lambda_{i}$.

By composing Equation \eqref{Eq:XYBottom} with Equations \eqref{Eq:XYExtensionPlus1}  -- in the case of $+1$-surgery -- and \eqref{Eq:XYExtensionMinus1} -- in the case of $-1$-surgery -- we have
\begin{equation}\label{Eq:HandleFlow}
\begin{aligned}
F_{i}(z, p, q) &= E(\zeta^{c_{i}}, \eta^{c_{i}})e^{J_{0}G_{i}(\eta^{c_{i}})}\begin{pmatrix}
    1 & -c_{i} \frac{\partial f_{\epsilon}}{\partial p}(\zeta^{c_{i}}) \\
    0 & 1 \end{pmatrix} e^{-J_{0} G_{i}(q)}E^{-1}(p, q)\\
    &= e^{J_{0}G_{i}(\eta^{c_{i}})}\begin{pmatrix}
    1 & -c_{i} \frac{\partial f_{\epsilon}}{\partial p}(\zeta^{c_{i}}) \\
    0 & 1 \end{pmatrix} e^{-J_{0}G(q)}(\Id + \bigO(p))
\end{aligned}
\end{equation}
We have preemptively simplified the expression with some basic arithmetic. Here $G_{i}$ is the Gauss map associated to the component $\Lambda_{i}$ of $\Lambda$. The following collection of assumptions will allow us to further simplify the above expression:

\begin{assump}\label{Assump:FlowProperties}
We refine our previous constructions as follows: At any point through which a closed Reeb orbit passes, the sections $X, Y$ of $\SurgXi$ described in Section \ref{Sec:FramingExtension} are defined according to the formula contained within that section. This can be achieved by setting the constant $\delta$ to be sufficiently small.
\end{assump}

Some consequences of the above assumptions coupled with Assumptions \ref{Assump:DiskIntersection} are:
\be
\item Equation \eqref{Eq:HandleFlow} is valid for any point contained in a closed Reeb orbit.
\item The expression $-c_{i} \frac{\partial f_{\epsilon}}{\partial p}(\zeta^{c_{i}})$ in that formula simplifies to $-c_{i}\epsilon^{-1}$ for any point lying in a closed Reeb orbit.
\ee

Combining these consequences with a conjugation, we have that $F_{i}$ in a neighborhood of a Reeb segment which exits $N_{\epsilon, i}$ near $l^{+}_{j_{1}}$ and exists near $l^{-}_{j_{2}}$ for composable Reeb chords $r_{j_{i}}, r_{j_{2}}$ is homotopic -- relative endpoints -- to a path of the form
\begin{equation}\label{Eq:ModelReturnMap}
\mathcal{F}_{j_{1}, j_{2}}(t) = e^{J_{0}t \theta_{j_{1}, j_{2}}}\begin{pmatrix}
1 & -t c^{+}_{j_{1}}\epsilon^{-1} \\
0 & 1
\end{pmatrix}(\Id + \bigO(\epsilon)) \in \SLtwoR,\ t\in [0, 1]
\end{equation}
where we use the basis $e^{i\frac{\pi}{4}}(X, Y)$.

Using Equation \eqref{Eq:ModelReturnMap}, we can write the restriction of Poincaré return map to $\xi$ of a closed Reeb orbit $\gamma$ of $\alpha_{\epsilon}$ with $\cycword(\gamma) = r_{j_{1}}\cdots r_{j_{n}}$ as
\begin{equation}\label{Eq:ReturnMapProduct}
    \Ret_{\gamma} = \mathcal{F}_{j_{n}, j_{1}}(1)\mathcal{F}_{j_{n-1}, j_{n}}(1)\cdots \mathcal{F}_{j_{1}, j_{2}}(1)
\end{equation}
by composing the flow maps as an orbit passes through the various surgery handles. If the word consists of a single chord, then we have $\Ret = \mathcal{F}_{j_{1}, j_{1}}(1)$. Note that while our expression for $\Ret$ depends on a particular representation of the associated cyclic word, its conjugacy class in $\SLtwoR$ does not.

\subsection{Integral Conley-Zehnder indices}

In this subsection we prove Theorem \ref{Thm:IntegralCZ} via induction on the word length $n$ of $\gamma$. The proof is computational, making use of the Robbin-Salamon characterization of the Conley-Zehnder index described in Equation \eqref{Eq:RSCZ}.

\subsubsection{The case $n=1$}

We begin with the case $n=1$, analyzing Equation \eqref{Eq:ModelReturnMap}. A slight modification of the proof of the following lemma along with further analysis of Equation \eqref{Eq:MatrixReturn} will provide the general case. For the sake of notational simplicity, we temporarily drop the subscripts required to describe words of length greater than $1$.

\begin{lemma}\label{Lemma:CZnEqualsOne}
Theorem \ref{Thm:IntegralCZ} is valid for Reeb orbits of word length $1$.
\end{lemma}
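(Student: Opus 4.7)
The strategy is to apply the Robbin--Salamon formula \eqref{Eq:RSCZ} directly to the model path $\mathcal{F}_{j,j}(t)$ of Equation \eqref{Eq:ModelReturnMap}. For notational brevity write $\theta = \theta_{j_1,j_1}$ and $c = c^{+}_{j_1}$, and (after homotopy rel endpoints) drop the $\bigO(\epsilon)$ correction so that
\[
\phi(t) := e^{J_{0} t\theta}\begin{pmatrix} 1 & -tc\epsilon^{-1} \\ 0 & 1 \end{pmatrix}, \qquad t \in [0,1].
\]
The plan is: compute the Robbin--Salamon crossing form; identify all crossings for $\epsilon$ small; evaluate the signature at each crossing; and check that the sum matches the claimed formula $\rot_{j_1,j_1} + \delta_{1,c}$.

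\textbf{Step 1 (Crossing form).} A direct computation of $\partial_t \phi \cdot \phi^{-1}$ and the identity $J_0 R(\alpha) = R(\alpha) J_0$ yields
\[
S(t) \;=\; \theta\, \Id \,+\, c\epsilon^{-1}\, v(t\theta)\, v(t\theta)^{T}, \qquad v(\alpha) = (-\sin\alpha,\cos\alpha)^{T}.
\]
In particular $S(0) = \Diag(\theta,\, \theta + c\epsilon^{-1})$, and at any crossing with $t\theta$ near $k\pi$ one has $v(t\theta)v(t\theta)^{T}$ approximately equal to $\Diag(0,1)$.

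\textbf{Step 2 (Locating crossings).} A crossing is a zero of $\det(\phi(t) - \Id) = 2 - \tr\phi(t) = 2(1 - \cos(t\theta)) + tc\epsilon^{-1}\sin(t\theta)$. For $\epsilon$ small this forces $\sin(t\theta) \approx 0$, so all crossings occur near $t\theta \in \pi\Z$; the condition $t \in (0,1]$ picks out precisely the integers $k$ with $k\pi$ lying strictly between $0$ and $\theta$. For even $k$ the crossing occurs exactly at $t\theta = k\pi$, while for odd $k$ it occurs at $t\theta = k\pi + \delta$ with $\delta = 4\epsilon/(tc) + \bigO(\epsilon^2)$; in either case the 1--eigenspace of $\phi(t)$ is, to leading order in $\epsilon$, the line $\Span(\partial_{x})$.

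\textbf{Step 3 (Signatures).} At $t=0$ the kernel is all of $\R^{2}$ and $\sgn(\Gamma(0)) = \sgn(\theta) + c$, so the boundary term contributes $\tfrac{1}{2}(\sgn(\theta)+c)$. At each interior crossing $\Gamma(t_k)$ is the restriction of $S(t_k)$ to $\Span(\partial_x)$; using Step 1 this restriction equals $\theta + \bigO(\epsilon)$, whose sign is $\sgn(\theta)$. Bookkeeping with the number of integers $k$ strictly between $0$ and $\theta/\pi$ gives $\lfloor\theta/\pi\rfloor$ crossings when $\theta>0$ and $-\lceil\theta/\pi\rceil$ crossings when $\theta<0$.

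\textbf{Step 4 (Summation).} Adding the contributions one obtains
\[
\CZ(\phi) \;=\; \tfrac{1}{2}\bigl(\sgn(\theta)+c\bigr) + \sgn(\theta)\cdot\#\{\text{interior crossings}\},
\]
which case-splitting on the sign of $\theta$ simplifies (using $\delta_{1,c} = (c+1)/2$ and the identity $\lceil x \rceil = \lfloor x \rfloor + 1$ for $x \notin \Z$, valid here by good position) to $\rot_{j_1,j_1} + \delta_{1,c}$. The hyperbolicity is inherited from Theorem \ref{Thm:Mod2CZ}, which also justifies that $t=1$ is not a crossing so that the above count is complete.

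The main thing to verify carefully is that the $\bigO(\epsilon)$ correction suppressed at the start does not affect the computation for $\epsilon$ sufficiently small: at $t=0$ it leaves $\sgn(\Gamma(0))$ unchanged because the dominant diagonal terms are either already of order one ($\theta$) or of order $\epsilon^{-1}$ ($c\epsilon^{-1}$), and along the path it only perturbs the locations of crossings by $\bigO(\epsilon)$ without altering their signs.
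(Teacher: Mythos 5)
Your proposal is correct: I checked the crossing form ($S(t)=\theta\,\Id+c\epsilon^{-1}v(t\theta)v(t\theta)^{T}$ is exactly what $-J_{0}\dot\phi\phi^{-1}$ gives), the location of the crossings (exact at even multiples of $\pi$, shifted by $4\epsilon/(tc)+\bigO(\epsilon^{2})$ at odd multiples, with the $1$-eigenspace $\Span(\partial_{x})+\bigO(\epsilon)$ in both cases), the signatures ($\sgn(\theta)+c$ at $t=0$, $\sgn(\theta)$ at every interior crossing), and the final bookkeeping, which indeed reproduces $\rot_{j_{1},j_{1}}+\delta_{1,c}$ in both signs of $\theta$ using $\theta\in\frac{\pi}{2}+\pi\Z$ from good position. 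The route differs from the paper's in execution though not in framework: the paper first homotops the path to a staged one (rotation on $[0,1]$, shear on $[1,2]$, the $\Id+\bigO(\epsilon)$ correction on $[2,3]$), so that the index is the known rotation index $2\lfloor\theta/2\pi\rfloor+1$ plus a contribution $\tfrac{1}{2}(c-(-1)^{\rot})$ from at most one crossing in the shear stage, with the correction stage crossing-free because its trace stays large; you instead keep rotation and shear simultaneous and do the full Robbin--Salamon crossing analysis of the original path. What the paper's staging buys is an almost computation-free crossing count and a template that is reused verbatim in the induction step for $n>1$ (where the "last letter" is again isolated as a final stage acting on $\Ret_{n}$); what your direct computation buys is explicit crossing locations and forms without the reparameterization, at the cost of more careful $\epsilon$-bookkeeping. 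Two small points to tighten: the phrase "homotopy rel endpoints" is not literally what you do when you drop the $\bigO(\epsilon)$ factor, since that changes the endpoint -- the correct statement (and what your final paragraph is really arguing, and what the paper does via its third stage) is that the homotopy interpolating the correction keeps the endpoint nondegenerate because its trace is of order $\epsilon^{-1}$, so $\CZ$ is unchanged; and you should record explicitly that away from $t\theta\in\pi\Z$ the term $tc\epsilon^{-1}\sin(t\theta)$ dominates $\det(\phi(t)-\Id)$, so no crossings other than the ones you list can occur for $\epsilon$ small.
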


\begin{proof}
We homotop the path $\mathcal{F}$ so that it is parameterized with the interval $[0, 3]$, taking the form
\begin{equation}\label{Eq:TkParam}
\begin{gathered}
\mathcal{F}(t) = e^{J_{0}t_{1} \theta}\begin{pmatrix}
1 & -t_{2} c\epsilon^{-1} \\
0 & 1
\end{pmatrix}E(t_{3}), \quad E(0) = \Id,\quad E(1) = \Id + \bigO(\epsilon),\\
t_{k} = \begin{cases}
0 & t \leq k - 1\\
t - k + 1 & t \in (k-1, k)\\
1 & t \geq k.
\end{cases}
\end{gathered}
\end{equation}
With this parameterization we are performing the rotation first so that the path is non-degenerate at $\mathcal{F}(0)=\Id$. A standard computation shows that along the interval $[0, 1]$, the contributions to $\CZ_{X, Y}$ are given by $2\left\lfloor \frac{\theta}{2\pi} \right\rfloor + 1$. Then along $t\in [1, 2]$, we have
\begin{equation*}
\mathcal{F}(t) = (-1)^{\rot}\begin{pmatrix}
0 & -1 \\
1 & -t_{2}c\epsilon^{-1}
\end{pmatrix}
\end{equation*}
By the $\SLtwoR$ trace formula of Equation \eqref{Eq:DetTr}, $t \in [1, 2]$ will be crossing exactly when 
\begin{equation*}
\tr(\mathcal{F}(t)) = (-1)^{1 + \rot} t_{2} c = 2.
\end{equation*}
Therefore we find a crossing in the interval -- and a single one at that -- if and only if $c = (-1)^{1 + \rot}$. At such a crossing, if it exists, the matrix $S(t)$ of Equation \eqref{Eq:SympPathDerivative} is $S(t) = \Diag(t_{2} c \epsilon^{-1}, 0)$. So the contribution to $\CZ_{X, Y}$ can be computed as $\half(c - (-1)^{\rot})$.

Adding up the contributions along $t \in [0, 2]$, we have
\begin{equation*}
\CZ = 2\left\lfloor\frac{\theta}{2\pi}\right\rfloor + 1 + \frac{c - (-1)^{\delta}}{2} = 2\left\lfloor\frac{\theta}{2\pi}\right\rfloor + \frac{1-(-1)^{\rot}}{2} + \frac{c + 1}{2} = \rot + \delta_{1, c}.
\end{equation*}
Along the interval $[2, 3]$, the addition of the $E$ term to the formula contributes a term to the trace which is bounded by a constant which is independent of $\epsilon$. The this interval is devoid of crossings for $\epsilon$ small.
\end{proof}

\subsubsection{The case $n > 1$}

Now we prove the induction step in our index computation. We suppose that the Reeb orbit in question has word length $n + 1 > 1$ and is parameterized with an interval $[0, n+1]$. Then we can compute the Conley-Zehnder index using the path of symplectic matrices
\begin{equation*}
\phi(t) = \mathcal{F}_{j_{n}, j_{n+1}}(t_{n+1})\cdots\mathcal{F}_{j_{n+1}, j_{1}}(t_{1}), \quad 
t_{k} = \begin{cases}
0 & t \leq k - 1 \\
t - k + 1 & t \in (k-1, k)\\
1 & t \geq k
\end{cases}
\end{equation*}
by combining Equations \eqref{Eq:ModelReturnMap} and \eqref{Eq:ReturnMapProduct}. As in the proof of Lemma \ref{Lemma:CZnEqualsOne}, we can drop the $E$-terms in the equations, count the contributions to $\CZ$ coming from the rotation and shearing matrices, then re-introduce the $E$ terms noting that they do not contribute to $\CZ$ due to the large absolute values of traces. Consequently, we ignore these $E$ terms during computation. With this simplification $\phi$ takes the  following form when $t \in [n, n+1]$,
\begin{equation}\label{Eq:ReturnMapLastStep}
\begin{gathered}
\phi(t) = \tilde{\phi}(t_{n+1})\Ret_{n}, \quad \tilde{\phi}(t_{n+1}) = e^{it_{n} \theta_{j_{n+1}, j_{1}}}\begin{pmatrix}
1 & -t_{n+1} c^{+}_{j_{n+1}}\epsilon^{-1} \\
0 & 1
\end{pmatrix},\\
\Ret_{n} = \mathcal{F}_{j_{n-1}, j_{n}}(1)\cdots\mathcal{F}_{j_{n+1}, j_{1}}(1) = (-1)^{\rot_{n}}\left( J^{n} + \sum_{K=1}^{n} \left( \sum_{k\in I_{K}} \left(\prod_{i=1}^{K} -c^{+}_{j_{k_{i}}}\right)M_{k}\right) \epsilon^{-K}\right),\\
\rot_{n} = \sum_{0}^{n-1} \rot_{j_{k},j_{k+1}}.
\end{gathered}
\end{equation}
over the sub-interval $[n, n+1]$. Here indices are cyclic so that $\rot_{j_{0},j_{1}} = \rot_{j_{n+1}, j_{1}}$. The $M_{k}$ are as in Equation \eqref{Eq:MatrixReturn}.

By Theorem \ref{Thm:Mod2CZ} the trace of $\Ret_{n}$ has absolute value of order $\epsilon^{-n}$. Therefore $n, n+1 \in [0, n+1]$ are not crossing for $\epsilon$ small. The $\epsilon^{-n}$ term in $\Ret_{n}$ is given by Equation \eqref{Eq:Mn}. The $\epsilon^{1-n}$ term is also easily computable. Noting that $\Diag(0, a)J\Diag(0, b) = 0$ for $a, b\in \R$, the only $k$ for which $M_{k}$ is non-zero with $k\in I_{n}$ are $(1,\dots, n-1)$ and $(2,\dots,n)$. Thus the $\epsilon^{1-n}$ terms in $\Ret_{n}$ are
\begin{equation*}
(\prod_{1}^{n-1}-c^{+}_{j_{k_{i}}})\Diag(0, 1)J + (\prod_{2}^{n}-c^{+}_{j_{k_{i}}})J\Diag(0, 1) = \begin{pmatrix}
0 & -\prod_{2}^{n}-c^{+}_{j_{k_{i}}} \\
\prod_{1}^{n-1}-c^{+}_{j_{k_{i}}} & 0
\end{pmatrix}.
\end{equation*}
Combining this with Equation \eqref{Eq:Mn} we have
\begin{equation}\label{Eq:ReturnMapPenultimate}
\begin{aligned}
\Ret_{n} &= (-1)^{\rot_{n}}\begin{pmatrix}
0 & -\epsilon^{-n+1}\prod_{2}^{n}-c^{+}_{j_{k_{i}}} \\
\epsilon^{-n+1}\prod_{1}^{n-1}-c^{+}_{j_{k_{i}}} & \epsilon^{-n}\prod^{n}_{1} -c^{+}_{j_{k_{i}}}
\end{pmatrix} + \bigO(\epsilon^{2-n})\\
&= (-1)^{\rot_{n}}\epsilon^{-n+1}(\prod_{1}^{n}-c^{+}_{j_{k_{i}}})\begin{pmatrix}
0 & c^{+}_{j_{k_{1}}} \\
-c^{+}_{j_{k_{n}}} & \epsilon^{-1}
\end{pmatrix} + \bigO(\epsilon^{2-n}).
\end{aligned}
\end{equation}

\begin{lemma}\label{Lemma:CZnPlusOneContribution}
For $\epsilon$ sufficiently small, the contribution to $\CZ_{X, Y}$ along the interval $[n, n+1]$ in Equation \eqref{Eq:ReturnMapLastStep} is
\begin{equation*}
    \rot_{j_{n}, j_{n+1}} +\ \delta_{1,c^{+}_{j_{n+1}}}.
\end{equation*}
\end{lemma}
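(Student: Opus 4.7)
The plan is to extend the Robbin--Salamon crossing count of Lemma \ref{Lemma:CZnEqualsOne} to the composition $\tilde\phi(t_{n+1})\Ret_n$ in Equation \eqref{Eq:ReturnMapLastStep}. Write $\theta$ and $c$ for the rotation angle and surgery coefficient of the final factor $\tilde\phi$. Reparameterize $\tilde\phi$ on $[n,n+1]$ sequentially, in the style of Equation \eqref{Eq:TkParam}, into three sub-phases: (i) a pure rotation phase $\phi(t)=e^{J_{0} s(t)\theta}\Ret_{n}$ with $s$ increasing from $0$ to $1$; (ii) a pure shear phase $\phi(t)=e^{J_{0}\theta}\bigl(\begin{smallmatrix}1 & -s(t)c\epsilon^{-1}\\ 0 & 1\end{smallmatrix}\bigr)\Ret_{n}$; and (iii) an $E$-correction phase reinstating the $\Id+\bigO(\epsilon)$ factor from Equation \eqref{Eq:HandleFlow}. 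The third sub-phase contributes nothing for $\epsilon$ small, since the $\bigO(\epsilon)$ correction to $E$ cannot alter the fact that $\tr\phi$ is of order $\epsilon^{-n-1}$ at the start of this phase, hence far from $2$.

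For the rotation sub-phase, substituting the second-highest order expansion of $\Ret_{n}$ from Equation \eqref{Eq:ReturnMapPenultimate} and simplifying via cyclicity of the trace yields
$$\tr\phi(t)\;=\;A\bigl[\epsilon^{-1}\cos(s\theta)+(c^{+}_{j_{1}}+c^{+}_{j_{n}})\sin(s\theta)\bigr]+\bigO(\epsilon^{2-n}),$$
where $A$ and the indices $j_{1},j_{n}$ refer to the scalar prefactor and the innermost and outermost surgery coefficients appearing in Equation \eqref{Eq:ReturnMapPenultimate} applied to $\Ret_{n}$. Since the leading term is of order $\epsilon^{-n}$, the crossing equation $\tr\phi=2$ forces $\cos(s\theta)\approx 0$, and the good-position hypothesis $\theta\bmod 2\pi\in\{\pi/2,3\pi/2\}$ reduces this to the candidate crossings $s\theta=\tfrac{\pi}{2}+k\pi$, giving a base count of $\rot+1$ in $(0,1]$ with $\rot=\lfloor\theta/\pi\rfloor$. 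Each such crossing is regular and $1$-dimensional: differentiating gives $S(t)=\dot s\,\theta\,\Id$, which is positive definite, so each contributes $+1$. A first-order perturbation shows the candidate crossing at $s=1$ is shifted by an amount of order $\epsilon(c^{+}_{j_{1}}+c^{+}_{j_{n}})/\theta$, and thus lies inside $(0,1]$ iff $c^{+}_{j_{1}}+c^{+}_{j_{n}}<0$; the rotation sub-phase therefore contributes $\rot$ when $c^{+}_{j_{1}}+c^{+}_{j_{n}}>0$ and $\rot+1$ when $c^{+}_{j_{1}}+c^{+}_{j_{n}}<0$.

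A parallel trace computation locates at most one crossing in the shear sub-phase, near $s\approx(c^{+}_{j_{1}}+c^{+}_{j_{n}})\epsilon^{2}/c$, present in $(0,1]$ iff $\sgn(c^{+}_{j_{1}}+c^{+}_{j_{n}})=\sgn(c)$. At this crossing one finds $S(t)=\dot s\,\Diag(c\epsilon^{-1},0)$, and solving $(\phi(t)-\Id)v=0$ to leading order shows that $v$ has nonzero first coordinate; the shear crossing therefore contributes $\sgn(c)$. Enumerating the four sign cases of $(\sgn(c^{+}_{j_{1}}+c^{+}_{j_{n}}),\sgn(c))$, the rotation and shear contributions always assemble to $\rot+\delta_{1,c}$: the $(c^{+}_{j_{1}}+c^{+}_{j_{n}})$-dependence cancels exactly between the two sub-phases. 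This yields the formula of the lemma. The main obstacle is the degenerate configuration $c^{+}_{j_{1}}+c^{+}_{j_{n}}=0$, in which both the apparent rotation boundary crossing and the apparent shear crossing collapse onto the sub-phase boundary; here one must expand $\tr\phi$ one further order in $\epsilon$ to decide in which sub-phase the limiting crossing actually lies, after which the same case-by-case tally still gives $\rot_{j_{n},j_{n+1}}+\delta_{1,c^{+}_{j_{n+1}}}$.
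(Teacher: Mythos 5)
Your route is genuinely different from the paper's, and at the level of the displayed expansions it works. The paper never performs the full rotation before shearing: it homotops $\tilde{\phi}$ into four sub-phases (rotation by $\frac{\pi}{4}$, the shear, rotation by $\pi(\delta_{1,\rot_{2}}+\frac{1}{4})$, and a loop by $2\pi k$ handled via the loop property \eqref{Eq:CZLoop}). The point of inserting the shear at angle $\frac{\pi}{4}$ is that the trace keeps a nonvanishing leading term of order $\epsilon^{-n}$ throughout the shear phase, so existence of the shear crossing is decided by the two top orders of \eqref{Eq:ReturnMapPenultimate} alone and depends only on $c^{+}_{j_{n+1}}$; the subleading $(c^{+}_{j_{1}}+c^{+}_{j_{n}})$ data never enters. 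Your decomposition instead pushes the rotation all the way to $\theta$ first, and then the whole dependence on $c^{+}_{j_{1}}+c^{+}_{j_{n}}$ has to cancel between a possible boundary crossing of the rotation phase and a possible shear crossing of sign $\sgn(c)$. I checked your trace formula, the crossing locations, the crossing forms $S=\dot{s}\,\theta\,\Id$ and $\dot{s}\,\Diag(c\epsilon^{-1},0)$, and the four-case tally; in the generic case ($c^{+}_{j_{1}}+c^{+}_{j_{n}}\neq 0$, $n\geq 2$, $\theta>0$) your bookkeeping does reproduce $\rot_{j_{n},j_{n+1}}+\delta_{1,c^{+}_{j_{n+1}}}$. (One small implicit assumption: your count of $\rot+1$ candidate crossings, each contributing $+1$, uses $\theta>0$; for $\theta<0$ the paper's use of the loop property handles the count uniformly, while your version needs a sign-adjusted recount.)

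The genuine soft spot is exactly the junction between your two sub-phases, where $\cos\theta=0$ and the trace drops to order $\epsilon^{1-n}(c^{+}_{j_{1}}+c^{+}_{j_{n}})$. First, this is the same order at which the suppressed $\Id+\bigO(\epsilon)$ factors of \eqref{Eq:ModelReturnMap} (the $E$-terms swallowed into the error of \eqref{Eq:ReturnMapPenultimate}) can contribute, so ``expanding one further order'' is not something \eqref{Eq:ReturnMapPenultimate} lets you do, and your closing remark that the $E$-correction is harmless because the trace is of order $\epsilon^{-n-1}$ only covers the final sub-phase, not the junction where the side on which the crossing falls is being decided. Second, in the degenerate situations (when $c^{+}_{j_{1}}+c^{+}_{j_{n}}=0$, and also when $n=1$, where the junction trace equals $\pm 2+\bigO(\epsilon)$) there need not be exactly one ``limiting crossing'' to locate: since the trace runs from a huge value of sign $\sigma(-1)^{\rot_{n}}$ through the small junction value to a huge value of sign $-\sigma(-1)^{\rot_{n}}c^{+}_{j_{n+1}}$, monotonically on each side, there are either one (if $c^{+}_{j_{n+1}}=1$) or zero-or-two (if $c^{+}_{j_{n+1}}=-1$) crossings near the junction. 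The correct repair is not to decide where the crossing lies but to observe that the count is insensitive to it: when $c^{+}_{j_{n+1}}=1$ the rotation-side and shear-side contributions are both $+1$, and when $c^{+}_{j_{n+1}}=-1$ the junction crossings occur in cancelling pairs $(+1)+(-1)$ or not at all. With that parity argument in place of your ``expand one more order and locate the crossing'' step, your decomposition gives a complete proof; without it, the degenerate and $n=1$ cases are not actually covered by the argument as written.
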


\begin{proof}
We begin by making some temporary notational simplifications and further subdivide the interval $[n, n+1]$ along which the map $\tilde{\phi}$ is changing and $\Ret_{n}$ is constant. We are referring here to Equation \eqref{Eq:ReturnMapLastStep} and use notation from that equation throughout the proof. We write 
\begin{equation*}
\begin{gathered}
\sigma = (-1)^{\rot_{n}}\Big(\prod_{1}^{n}-c^{+}_{j_{k_{i}}}\Big)\in \{ \pm 1\}\\ 
c_{1} = c^{+}_{j_{1}}, \quad c_{n} = c^{+}_{j_{n}}, \quad c_{n+1} = c^{+}_{j_{n+1}},\\
\rot = \rot_{j_{n}, j_{n+1}},\quad \rot_{2} = \rot \bmod_{2} \in \Z/2\Z \\
\theta_{j_{n}, j_{n+1}} = \pi(2k + \delta_{1,\rot_{2}} + \half), \quad k = \left\lfloor \frac{\theta_{j_{n}, j_{n+1}}}{2\pi}\right\rfloor \in \Z.
\end{gathered}
\end{equation*}
By combining the above notation with Equation \eqref{Eq:ReturnMapPenultimate}, we can write $\phi(t) = \tilde{\phi}(t_{n})\Ret_{n}$ where
\begin{equation*}
\tilde{\phi}(t_{n+1}) = e^{J_{0}t_{n+1} \theta_{j_{n}, j_{n+1}}}\begin{pmatrix}
1 & -t_{n+1} c_{n+1} \epsilon^{-1} \\
0 & 1
\end{pmatrix}, \quad \Ret_{n} = \sigma \epsilon^{1-n}\begin{pmatrix}
0 & c_{1}\\
-c_{n} & \epsilon^{-1}
\end{pmatrix} + \bigO(\epsilon^{2-n}).
\end{equation*}

\begin{figure}[h]
\begin{overpic}[scale=.35]{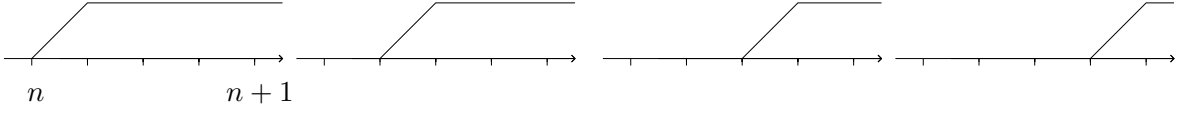}
\put(2, -3){$n$}
\put(19, -3){$n+1$}
\end{overpic}
\vspace{3mm}
\caption{From left to right are graphs of the functions $s_{1}(t), \dots, s_{4}(t)$.}
\label{Fig:RotationHomotopy}
\end{figure}

Along the subset $t \in [n, n+1]$ we homotop $\tilde{\phi}$ to take the form
\begin{equation*}
\tilde{\phi}(t_{n+1}) = e^{J_{0}\theta}\begin{pmatrix}
1 & -s_{2} c_{n+1}\epsilon^{-1}\\
0 & 1
\end{pmatrix}, \quad \theta = \pi\Big(s_{1}\frac{1}{4} + s_{3}\left(\delta_{1,\rot_{2}} + \frac{1}{4}\right) + s_{4}2k\Big)
\end{equation*}
where $s_{i}$ are functions of $t$ taking values in $[0, 1]$ as described in Figure \ref{Fig:RotationHomotopy} so that $\theta = \theta_{j_{n}, j_{n+1}}$ when $s_{1} = \cdots = s_{4} = 1$. In words, we will be applying a rotation by $\frac{\pi}{4}$, a shear, a rotation by $\pi(\delta_{1, \rot_{2}} + \frac{1}{4})$, and then finally a rotation by $2\pi k$. Taking the arguments of all trigonometric functions to be $\theta$,
\begin{equation*}
\begin{aligned}
\phi &= \begin{pmatrix}
\cos & -\sin \\ \sin & \cos
\end{pmatrix}\begin{pmatrix}
1 & -s_{2}c_{n+1}\epsilon^{-1} \\ 0 & 1
\end{pmatrix}\left(\sigma \epsilon^{1- n}\begin{pmatrix}
0 & c_{1} \\ -c_{n} & \epsilon^{-1}
\end{pmatrix} + \bigO(\epsilon^{2 - n})\right) \\
&= \sigma \epsilon^{-n}\begin{pmatrix}
s_{2}c_{n}c_{n+1} \cos & -s_{2}c_{n+1}\epsilon^{-1}\cos - \sin \\ s_{2}c_{n}c_{n+1}\sin & -s_{2}c_{n+1}\epsilon^{-1}\sin + \cos
\end{pmatrix} + \bigO(\epsilon^{1 - n}),\\
\tr(\phi) &= \sigma \epsilon^{-n}\Big(s_{2}c_{n}c_{n+1} \cos -s_{2}c_{n+1}\epsilon^{-1}\sin + \cos\Big) + \bigO(\epsilon^{1- n}).
\end{aligned}
\end{equation*}

Along our first sub-interval parameterized by $s_{1}$ we have $s_{2} = s_{3} = s_{4} = 0$ with $\theta$ increasing from $0$ to $\frac{\pi}{4}$. Here $\tr(\phi) = \sigma \epsilon^{-n}\cos + \bigO(\epsilon^{1-n})$ has large absolute value when $\epsilon$ is small. Hence for $\epsilon$ small, there are no crossings over the $s_{1}$ sub-interval and so there are no contributions to the Conley-Zehnder index.

Along the sub-interval parameterized by $s_{2}$, we have $\theta = \frac{\pi}{4}$ so that $\cos(\theta) = \sin(\theta) = 1/\sqrt{2}$. Hence
\begin{equation}\label{Eq:TrStwoSubinterval}
\tr(\phi) = \frac{\sigma \epsilon^{-n}}{\sqrt{2}}\Big(s_{2}c_{n}c_{n+1} - s_{2}c_{n+1}\epsilon^{-1} + 1\Big) + \bigO(\epsilon^{1 - n}).
\end{equation}
At both the $s_{2} = 0$ and $s_{2} = 1$ endpoints of the interval, $\tr(\phi)$ will have large absolute value of orders $\epsilon^{-n}$ and $\epsilon^{-1-n}$, respectively so that $\phi$ cannot be crossing at either of these endpoints. Over the interior of the sub-interval, we see that there is a single crossing if $c_{n+1} = 1$ and no crossings if $c_{n+1} = -1$.

If $c_{n+1} = 1$, then at the unique crossing we compute the crossing form
\begin{equation*}
S = -J_{0}\frac{\partial \phi}{\partial s_{2}}\phi^{-1} = \frac{1}{2\epsilon}\begin{pmatrix}
1 & -1 \\ -1 & 1
\end{pmatrix}, \quad \begin{pmatrix}
a & b
\end{pmatrix}S\begin{pmatrix}
a \\ b
\end{pmatrix} = \frac{1}{2\epsilon}(a - b)^{2}.
\end{equation*}
The quadratic form determined by $S$ vanishes exactly along the line $\R\left(\begin{smallmatrix}
1 \\ 1
\end{smallmatrix}\right)$. Therefore to see that the unique crossing along the $s_{2}$ sub-interval is non-degenerate, we only need to check that $\phi\left(\begin{smallmatrix}
1 \\ 1
\end{smallmatrix}\right) \neq 0$ at the crossing. Plugging $c_{n+1}=1$ into Equation \eqref{Eq:TrStwoSubinterval}, at the crossing we must have
\begin{equation*}
s_{2}(c_{n} - \epsilon^{-1}) + 1 = \bigO(\epsilon)
\end{equation*}
in order to eliminate the $\epsilon^{-n}$ and $\epsilon^{-n - 1}$ terms. Therefore at the crossing
\begin{equation*}
\phi \begin{pmatrix}
1 \\ 1
\end{pmatrix} = \frac{\sigma\epsilon^{-n}}{\sqrt{2}}\begin{pmatrix}
s_{2}(c_{n} - \epsilon^{-1}) - 1 \\
s_{2}(c_{n} - \epsilon^{-1}) + 1
\end{pmatrix} + \bigO(\epsilon^{1 - n}) = \frac{\sigma\epsilon^{-n}}{\sqrt{2}}\begin{pmatrix}
- 2 \\
0
\end{pmatrix} + \bigO(\epsilon^{1 - n})
\end{equation*}
is non-zero for $\epsilon$ small. We conclude that at the crossing $\ker(\phi - \Id)$ must be $1$-dimensional and the restriction of $S$ to $\ker(\phi - \Id)$ must be positive. Hence the $s_{2}$ sub-interval contributes $\delta_{c_{n+1}, 1}$ to the Conley-Zehnder index.

Now we study the $s_{3}$ sub-interval along which $s_{2} = 1$ and $\theta \in [\frac{\pi}{4}, \pi(\delta_{1, \rot_{2}} + \half)]$. Along this sub-interval
\begin{equation*}
\tr(\phi) = \sigma \epsilon^{-n}\Big(c_{n}c_{n+1} \cos - c_{n+1}\epsilon^{-1}\sin + \cos\Big) + \bigO(\epsilon^{1- n}).
\end{equation*}
If $\delta_{1, \rot_{2}} = 0$ and $\epsilon$ is very small, then as $\theta$ increases from $\frac{\pi}{4}$ to $\frac{\pi}{2}$, the $\sin$ term dominates, $\tr(\phi)$ maintains a large absolute value, and there are no crossings.

If $\delta_{1, \rot_{2}} = 1$, then we have a single crossing at which the crossing form is determined by the matrix
\begin{equation*}
S = -J_{0}\frac{\partial \phi}{\partial s_{2}}\phi^{-1} = \frac{3\pi}{4}\Id.
\end{equation*}
Because the crossing form is positive definite, the contribution to the Conley-Zehnder index is the dimension of $\ker (\phi - \Id)$ -- either $1$ or $2$. At the crossing, $\sin$ is $\bigO(\epsilon)$ implying that $\theta \rightarrow \pi$ (and so $\cos(\theta)\rightarrow -1$) as $\epsilon \rightarrow 0$. Therefore at the crossing,
\begin{equation*}
\left(\phi - \Id\right)\begin{pmatrix}
0 \\ 1
\end{pmatrix} = \sigma\epsilon^{-n}\begin{pmatrix}
-c_{n+1}\epsilon^{-1}\cos - \sin \\ -c_{n+1}\epsilon^{-1}\sin + \cos
\end{pmatrix} + \bigO(\epsilon^{1 - n}) = \sigma\epsilon^{-1-n}\begin{pmatrix}
-c_{n+1}\cos \\ 0
\end{pmatrix} + \bigO(\epsilon^{-n}) \neq 0
\end{equation*}
implying that $\dim\ker (\phi - \Id) = 1$. We conclude that the contribution to $\CZ$ along the $s_{3}$ interval is $\delta_{1, \rot_{2}}$.

For the $s_{4}$ sub-interval, we appeal to the loop property of $\CZ$ described in Equation \eqref{Eq:CZLoop} to see a contribution of $2k$. Combining the contributions over the four $s_{i}$ sub-intervals we get
\begin{equation*}
2k + \delta_{\rot_{2}, 1} + \delta_{c_{n+1}, 1} = \rot_{j_{n}, j_{n+1}} + \delta_{c^{+}_{j_{n+1}}, 1},
\end{equation*}
by reverting to our original notation, thereby completing the proof.
\end{proof}

The combination of the above lemmas completes our induction, thereby proving Theorem \ref{Thm:IntegralCZ}.

\subsection{Integral Maslov indices}

The proof of Theorem \ref{Thm:MaslovComputation} follows from the same methods of calculation as Theorem \ref{Thm:IntegralCZ}.

\begin{proof}[Proof of Theorem \ref{Thm:MaslovComputation}]
According to Definition \ref{Def:MaslovIndex}, we need to measure the rotation of $\Flow_{R_{\epsilon}}^{t}(T_{q^{-}_{k_{1}}}\LambdaZero)$ along each chord $(r_{k_{1}}\cdots r_{k_{n_{k}}})$ with respect to the framing $(X, Y)$. For chords $\kappa_{k}$ of word length $1$, the flow is trivial, so we restrict attention to chords of word length $> 1$. The required analysis can be carried out via analysis of Equation \eqref{Eq:ModelReturnMap}: We recall that this describes the restriction of the linearized flow of $R_{\epsilon}$ to $\SurgXi$ through a component $N_{\epsilon, j_{1}}$ of $N_{\epsilon}$ starting at a point near the tip of one chord $r_{j_{1}}$ up to a point near the tail of another chord $r_{j_{2}}$. 

We study the rotation along a single $\kappa = (r_{j_{1}}\cdots r_{j_{n}})$: The matrix expression $\mathcal{F}_{j_{1},j_{2}}(t)$ in Equation \eqref{Eq:ModelReturnMap} applies to the basis
\begin{equation*}
e^{i\frac{\pi}{4}}(X, Y)\simeq (\partial_{q} - p\partial_{z}, -\partial_{p}),    
\end{equation*}
beginning on the bottom, $\{ z = -\epsilon \}$ of the surgery handle $N_{\epsilon, j_{1}}$. For $j_{1}=k_{1}, j_{2}=k_{2}$, the strand of $\LambdaZero$ touching the starting point of the chord $r_{j_{1}}$ is such that $T\LambdaZero = \R\left( \begin{smallmatrix}0 \\ 1\end{smallmatrix}\right)$. Therefore we need to see how $\mathcal{F}_{j_{1},j_{2}}(t)$ rotates this subspace for $t\in [0, 1]$. As in the proof of Theorem \ref{Thm:IntegralCZ} we can modify the path so as to apply the shearing first, and then the rotation. 

For the shearing, we study the family of real lines in $\R^{2}$ given by
\begin{equation*}
\R
\begin{pmatrix}
    1 & -tc_{j_{1}}^{+}\epsilon^{-1} \\
    0 & 1
\end{pmatrix}(\Id + \bigO(\epsilon))\begin{pmatrix}
    0 \\ 1
\end{pmatrix},\ t\in [0, 1].
\end{equation*}
The end result is a line of the form $\R(\left( \begin{smallmatrix}1 \\ 0\end{smallmatrix}\right) + \bigO(\epsilon))$ obtained by rotating $\R\left( \begin{smallmatrix}0 \\ 1\end{smallmatrix}\right)$ by an angle of
\begin{equation}\label{Eq:MaslovRotationShear}
c^{+}_{j_{1}}\frac{\pi}{2} + \bigO(\epsilon).
\end{equation}

Then applying the rotation by through angles $t\theta_{j_{1},j_{2}}$ as in Equation \eqref{Eq:ModelReturnMap}, we rotate this subspace by 
\begin{equation}\label{Eq:MaslovRotationRot}
\theta_{j_{1}, j_{2}} = \pi \rot_{j_{1},j_{2}} + \frac{\pi}{2}
\end{equation}
which we recall from Section \ref{Sec:ChordNotation} is the rotation angle of the capping path $\eta_{i, j}$ associated to the pair of composable chords $(r_{j_{1}}, r_{j_{2}})$.

Continuing the flow by applying the remaining $\mathcal{F}_{k_{l},k_{l+1}}$ for $l=2,\dots,n_{k}-1$ provides us a total rotation angle of
\begin{equation*}
    \pi\big(\sum_{l=1}^{n_{k}-1} \rot_{k_{l},k_{l+1}} + \half + \frac{c^{+}_{k_{l}}}{2}\big) + \bigO(\epsilon) = \pi\big(\sum_{l=1}^{n_{k-1}} \rot_{k_{l},k_{l+1}} +\  \delta_{c^{+}_{k_{l}}, 1}\big) + \bigO(\epsilon)
\end{equation*}
leaving us on a neighborhood of the strand of $\Lambda$ lying at the starting point of the chord $r_{k_{n_{k}}}$ which ends on $\LambdaZero$. Each summand in the above formula is the result of adding the contributions of Equations \eqref{Eq:MaslovRotationShear} and \eqref{Eq:MaslovRotationRot}. As in the case $\wl(\kappa) = 1$, the linearized flow up to $\LambdaZero$ along $r_{k_{n_{k}}}$ is trivial in the basis $(X, Y)$. This nearly completes the construct of the section $\phi^{G}$ along the chord $\kappa$: We must apply one more rotation to ensure that the unoriented Lagrangian line closes up as described in the discussion preceding \ref{Def:MaslovIndex}.

In the case that our asymptotic indicator is $a_{k} = 1$, the total rotation along the chord will be
\begin{equation*}
    \pi\big(- \half + \sum_{l=1}^{n_{k-1}} (\rot_{k_{l},k_{l+1}} +\  \delta_{c^{+}_{1, k_{l}}})\big).
\end{equation*}
where the $\half$ is the contribution of the clockwise correction rotation at the end of the chord. From the above analysis we know that the angle for this rotation is $-\frac{\pi}{2} + \bigO(\epsilon)$.

If the asymptotic indicator is $-1$, we must travel in the opposite direction, from the tip to the tail of the chord and then apply small the clockwise rotation to obtain a total rotation angle of
\begin{equation*}
    \pi\big(- \half - \sum_{l=1}^{n_{k-1}} (\rot_{k_{l},k_{l+1}} +\  \delta_{c^{+}_{1, k_{l}}})\big).
\end{equation*}
The section $\phi^{G}$ appearing in the definition of $\Maslov_{s}$ is determined by the $\theta(\zeta_{k})$ as the framing $(X, Y)$ coincides with $(\partial_{x}-y\partial_{z}, \partial_{y})$ -- which which the $\theta(\zeta_{k})$ are computed -- on the complement of $N_{\epsilon}$, in which $\LambdaZero$ is presumed to be contained.
\end{proof}

\section{Diagrammatic index formulae}\label{Sec:IndexFormulae}

In this section we compute indices of holomorphic curves in $(\R\times \SurgL, d(e^{t}\alpha_{\epsilon}))$. We begin by covering the case of curves whose domain is a closed surface with punctures, which is a simple application of Equation \eqref{Eq:DelbarIndex} to our existing computations of Conley-Zehnder indices and Chern classes. Next we cover the case of a holomorphic disk which is asymptotic to a broken closed string on $\LambdaZero \subset \SurgLxi$ in the sense of Example \ref{Ex:BoundaryBCS}. The case $\LambdaPM = \emptyset$ recovers classic index formula appearing in combinatorial versions of $LCH$ and Legendrian $RSFT$. These index formulae are then combined to describe indices associated to holomorphic curves with arbitrary configurations of interior and boundary punctures in Theorem \ref{Thm:IndexGeneral}.

All indices computed will depend only on topological data, so mention of any specific almost complex structures are ignored.

\subsection{Index formulae for closed orbits}

Let $\{w^{+}_{1},\dots, w^{+}_{m^{+}}\}$ and $\{w^{+}_{1},\dots,w_{m^{-}}^{-}\}$ be collections of cyclic words of chords on $\Lambda$. By Theorem \ref{Thm:IntegralCZ}, we may choose some $\epsilon_{\gamma} > 0$ such that for all $\epsilon < \epsilon_{\gamma}$, the closed orbits $\gamma^{\pm}_{j}$ of $R_{\epsilon}$ corresponding to these cyclic words via Theorem \ref{Thm:ChordsToOrbits} are all non-degenerate hyperbolic á la Theorem \ref{Thm:Mod2CZ}. Write $\gamma^{+} = \{\gamma^{+}_{1},\dots,\gamma^{+}_{m^{+}}\}$ and $\gamma^{-} = \{\gamma^{-}_{1},\cdots,\gamma^{-}_{m^{-}}\}$ for the corresponding collections of orbits. 

Suppose that $(\Sigma, j)$ is a closed Riemann surface containing a non-empty collection of punctures and that $(t, U): \Sigma' \rightarrow \R\times \SurgL$ is a holomorphic curve (as in Section \ref{Sec:SFTOverview}) which is positively asymptotic to the punctures $\gamma^{+}$ and negatively asymptotic to the $\gamma^{-}$.

\begin{thm}\label{Thm:IndexInteriorPunctures}
Using the framing $(X, Y)$ described in Section \ref{Sec:FramingExtension} we can write the expected dimension of the moduli space of curves near $(t, U)$ as
\begin{equation}\label{Eq:ModuliSpaceDimension}
\ind((t, U)) = \CZ_{X, Y}(\gamma^{+}) - \CZ_{X, Y}(\gamma^{-}) - \chi(\Sigma') - 2\sum_{1}^{n} c_{i}\rot(\Lambda_{i})(U \cdot T^{c_{i}}_{i})
\end{equation}
for all $\epsilon < \epsilon_{\gamma}$ where the right-most sum runs over the connected components of $\LambdaPM$ and the $\CZ_{X, Y}$ are computed as in Theorem \ref{Thm:IntegralCZ}.
\end{thm}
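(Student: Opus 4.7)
The plan is to apply the general index formula \eqref{Eq:DelbarIndex} with the specific choice of framing $s = (X,Y)$ from Theorem \ref{Thm:FramingSummary}, and then identify the relative Chern class term $c_{s}(U)$ as the weighted intersection number appearing on the right-hand side. The key inputs are all already in place: by Theorem \ref{Thm:FramingSummary}(2) the pair $(X,Y)$ is a symplectic basis of $\SurgXi$ at every point lying on a closed orbit of $R_{\epsilon}$, so it is a legitimate framing along $\gamma^{\pm}$ in the sense of Section \ref{Sec:CZOverview}, and Theorem \ref{Thm:IntegralCZ} computes $\CZ_{X,Y}(\gamma^{\pm})$. Hyperbolicity for $\epsilon<\epsilon_{\gamma}$ ensures all orbits are non-degenerate, so Equation \eqref{Eq:DelbarIndex} applies.

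The substance of the argument is the identification
\begin{equation*}
c_{X,Y}(U) \;=\; -\sum_{\Lambda_{i}\subset\LambdaPM} c_{i}\rot(\Lambda_{i})\,\bigl(U\cdot T^{c_{i}}_{i}\bigr).
\end{equation*}
First I would recall the definition: $c_{X,Y}(U)$ is the signed count of zeros, over $\Sigma'$, of any section $\sigma$ of $U^{*}\xi_{\Lambda}$ that coincides with $(X,Y)$ (say, with the section $X$) near the punctures. Take $\sigma = X \circ U$. By Theorem \ref{Thm:FramingSummary}(3), $X^{-1}(0) = Y^{-1}(0)$ is the disjoint union of those components $T^{c_{i}}_{i}$ for which $\rot(\Lambda_{i})\neq 0$, and this zero locus is disjoint from a neighbourhood of every closed orbit. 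After a generic perturbation of $U$ (not changing its homotopy class or its asymptotics) we may assume $U$ is transverse to each $T^{c_{i}}_{i}$, and then
\begin{equation*}
c_{X,Y}(U) \;=\; \sum_{i} m_{i}\,\bigl(U\cdot T^{c_{i}}_{i}\bigr),
\end{equation*}
where $m_{i}$ is the signed vanishing order of $X$ along $T^{c_{i}}_{i}$, i.e.\ the local contribution of $T^{c_{i}}_{i}$ to the Poincar\'e dual of $c_{1}(\SurgXi) = e(\SurgXi)$.

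To identify $m_{i}=-c_{i}\rot(\Lambda_{i})$ I would reread the computation in the proof of Theorem \ref{Thm:FramingSummary}: the Poincar\'e dual formula
\begin{equation*}
\PD(c_{1}(\SurgXi)) \;=\; \sum_{i} -c_{i}\rot(\Lambda_{i})\,[T^{c_{i}}_{i}]
\end{equation*}
is obtained precisely by counting the zeros of a generic extension of $X$ through the surgery handle, one transverse square at a time, and each component $T^{c_{i}}_{i}$ picks up multiplicity exactly $-c_{i}\rot(\Lambda_{i})$ from the local winding of the Gauss map displayed in Equation \eqref{Eq:XLoopHomotopy}. Thus the coefficients $m_{i}$ are already determined by that earlier argument and no new calculation is required.

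Combining the two ingredients gives
\begin{equation*}
\ind((t,U)) \;=\; \CZ_{X,Y}(\gamma^{+}) - \CZ_{X,Y}(\gamma^{-}) - \chi(\Sigma') + 2\sum_{i}\bigl(-c_{i}\rot(\Lambda_{i})\bigr)\bigl(U\cdot T^{c_{i}}_{i}\bigr),
\end{equation*}
which is exactly \eqref{Eq:ModuliSpaceDimension}. The only real obstacle is bookkeeping the sign of the vanishing multiplicity $m_{i}$ with respect to the orientation conventions for $T^{c_{i}}_{i}$ in Definition \ref{Def:Pushoff} and the orientation of the boundary loops used in the proof of Theorem \ref{Thm:FramingSummary}; one must check that the sign that appears when viewing $T^{c_{i}}_{i}$ as the zero divisor of $X$ (transversally intersected by $U$) agrees with the sign with which it appears in $\PD(c_{1}(\SurgXi))$. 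This is a direct comparison between Equation \eqref{Eq:XLoopHomotopy} and the standard convention for $e(\xi)$, and once it is settled the theorem follows.
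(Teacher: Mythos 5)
Your proposal is correct and follows essentially the same route as the paper: apply the general index formula \eqref{Eq:DelbarIndex} with the framing $(X,Y)$, then identify the relative Chern class $c_{X,Y}(U)$ with $-\sum_{i} c_{i}\rot(\Lambda_{i})(U\cdot T^{c_{i}}_{i})$ by computing the zeros of the pulled-back section $X\circ U$, whose zero locus and multiplicities are exactly those established in Theorem \ref{Thm:FramingSummary}. Your additional remarks on transversality and sign bookkeeping only make explicit what the paper's terse proof leaves implicit.
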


\begin{proof}
By comparing with Equation \eqref{Eq:DelbarIndex}, we only need to check that
\begin{equation*}
    c_{X, Y}(U) = - \sum_{1}^{n} c_{i}\rot(\Lambda_{i})(U \cdot T^{c_{i}}_{i})
\end{equation*}
where $c_{X, Y}(U)$ is the relative Chern class of the framing $(X, Y)$. Letting $X_{U} \in U^{*}\SurgLxi$ be a section for which $T_{z}U(X_{U}) = X(U(z))$ for $z \in \Sigma'$ to compute $c_{X, Y}(U)$ provides the desired result, as $X^{-1}(0)$ is a union of connected components of $\cup T^{c_{i}}_{i}$ and the coefficients $-c_{i}\rot(\Lambda_{i})$ account for the multiplicities of the zeros of $X_{U}$ by construction of $(X, Y)$ in Section \ref{Sec:Framing}.
\end{proof}

\subsection{Index formulae for disks with boundary punctures}

Now suppose that $\{ p_{j} \}_{1}^{m} \subset \partial\disk$ is a collection of distinct points on the boundary of a disk. Write $\disk' = \disk \setminus \{p_{j}\}$ for the complement of the boundary punctures in $\disk$ and write $j$ for the standard complex structure on $\disk$. Suppose that $(t, U): \disk' \rightarrow \R\times \SurgL$ is a $(j, J)$ holomorphic map satisfying the following criteria:
\be
\item $(t, U)(\partial \disk') \subset \R\times \LambdaZero$, and
\item the punctures $\{ p_{j} \}$ are asymptotic to chords of $R_{\epsilon}$ with boundary on $\LambdaZero \subset \SurgLxi$.
\ee

As described in Example \ref{Ex:BoundaryBCS}, such a map determines a broken closed string which we will denote by $\bcs(U)$. As in the case of Equation \eqref{Eq:DelbarIndex}, we use $\ind((t, U))$ to denote the expected dimension of the space of holomorphic maps with the same $\bcs(U)$ boundary conditions as $(t, U)$ and in the same relative homotopy class obtained by allowing the locations vary and then modding out by holomorphic reparameterization in the domain (when $m < 3$).

\begin{thm}\label{Thm:IndexPuncturedDisk}
The moduli space of holomorphic disks with boundary condition $\bcs(U)$ in the homotopy class of $U$ has expected dimension
\begin{equation}\label{Eq:IndexPuncturedDisk}
    \ind((t, U)) = \Maslov_{X, Y}(\bcs(U)) + m - 1 - 2\sum_{1}^{n} c_{i}\rot(\Lambda_{i})(U \cdot T^{c_{i}}_{i})
\end{equation}
near the point $(t, U)$. The last sum appearing in the above formula is indexed over the components $\Lambda_{i}$ of $\Lambda$.
\end{thm}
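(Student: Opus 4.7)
The plan is to mirror the proof of Theorem \ref{Thm:IndexInteriorPunctures}, adapting it to punctured disks with Lagrangian boundary on $\R \times \LambdaZero$. I start from the classical Fredholm index formula for a $(J, j)$-holomorphic map $(t, U): \disk' \to \R \times \SurgL$ with $m$ boundary punctures asymptotic to Reeb chords: for any trivialization $s$ of $U^*\SurgXi$,
\begin{equation*}
\ind((t, U)) = \Maslov_s(\bcs(U)) + (m - 1) + 2c_s(U).
\end{equation*}
This is the standard output of Riemann-Roch on a punctured disk combined with the conformal moduli count for $m$ ordered boundary marked points modulo $\mathrm{PSL}(2, \R)$; it appears in Ekholm-Etnyre-Sullivan \cite{EES:LegendriansInR2nPlus1} for uniform asymptotic signs and in Ng's Legendrian RSFT \cite{Ng:RSFT} as well as the relative SFT framework of \cite{EGH:SFTIntro} for arbitrary combinations of asymptotic indicators $a_k \in \{\pm 1\}$.

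Specialized to $s = (X, Y)$, the first term on the right-hand side coincides with $\Maslov_{X, Y}(\bcs(U))$: the loop of unoriented Lagrangian lines assembled in Definition \ref{Def:MaslovIndex} --- tangent planes to $\LambdaZero$ along the boundary arcs, flowed tangent spaces along each chord, and the smallest clockwise corrections at each asymptotic puncture --- is exactly the loop whose degree enters the Fredholm calculation, with the clockwise corrections matching the standard asymptotic Maslov contributions at each end. For the Chern class term, I argue exactly as in the proof of Theorem \ref{Thm:IndexInteriorPunctures}: take a section $X_U$ of $U^*\SurgXi$ extending the pullback of $X$ along $\partial \disk'$ and near the punctures. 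By Theorem \ref{Thm:FramingSummary} the zero locus $X^{-1}(0)$ is a disjoint union of transverse push-offs $T^{c_i}_i$ contained in $\SurgL \setminus N_\epsilon$, and $X$ has multiplicity $-c_i \rot(\Lambda_i)$ along each, so a signed intersection count yields
\begin{equation*}
c_{X, Y}(U) = -\sum_i c_i \rot(\Lambda_i)(U \cdot T^{c_i}_i).
\end{equation*}
Substituting both identifications into the general formula produces Equation \eqref{Eq:IndexPuncturedDisk}.

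The principal obstacle is matching the puncture-by-puncture Maslov accounting to the mixed-sign version of the index formula, since at each negative puncture ($a_k = -1$) the chord is traversed against the Reeb direction and the clockwise rotation correction applies to the reversed linearized flow rather than to the forward one. Verifying that the $a_k$-weighting of the per-chord contributions appearing in Theorem \ref{Thm:MaslovComputation} exactly reproduces what shows up in the Fredholm index at each puncture is the main technical point; once this bookkeeping is settled, the Chern class computation above closes the argument.
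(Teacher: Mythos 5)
Your proposal is correct and follows essentially the same route as the paper: quote the relative index formula from \cite{EES:LegendriansInR2nPlus1, Ekholm:Z2RSFT}, identify its Maslov term with $\Maslov_{X, Y}(\bcs(U))$, and compute the relative Chern class $c_{X,Y}(U)$ as a signed intersection count with the transverse push-offs exactly as in Theorem \ref{Thm:IndexInteriorPunctures}. The one point you flag as the ``principal obstacle'' --- matching the per-puncture rotation corrections for mixed asymptotic indicators to the Fredholm contributions --- is handled in the paper simply by observing that the clockwise-rotation conventions of Definition \ref{Def:MaslovIndex} coincide with those used for the Maslov numbers in \cite[Section 3.1]{Ekholm:Z2RSFT}, after splitting $T(\R\times\SurgL)$ (and $\R\partial_{t}\oplus T\LambdaZero$) into complex lines so that only the rotation of $T\LambdaZero$ contributes.
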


\begin{proof}
We are simply plugging our definition of broken closed strings to formulae appearing in \cite{EES:LegendriansInR2nPlus1, Ekholm:Z2RSFT}. 

Assume first that $X$ is non-vanishing over $\im(U)$, so that $\partial_{t}, R_{\epsilon}, X, Y$ determines a trivialization of $U^{*}T(\R\times \SurgL)$ which splits as a pair of complex lines. Using framing-deformation invariance of $\Maslov_{X, Y}$, we may perturb $X, Y$ so that it is invariant under the flow of $R_{\epsilon}$ in which case the geometric setup described in \cite[Section 3.1]{Ekholm:Z2RSFT} applies. Our choices of ``clockwise rotations'' along positive punctures and ``counter-clockwise rotations'' along negative punctures in the definition of the path of symplectic matrices defining $\Maslov_{s}$ coincide with those used to define the Maslov numbers (which are denoted $\mu(\gamma)$) in that text. The tangent space of our Lagrangian -- $\R\times\LambdaZero$ -- splits as $\R\partial_{t}\oplus T\LambdaZero$, so the only contribution to the Maslov number in question comes from the rotation of $T\LambdaZero$ along the boundary of the disk by the direct sum formula for Maslov numbers. Then the moduli space dimension formula of \cite[Section 3.1]{Ekholm:Z2RSFT} completes our proof.

Now suppose that $X$ is non-vanishing along $\im(U)$. By the construction of the framing $X, Y$, we have that this section must be non-vanishing along $\LambdaZero$ and all of its Reeb chords, and so is non-vanishing along $\im(\bcs(U))$. Therefore the Maslov index can corrected by a relative Chern class term as in Equation \eqref{Eq:DelbarIndex}, which may be computed as signed count of intersections of $U$ with the transverse push-offs of the $\Lambda_{i}$ as in the statement of that theorem.
\end{proof}

\subsection{Index formulae for curves with interior and boundary punctures}

Now we state an index formula for holomorphic curves of general topological type. The geometric setup is as follows.

Let $(\Sigma, j)$ be a compact, connected Riemann surface with boundary components
\begin{equation*}
(\partial \Sigma)_{k},\quad k=1,\dots,\#(\partial \Sigma),
\end{equation*}
marked points $p^{\Int, \pm}_{i}$ contained in $\Int(\Sigma)$, and marked points $p^{\partial, \pm}_{i}$ contained in $\partial \Sigma$. We write $\Sigma'$ for $\Sigma$ with all of its marked points removed. Consider a holomorphic map $(t, U): \Sigma' \rightarrow \R\times \SurgLxi$ subject to the following conditions:
\be
\item The $p^{\Int, +}_{i}$ are positively asymptotic to some collection $\gamma^{+}$ of closed orbits of $R_{\epsilon}$,
\item The $p^{\Int, -}_{i}$ are negatively asymptotic to some collection $\gamma^{-}$ of closed orbits of $R_{\epsilon}$,
\item The $p^{\partial, +}_{i}$ are positively asymptotic to some collections $\kappa^{+}$ of chords of $\LambdaZero \subset \SurgLxi$,
\item The $p^{\partial, -}_{i}$ are negatively asymptotic to some collections $\kappa^{-}$ of chords of $\LambdaZero \subset \SurgLxi$, and
\item $(t, U)(\partial \Sigma') \subset \R \times \LambdaZero$.
\ee
In this setup, we have a broken closed string $\bcs_{k}$ associated to each component $(\partial \Sigma)_{k}$ of $\Sigma$. We may consider the moduli space of curves subject to the same asymptotics -- $\gamma^{\pm}$ and $\bcs_{k}$ -- allowing the complex structure on $\Sigma$ to vary and taking a quotient by $j$-holomorphic symmetries on the domain.

\begin{thm}\label{Thm:IndexGeneral}
In the above notation, the expected dimension of the moduli space of holomorphic maps is
\begin{equation*}
\begin{aligned}
\ind((t, U)) &= \CZ_{X, Y}(\gamma^{+}) - \CZ_{X, Y}(\gamma^{-}) +  \sum_{k} \Maslov_{X, Y}(\bcs_{k})\\
&- \chi(\Sigma) + \#(p^{int}) + \#(p^{\partial}) - 2\sum_{\Lambda_{i}\subset \LambdaPM}c_{i}\rot(\Lambda_{i})(U \cdot T^{c_{i}}_{i}).
\end{aligned}
\end{equation*}
\end{thm}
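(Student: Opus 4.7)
The plan is to obtain the formula as a direct extension of Theorems \ref{Thm:IndexInteriorPunctures} and \ref{Thm:IndexPuncturedDisk} by invoking the standard additivity of the Fredholm index of the Cauchy-Riemann operator for punctured Riemann surfaces carrying a mixture of interior (Reeb orbit) and boundary (Reeb chord) asymptotic conditions along with Lagrangian boundary conditions mapping to $\R\times \LambdaZero$. Following \cite[Sections 5-6]{EES:LegendriansInR2nPlus1} and the setup of \cite[Section 3.1]{Ekholm:Z2RSFT} -- which treats precisely this mixed asymptotic setting -- the Fredholm index of the linearization $D\delbar_{(t,U)}$ acting on sections of $U^{*}T(\R\times \SurgL)$ admits the additive decomposition
\begin{equation*}
\ind((t,U)) = \CZ_{s}(\gamma^{+}) - \CZ_{s}(\gamma^{-}) + \sum_{k} \Maslov_{s}(\bcs_{k}) - \chi(\Sigma') + 2c_{s}(U)
\end{equation*}
with respect to any symplectic trivialization $s$ of $\xi_{\Lambda}$ defined on a neighborhood of the image of $U$ along all of its asymptotics, where $\chi(\Sigma') = \chi(\Sigma) - \#(p^{\Int}) - \#(p^{\partial})$.

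Taking $s = (X, Y)$ from Theorem \ref{Thm:FramingSummary}, the individual $\CZ$ and $\Maslov$ contributions are exactly those computed in Theorems \ref{Thm:IntegralCZ} and \ref{Thm:MaslovComputation}. The relative Chern class term $c_{X,Y}(U)$ is handled by the same argument as in the proof of Theorem \ref{Thm:IndexInteriorPunctures}: after extending $X$ to a section $X_{U}$ of $U^{*}\SurgXi$ which agrees with the asymptotic framing at each puncture and is transverse to the zero section, the signed zero count of $X_{U}$ equals $-\sum_{\Lambda_{i}\subset \LambdaPM} c_{i}\rot(\Lambda_{i})(U \cdot T^{c_{i}}_{i})$, since $X^{-1}(0)$ is a union of transverse push-offs $T^{c_{i}}_{i}$ with multiplicities $-c_{i}\rot(\Lambda_{i})$. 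Substituting and rewriting $-\chi(\Sigma')$ in terms of $\chi(\Sigma)$ and the puncture counts then yields the stated formula.

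The principal obstacle is a compatibility check rather than any new hard analysis: one must verify that the framing $(X, Y)$ is simultaneously suitable for evaluating $\CZ_{X,Y}$ at every closed orbit puncture and $\Maslov_{X,Y}$ along every boundary broken closed string, and that the clockwise-rotation correction built into Definition \ref{Def:MaslovIndex} at each boundary puncture is compatible with the conventions governing the interior Conley-Zehnder contributions. Assumption \ref{Assump:FlowProperties} together with the construction of $(X, Y)$ in Section \ref{Sec:Framing} ensures that $(X, Y)$ is non-vanishing and symplectic on a neighborhood of every closed orbit and every Reeb chord that can appear as an asymptotic of $U$, so a single consistent extension of $(X, Y)$ along the punctured asymptotics exists and no modification of the framing is needed. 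Once this check is in place, the Fredholm additivity quoted above yields the claimed index formula with no further computation.
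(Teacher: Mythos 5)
Your proposal is correct and follows essentially the same route as the paper, which proves the statement by combining Theorems \ref{Thm:IndexInteriorPunctures} and \ref{Thm:IndexPuncturedDisk} via index additivity (the paper cites \cite[Section 3]{Schwarz:Thesis} for this); your bookkeeping $-\chi(\Sigma') = -\chi(\Sigma) + \#(p^{\Int}) + \#(p^{\partial})$ and the treatment of $c_{X,Y}(U)$ via the zero locus of $X$ match the paper's argument. The framing-compatibility check you flag is a reasonable addition but is already built into the constructions of Sections \ref{Sec:Framing} and \ref{Sec:CZMaslov}.
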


The proof is a simple combination of Theorems \ref{Thm:IndexInteriorPunctures} and \ref{Thm:IndexPuncturedDisk} using index additivity, cf. \cite[Section 3]{Schwarz:Thesis}.

\section{$H_{1}$ computations and push-outs of closed orbits}\label{Sec:Homology}

Here we compute the first homology $H_{1}(\SurgL)$  of $\SurgL$ and the homology classes of the closed orbits of $R_{\epsilon}$.

\begin{thm}\label{Thm:H1}
The first homology $H_{1}(\SurgL)$ is presented with generators $\mu_{i}$ and relations
\begin{equation*}
(\tb(\Lambda_{i}) + c_{i})\mu_{i}+ \sum_{j\neq i} \lk(\Lambda_{i}, \Lambda_{j})\mu_{j} = 0.
\end{equation*}
Let $\gamma$ be a Reeb orbit of $\alpha_{\epsilon}$ with $\cycword(\gamma) = r_{j_{1}}\cdots r_{j_{n}}$. Then its homology class in $H_{1}(\SurgL)$ with respect to the above basis is
\begin{equation*}
    [\gamma] = \half \sum_{k=1}^{n}(\cross_{j_{k}} + \cross_{j_{k}, j_{k+1}})
\end{equation*}
where the $k$ are considered modulo $n$.
\end{thm}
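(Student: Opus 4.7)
The plan is to establish the two claims of Theorem \ref{Thm:H1} separately: the $H_{1}$ presentation follows from standard Dehn-surgery calculus adapted to the contact framing, while the formula for $[\gamma]$ will be proved by a careful geometric push-off argument combined with a linking number computation.

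For the $H_{1}$ presentation, I will apply Mayer--Vietoris to the decomposition $\SurgL = \big(\R^{3} \setminus \Int(N_{\epsilon})\big) \cup \big(\sqcup_{i} \disk^{2} \times \Circle\big)$ obtained from the gluing maps $\phi_{c_{i}, f, \epsilon, \delta}$ of Equation \eqref{Eq:GluingMap}. The generators $\mu_{i}$ arise from $H_{1}(\R^{3} \setminus \Lambda) = \bigoplus_{i} \Z\mu_{i}$ (the meridians of the components of $\Lambda$), and the only new relations come from the meridional disks of the glued-in solid tori becoming null-homologous. In the contact-framing basis on $\partial N_{\epsilon, i}$, the boundary of the $i$-th such disk is $c_{i}\mu_{i} + \lambda_{i}^{\text{cont}}$; converting via $\lambda_{i}^{\text{cont}} = \tb(\Lambda_{i})\mu_{i} + \sum_{j \neq i}\lk(\Lambda_{i}, \Lambda_{j})\mu_{j}$ in $H_{1}(\R^{3} \setminus \Lambda)$ yields the stated relations.

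For the homology class of $\gamma$, the key observation is that the complement of the cores of the glued-in solid tori inside $\SurgL$ is naturally identified with $\R^{3} \setminus \Lambda$. I will therefore homotope $\gamma$ off these cores to obtain a loop $\gamma'$ in $\R^{3} \setminus \Lambda$ and then compute $[\gamma] = \sum_{i} \lk(\gamma', \Lambda_{i})\mu_{i}$, which descends to $H_{1}(\SurgL)$. Concretely, $\gamma$ decomposes into chord segments along each $r_{j_{k}}$ (lying in $\R^{3} \setminus N_{\epsilon}$) alternating with in-handle segments from $\rect^{en}_{j_{k}}$ to $\rect^{ex}_{j_{k+1}}$ whose $\pxy$-projections trace out neighborhoods of the capping paths $\eta_{j_{k}, j_{k+1}}$. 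I will push each in-handle segment off the core $\Lambda_{l^{+}_{j_{k}}}$ onto $\partial N_{\epsilon, l^{+}_{j_{k}}}$, then add small arcs near each chord endpoint to close up into a loop. The linking number contributions then split naturally by $k$: interior intersections of each pushed-off capping path with other strands of $\Lambda$ yield the $\half\cross_{j_{k}, j_{k+1}}$ terms (as is clear from comparing the definition of $\cross_{j_{k}, j_{k+1}}$ with Figure \ref{Fig:CrossingPairs}), while the push-offs near the two endpoints of each chord yield the $\half\cross_{j_{k}}$ term. The coefficient $\half(c^{\pm}_{j_{k}} + \sgn_{j_{k}}) \in \{-1, 0, +1\}$ at each chord endpoint encodes the combined effect of the Dehn twist (which wraps the push-off $c$ times around $\Lambda_{l^{\pm}_{j_{k}}}$ per pass) and the sign of the crossing at $r_{j_{k}}$; the factor $\half$ appears because each chord endpoint is shared between the chord segment and the adjacent in-handle segment, and averaging produces the stated integer-valued linking numbers.

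The main technical obstacle will be the sign bookkeeping at each chord endpoint: verifying that the push-off of the in-handle segment, combined with the push-off of the adjacent chord segment, picks up exactly the coefficient $\half(c^{\pm}_{j_{k}} + \sgn_{j_{k}})$. I expect this to reduce to a direct inspection of the local model provided by Equations \eqref{Eq:OvercrossingNeighborhoodModel}, \eqref{Eq:UndercrossingNeighborhoodModel}, and \eqref{Eq:GluingMap} near each chord, using that the Dehn twist direction is governed by $c_{i}$ while the relative position of the two strands at the crossing is governed by $\sgn_{j_{k}}$. The integrality of the final answer is then automatic from the identity $(c + 1) + (c - 1) = 2c$ applied at each component-traversal of the orbit.
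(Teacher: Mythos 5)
Your proposal is correct and follows essentially the same route as the paper: the presentation of $H_{1}(\SurgL)$ is the standard Dehn-surgery computation (the paper quotes the Kirby-calculus theorem and converts the contact framing via $\lambda_{\xi,i} = \lambda_{i} + \tb(\Lambda_{i})\mu_{i}$, which is what your Mayer--Vietoris argument reproduces), and the orbit classes are computed exactly as you describe, by homotoping $\gamma$ out of $N_{\epsilon}$ so that it rides along capping paths and then reading off $\sum_{i}\lk(\gamma',\Lambda_{i})\mu_{i}$ from the local models at the crossings. The only caveat is that your explanation of the overall factor $\half$ (``averaging'' at shared chord endpoints) is looser than the paper's, which simply uses that the linking number is half the signed crossing count of the pushed-out diagram, with the per-crossing and per-capping-path contributions verified case by case against the definitions of $\cross_{j}$ and $\cross_{j_{1},j_{2}}$.
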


Relative homology classes $[\kappa] \in H_{1}(\SurgL, \LambdaZero)$ of chords $\kappa$ with boundary on $\LambdaZero \subset \SurgLxi$ can similarly be computed using the technique of the proof of Theorem \ref{Thm:H1} which is carried out in Section \ref{Sec:H1Orbits}. It will be clear that the method of proof allows the reader to compute $[\gamma]$ as an element of the $H_{0}$ of the free loop space of $\SurgL$. In Section \ref{Sec:PushOutDefn}, we show how the proof can be generalized to provide a general means of homotoping closed orbits of $R_{\epsilon}$ into $\R^{3}\setminus N$, a technique we will need for the proof of Theorem \ref{Thm:Trefoil}.

\subsection{Conventions for meridians and longitudes}

Before proving Theorem \ref{Thm:H1}, we quickly review some standard notation. Let $\mu_{j}$ denote a meridian of $\Lambda$ and $\lambda_{i}$ a longitude of $\Lambda$ provided by the Seifert framing and orientation of $\Lambda_{i}$. We note that with respect to the Seifert framing of $\Lambda_{i}$ the longitude provided by $\xi$, denoted $\lambda_{\xi, i}$ is 
\begin{equation*}
\lambda_{\xi, i} = \lambda_{i} + \tb(\Lambda_{i})\mu_{i}.    
\end{equation*}
Each $\mu_{i}$ is oriented so that
\begin{equation*}
(\text{meridian, longitude, outward-pointing normal})
\end{equation*}
is a basis for $T\R^{3}$ agreeing with the usual orientation over $\partial N$ (after rounding the edges of $\partial N$ in the obvious fashion). See Figure \ref{Fig:MuLambdaOrientation}.

\begin{figure}[h]
\includegraphics[scale=.8]{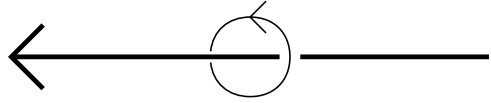}
\caption{Default orientations for meridians.}
\label{Fig:MuLambdaOrientation}
\end{figure}

\subsection{First homology of the ambient space}

The computation of $H_{1}(\SurgL)$ easily follows from the fact that contact $\pm 1$-surgery is a form of Dehn surgery. Suppose that $\R^{3}_{L}$ is a 3-manifold obtained by Dehn surgery on a smooth link $L =\cup L_{i}$ for which the surgery coefficients with respect to the Seifert framing are $p_{i}/q_{i}$ for relatively prime integers $p_{i}$ and $q_{i}$. Writing $\mu_{j}$ for the oriented meridians of the $L_{i}$ we have the following theorem from Kirby calculus -- see eg. \cite[Theorem 2.2.11]{OS:SurgeryBook}:

\begin{thm}
Denote by $\R^{3}_{L}$ a $3$-manifold determined by a surgery diagram where each component $L_{i}$ of $L$ has Dehn surgery coefficient $p_{i}/q_{i}$ for relative prime integers $p_{i}, q_{i}$. Then $H_{1}(\R^{3}_{L})$ is presented with generators $\mu_{i}$, and relations
\begin{equation*}
p_{i} \mu_{i}+q_{i}\sum_{j\neq i} \lk(L_{i}, L_{j}) \mu_{j} = 0
\end{equation*}
where $\lk(L_{i}, L_{j})$ is the linking number.
\end{thm}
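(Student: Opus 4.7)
The plan is to decompose $\R^{3}_{L}$ into the link exterior and the glued-in surgery solid tori, then apply Mayer--Vietoris. Write $X = \R^{3} \setminus \nu(L)$ for the complement of an open tubular neighborhood of $L$, and for each $L_{i}$ let $V_{i} = D^{2} \times S^{1}$ be the solid torus glued in so that $\partial V_{i}$ is identified with $\partial \nu(L_{i})$ via the surgery coefficient $p_{i}/q_{i}$. Thus $\R^{3}_{L} = X \cup (\sqcup_{i} V_{i})$ with $X \cap V_{i} = \partial \nu(L_{i}) = T^{2}_{i}$.

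First I would show $H_{1}(X) \cong \bigoplus_{i} \Z \mu_{i}$, with each meridian $\mu_{i}$ independent; this is a standard computation (use Alexander duality for $S^{3}\setminus \nu(L)$, or the analog for $\R^{3}$: removing a point on a collar at infinity does not affect $H_{1}$ in dimensions below $2$). Next I would identify the Seifert longitude $\lambda_{i}$ in $H_{1}(X)$: a Seifert surface $\Sigma_{i}$ for $L_{i}$ can be perturbed so that it meets each other $L_{j}$ transversely in $\lk(L_{i},L_{j})$ signed points and meets $\partial \nu(L_{i})$ in $\lambda_{i}$; then $\Sigma_{i} \cap X$ is a relative $2$-chain in $X$ whose boundary equals $\lambda_{i} - \sum_{j\ne i}\lk(L_{i},L_{j})\mu_{j}$, giving the identity
\begin{equation*}
\lambda_{i} = \sum_{j\ne i}\lk(L_{i},L_{j})\,\mu_{j}\quad \text{in}\ H_{1}(X).
\end{equation*}

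Next I would run Mayer--Vietoris on the decomposition $\R^{3}_{L} = X \cup (\sqcup V_{i})$. The overlap $\sqcup T^{2}_{i}$ contributes $H_{1}(\sqcup T^{2}_{i}) = \bigoplus_{i}(\Z \mu_{i} \oplus \Z \lambda_{i})$. The piece $H_{1}(\sqcup V_{i})$ is generated by the core circles of the solid tori. By definition of $p_{i}/q_{i}$-surgery, the meridian of $V_{i}$ (a curve that bounds a disk in $V_{i}$) is glued to the curve $p_{i}\mu_{i} + q_{i}\lambda_{i}$ on $T^{2}_{i}$. Hence in the Mayer--Vietoris sequence
\begin{equation*}
H_{1}(\sqcup T^{2}_{i}) \longrightarrow H_{1}(X) \oplus H_{1}(\sqcup V_{i}) \longrightarrow H_{1}(\R^{3}_{L}) \longrightarrow H_{0}(\sqcup T^{2}_{i}) \to H_{0}(X) \oplus H_{0}(\sqcup V_{i}),
\end{equation*}
the right-hand map on $H_{0}$ is injective (both $X$ and each $V_{i}$ are connected and their gluing produces a connected manifold), so the rightmost map above vanishes and
\begin{equation*}
H_{1}(\R^{3}_{L}) = \big(H_{1}(X) \oplus H_{1}(\sqcup V_{i})\big) / \im(\mathrm{MV}).
\end{equation*}
The image of $\mathrm{MV}$ is generated by: the pairs $(\mu_{i},-\mu_{i}^{V})$ and $(\lambda_{i},-\lambda_{i}^{V})$ where $\mu_{i}^{V}, \lambda_{i}^{V}$ denote the images in $H_{1}(V_{i})$. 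Since the meridian of $V_{i}$ is $p_{i}\mu_{i}+q_{i}\lambda_{i}$, we obtain $p_{i}\mu_{i}^{V} + q_{i}\lambda_{i}^{V} = 0$ in $H_{1}(V_{i})$, and $H_{1}(V_{i}) \cong \Z$ is generated by the core. Eliminating the generators from $H_{1}(\sqcup V_{i})$ (each $V_{i}$ contributes nothing new after the two gluing relations) and substituting $\lambda_{i} = \sum_{j\ne i}\lk(L_{i},L_{j})\mu_{j}$ yields the stated presentation.

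The only real obstacle is bookkeeping for the non-compact ambient space $\R^{3}$ rather than $S^{3}$; I would handle this by noting that $\R^{3}_{L}$ is the interior of a compact manifold with boundary (a once-punctured $S^{3}_{L}$), so $H_{1}(\R^{3}_{L}) \cong H_{1}(S^{3}_{L})$, and then the Mayer--Vietoris computation above is standard. Orientation conventions for $\mu_{i}$ (as fixed in Figure~\ref{Fig:MuLambdaOrientation}) and the sign of $\lk$ must be consistent throughout; these are the only places a sign error could creep in.
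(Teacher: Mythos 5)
Your argument is correct, but note that the paper does not actually prove this statement at all: it is quoted as a standard fact of Kirby calculus with a citation to \cite[Theorem 2.2.11]{OS:SurgeryBook}, so there is nothing internal to compare against. What you have written is precisely the standard textbook proof (link exterior plus surgery solid tori, Mayer--Vietoris, Seifert-surface identification of $\lambda_{i}$ with $\sum_{j\neq i}\lk(L_{i},L_{j})\mu_{j}$, and the puncture argument reducing $\R^{3}_{L}$ to $S^{3}_{L}$), and each step is sound. The one place where your sketch leans on an unstated fact is the elimination step ``each $V_{i}$ contributes nothing new'': this uses that $H_{1}(T^{2}_{i})\to H_{1}(V_{i})$ is surjective, so that the core of $V_{i}$ is the image of some class $x_{i}\mu_{i}+y_{i}\lambda_{i}$ with $x_{i}q_{i}-y_{i}p_{i}=\pm 1$ (here coprimality of $p_{i},q_{i}$ enters); substituting this expression for the core into the two gluing relations leaves, after a short computation with $\gcd(x_{i},y_{i})=1$, exactly the single relation $p_{i}\mu_{i}+q_{i}\lambda_{i}=0$ per component, which is the stated presentation. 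With that sentence added, and with the meridian orientation convention of Figure \ref{Fig:MuLambdaOrientation} fixed as you say, the proof is complete and consistent with how the result is used in Theorem \ref{Thm:H1}.
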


When performing contact surgery on the component $\Lambda_{i}$ of $\Lambda$, the meridian $\mu_{i}$ bounding a core disk of the surgery handle is sent to 
\begin{equation*}
    \mu_{i} + c_{i} \lambda_{\xi, i} = (1 + c_{i}\tb(L_{i}))\mu_{i} + c_{i}\lambda_{i}.
\end{equation*}
Thus for Legendrian knots in $\R^{3}$ contact $\pm 1$-surgery on $\Lambda_{i}$ is topologically a $(\tb(\Lambda_{i}) \pm 1)$-surgery. From this computation, the calculation of $H_{1}(M)$ in Theorem \ref{Thm:H1} is then immediate.

\subsection{Homology classes of Reeb orbits}\label{Sec:H1Orbits}

In this section we describe how to compute homology classes of the Reeb orbits of $\alpha_{\epsilon}$. Our strategy will be the homotop orbits to the complement of $N_{\epsilon}$  in $\SurgL$ after which the following computational tool may be applied:

\begin{thm}\label{Thm:H1GammaGeneral}
Let $\gamma$ be an oriented link in $\R^{3}\setminus L$. Then the homology classes of $\gamma$ in $H_{1}(\R^{3} \setminus L)$ and $H_{1}(\R^{3}_{L})$ is given by the formula
\begin{equation*}
    [\gamma] = \sum_{i} \lk(\gamma, L_{i})\mu_{i}.
\end{equation*}
\end{thm}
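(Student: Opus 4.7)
The plan is to establish the formula first in $H_{1}(\R^{3}\setminus L)$, and then push it forward along the natural surjection $H_{1}(\R^{3}\setminus L) \twoheadrightarrow H_{1}(\R^{3}_{L})$ induced by inclusion.

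First I would recall the classical fact (via Alexander duality, or equivalently Mayer--Vietoris applied to $\R^{3} = (\R^{3}\setminus L)\cup T(L)$ where $T(L)$ is an open tubular neighborhood of $L$) that $H_{1}(\R^{3}\setminus L)$ is freely generated by the oriented meridians $\{\mu_{i}\}$. To read off the coefficient of each $\mu_{i}$ in a class $[\gamma]$, I would pair against the dual basis of the relative group $H_{2}(\R^{3}\setminus L,\, \partial T(L))$, represented by (suitably truncated) Seifert surfaces $\Sigma_{i}$ for the individual components $L_{i}$. After a small isotopy $\Sigma_{i}$ meets $\gamma$ transversely, and by the standard Seifert-surface definition of linking number one has
\begin{equation*}
\gamma \cdot \Sigma_{i} = \lk(\gamma, L_{i}).
\end{equation*}
The dual-basis identity $\mu_{j}\cdot \Sigma_{i} = \delta_{ij}$ then forces $[\gamma] = \sum_{i}\lk(\gamma, L_{i})\mu_{i}$ in $H_{1}(\R^{3}\setminus L)$.

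Next I would observe that the inclusion $\R^{3}\setminus L \hookrightarrow \R^{3}_{L}$ is surjective on $H_{1}$: Dehn surgery attaches $2$-handles (along the framing curves $p_{i}\mu_{i}+q_{i}\lambda_{i}$) and then caps off with $3$-handles, a process which only imposes relations at the level of $H_{1}$ without introducing new generators. Pushing the formula forward along this surjection, the image of $[\gamma]$ in $H_{1}(\R^{3}_{L})$ is still $\sum_{i}\lk(\gamma, L_{i})\mu_{i}$, now interpreted modulo the surgery relations listed in Theorem \ref{Thm:H1}.

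The only thing requiring care, rather than a genuine obstacle, is that the orientation conventions line up: the sign of $\mu_{j}\cdot \Sigma_{i} = \delta_{ij}$ must match the orientation of $\mu_{i}$ fixed in Figure \ref{Fig:MuLambdaOrientation}, and the sign of $\gamma \cdot \Sigma_{i}$ must agree with the conventional definition of $\lk(\gamma, L_{i})$. Once these are pinned down, the proof reduces entirely to citing Alexander duality together with the standard Seifert-surface characterization of linking numbers.
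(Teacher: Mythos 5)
Your proof is correct, but it runs in the opposite direction from the paper's. The paper takes a Seifert surface $S$ for $\gamma$ itself, punches holes where $S$ meets the components $L_{i}$, and the resulting surface $S'\subset \R^{3}\setminus L$ is an explicit $2$-chain cobounding $\gamma$ and a collection of meridians; the coefficients $a_{i}=\lk(\gamma,L_{i})$ then fall out of the definition of linking number as the signed count of intersections of $L_{i}$ with a Seifert surface of $\gamma$. You instead first identify $H_{1}(\R^{3}\setminus L)$ as the free module on the meridians (Alexander duality / Mayer--Vietoris) and extract the coefficient of $\mu_{i}$ by pairing against truncated Seifert surfaces $\Sigma_{i}$ of the surgery components, using $\gamma\cdot\Sigma_{i}=\lk(\gamma,L_{i})$ and $\mu_{j}\cdot\Sigma_{i}=\delta_{ij}$; both pairings are fine once the $\Sigma_{i}$ are truncated along all components they meet, since intersection numbers of a $1$-cycle with a relative $2$-cycle are homological invariants. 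The paper's argument is more economical: it never needs freeness of $H_{1}$ of the complement or a duality pairing, just one explicit $2$-chain, and it hands you the class in both $H_{1}(\R^{3}\setminus L)$ and $H_{1}(\R^{3}_{L})$ simultaneously because $S'$ lives in the complement. Your argument buys a cleaner conceptual explanation of why the coefficients are exactly linking numbers (dual bases under the intersection pairing) at the cost of more machinery. One small imprecision on your side: the surjectivity of $H_{1}(\R^{3}\setminus L)\to H_{1}(\R^{3}_{L})$ is a statement about gluing solid tori back in (Mayer--Vietoris: every core circle is homologous to a curve on the boundary torus, which lies in the complement); the ``$2$-handles then $3$-handles'' phrasing describes the $4$-dimensional surgery cobordism rather than the $3$-manifold itself, though the conclusion you draw from it is the correct one and is in any case contained in the Kirby-calculus presentation quoted just before this theorem in the paper.
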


\begin{proof}
Assume that $\gamma$ is embedded and let $S \subset \R^{3}$ be a Seifert surface which transversely intersects the $L_{i}$. Punch holes in $S$ near its intersections with the $L_{i}$ producing a surface $S'$ which is disjoint from $L$ and whose oriented boundary is a union of $\gamma$ and a linear combination $\sum a_{i}\mu_{i}$. Then $S'$ provides a cobordism from $\gamma$ to these $\mu_{i}$, providing an equivalence $[\gamma] = \sum a_{i}\mu_{i}$ in homology. By the definition of $\lk$, we have $a_{i} = \lk(\gamma, L_{i})$.
\end{proof}

\begin{warn}
The homotopies which we apply to closed Reeb orbits $\gamma$ are not guaranteed to preserve the isotopy class of their embedding in $\SurgL$ (assuming $\gamma$ is embedded).
\end{warn}

\begin{figure}[h]
\includegraphics[scale=.9]{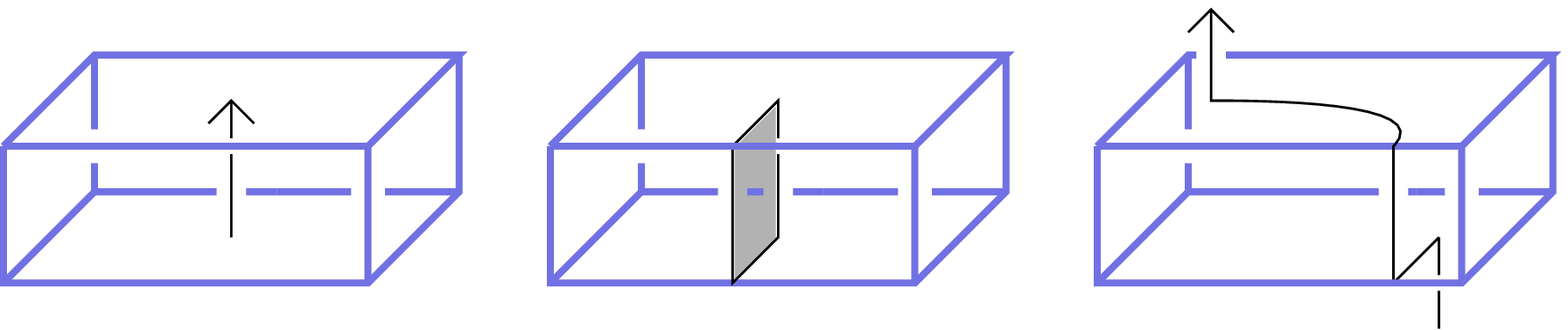}
\caption{Homotoping a Reeb orbit into $\R^{3}\setminus N_{\epsilon}$ as it passes though a $c=+1$ surgery handle.}
\label{Fig:PushingOutPlus1}
\end{figure}

Figure \ref{Fig:PushingOutPlus1} demonstrates how to homotop a segment of a Reeb orbit $\gamma$ into the exterior of the surgery handle $N_{\epsilon}$ as it passes through a component $N_{\epsilon, i}$ for which $c_{i}=1$. The boxes represent the surgery handles with $\partial_{p}$ pointing into the page, $\partial_{q}$ pointing to the left, and $\partial_{z}$ pointing up. On the left we have an arc parallel to the Reeb vector field entering the handle as seen from the inside of $N_{\epsilon, i}$. The arc extends in the $\partial_{z}$-direction through the handle, along which it can be realized realize as being contained in the boundary of a square of the form $\{ p \leq p_{0}, q=q_{0}\}$, depicted in gray. On the right, we see intersection of the boundary of this square with $\partial N_{\epsilon}$ as see from the outside of the surgery handle $\R^{3}\setminus N_{\epsilon}$. By homotoping $\gamma$ across the gray disk, we obtain the this arc shown on the right.

\begin{figure}[h]
\includegraphics[scale=.9]{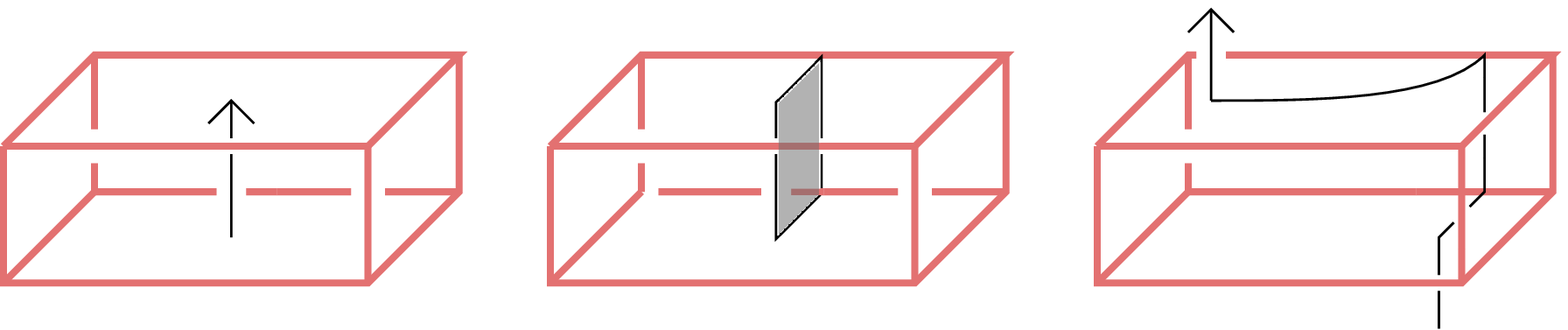}
\caption{Homotoping a Reeb orbit into $\R^{3}\setminus N_{\epsilon}$ as it passes though a $c=-1$ surgery handle.}
\label{Fig:PushingOutMinus1}
\end{figure}

Figure \ref{Fig:PushingOutMinus1} demonstrates the same procedure for orbits as they pass thought surgery handles with surgery coefficient $-1$. In this case we consider squares of the form $\{ p \geq p_{0}, q=q_{0}\}$ through which we homotop our arcs. Note that our choice of homotopy for both surgery coefficients are such that the homotoped arcs traverse $\partial N$ in the $\partial_{q}$-direction in which the components of $\Lambda$ are oriented. 

For a Reeb orbit $\gamma$, we can perform homotopies as described above at the tips of all chords in $\cycword(\gamma)$ to push it to the exterior of $N_{\epsilon}$. Away from the chords, we may arrange that the homotoped orbit traverses the $p=\mp \epsilon$ side of $N_{\epsilon, i}$ when the surgery coefficient of $\Lambda_{i}$ is $\pm 1$. The image of the $\gamma$ after homotopy is shown in the Lagrangian projection in Figure \ref{Fig:Crossings}.

\begin{figure}[h]
\begin{overpic}[scale=.8]{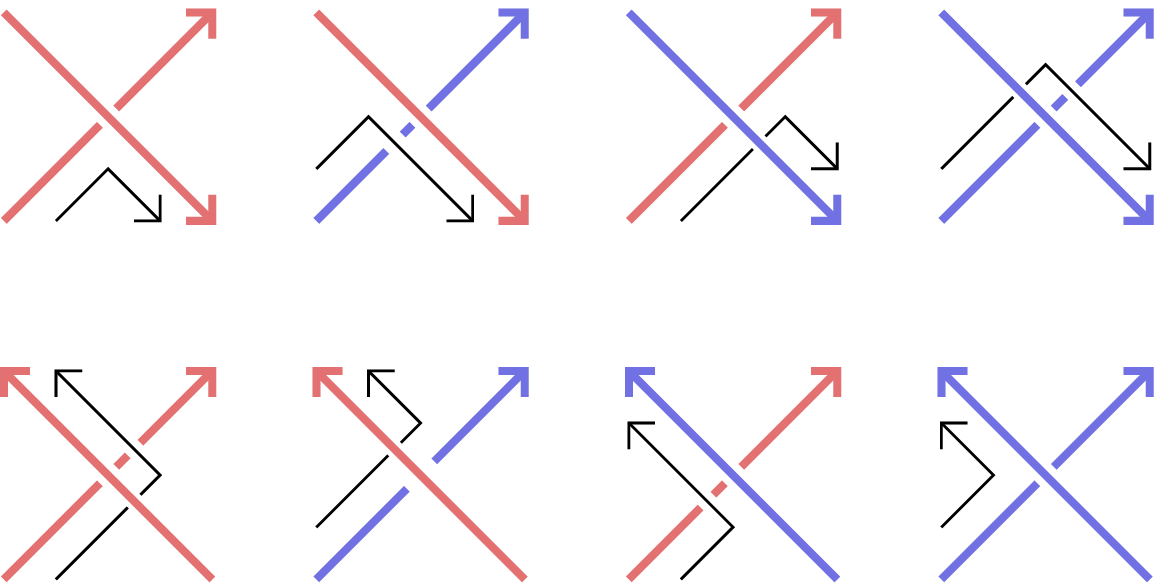}
\put(0, 25){$0\mu_{l^{-}_{j_{k}}} + 0\mu_{l^{+}_{j_{k}}}$}
\put(26, 25){$1\mu_{l^{-}_{j_{k}}} + 0\mu_{l^{+}_{j_{k}}}$}
\put(53.5, 25){$0\mu_{l^{-}_{j_{k}}} + 1\mu_{l^{+}_{j_{k}}}$}
\put(80.5, 25){$1\mu_{l^{-}_{j_{k}}} + 1\mu_{l^{+}_{j_{k}}}$}

\put(-2, -5){$-1\mu_{l^{-}_{j_{k}}} -1\mu_{l^{+}_{j_{k}}}$}
\put(26, -5){$0\mu_{l^{-}_{j_{k}}} -1 \mu_{l^{+}_{j_{k}}}$}
\put(51.5, -5){$-1\mu_{l^{-}_{j_{k}}} + 0\mu_{l^{+}_{j_{k}}}$}
\put(80.5, -5){$0\mu_{l^{-}_{j_{k}}} + 0\mu_{l^{+}_{j_{k}}}$}
\end{overpic}
\vspace*{.7cm}
\caption{Here the homotopy described in Figures \ref{Fig:PushingOutPlus1} and \ref{Fig:PushingOutMinus1} are depicted in the Lagrangian projection. The top (bottom) row shows positive (negative) crossings of $\LambdaPM$. Each subfigure may be rotated by $\pi$. Local contributions to linking numbers are indicated below each subfigure.}
\label{Fig:Crossings}
\end{figure}

The computation of homology classes of orbits in Theorem \ref{Thm:H1} then amounts to packaging the above observations algebraically:

\begin{proof}[Proof of Theorem \ref{Thm:H1}]
We homotop $\gamma$ to $\R^{3}\setminus N_{\epsilon}$ as described above and then apply Theorem \ref{Thm:H1GammaGeneral}. We write $\gamma'$ for the image of $\gamma$ under the homotopy. The linking number of two knots in $\R^{3}$ may be computed from a diagram as half of the signed count of crossings in the diagram. Therefore, in order to compute $[\gamma]$ it suffices to show that the signed count of crossings between $\gamma'$ and each $\Lambda_{i}$ is given by the $\mu_{i}$ coefficients in $\sum (\cross_{j_{k}} + \cross_{j_{k}, j_{k+1}})$.

In a neighborhood of a crossing, $\gamma'$ will be as depicted in Figure \ref{Fig:Crossings} in the Lagrangian projection, where the contribution to the signed count of crossings between $\gamma$ and the $\Lambda_{i}$ are given by the terms
\begin{equation*}
\half\Big((c_{j_{k}}^{-} + \sgn_{j_{k}})\mu_{l^{-}_{j_{k}}} + (c^{+}_{j_{k}} + \sgn_{j_{k}})\mu_{l^{+}_{j_{k}}}\Big).
\end{equation*}
The formula may be verified on a case-by-case basis for each of the eight components of the figure. This is exactly the definition of $\cross_{j_{k}}$ given in Equation \eqref{Eq:ChordCrossingMonomial}.

Away from a crossing, $\gamma'$ will continue following alongside arc-components of the $\Lambda_{i}$, to the right (in the $p > 0$ direction) of $\Lambda$ when the component of $\Lambda$ has coefficient $-1$ and to the left otherwise as it travels from a crossings $j_{k}$ to $j_{k+1}$. The contributions to the signed count of crossings with each of the $\Lambda_{i}$ are given by the coefficients of $\mu_{i}$ in $\cross_{j_{k}, j_{k+1}}$ in the formula as is clear from the definition of the crossing monomial.
\end{proof}

\subsection{Push-outs of Reeb orbits}\label{Sec:PushOutDefn}

We've demonstrated how squares of the form $\{ p \leq p_{0}, q=q_{0} \}\subset N_{\epsilon}$ in the case of $+1$ surgery and of the form $\{ p \geq p_{0}, q=q_{0} \}$ in the case of $-1$ surgery are used to homotop Reeb orbits into $\SurgL \setminus N_{\epsilon} = \R^{3} \setminus \epsilon$ so that the homotoped circles ride along some $\eta_{j_{1},j_{2}}\subset \Lambda$ according to its prescribed orientation. 

Squares of the form $\{ p \geq p_{0}, q=q_{0} \}$ inside of a $c_{i}=+1$ component of $\Lambda$ and of the form $\{ p \leq p_{0}, q=q_{0} \}$ inside of a $c_{i}=-1$ component could also be used. As may be checked with the same local model -- Figures \ref{Fig:PushingOutPlus1} and \ref{Fig:PushingOutMinus1} -- but with opposite the prescribed orientation for $\Lambda$, we may use these squares to homotop an orbit $\gamma$ to $\R^{3} \setminus \epsilon$. Using these squares will result in the homotoped arcs riding along some $\overline{\eta}_{j_{1},j_{2}}\subset \Lambda$.

We then have two choices of homotoping square each time our orbit $\gamma$ passes through $N_{\epsilon}$, with each choice corresponding to a choice of either a $\eta_{j_{1},j_{2}}$ or a $\overline{\eta}_{j_{1},j_{2}}$. Hence for a Reeb orbit $\gamma=(r_{j_{1}}\dots r_{j_{n}})$, a choice of $\zeta_{1}\cdots\zeta_{n}$ with each $\zeta_{j}\in \{\eta_{j_{k},j_{k+1}}, \overline{\eta}_{j_{k}, j_{k+1}} \}$ determines a means of homotoping $\gamma$ into $\R^{3}\setminus N_{\epsilon}$.

\begin{defn}
Provided $\zeta_{1}\cdots\zeta_{n}$ as above, we say that homotopy class of map of the circle in $\R^{3}\setminus N_{\epsilon}$ determined by homotoping $\gamma$ as described above is the \emph{push-out of $\zeta_{1}\cdots\zeta_{n}$}.
\end{defn}

In other words, each orbit string -- recall Definition \ref{Def:OrbitString} -- determines instructions for homotoping $\gamma$ into the complement of the surgery locus. Various examples are depicted in Figure \ref{Fig:TrefoilOrbitStrings}, displaying all push-outs for orbits $(r_{1})$, $(r_{2})$, and $(r_{1}r_{2})$ for $\SurgL$, where $\Lambda$ is the trefoil of Figure \ref{Fig:LagrangianResolutionEx} for both choices of surgery coefficient.

\begin{figure}[h]\begin{overpic}[scale=.6]{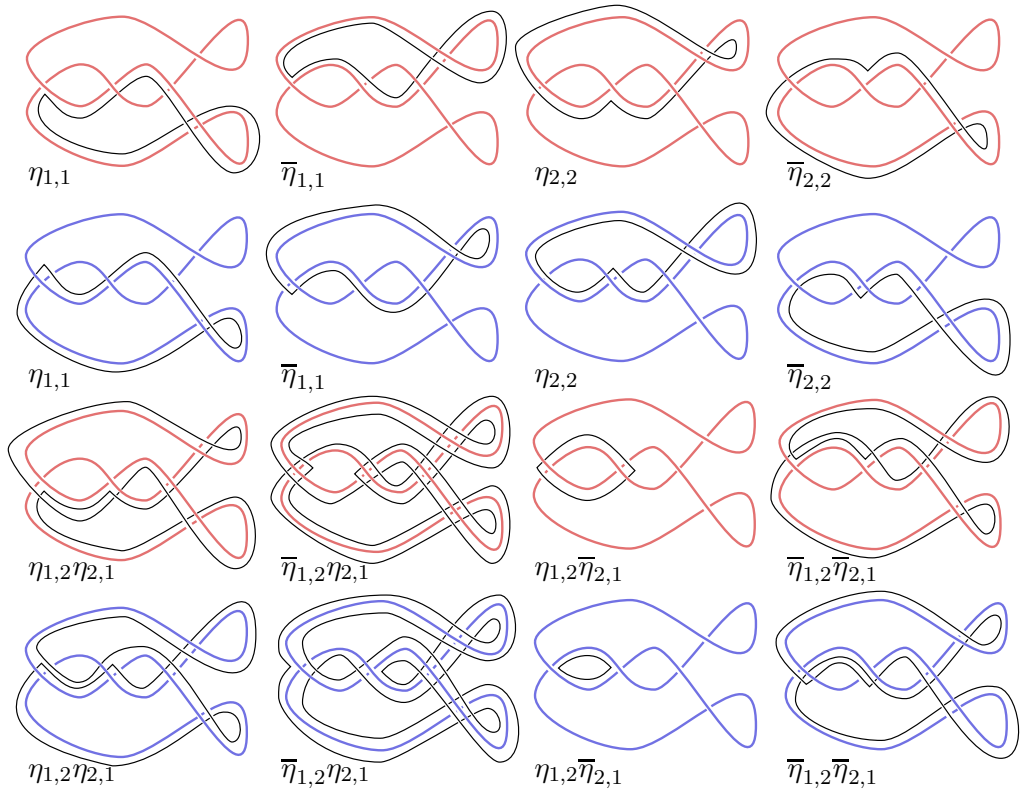}
    \put(2, 58){$\eta_{1, 1}$}
    \put(27, 58){$\overline{\eta}_{1, 1}$}
    \put(52, 58){$\eta_{2, 2}$}
    \put(77, 58){$\overline{\eta}_{2, 2}$}
    \put(2, 38){$\eta_{1, 1}$}
    \put(27, 38){$\overline{\eta}_{1, 1}$}
    \put(52, 38){$\eta_{2, 2}$}
    \put(77, 38){$\overline{\eta}_{2, 2}$}
    \put(2, 19){$\eta_{1, 2}\eta_{2, 1}$}
    \put(27, 19){$\overline{\eta}_{1, 2}\eta_{2, 1}$}
    \put(52, 19){$\eta_{1, 2}\overline{\eta}_{2, 1}$}
    \put(77, 19){$\overline{\eta}_{1, 2}\overline{\eta}_{2, 1}$}
    \put(2, -1){$\eta_{1, 2}\eta_{2, 1}$}
    \put(27, -1){$\overline{\eta}_{1, 2}\eta_{2, 1}$}
    \put(52, -1){$\eta_{1, 2}\overline{\eta}_{2, 1}$}
    \put(77, -1){$\overline{\eta}_{1, 2}\overline{\eta}_{2, 1}$}
\end{overpic}
\caption{Push-outs of Reeb orbits in $\SurgLxi$, where $\Lambda$ is the trefoil of Figure \ref{Fig:TrefoilImmersions}. Default orientations for $\Lambda$ and hence for capping paths are determined by the arrow on $\Lambda$ appearing in that figure. Each subfigure is labeled (to its lower-left) with the capping paths which determine the homotopy shown with homotoped Reeb orbits appearing in black.}
\label{Fig:TrefoilOrbitStrings}
\end{figure}

\section{Surgery cobordisms and Lagrangian disks}\label{Sec:SurgeryCobordisms}

The purpose of this section is to build symplectic cobordisms between the $\SurgLxi$ with specialized properties. We consider the following setup: Take $\Lambda \subset \Rthree$ in good position with $\LambdaZero \subset \Lambda$ non-empty. After performing surgery on $\LambdaPM \subset \Lambda$, we have a contact form $\alpha_{\epsilon}$ on $\SurgLxi$ and consider $\LambdaZero$ as a Legendrian link in $\SurgLxi$. We choose a constant $c = \pm 1$ and denote the contact manifold obtained by performing contact $c$ surgery along $\LambdaZero \subset \SurgLxi$ by $\SurgLxiPrime$, we also denote the contact form on $\SurgLxiPrime$ obtained as $\alpha_{\epsilon}$. We write $N_{\epsilon}^{0}$ for a standard neighborhood of $\LambdaZero \subset \SurgL$ as described in Section \ref{Sec:StandardNeighborhoods} of size $\epsilon$.

\begin{thm}\label{Thm:SurgeryCobordisms}
For any $\epsilon > 0$, there exists a positive constant $C > 0$ and a Liouville cobordism $(W_{c}, \lambda_{c})$ with the following properties:
\be
\item If $c = +1$, the convex end of the cobordism is $(\SurgL, e^{C}\alpha_{\epsilon})$ and the convex end is $(\SurgLPrime, e^{-C}\alpha_{\epsilon})$.
\item If $c = -1$, the convex end of the cobordism is $(\SurgLPrime, e^{C}\alpha_{\epsilon})$ and the convex is $(\SurgL, e^{-C}\alpha_{\epsilon})$.
\item $(W_{c}, \lambda_{c})$ contains a disjoint collection of disks $\disk_{c, i}$ along which $\lambda_{c} = 0$, bounding $\LambdaZero$ in the convex end of cobordism when $c=+1$ and bounding $\LambdaZero$ in the concave end of the cobordism when $c=-1$.
\item A finite symplectization $([-C, C] \times (\SurgL \setminus N_{\epsilon}^{0}), e^{t}\alpha_{\epsilon})$ of $(\SurgL \setminus N_{\epsilon}^{0})$ is contained in $(W_{c}, \lambda_{c})$, so that the restriction of its inclusion map to $(\partial [-C, C])\times (\SurgL \setminus N_{\epsilon}^{0})$ provide the obvious inclusions into $(\SurgL, e^{\pm C}\alpha_{\epsilon})$ and $(\SurgLPrime, e^{\pm C}\alpha_{\epsilon})$.
\ee
\end{thm}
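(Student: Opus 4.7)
The plan is to construct $(W_{c},\lambda_{c})$ as a Weinstein $2$-handle attachment carried out inside an arbitrarily thin tubular neighborhood of $\LambdaZero$ (or of its Legendrian push-off, in the case $c=+1$), in close analogy with the constructions of \cite{Weinstein:Handles, Ekholm:SurgeryCurves}. The key adjustment is to arrange that the handle is supported strictly inside the standard neighborhood $N_{\epsilon}^{0}$ constructed in Section \ref{Sec:StandardNeighborhoods}, so that the remainder of the cobordism is literally a finite symplectization of $\SurgL\setminus N_{\epsilon}^{0}$ with its standard Liouville form.

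First I would handle the case $c=-1$. Starting with the finite symplectization $([-C,C]\times \SurgL, e^{t}\alpha_{\epsilon})$ for a suitable $C>0$, attach a standard Weinstein $2$-handle along $\LambdaZero\subset \{0\}\times \SurgL$ using Weinstein's explicit model \cite{Weinstein:Handles}. Theorem \ref{Thm:SurgeryOverview}(2) identifies the resulting convex boundary with $(\SurgLPrime,\alpha_{\epsilon})$ after rescaling. Since the model handle can be made of arbitrarily small radius, it is contained in a tubular neighborhood of $\{0\}\times \LambdaZero$ sitting inside $[-C,C]\times N_{\epsilon}^{0}$. The core of the $2$-handle is a Lagrangian disk bounded by $\LambdaZero$ in the concave end, giving the desired $\disk_{-1,i}$.

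For the case $c=+1$, I would invoke the push-off cancellation of Theorem \ref{Thm:SurgeryOverview}(3): $\SurgLxi$ is recovered from $\SurgLxiPrime$ by performing contact $-1$ surgery on a Legendrian push-off $K$ of $\LambdaZero$, and $K$ can be arranged to lie in $N_{\epsilon}^{0}\subset \SurgLxiPrime$. Applying the preceding construction with $\SurgLxiPrime$ concave and $K$ as the attaching Legendrian produces a Weinstein cobordism from $\SurgLxiPrime$ to $\SurgLxi$ with the correct boundary orientations. Here the desired Lagrangian disk is the \emph{co-core} of the $2$-handle, whose boundary is the belt sphere sitting in the convex end $\SurgL$. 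The cancellation identifies this belt sphere, up to a Legendrian isotopy contained inside $N_{\epsilon}^{0}$, with the original $\LambdaZero$; extending the co-core by the Lagrangian trace of this isotopy, supported in a collar of the convex boundary inside $[0,C]\times N_{\epsilon}^{0}$, yields a Lagrangian disk bounded by $\LambdaZero$ in the convex end. In both cases the non-trivial modifications of the cobordism are confined to the region $[-C,C]\times N_{\epsilon}^{0}$, so the condition on the embedded finite symplectization of $\SurgL\setminus N_{\epsilon}^{0}$ is automatic.

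The hard part will be verifying the local model carefully enough for the $c=+1$ case. One needs to check that the belt sphere of the Weinstein handle attached along $K$, viewed in the recovered $\SurgL$, is Legendrian isotopic to $\LambdaZero$ by an isotopy staying inside $N_{\epsilon}^{0}$, and that this isotopy can be realized by a Lagrangian cylinder supported entirely in the trivial part of the cobordism. This is ultimately a local computation in the explicit gluing models of Section \ref{Sec:GluingMaps}, comparing the attaching data for the $+1$ surgery on $\LambdaZero$ with the attaching data for the cancelling $-1$ surgery on $K$, and is the one point where some additional care beyond citing \cite{Weinstein:Handles, Ekholm:SurgeryCurves} is required.
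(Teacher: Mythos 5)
There is a genuine gap, and it sits exactly where the real content of the theorem lies. The theorem is not the structure-level statement that a Liouville cobordism realizing contact $\pm 1$ surgery on $\LambdaZero$ exists -- that much is already contained in Theorem \ref{Thm:SurgeryOverview}(2),(3). What the theorem asserts is that the cobordism can be built so that the induced contact forms on its two ends are \emph{exactly} $e^{\pm C}\alpha_{\epsilon}$, where $\alpha_{\epsilon}$ is the specific form of Definition \ref{Def:AlphaEpsilon}, built from the gluing map of Section \ref{Sec:GluingMaps} whose twist profile $f_{\epsilon}$ is affine on most of its support (Assumptions \ref{Assump:DiskIntersection}); only then do the chords-to-orbits correspondence, the Conley--Zehnder computations, and the dynamics results of Sections \ref{Sec:ModelGeometry}--\ref{Sec:CZMaslov} apply to the ends without modification. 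Your step ``Theorem \ref{Thm:SurgeryOverview}(2) identifies the resulting convex boundary with $(\SurgLPrime,\alpha_{\epsilon})$ after rescaling'' is where this is lost: attaching Weinstein's model handle of small radius identifies the new boundary with $\SurgLxiPrime$ only up to isotopy of contact structures, and the contact form it induces has a Reeb return map across the handle region given by \emph{some} Dehn twist, not the model one. Closing this gap is precisely what Sections \ref{Sec:SquareHandle}--\ref{Sec:HandleLambdaPerturbations} of the paper do: the handle is shaped so that the flow across $M_{H}$ is a Dehn twist $f_{H,\rho_{0}}$ with the quantitative estimate \eqref{Eq:HandleTwistLinearApproxBound}, and the Liouville form is then corrected in a collar using Proposition \ref{Prop:TrivialTwistRealization} so that the induced twist becomes the model $f_{\rho_{0}}$, while keeping all modifications inside the neighborhood of $\LambdaZero$. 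Without an argument of this type, properties (1), (2) and (4) of the theorem are not established by your construction.

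Your $c=+1$ route is also genuinely different from the paper's, and it compounds the problem. The paper does not invoke handle cancellation: it attaches the \emph{same} explicit handle with its jet-space end facing the concave boundary and takes $\disk_{+1}=\{y=0\}\cap W$, whose boundary is $\LambdaZero$ itself in the convex end, with $\lambda_{+1}$ vanishing on it by Equation \eqref{Eq:LambdaEFG}. In your cancellation picture you would need (a) the concave end to carry the form $\alpha_{\epsilon}$ of $\SurgLxiPrime$ on the nose -- the same unaddressed issue as above, now at the bottom end, since $\alpha_{\epsilon}$ on $\SurgLPrime$ is itself defined through the model gluing along $\LambdaZero$, not along your push-off $K$ -- and (b) the belt sphere to be carried to $\LambdaZero$ (not merely an isotopic copy) by a Lagrangian cylinder on which $\lambda_{+1}\equiv 0$, supported in $[0,C]\times N_{\epsilon}^{0}$. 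The trace of a Legendrian isotopy gives at best an exact Lagrangian cylinder after correction, not one on which the Liouville form vanishes identically, and the theorem's statement (3) requires the stronger condition and the exact boundary $\LambdaZero$. So while the cancellation idea could plausibly be made to work, it requires essentially the same model-geometry analysis you hoped to avoid, plus an additional identification argument that the paper's choice of disk sidesteps entirely.
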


We will construct $(W_{c}, \lambda_{c})$ by attaching $4$-dimensional surgery handles to $\SurgL$. As mentioned in the above theorem, the key properties of our cobordism are that
\be
\item we get exactly the contact forms $\alpha_{\epsilon}$ on its boundaries and 
\item all of the perturbations required to achieve this end happen within a standard neighborhood of $\LambdaZero$ whose size shrinks as $\epsilon$ tends to zero.
\ee
Then the analysis of Sections \ref{Sec:ModelGeometry} through \ref{Sec:CZMaslov} applies to contact forms on the ends of our cobordisms without modification.

We are only slightly modifying known handle attachment constructions -- corresponding to the case $c=-1$ above -- such as appearing in Weinstein's original work \cite{Weinstein:Handles} and Ekholm's \cite{Ekholm:SurgeryCurves}.

An outline of this section is as follows:
\be
\item In Section \ref{Sec:ContactizationForms} we collect lemmas required to perturb contact forms on contactizations, being particularly interested in standard neighborhoods of Legendrian knots.
\item In Section \ref{Sec:SquareHandle} we describe a square surgery handle sitting inside of $\R^{4}$ and outline the properties of its ambient geometry.
\item In Section \ref{Sec:HandleShaping} we flatten the corners of the handle to prepare for later attachment.
\item In Section \ref{Sec:HandleDynamics} we describe Reeb dynamics on the convex end of this handle, showing that its flow is described as a Dehn twist.
\item In Section \ref{Sec:HandleLambdaPerturbations} we modify the handle so that the Dehn twist determined by the Reeb flow is a linear Dehn twist as a described in the gluing construction of Section \ref{Sec:GluingMaps}.
\item In Section \ref{Sec:HandleAttachment} we finalizing our construction by attaching our handle to finite symplectizations of $(\SurgL, \alpha_{\epsilon})$.
\ee

\subsection{Geometry of $1$-forms on contactizations and their symplectizations}\label{Sec:ContactizationForms}

Let $(I \times W, \alpha = dz + \beta)$ be a contactization of an exact symplectic manifold $(W, \beta)$ as in Section \ref{Sec:Contactizations}.

\subsubsection{$\xi$-preserving perturbations}

We first look at how the Reeb vector field changes if we multiply $\alpha$ by a positive function, thereby preserving the contact structure.

\begin{lemma}\label{Lemma:AlphaPerturbation}
Given $H \in \Cinfty(I\times W)$, the Reeb vector field $R_{H}$ of the contact form 
\begin{equation*}
\alpha_{H} = e^{H}(dz + \beta)
\end{equation*}
on $I\times W$ is
\begin{equation*}
R_{H} = e^{-H}\bigg( \big(1 + \beta(X_{H})\big)\partial_{z} - X_{H} - \frac{\partial H}{\partial z}X_{\beta}\bigg).
\end{equation*}
where $X_{H}$ is computed with respect to $d\beta$.
\end{lemma}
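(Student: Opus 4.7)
The plan is to verify directly that the right-hand side of the displayed formula satisfies the two defining equations of a Reeb vector field, namely $\alpha_H(R_H)=1$ and $d\alpha_H(R_H,\cdot)=0$. Write $R := e^{-H}\bigl((1+\beta(X_H))\partial_z - X_H - \tfrac{\partial H}{\partial z}X_\beta\bigr)$ and show $R = R_H$.

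First I would record the following identities, which follow from the definitions and conventions already fixed in Section 2.2 of the paper: since $X_H$ and $X_\beta$ are tangent to the $W$-factor, $dz(X_H) = dz(X_\beta) = 0$; since $d\beta$ is skew, $\beta(X_\beta) = d\beta(X_\beta,X_\beta) = 0$ and $dH(X_H) = d\beta(X_H,X_H) = 0$; and using $d\beta(X_\beta,\cdot)=\beta$ together with the defining equation of $X_H$, one gets $dH(X_\beta) = d\beta(X_\beta,X_H) = \beta(X_H)$. Finally, since $\beta$ and $d\beta$ pull back from $W$, for an arbitrary vector $V$ on $I\times W$ the identity $d\beta(V,X_H) = dH(V) - \tfrac{\partial H}{\partial z}dz(V)$ holds (i.e.\ only the $W$-part of $dH$ contributes), and likewise $d\beta(X_\beta,V) = \beta(V)$.

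Next, compute $\alpha(R) = dz(R) + \beta(R)$. The $\partial_z$ coefficient of $R$ contributes $e^{-H}(1+\beta(X_H))$ to $dz(R)$, and $\beta(R) = -e^{-H}\beta(X_H) - e^{-H}\tfrac{\partial H}{\partial z}\beta(X_\beta)$; using $\beta(X_\beta)=0$, the $\beta(X_H)$ terms cancel and one obtains $\alpha(R) = e^{-H}$, so $\alpha_H(R) = e^H \alpha(R) = 1$.

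For the second equation, use $d\alpha_H = e^H(dH\wedge\alpha + d\beta)$ and test against an arbitrary $V$. One has $(dH\wedge\alpha)(R,V) = dH(R)\alpha(V) - dH(V)\alpha(R)$. Plugging in $R$ and applying the identities above gives $dH(R) = e^{-H}\tfrac{\partial H}{\partial z}$ (the $dH(X_H)$ term vanishes and the $dH(X_\beta) = \beta(X_H)$ term cancels against the $\beta(X_H)\tfrac{\partial H}{\partial z}$ contribution from the $\partial_z$ part). Similarly, expanding $d\beta(R,V)$ using the pull-back identities for $d\beta$ yields $e^{-H}\bigl(dH(V) - \tfrac{\partial H}{\partial z}\alpha(V)\bigr)$. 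Summing the two contributions, the $dH(V)$ terms cancel and the $\tfrac{\partial H}{\partial z}\alpha(V)$ terms cancel, giving $d\alpha_H(R,V)=0$. By uniqueness of the Reeb vector field, $R = R_H$.

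There is no real obstacle here beyond bookkeeping: the only thing to watch is the distinction between $dH$ on $I\times W$ and its restriction $d_W H$ to the $W$-factor, which accounts for the $\tfrac{\partial H}{\partial z}X_\beta$ correction term in the formula. All other manipulations are formal consequences of the defining equations for $X_H$ and $X_\beta$ with respect to $d\beta$.
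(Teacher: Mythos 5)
Your verification is correct, and it is exactly the "straightforward computation" the paper alludes to without writing out: check $\alpha_H(R)=1$ and $d\alpha_H(R,\cdot)=0$ directly, using $d\beta(X_\beta,\cdot)=\beta$, the fiberwise interpretation $d\beta(\cdot,X_H)=dH-\tfrac{\partial H}{\partial z}dz$, and the resulting identities $dH(X_\beta)=\beta(X_H)$, $dH(X_H)=0$, $\beta(X_\beta)=0$. Your explicit handling of the $z$-dependence of $H$ (the $d_WH$ versus $dH$ distinction producing the $\tfrac{\partial H}{\partial z}X_\beta$ term) is the one point worth spelling out, and you do so correctly.
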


This is a straightforward computation. We'll be interested in the following special case:

\begin{lemma}\label{Lemma:AlphaPerturbationSpecific}
Suppose that $H = H(z, p)$ is a smooth function on $I \times I \times \Circle$. Then the Reeb vector field of $\alpha_{H} = e^{H}(dz + p dq)$ is
\begin{equation*}
R_{H} = e^{-H}\left( \left(1 + p\frac{\partial H}{\partial p}\right)\partial_{z} - \frac{\partial H}{\partial p}\partial_{q} - p\frac{\partial H}{\partial z}\partial_{p}\right).
\end{equation*}
and the functions $pe^{H}$ is invariant under $\Flow^{t}_{R_{H}}$.
\end{lemma}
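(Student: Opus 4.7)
The plan is to derive both claims as direct specializations and computations, building on the general formula already stated in Lemma \ref{Lemma:AlphaPerturbation}. The structure is: (i) identify the relevant vector fields on $W = I \times \Circle$ with $\beta = p\,dq$, (ii) substitute into the general Reeb formula, and (iii) check invariance of $pe^H$ by a one-line calculation.

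First I would unpack Lemma \ref{Lemma:AlphaPerturbation} in this setting. With $\beta = p\,dq$ and $d\beta = dp\wedge dq$, the Liouville field determined by $d\beta(X_\beta,\cdot) = \beta$ is immediately $X_\beta = p\partial_p$. For $H = H(z,p)$, the Hamiltonian vector field $X_H$ (computed on $W$ with $z$ treated as a parameter) is obtained from $d\beta(\cdot, X_H) = d_W H = \tfrac{\partial H}{\partial p}\,dp$; writing $X_H = a\partial_p + b\partial_q$ and comparing coefficients in $dp\wedge dq(\cdot, X_H) = b\,dp - a\,dq$ forces $a=0$, $b = \tfrac{\partial H}{\partial p}$, so $X_H = \tfrac{\partial H}{\partial p}\partial_q$. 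Then $\beta(X_H) = p\tfrac{\partial H}{\partial p}$. Plugging these three ingredients into the formula of Lemma \ref{Lemma:AlphaPerturbation} yields the stated expression for $R_H$.

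For the second claim I would simply evaluate $R_H(pe^H)$. Since $R_H(p) = -pe^{-H}\tfrac{\partial H}{\partial z}$ (only the $\partial_p$-component contributes), and
\begin{equation*}
R_H(H) = \tfrac{\partial H}{\partial z} R_H(z) + \tfrac{\partial H}{\partial p} R_H(p) = e^{-H}\left(\tfrac{\partial H}{\partial z}\bigl(1+p\tfrac{\partial H}{\partial p}\bigr) - p\tfrac{\partial H}{\partial p}\tfrac{\partial H}{\partial z}\right) = e^{-H}\tfrac{\partial H}{\partial z},
\end{equation*}
the Leibniz rule gives $R_H(pe^H) = R_H(p)\,e^H + p\,e^H R_H(H) = -p\tfrac{\partial H}{\partial z} + p\tfrac{\partial H}{\partial z} = 0$, so $pe^H$ is constant along Reeb flow lines.

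There is no real obstacle: the computation is completely mechanical once the special form of $\beta$ and the fact that $H$ is independent of $q$ are exploited. The only place to be careful is the sign convention for the Hamiltonian vector field (the paper uses Option 1 of \cite{Wendl:Signs}, i.e.\ $d\beta(\cdot, X_H) = dH$), which must be applied consistently both when extracting $X_H$ from $dH$ and when reading off the Reeb formula from Lemma \ref{Lemma:AlphaPerturbation}.
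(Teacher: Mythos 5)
Your proposal is correct: the specialization of Lemma \ref{Lemma:AlphaPerturbation} with $\beta = p\,dq$, $X_{\beta} = p\partial_{p}$, $X_{H} = \frac{\partial H}{\partial p}\partial_{q}$, $\beta(X_{H}) = p\frac{\partial H}{\partial p}$ is exactly how the paper obtains the formula for $R_{H}$, and your sign conventions are applied consistently. The only place you diverge from the paper is the invariance statement: you verify $R_{H}(pe^{H}) = 0$ by a direct Leibniz-rule computation, whereas the paper argues structurally, observing that the projection of $R_{H}$ to the $(z,p)$-plane is (up to a positive conformal factor) a Hamiltonian vector field for the area form $dp\wedge dz$, so its flow preserves the level sets of the Hamiltonian. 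Your route is more elementary and, usefully, it pins down the sign unambiguously: the conserved quantity is $pe^{H}$, consistent with the lemma as stated, while the paper's one-line remark names the function $pe^{-H}$; a direct check along your lines shows the $(z,p)$-projection of $R_{H}$ equals $e^{-2H}$ times the Hamiltonian vector field of $pe^{H}$ (in the convention $d\beta(\ast, X_{F}) = dF$), and $pe^{-H}$ is in general \emph{not} preserved, so the paper's structural argument needs that small correction (or a ``proportional to'' reading) to go through, whereas your computation settles the claim outright.
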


For the last item, we see that the projection of $R_{H}$ onto the $(z, p)$ coordinates is the Hamiltonian vector field associated to $dp \wedge dz$ and the function $pe^{-H}$.

\subsubsection{$\xi$-modifying perturbations}

Now we study perturbations of $\alpha$ which modify $\xi$. Similar modifications of contact forms appear in \cite[Definition 3.1.1]{BH:ContactDefinition} and \cite[Corollary 2.5]{CGHH:Sutures}.

\begin{lemma}\label{Lemma:ZPerturbation}
Given a smooth function $h \in \Cinfty(I\times W, (0, \infty))$ the contact form 
\begin{equation*}
	\alpha_{h} = h dz + \beta
\end{equation*}
is contact if and only if 
\begin{equation*}
	h d\beta + \beta\wedge dh
\end{equation*}
is a symplectic form on each $\{z\} \times W$. If this form is contact, its Reeb vector field $R_{h}$ is
\begin{equation*}
	R_{h} = \big( h - \beta(X_{h}) \big)^{-1}\bigg(\partial_{z}  - X_{h}\bigg)
\end{equation*}
where $X_{h}$ is computed with respect to $d\beta$. The contact structure $\xi_{h} = \ker(\alpha_{h})$ is given by
\begin{equation*}
	\xi_{h} = \{ hV - \beta(V)\partial_{z}\ :\ V \in TW \}.
\end{equation*}
\end{lemma}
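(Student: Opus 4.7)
The plan is to reduce every claim to a direct algebraic manipulation of forms, treating the three assertions (contact condition, formula for $R_h$, description of $\xi_h$) in turn.

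First I would establish the contact condition by expanding $\alpha_h \wedge (d\alpha_h)^n$ using $d\alpha_h = dh \wedge dz + d\beta$. Terms containing $dz \wedge dz$ drop out immediately, and terms of the form $\beta \wedge d\beta$ drop out because they are pulled back to $I \times W$ from the $2n$-dimensional manifold $W$, hence vanish as $(2n+1)$-forms. What remains can be factored as $dz$ wedged with a $2n$-form $\Omega$ whose restriction to each slice $\{z\}\times W$ has the shape
\begin{equation*}
\Omega|_{\{z\}\times W} = h(d\beta)^n + n\,\beta \wedge d_W h \wedge (d\beta)^{n-1},
\end{equation*}
where $d_W h$ denotes the differential of $h|_{\{z\}\times W}$. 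On the other hand, using $(\beta \wedge dh)^2 = 0$ (immediate from $\beta \wedge \beta = 0$), the binomial expansion of $(hd\beta + \beta \wedge dh)^n$ on a slice retains only the $k=0,1$ terms and equals $h^{n-1}\,\Omega|_{\{z\}\times W}$ up to sign-tracking. Since $h > 0$, the contact condition on $I \times W$ is therefore equivalent to $hd\beta + \beta \wedge dh$ being a nondegenerate $2$-form on every slice; in the three-dimensional setting $W$ is a surface, so closedness is automatic and the nondegenerate form is symplectic in the strict sense.

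Next I would derive the Reeb vector field by writing $R_h = a\,\partial_z + V$ with $a$ a function and $V$ a $z$-dependent vector field on $W$, then solving $\iota_{R_h} d\alpha_h = 0$ together with $\alpha_h(R_h) = 1$. Decomposing $dh = (\partial_z h)\,dz + d_W h$ and separating the resulting $1$-form into its $dz$-component and its $W$-component yields the pair of equations $V(h) = 0$ and $\iota_V d\beta = a\, d_W h$. The defining property $d\beta(\,\cdot\,, X_h) = dh$ gives $\iota_{X_h} d\beta = -d_W h$, so nondegeneracy of $d\beta$ on each slice forces $V = -a\,X_h$; the first equation is then automatic since $d_W h(X_h) = d\beta(X_h, X_h) = 0$. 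The normalization $1 = \alpha_h(R_h) = ha + \beta(V) = a(h - \beta(X_h))$ determines $a$ and produces the stated formula for $R_h$. The description of $\xi_h$ is finally verified by observing that $V \mapsto hV - \beta(V)\partial_z$ is injective on $TW$ (because $h > 0$), lands in $\ker \alpha_h$ since $\alpha_h(hV - \beta(V)\partial_z) = h\beta(V) - h\beta(V) = 0$, and matches $\xi_h$ by a dimension count.

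The only genuine obstacle, modest as it is, is the sign bookkeeping in the first step when sliding $dz$ past pulled-back forms of various degrees and recognizing the factor $(hd\beta + \beta \wedge dh)^n$ inside $\alpha_h \wedge (d\alpha_h)^n$. Once that identification is made, every remaining assertion reduces to a one-line algebraic identity, with no analytic input required.
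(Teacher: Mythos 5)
Your proof is correct: the paper states this lemma without proof, treating it as a routine computation, and your direct verification --- expanding $\alpha_h\wedge(d\alpha_h)^n = dz\wedge\bigl(h(d\beta)^n + n\,\beta\wedge d_W h\wedge(d\beta)^{n-1}\bigr)$, recognizing this $2n$-form as $h^{1-n}(h\,d\beta+\beta\wedge dh)^n$ on each slice, and then solving the ansatz $R_h = a\,\partial_z + V$ against $\iota_{R_h}d\alpha_h=0$, $\alpha_h(R_h)=1$ --- is exactly the intended argument, including the sign conventions $d\beta(\cdot,X_h)=dh$ used in the paper. Your remark that closedness of $h\,d\beta+\beta\wedge dh$ is only automatic when $W$ is a surface correctly identifies the setting (two-dimensional $W$) in which the lemma is actually applied, so no further repair is needed.
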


The following technical result will allow us to modify the Reeb vector field on standard neighborhoods of Legendrians so that the flow map from the bottom to the top of the neighborhood realizes a Dehn twist $\tau_{g}$ associated to a function $g$. For applications to surgery later in this section, it will be important to keep track of the size of our neighborhood.

\begin{prop}\label{Prop:TrivialTwistRealization}
For positive constants $\epsilon_{p}, \epsilon_{g} > 0$ let $g = g(p): I_{\epsilon_{p}} \rightarrow \R$ be a smooth function which vanishes for all orders on $\partial I_{\epsilon_{p}}$ and satisfies the point-wise bound $|g(p)| \leq \epsilon_{g}$. Then for constants $\epsilon_{z}$ and $\epsilon_{t}$ satisfying
\begin{equation*}
	\epsilon_{p}\epsilon_{g} \leq \half \epsilon_{z},\ \frac{\epsilon_{t}\epsilon_{z}}{2(1 + \epsilon_{t})}
\end{equation*}
there exists a function $h = h(z, p)$ on $I_{\epsilon_{z}} \times I_{\epsilon_{p}}$ and an exact symplectic manifold
\begin{equation*}
	\left([-\epsilon_{t}, 0]\times I_{\epsilon_{z}} \times I_{\epsilon_{p}} \times \Circle, \lambda\right)
\end{equation*}
such that the following conditions hold:
\be
\item $\lambda|_{\{ -\epsilon_{t}\}\times I_{\epsilon_{z}} \times I_{\epsilon_{p}} \times \Circle} = e^{-\epsilon_{t}}(dz + pdq)$,
\item $\lambda|_{\{ 0\}\times I_{\epsilon_{z}} \times I_{\epsilon_{p}}\times \Circle} = \alpha_{h}$ where $\alpha_{h}$ is as in Lemma \ref{Lemma:ZPerturbation} for a positive function $h$,
\item $s \alpha_{h} + (1-s)(dz + pdq)$ is contact for all $s \in [0, 1]$,
\item $\alpha_{h} - (dz + pdq)$ and all of its derivatives vanish along $\partial (I_{\epsilon_{z}}\times I_{\epsilon_{p}}\times \Circle)$,
\item the Reeb vector field $R_{h}$ of $\alpha_{h}$ satisfies $dz(R_{h}) > 0$ everywhere,
\item for each point $(p, q) \in I_{\epsilon_{p}} \times \Circle$ a flow-line of $R_{h}$ passing through $(-\epsilon_{z}, p, q)$ will pass through $(\epsilon_{z}, p, q + g(p))$,
\item the Liouville vector field of $\lambda$ agrees with $\partial_{t}$ on a collar neighborhood of the boundary of its domain.
\ee
\end{prop}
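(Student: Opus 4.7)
My plan is to construct $h$ explicitly by separation of variables and then build $\lambda$ as a $t$-dependent family of contact forms interpolating between the symplectization form at $t = -\epsilon_t$ and $\alpha_h$ at $t = 0$. The two size assumptions on $(\epsilon_p, \epsilon_g, \epsilon_z, \epsilon_t)$ play distinct roles: the first guarantees contactness of the slice-wise interpolation (condition~3), while the second is what controls the symplectic condition on the full cobordism.

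For $h$, I would take $h(z, p) = 1 - \phi(z) G(p)$, where $\phi$ is a smooth bump on $I_{\epsilon_z}$ that vanishes to all orders at $\pm\epsilon_z$, satisfies $\int_{I_{\epsilon_z}}\phi\,dz = 1$, and has $\sup|\phi|$ of order $1/\epsilon_z$, while $G$ is an antiderivative of $g$ chosen so that it vanishes smoothly at both endpoints of $I_{\epsilon_p}$ (the vanishing of $g$ and all its derivatives at $\pm\epsilon_p$ makes such a normalization possible, possibly after a correction term supported where $\phi$ is already zero). Condition~4 is immediate from the boundary behavior of $\phi$ and $G$. By Lemma~\ref{Lemma:ZPerturbation}, the Reeb field of $\alpha_h = h\,dz + p\,dq$ preserves $p$ with $dq/dz = -\partial_p h = \phi(z) g(p)$, and integrating from $-\epsilon_z$ to $\epsilon_z$ gives exactly the shift $g(p)$ required by condition~6. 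The first size bound $\epsilon_p\epsilon_g \leq \epsilon_z/2$ forces $|h - 1|, |p\partial_p h| < 1/2$, so $h > 0$ and $dz(R_h) > 0$ (condition~5); the same estimate shows that $h_s := 1 + s(h - 1)$ has $h_s - p s\partial_p h > 0$ for every $s \in [0, 1]$, proving that $\alpha_s := s\alpha_h + (1-s)(dz + p\,dq) = h_s\,dz + p\,dq$ is contact throughout the interpolation, which is condition~3.

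For the Liouville cobordism I would pick a smooth monotone $s:[-\epsilon_t, 0]\to[0, 1]$ equal to $0$ near $-\epsilon_t$, equal to $1$ near $0$, and with $\sup|s'|$ of order $1/\epsilon_t$, then set $\lambda = e^t \alpha_{s(t)}$. Conditions~1 and~2 are built in. On collars of $\{t = -\epsilon_t\}$ and $\{t = 0\}$, where $s$ is locally constant, $\lambda$ coincides with a standard symplectization form $e^t\alpha_\bullet$ so that the Liouville field equals $\partial_t$, giving condition~7.

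The main obstacle is verifying that $d\lambda$ is symplectic on the full slab. A direct computation in the coordinates $(t, z, p, q)$ yields
\begin{equation*}
(d\lambda)^2 = 2 e^{2t}\bigl(h_{s(t)} + s'(t)(h - 1) - p\,s(t)\,\partial_p h\bigr)\,dt\wedge dz\wedge dp\wedge dq,
\end{equation*}
so everything reduces to positivity of the scalar factor. The ``static'' part $h_{s(t)} - ps(t)\partial_p h$ is $\geq 1/2$ by the first size hypothesis, as in the contactness check. The ``dynamic'' correction $|s'(t)(h-1)|$ is bounded by $\sup|s'|\cdot\sup|\phi|\cdot\epsilon_p\epsilon_g$, which is of order $\epsilon_p\epsilon_g/(\epsilon_t\epsilon_z)$; the second hypothesis $\epsilon_p\epsilon_g \leq \epsilon_t\epsilon_z/(2(1+\epsilon_t))$ is arranged so that this correction is strictly smaller than the static lower bound, keeping the total positive. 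This is the quantitative heart of the proof: the unusual factor $1/(1+\epsilon_t)$ in the hypothesis is exactly the balance between the slope of the $t$-interpolation $s'(t)\sim 1/\epsilon_t$ and the height $\epsilon_t$ of the cobordism, and without it one could push $d\lambda$ out of the symplectic locus by making the transition too abrupt.
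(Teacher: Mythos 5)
Your proposal is essentially the paper's own proof: the paper sets $h = 1 + F(z)G(p)$ with $F$ a bump in $z$ and $G$ a normalized antiderivative of $g$ (your $\phi$ is $F/\int F$), derives the $p$-preserving Reeb flow and the twist by $g$ from Lemma \ref{Lemma:ZPerturbation}, and interpolates via $\lambda = e^{t}\bigl((1 + E(t)F(z)G(p))dz + p\,dq\bigr)$ for a monotone cutoff $E(t)$ with $\sup|\partial_{t}E| = 2/\epsilon_{t}$, the two hypotheses entering exactly as in your estimates — the first giving the pointwise contact condition (hence condition 3 and $dz(R_{h})>0$), the second dominating the $\partial_{t}E\cdot FG$ term in $(d\lambda)^{2}$. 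The only inaccuracy is your parenthetical claim that the antiderivative of $g$ can be normalized to vanish at both endpoints of $I_{\epsilon_{p}}$ (that would require $\int_{I_{\epsilon_{p}}} g = 0$, which is not assumed, and a correction ``supported where $\phi$ vanishes'' cannot fix a function of $p$ alone); the paper arranges vanishing only at one endpoint, and since additive constants in $G$ affect neither $g$, the twist, nor the quantitative bounds, this does not change the structure or validity of the argument.
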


\begin{proof}
We first outline the contact forms we'll need. Consider functions of the form $h = 1 + F(z)G(p)$ on $I_{\epsilon_{z}}\times I_{\epsilon_{p}}$ and $1$-forms
\begin{equation*}
	\alpha_{h} = hdz + p dq
\end{equation*}
as studied in Lemma \ref{Lemma:ZPerturbation}. We assume $F \geq 0$ and that both $F,G$ and all of their derivatives vanish on collar neighborhoods of the boundary of their domains. By Lemma \ref{Lemma:ZPerturbation}, $\alpha_{h}$ is contact if and only if
\begin{equation}\label{Eq:TwistContactCondition}
0 < 1 + FG - pF\frac{\partial G}{\partial p}.
\end{equation}

Second we outline the construction of Liouville forms which interpolate between $\alpha = dz + pdq$ and $\alpha_{h}$. Consider functions $E$ on an interval $[-\epsilon_{t}, 0]$ satisfying $E(-\epsilon_{t}) = 0$ and $E(0) = 1$ with $\frac{\partial^{k} E}{\partial t^{k}} = 0$ for all $k > 0$ at the endpoints of its domain and $\frac{\partial E}{\partial t} \geq 0$ everywhere. Define a $1$-form 
\begin{equation*}
\lambda_{EFG} \in \Omega^{1}([0, \epsilon_{t}] \times I_{\epsilon_{z}} \times I_{\epsilon_{p}} \times \Circle)
\end{equation*}
determined by
\begin{equation}\label{Eq:LambdaEFG}
	\lambda_{EFG} = e^{t}\bigg( \bigr(1 + E(t)F(z)G(p)\bigr)dz + pdq \bigg).
\end{equation}
Then we compute 
\begin{equation}\label{Eq:TwistSymplecticCondition}
d\lambda_{EFG} \wedge d\lambda_{EFG} = e^{2t}\bigg( 1 + EFG - pEF\frac{\partial G}{\partial p} + \frac{\partial E}{\partial t}FG \bigg) dt\wedge dz \wedge dp \wedge dq.
\end{equation}

We seek to specify the $E$, $F$, and $G$ so that:
\be
\item $\alpha_{h}$ is contact and its flow determines a Dehn twist by $g$,
\item $d\lambda_{EFG}$ is symplectic, and
\item the sizes of our neighborhood and symplectic cobordism -- governed by the constants $\epsilon_{z}$ and $\epsilon_{t}$ -- are reasonably small.
\ee

First we show that $G$ is determined by $g$. If $\alpha_{h}$ is contact its Reeb vector field is computed
\begin{equation*}
	R_{h} = \left(1 + FG - pF\frac{\partial G}{\partial p}\right)^{-1}\left(\partial_{z} - F\frac{\partial G}{\partial p}\partial_{q}\right).
\end{equation*}
This Reeb vector field is particularly friendly in that it preserves $p$ and provides us with a separable O.D.E. For provided an initial condition $(z_{0}, p_{0}, q_{0})$ and some $z > z_{0}$ we see that after some time $t > 0$, $\Flow^{t}_{R_{h}}$ will pass through the point $(z, p_{0}, q)$ with
\begin{equation*}
	q = q_{0} - \frac{\partial G}{\partial p}\int_{z_{0}}^{z}F(Z)dZ.
\end{equation*}
In order to realize the flow from $\{ -\epsilon_{z}\} \times \R \times \Circle$ to $\{ \epsilon_{z} \} \times \R \times \Circle$ as a Dehn twist by $g$ we need
\begin{equation*}
	G(p) = -\bigg(\int_{-\epsilon_{z}}^{\epsilon_{z}} F(z)dz\bigg)^{-1}\int_{-\infty}^{p}g(P)dP.
\end{equation*}
This quantity is well defined by our presumption that $g$ is compactly supported.

With this choice of $G$, the contact condition provided by Equation \eqref{Eq:TwistContactCondition} is equivalent to
\begin{equation}\label{Eq:TwistContactConditionSpecific}
F\cdot \bigg(\int_{-\infty}^{p}g(P)dP - p g(p)\bigg) \leq \int_{-\epsilon_{z}}^{\epsilon_{z}} F(z)dz
\end{equation}
for all $(z, p, q)$. The condition that $d\lambda_{EFG}$ is symplectic provided by Equation \eqref{Eq:TwistSymplecticCondition} is equivalent to
\begin{equation}\label{Eq:TwistSymplecticConditionSpecific}
EF\cdot\bigg( \int_{-\infty}^{p}g(P) dP - pg(p)\bigg) + \frac{\partial E}{\partial t} F\cdot \bigg(\int_{-\infty}^{p}g(P) dP\bigg) \leq \int_{-\epsilon_{z}}^{\epsilon_{z}} F(z)dz
\end{equation}
Now choose $F$ and a constant $\epsilon_{F}$ so that the following are satisfied:
\begin{equation*}
	\epsilon_{F} = \sup |F(z)|,\quad \epsilon_{F}\epsilon_{z} = \int_{-\epsilon_{z}}^{\epsilon_{z}}F(z)dz.
\end{equation*}
Its easy to see by drawing pictures bump functions that these choices can be made. Then Equation \eqref{Eq:TwistContactConditionSpecific} is satisfied so long as
\begin{equation*}
	\epsilon_{p}\epsilon_{g} \leq \half \epsilon_{z}
\end{equation*}
and since $0 \leq E \leq 1$ we have that Equation \eqref{Eq:TwistSymplecticConditionSpecific} is satisfied so long as
\begin{equation*}
	\left(2 + \frac{\partial E}{\partial t}\right)\epsilon_{p}\epsilon_{g} \leq \epsilon_{z}.
\end{equation*}
Choose $E$ so that $\sup \frac{\partial E}{\partial t} = \frac{2}{\epsilon_{t}}$. Then this last inequality we seek to satisfy becomes
\begin{equation*}
	2(1 + \epsilon_{t}^{-1})\epsilon_{p}\epsilon_{g} \leq \epsilon_{z} \iff \epsilon_{p}\epsilon_{g} \leq \frac{\epsilon_{t}\epsilon_{z}}{2(1 + \epsilon_{t})}.
\end{equation*}
\end{proof}

\subsection{The square handle}\label{Sec:SquareHandle}

Having established the above lemmas, we proceed with the construction of our symplectic handle. Here we construct a square Weinstein handle sitting in $\R^{4}$.

Consider the Liouville form on $\R^{4} = \C^{2}$,
\begin{equation*}
\lambda_{0} = \sum_{1}^{2} 2x_{i}dy_{i} + y_{i}dx_{i}.
\end{equation*}
This is a potential for the standard symplectic form $d\lambda_{0} = dx_{i}\wedge dy_{i}$ with Liouville vector field
\begin{equation*}
	X_{\lambda_{0}} = 2x_{i}\partial_{x_{i}} - y_{i}\partial_{y_{i}}
\end{equation*}
whose time $t$ flow is given by
\begin{equation}\label{Eq:XLambdaFlow}
\Flow^{t}_{X_{\lambda_{0}}}(x, y) = \left(e^{2t}x, e^{-t}y\right).
\end{equation}

For $\rho_{0} > 0$, consider also the convex set with corners
\begin{equation*}
	\disk_{\rho_{0}} \times \disk = \{ |x| \leq \rho_{0}, |y| \leq 1 \} \subset \R^{4}
\end{equation*}
whose smooth boundary strata we denote
\begin{equation*}
M^{+}_{\rho_{0}} = \partial \disk_{\rho_{0}} \times \disk,\quad M^{-}_{\rho_{0}} = \disk_{\rho_{0}} \times \partial\disk.
\end{equation*}
Then $X_{\lambda_{0}}$ is positively transverse to the $M^{\pm}_{\rho_{0}}$ if we equip $M^{+}_{\rho_{0}}$ with the outward-pointing orientation and equip $M^{-}_{\rho_{0}}$ with its inward-pointing orientation. Therefore $\lambda_{0}|_{M^{\pm}}$ is contact. Applying $\Flow^{t}_{X_{\lambda_{0}}}$ for $t \in (-\infty, 0]$ we have embeddings of the negative half-infinite symplectizations of the $(M^{\pm}_{\rho_{0}}, \lambda_{0}|_{TS^{\pm}})$ into $\R^{4}$,
\begin{equation}\label{Eq:SymplectizationFlow}
\Flow^{t}_{X_{\lambda_{0}}}\circ i^{\pm}:(-\infty, 0]\times M^{\pm}_{\rho_{0}} \rightarrow \R^{4}
\end{equation}
where $i^{\pm}:M^{\pm}_{\rho_{0}} \rightarrow \R^{4}$ denote the inclusion mappings.

\subsubsection{Reeb trajectories across the square handle}\label{Sec:SquareHandleTrajectories}

The Reeb vector field $R_{\rho_{0}}$ along $M^{+}_{\rho_{0}}$ is
\begin{equation*}
R_{\rho_{0}} = \frac{1}{2\rho_{0}^{2}} x_{i}\partial_{y_{i}} \implies \Flow^{t}_{R_{\rho_{0}}}(x, y) = \left(x, y + \frac{t}{2\rho_{0}^{2}} x\right).
\end{equation*}
Starting at points $(x_{\theta}, y_{0}) = (\rho_{0}\cos(\theta), \rho_{0}\sin(\theta), 1, 0)$, Reeb trajectories are
\begin{equation*}
\Flow^{t}_{R_{\rho_{0}}}(x_{\theta}, y_{0}) = \left(\rho_{0}\cos(\theta), \rho_{0}\sin(\theta), 1 + \frac{t}{2\rho_{0}}\cos(\theta), \frac{t}{2\rho_{0}}\sin(\theta)\right).
\end{equation*}
In order that such a trajectory does not immediately exit $M^{+}_{\rho_{0}}$ (maintaining the condition $|y| \leq 1$ for small $t \geq 0$), we must have $\theta \in [\frac{\pi}{2}, \frac{3\pi}{2}]$. These trajectories touch $\partial M^{+}_{\rho_{0}}$ when
\begin{equation*}
1 = \left(1 + \frac{t}{2\rho_{0}}\cos(\theta)\right)^{2} + \left(\frac{t}{2\rho_{0}}\sin(\theta)\right)^{2} \iff -4\rho_{0} \cos(\theta) = t
\end{equation*}
at which point the $y$ coordinate will be
\begin{equation*}
y_{\theta} = (1 -2\cos^{2}(\theta), -2\cos(\theta)\sin(\theta)) = (-\cos(2\theta), -\sin(2\theta)) = (\cos(2\theta - \pi), \sin(2\theta - \pi)).
\end{equation*}
We can then measure the angle from $y_{0}$ to $y_{\theta}$ as $2\theta - \pi \in [0, 2\pi]$.

\subsection{Identification of the concave end of the handle as a $1$-jet space}\label{Sec:HandleNegativeEndJetSpace}

We define an embedding of a standard neighborhood of a Legendrian into $M^{-}_{1}$ as
\begin{equation*}
	\Phi_{-}(z, p, q) = \left(z\cos - \frac{p}{2\pi}\sin, z\sin + \frac{p}{2\pi}\cos, \cos, \sin\right)
\end{equation*}
where the arguments of $\cos$ and $\sin$ are both $2\pi q$. The map parameterizes $M^{-}_{1}$ so that
\be
\item $2\pi q$ is the angle in the $y$-plane,
\item $z = x\cdot y$,
\item $p = x\cdot \frac{\partial y}{\partial q}$, and
\item $|x|^{2} = z^{2} + \left(\frac{p}{2\pi}\right)^{2}$.
\ee

The tangent map of $\Phi_{-}$ is computed
\begin{equation*}
T\Phi_{-} = \begin{pmatrix}
	\cos & -\frac{\sin}{2\pi} & -2\pi z \sin - p\cos\\
	\sin & \frac{\cos}{2\pi} & 2\pi z \cos - p\sin\\
	0 & 0 & -2\pi\sin\\
	0 & 0 & 2\pi\cos
\end{pmatrix}
\end{equation*}
with incoming basis $\partial_{z}, \partial_{p}, \partial_{q}$ and outgoing basis $\partial_{x_{1}},\partial_{x_{2}}, \partial_{y_{1}},\partial_{y_{2}}$ from which it follows that
\begin{equation*}
\Phi_{-}^{*}\lambda_{0} = dz + pdq.
\end{equation*}

We can extend $\Phi_{-}$ to an embedding of the symplectization of the $1$-jet space into $\R^{4}$ by
\begin{equation}\label{Eq:PhiMinusExplicit}
	\begin{aligned}
		\overline{\Phi}_{-}(t, z, p, q) &= \Flow^{t}_{X_{\lambda_{0}}}\circ\Phi_{-}(z, p, q)\\
		&= \left(e^{2t}\left(z\cos - \frac{p}{2\pi}\sin\right), e^{2t}\left(z\sin + \frac{p}{2\pi}\cos \right), e^{-t}\cos, e^{-t}\sin\right).
	\end{aligned}
\end{equation}
By Equation \eqref{Eq:XLambdaFlow} and $\Lie_{X_{\lambda_{0}}}\lambda_{0} = \lambda_{0}$, we have
\begin{equation}\label{Eq:PhiMinusStarLambda}
\overline{\Phi}_{-}^{*}\lambda_{0} = e^{t}(dz + p dq).
\end{equation}

\subsection{Shaping the handle}\label{Sec:HandleShaping}

Here we shape our handle so that the manifold obtained by the handle attachment will be smooth. Moreover, we will choose a specific shape which allows us to control Reeb dynamics on the surgered contact manifold.

Pick a positive constant $\rho_{1} < \rho_{0}$ and a smooth function $B = B(\rho): (0, \infty) \rightarrow [0, \infty)$ satisfying the conditions
\be
\item $B(\rho) = \log\left(\sqrt{\frac{\rho_{0}}{\rho}}\right) = -\half (\log (\rho) - \log(\rho_{0}))$ for $\rho \in (0, \rho_{1})$,
\item $B(\rho) = 0$ for $\rho > \rho_{0}$, and
\item $0 \leq -\frac{\partial B}{\partial \rho} < \rho^{-1}$ everywhere.
\ee
Along $\rho \in (0, \rho_{1})$, we have $\frac{\partial B}{\partial \rho} = -\frac{1}{2\rho}$ so that our last condition is satisfied. To find such a function $B$ we can take a smoothing of the piece-wise smooth function
\begin{equation}\label{Eq:PLBump}
B^{PW}(\rho) = \begin{cases}
\log\left(\sqrt{\frac{\rho_{0}}{\rho}}\right) & \rho \leq \rho_{0} \\
0 & \rho \geq \rho_{0}.
\end{cases}
\end{equation}

Let $N = I \times I \times \Circle$ be a standard neighborhood of a Legendrian $\Lambda$ with $\Lambda = \{ 0 \} \times  \{0\} \times \Circle$. Using the function $B$ we define an embedding
\begin{equation*}
\begin{gathered}
\Phi_{H}: (N \setminus \Lambda) \rightarrow \R^{4}, \quad \Phi_{H} = \Flow^{H}_{X_{\lambda_{0}}}\circ \Phi_{-},\\
H(p, z) = B(\rho(p, z)), \quad \rho(p, z) = \sqrt{z^{2} + \left(\frac{p}{2\pi}\right)^{2}}.
\end{gathered}
\end{equation*}

We outline some important properties of the map $\Phi_{H}$:
\be
\item From Equation \eqref{Eq:PhiMinusStarLambda}, $\Phi_{H}^{\ast}\lambda_{0} = e^{H}(dz + pdq)$.
\item Along the set $\{ z^{2} + \left(\frac{p}{2\pi}\right)^{2} \geq \rho_{0} \}$, $\Phi_{H}$ is the same as $\Phi_{-}$. 
\item From the first property characterizing $B$ and equation \eqref{Eq:PhiMinusExplicit} we see that on the set $\{ \rho \leq \rho_{1} \}$, the $x$ and $y$ coordinates of the embedding satisfy
\begin{equation}
\left| x\circ \Phi_{H}(z, p, q)\right| = e^{2H}\sqrt{z^{2} + \left(\frac{p}{2\pi}\right)^{2}} = \rho_{0}, \quad 
\left| y\circ \Phi_{H}(z, p, q)\right| = e^{-H} = \sqrt{\frac{\rho}{\rho_{0}}}.
\end{equation}
\ee
From the last equation, we have the equivalences
\begin{equation*}
\begin{aligned}
\Phi_{H}(\{ \rho \leq \rho_{1} \}) &= \left\{ |x| = \rho_{0}, |y| \leq \sqrt{\frac{\rho_{1}}{\rho_{0}}} \right\} \setminus \{ y = 0 \}, \\
\overline{\Phi_{H}(\{ \rho \leq \rho_{1} \})} &= \left\{ |x| = \rho_{0}, |y| \leq \sqrt{\frac{\rho_{1}}{\rho_{0}}}\right\}
\end{aligned}
\end{equation*}

\begin{figure}[h]\begin{overpic}[scale=.6]{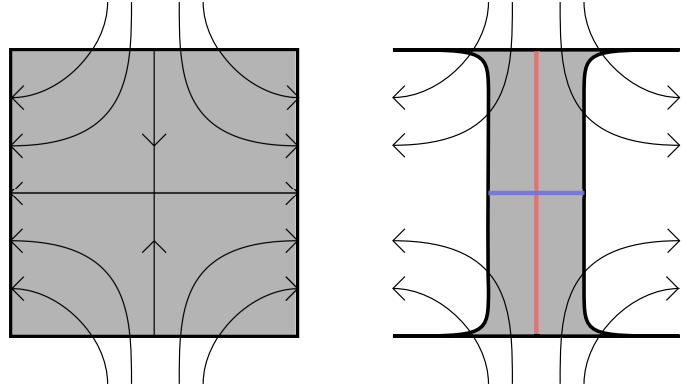}
\end{overpic}
\caption{On the left is the square handle $\disk_{\rho_{0}} \times \disk$ and on the right is the handle $W_{H}$. Flow lines of $X_{\lambda_{0}}$ transversely pass through $M^{\pm}_{\rho_{0}}$ and $M_{H}$. The Lagrangian disks $\{ x = 0 \}$ and $\{ y = 0 \}$ are shown in red and blue, respectively.}
\label{Fig:HandlePerturbation}
\end{figure}

The closure of the image of $\Phi_{H}$ in $\R^{4}$ is a smooth hypersurface $M_{H}$ which is positively transverse to $X_{\lambda_{0}}$. We write $M_{H}$ for this hypersurface and define $W_{H}\subset \R^{4}$ to be the set enclosed by $M^{-}_{1}$ and $M_{H}$. The handle $W_{H}$ is depicted in the right-hand side of Figure \ref{Fig:HandlePerturbation}.

\subsection{Analysis of $R_{H}$ over $M_{H}$}\label{Sec:HandleDynamics}

Here we analyze dynamics on $M_{H}$ of the Reeb vector field $R_{H}$ for the contact form $\alpha_{H} = \lambda_{0}|_{M_{H}}$. Because of our use of the imprecisely defined function $B$, we won't be able to solve for $\Flow_{R_{H}}^{t}$ explicitly. However, we'll be able to capture enough information about this flow for the applications to handle attachment.

On the complement of the set $\{ z = p = 0 \}$ our contact form is $\alpha_{H} = e^{H}(dz + pdq)$ whose Reeb field will be denoted $R_{H}$. Writing $\rho = \rho(p, z)$, we compute
\begin{equation*}
dH = \frac{\partial B}{\partial \rho}d\rho, \quad d\rho = \rho^{-1}\left(zdz + \frac{p}{(2\pi)^{2}}dp \right).
\end{equation*}
Now apply Lemma \ref{Lemma:AlphaPerturbationSpecific}, to compute $R_{H}$ using the coordinates $(z, p, q)$ on $M_{H} \setminus \{ y = 0\}$ as
\begin{equation}\label{Eq:HReeb}
R_{H} = e^{-H}\left( \left(1 + \frac{\partial B}{\partial \rho}\rho^{-1}\left(\frac{p}{2\pi}\right)^{2}\right)\partial_{z} - \frac{p}{\rho}\frac{\partial B}{\partial \rho}\left(\frac{1}{(2\pi)^{2}}\partial_{q} + z\partial_{p}\right) \right).
\end{equation}

\begin{figure}[h]\begin{overpic}[scale=1.0]{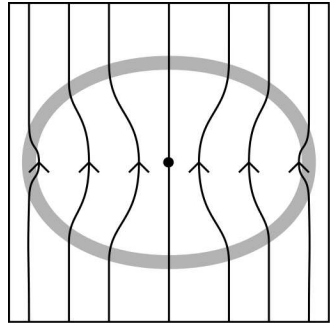}
\end{overpic}
\caption{Projections of flow-lines of $R_{H}$ to the $(p, z)$ coordinates. By Lemma \ref{Lemma:AlphaPerturbationSpecific}, these are the level sets of $p H$. Here $p$ points to the right and $z$ points upward. The dot represents the circle $|x| = 0$ along which our flow is not defined in the $(z, p, q)$ coordinate system. The region $\rho \in [\rho_{1}, \rho_{0}]$ where the function $B^{PW}$ is smoothed to obtain $B$ is shaded.}
\label{Fig:SurgeryFlowlines}
\end{figure}

Here is a collection of observations regarding $R_{H}$ and its flow:
\be
\item The $\partial_{z}$ part of $R_{H}$ is always strictly positive. This is a consequence of the inequalities
\begin{equation*}
-\frac{\partial B}{\partial \rho}\rho^{-1}\left(\frac{p}{2\pi}\right)^{2} \leq -\frac{\partial B}{\partial \rho}\rho^{-1}\rho^{2} = -\frac{\partial B}{\partial \rho}\rho < 1 \implies 1 + \frac{\partial B}{\partial \rho}\rho^{-1}\left(\frac{p}{2\pi}\right)^{2} > 0
\end{equation*}
following from the definition of $\rho$ and the third defining property of the function $B$.
\item For each $p$ and $\epsilon > 0$, a flow-line starting at the point $(-\epsilon, p, q)$ will pass through some $(\epsilon, p, q')$. This follows from the facts that $(p e^{-H})(-z, p) = (p e^{-H})(z, p)$ and that the projection of $R_{H}$ onto the $(z, p)$ plane is Hamiltonian with respect to $dp \wedge dz$ as per Lemma \ref{Lemma:AlphaPerturbationSpecific}. See Figure \ref{Fig:SurgeryFlowlines}.
\item The flow-line passing through $(-\epsilon, 0, q)$ will pass through the point $(\epsilon, 0, q + \half)$. To see this, observe that such a flow-line with such an initial condition must flow up into the circle $\{ z = p = 0 \}$ along the line $\{ p = 0\}$ and compare with the definition of the map $\Phi_{H}$.
\item A twist map $f_{H, \rho_{0}}: I_{2\pi\rho_{0}} \rightarrow \Circle$ defined by following the flow-line of $R_{H}$ passing through $(-\epsilon, p, q)$ to a point $(\epsilon, p, q + f_{H, \rho_{0}}(p))$. By the properties we've used to specify $B$, the derivatives of $f_{H, \rho_{0}}$ are supported on $I_{2\pi \rho_{0}}$ as $R_{H}$ coincides with $\partial_{z}$ outside of this region. Likewise, $R_{H} = \partial_{z}$ on $\{ |\rho| \geq \rho_{0} \}$.
\item The twist map satisfies
\begin{equation*}
f_{H, \rho_{0}}(-p) = -f_{H, \rho_{0}}(p), \quad f_{H, \rho_{0}}(-2\pi\rho_{0}) = f_{H, \rho_{0}}(2\pi\rho_{0}) = 0.
\end{equation*}
The first equality follows from the fact that the $\partial_{z}$ factor of $R_{H}$ is a function of $p^{2}$ while the $\partial_{p}$ and $\partial_{q}$ factors are anti-symmetric in $p$. The second equality follows from the previous item.
\item\label{Obs:Twisting} As $\frac{\partial B}{\partial p} \leq 0$, the $\partial_{q}$ coefficient of $R_{H}$ from Equation \eqref{Eq:HReeb} has sign equal to $\sgn(p)$ where is it non-zero. Hence $f_{H, \rho_{0}}$ always twists to the right for $p < 0$ and to the left along $p > 0$. This is the expected behavior of a positive Dehn twist.
\ee

\begin{prop}
Write $\tilde{f}_{H, \rho_{0}}: I_{2\pi\rho_{0}} \rightarrow \R$ for the lift of the twist map $f_{H, \rho_{0}}: I_{2\pi \rho_{0}} \rightarrow \R$ with initial condition
\begin{equation*}
\tilde{f}_{H, \rho_{0}}(-2\pi \rho_{0}) =0 \implies \tilde{f}_{H, \rho_{0}}(0) = -\half, \quad \tilde{f}_{H, \rho_{0}}(2\pi_{\rho_{0}}) = -1
\end{equation*}
by the preceding analysis. Suppose that $\tilde{f}: I_{2\pi_{\rho_{0}}} \rightarrow [-1, 0]$ is a decreasing function also satisfying the above equalities. Then for $H$ constructed using a function $B$ which is sufficiently $\mathcal{C}^{0}$ close to the function $B^{PW}$, the estimate
\begin{equation}\label{Eq:HandleTwistLinearApproxBound}
| \tilde{f}_{H, \rho_{0}}(p) - \tilde{f}(p) | \leq \half
\end{equation}
is satisfied for all $p \in I_{2\pi \rho_{0}}$.
\end{prop}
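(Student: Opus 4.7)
The plan is to derive the estimate from a monotonicity property of $\tilde{f}_{H,\rho_{0}}$ combined with the three pinned values $0, -\half, -1$ shared with $\tilde{f}$. If $\tilde{f}_{H,\rho_{0}}$ is strictly decreasing on $I_{2\pi\rho_{0}}$, then its restriction to $[-2\pi\rho_{0}, 0]$ maps onto $[-\half, 0]$ and its restriction to $[0, 2\pi\rho_{0}]$ maps onto $[-1, -\half]$; since $\tilde{f}$ is also decreasing and shares the same three values, both functions take values in an interval of length $\half$ on each of the two subintervals, which immediately yields the bound \eqref{Eq:HandleTwistLinearApproxBound}. The three pinned values for $\tilde{f}_{H,\rho_{0}}$ are already established in the preceding analysis: $0$ at $\pm 2\pi\rho_{0}$ because $R_{H} = \partial_{z}$ outside $\{|p| \leq 2\pi\rho_{0}\}$, and $-\half$ at $p=0$ because the flow line at $p = 0$ passes through the singular circle $\{z = p = 0\}$ and accrues a half-shift in the angular coordinate.

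The main technical step is therefore the strict monotonicity of $\tilde{f}_{H,\rho_{0}}$ on $I_{2\pi\rho_{0}}\setminus\{0\}$. My approach is to parameterize each flow line of $R_{H}$ starting at $(-\epsilon, p_{0}, 0)$ by its $z$-coordinate, using that $dz/dt > 0$ everywhere (the first observation following Equation \eqref{Eq:HReeb}), and to write
\begin{equation*}
\tilde{f}_{H,\rho_{0}}(p_{0}) \;=\; \int_{-\epsilon}^{\epsilon} \frac{dq}{dz}\bigg|_{\gamma_{p_{0}}(z)}\, dz,
\end{equation*}
where $\gamma_{p_{0}}$ is the unique flow line through $(-\epsilon, p_{0}, 0)$. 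The integrand has sign $\sgn(p)$ by the sixth observation after Equation \eqref{Eq:HReeb}, and $p$ retains its sign along each $\gamma_{p_{0}}$ because $pe^{-H}$ is preserved under the flow by Lemma~\ref{Lemma:AlphaPerturbationSpecific}. Differentiating in $p_{0}$ and using the explicit formula $e^{-H} = \sqrt{\rho/\rho_{0}}$ that comes from $B^{PW}$ on $\{\rho \leq \rho_{1}\}$, I would show $\partial \tilde{f}_{H,\rho_{0}}/\partial p_{0} < 0$ on $(0, 2\pi\rho_{0})$. Monotonicity on $(-2\pi\rho_{0}, 0)$ then follows from the antisymmetry $f_{H,\rho_{0}}(-p) = -f_{H,\rho_{0}}(p)$, which at the level of lifts becomes $\tilde{f}_{H,\rho_{0}}(-p) = -1 - \tilde{f}_{H,\rho_{0}}(p)$ by continuity and the three pinned values.

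The role of the $\mathcal{C}^{0}$-closeness hypothesis is precisely to control the transition region $\rho \in [\rho_{1}, \rho_{0}]$, where $B$ is not given by an explicit formula and its derivative $\partial B/\partial \rho$ is only constrained by the pointwise bound $0 \leq -\partial B/\partial \rho < \rho^{-1}$. The hard part will be to rule out that oscillations of $\partial B/\partial \rho$ in this region create spurious local extrema of $\tilde{f}_{H,\rho_{0}}$ which could push its image outside $[-\half, 0]$ on $[-2\pi\rho_{0}, 0]$ or outside $[-1, -\half]$ on $[0, 2\pi\rho_{0}]$. Taking the $\mathcal{C}^{0}$ distance between $B$ and $B^{PW}$ small forces the transition region to contribute a negligible amount to the integral above compared to the explicit region $\{\rho \leq \rho_{1}\}$, where the direct computation with $B^{PW}$ exhibits strict monotonicity by inspection. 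Once monotonicity is secured, the final step collapsing both images into intervals of length $\half$ is automatic.
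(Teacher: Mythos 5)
Your endgame coincides with the paper's key elementary observation: any two decreasing functions taking the values $0$, $-\half$, $-1$ at $p = -2\pi\rho_{0}, 0, 2\pi\rho_{0}$ map $[-2\pi\rho_{0}, 0]$ into $[-\half, 0]$ and $[0, 2\pi\rho_{0}]$ into $[-1, -\half]$, hence differ by at most $\half$; and your explicit computation in $\{\rho \leq \rho_{1}\}$ is exactly the square-handle trajectory analysis the paper uses. The gap is in how you propose to reach that endgame. The paper proves monotonicity only for the degenerate model $B^{PW}$, where the twist map is computed exactly (a clockwise semicircle parameterization gives a decreasing map with the three pinned values), and then transfers the estimate to the smoothed handle via $\mathcal{C}^{0}$-closeness of the twist maps, the sign of the $\partial_{q}$-coefficient of $R_{H}$, and a continuity argument on a small interval about $p = 0$; it never claims that $\tilde{f}_{H, \rho_{0}}$ itself is monotone. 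Your plan to prove strict monotonicity of $\tilde{f}_{H,\rho_{0}}$ by differentiating $\int (dq/dz)\, dz$ in the initial condition $p_{0}$ cannot be carried out from the stated hypotheses: in the collar $\rho \in [\rho_{1}, \rho_{0}]$ the only information about $B$ is the $\mathcal{C}^{0}$ bound and $0 \leq -\partial B/\partial \rho < \rho^{-1}$, while the derivative of the twist in $p_{0}$ involves $\partial B / \partial \rho$ (and, through the dependence of the flow line on its initial condition, second derivatives of $B$), none of which is controlled by $\mathcal{C}^{0}$-closeness.

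The specific mechanism you offer for the collar is also quantitatively wrong. Every admissible $B$ agrees with $B^{PW}$ at $\rho_{1}$ and at $\rho_{0}$, so the total drop of $B$ across the collar equals $\log\sqrt{\rho_{0}/\rho_{1}}$ no matter how small the $\mathcal{C}^{0}$ distance is; the twisting accrued there is therefore of a definite size governed by $\rho_{1}$ and $\rho_{0}$, not by the closeness parameter, and shrinking the $\mathcal{C}^{0}$ distance does not make it "negligible compared to the explicit region" -- it only makes it close to the corresponding $B^{PW}$ contribution, which is the statement the paper actually exploits. Moreover, even if the collar contributed only a small perturbation, smallness does not preserve strict monotonicity: a small overshoot of $\tilde{f}_{H,\rho_{0}}$ past $-\half$ near $p = 0$ is precisely the scenario that must be ruled out or absorbed, and your argument as written has no tool for it, whereas the paper handles it with the $\sgn(p)$ twisting observation away from $p=0$ together with a $\delta$-continuity step near $p = 0$ for the fixed comparison function $\tilde{f}$. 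To close the argument you should either abandon the monotonicity claim for the smoothed handle and follow that two-step structure (exact statement for $B^{PW}$, then $\mathcal{C}^{0}$-transfer), or impose genuinely stronger control on $\partial B/\partial\rho$ in the collar than the construction provides.
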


\begin{proof}
The analysis of Section \ref{Sec:SquareHandleTrajectories} provides a very explicit approximation of the function $\tilde{f}_{H, \rho_{0}}$. Let's consider the degenerate case when $B = B^{PW}$ as described in Equation \eqref{Eq:PLBump}, writing $H^{PW}$ for the associated piece-wise smooth function. Then $M_{H^{PW}}$ will be piece-wise smooth as a submanifold of $\R^{4}$. We have a $\mathcal{C}^{0}$ flow on $M_{H^{PW}}$ given by following $\partial_{z}$ on $M_{H^{PW}} \setminus M^{+}_{\rho_{0}}$ and by following $R_{\rho_{0}}$ on $M^{+}_{\rho_{0}}$.  We can view $M_{H^{PW}}$ as a smooth manifold by viewing its non-smooth part to be the graph of a $\mathcal{C}^{0}$ function, and observe that $M_{H}$ and $M_{H^{PW}}$ coincide along the sets $|x| \geq \rho_{0}$.

We look at flow trajectories passing over the $q=0$ slice of our neighborhood, which corresponds to the $y = y_{0}$ subset of $M_{H^{PW}}$. Then $\Phi_{H^{PW}}$ maps the arc
\begin{equation*}
A = \left\{ z = -\sqrt{\rho_{0}^{2} - \left(\frac{p}{2\pi}\right)^{2}},\quad p \in [-2\pi \rho_{0}, 2\pi \rho_{0}], \quad q=0 \right\}
\end{equation*}
to the semi-circle
\begin{equation}\label{Eq:SemiCircParam}
\left\{ \left(-\sqrt{\rho_{0}^{2} - \left(\frac{p}{2\pi}\right)^{2}}, \frac{p}{2\pi}, 1, 0\right)\quad  p \in [-2\pi \rho_{0}, 2\pi \rho_{0}]\right\} \subset M_{H^{PW}}.
\end{equation}
In the language of Section \ref{Sec:SquareHandleTrajectories}, this semi-circle is the set
\begin{equation*}
\left\{ (x_{\theta}, y_{0}) , \quad \theta \in \left[\frac{\pi}{2},\frac{3\pi}{2}\right] \right\} \subset \R^{4}
\end{equation*}
equipped with a clockwise parameterization (determined by the variable $p$). When a trajectory passes through the handle entering at angle $\theta = \theta(p) \in \left[\frac{\pi}{2},\frac{3\pi}{2}\right]$ determined by $p$ in the $x$-plane, it will come out on the top of our neighborhood at angle $2\theta - \pi$ as described in Section \ref{Sec:SquareHandleTrajectories}. Therefore when using the piece-wise smooth handle the lift $\tilde{f}_{H, \rho_{0}}$ of our continuous flow map $f_{H^{PW}, \rho_{0}}$ can be written
\begin{equation*}
\tilde{f}_{H^{PW}, \rho_{0}}(p) = \begin{cases}
0 & p < -2\pi\rho_{0} \\
\frac{1}{2\pi}(2\theta(p) - \pi) & p \in [-2\pi\rho_{0}, 2\pi \rho_{0}] \\
-1 & p > 2\pi\rho_{0},
\end{cases}
\end{equation*}
where $\theta(p)$ is the angle in the $x$-plane given by Equation \eqref{Eq:SemiCircParam}.

As the $p$ coordinate wraps around the semicircle of Equation \eqref{Eq:SemiCircParam} in a clockwise fashion, we conclude that $\tilde{f}_{H^{PW}, \rho_{0}}$ is a decreasing function. Moreover, $\tilde{f}_{H, \rho_{0}}(-2\pi\rho) = 0$, $\tilde{f}_{H^{PW}, \rho_{0}}(0) = -\half$, and $\tilde{f}_{H^{PW}, \rho_{0}}(-2\pi\rho) = -1$, just like our test function $\tilde{f}$. Therefore both $\tilde{g} = \tilde{f}, \tilde{f}_{H^{PW}, \rho_{0}}$ must satisfy
\begin{equation}\label{Eq:FliftProperties}
\tilde{g}([-2\pi \rho_{0}, 0]) = [-\half, 0], \quad \tilde{g}([0, 2\pi \rho_{0}]) = [-\half, -1].
\end{equation}
From this we conclude that Equation \eqref{Eq:HandleTwistLinearApproxBound} holds for $\tilde{f}_{H^{PW}, \rho_{0}}$ given any function $\tilde{f}$ satisfying the required properties.

Now we suppose that $B$ is smooth and $\mathcal{C}^{0}$ close to $B^{PW}$. Then the twist map $\tilde{f}_{H, \rho_{0}}$ will be $\mathcal{C}^{0}$ close to $\tilde{f}_{H^{PW}, \rho_{0}}$. This is because the twist map is entirely determined by the flow of the arc $A$ across the surgery handle. All Reeb trajectories starting at points in $A$ which pass through the $|y| < 1$ portion of the handle will intercept the region $\rho \in [\rho_{1}, \rho_{0}]$ where $B^{PW}$ is smoothed to obtain $B$. See Figure \ref{Fig:SurgeryFlowlines}.

Because of item \eqref{Obs:Twisting} in the observations preceding this proof, $\tilde{f}_{H, \rho_{0}}(p) < 0$ for $p < 0$, we can guarantee to that $\tilde{f}_{H, \rho_{0}}$ satisfies $\tilde{g}([-2\pi \rho_{0}, -\delta]) \subset [-\half, 0]$ for some arbitrarily small $\delta > 0$. Therefore we have $|\tilde{f}_{H, \rho_{0}}(p) - \tilde{f}(p)| \leq \half$ for $p\in [-2\pi \rho_{0}, -\delta]$. By continuity we can also ensure that the desired inequality holds for $p \in [-\delta, 0]$ by making $B$ $\mathcal{C}^{0}$ close enough to $B^{PW}$. The same arguments with modified notation apply to ensure that Equation \eqref{Eq:HandleTwistLinearApproxBound} holds over $[0, 2\pi\rho_{0}]$ for $B$ close enough to $B^{PW}$.
\end{proof}

\subsection{Perturbing $\lambda_{0}$}\label{Sec:HandleLambdaPerturbations}

Now that we've shown that the flow from the set $\{ z = -\rho_{0} \}$ to the set $\{ z = \rho_{0} \}$ defined by $R_{H}$ is determined by a Dehn twist by $f_{H, \rho_{0}}$, which is supported on $I_{2\pi\rho_{0}} \times \Circle$. Moreover, Equation \eqref{Eq:HandleTwistLinearApproxBound} tells us that we can use Proposition \ref{Prop:TrivialTwistRealization} to correct $\lambda_{0}$ and so that the flow over our handle will be an ``approximately linear twist'' satisfying Assumptions \ref{Assump:DiskIntersection}. We now carry out the details of this correction.

We construct a new coordinate system $(z, p, q)$ on $M_{H}$ as follows: On the set $\{ |y| = 1, |x| > \rho_{0} \}$ we have coordinates $(p, q, z)$ on $M_{H}$ coming from the embedding $\Phi_{-}$ as $M^{-}_{1}$ and $M_{H}$ overlap on this region. To get a standard coordinate system on $M_{H}$, apply the map
\begin{equation}\label{Eq:OverFlowOne}
(z, p, q) \mapsto \Flow_{R_{H}}^{z + \rho_{0}}\circ \Phi_{-}(-\rho_{0}, p, q), \quad z \in I_{\rho_{0}}.
\end{equation}
With respect to this coordinate system
\begin{equation*}
\lambda_{0}|_{M_{H}} = dz + p dq.
\end{equation*}
Due to our identification of the flow from the top to bottom of this region -- with respect to the $(z, p, q)$ coordinates on $M_{-}$ -- as being determined by a Dehn twist by $f_{H, \delta}$, we have that the change of coordinates on the overlap
\begin{equation*}
\big( M_{H} \setminus \{ \rho < \rho_{0} \} \big) \rightarrow M^{-}_{1}
\end{equation*}
is given exactly as the gluing map of Section \ref{Sec:GluingMaps} with the ``height perturbation function'' -- denoted in that section as $H_{f, \epsilon}$ -- uniquely determined by $f_{H, \rho_{0}}$.

\begin{figure}[h]\begin{overpic}[scale=.6]{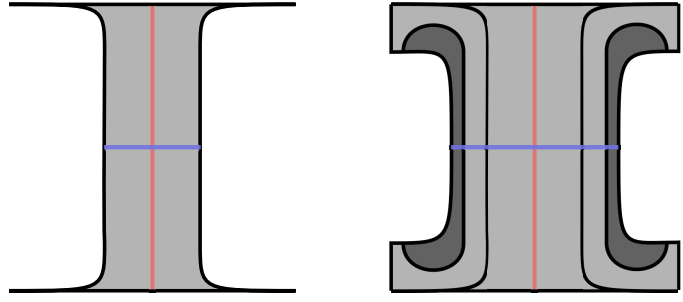}
\end{overpic}
\caption{On the left, the rounded handle $W_{H}$ of Figure \ref{Fig:HandlePerturbation}. On the right, the perturbed handle $W \subset \R^{4}$. The region along which $\lambda_{0}$ is modified -- as in Proposition \ref{Prop:TrivialTwistRealization} -- is shaded in dark gray. The lightly shaded extension of $W_{H}$ indicates extension by the Liouville flow.}
\label{Fig:HandleExtension}
\end{figure}

We seek to modify $f_{H, \epsilon}$ using Proposition \ref{Prop:TrivialTwistRealization} so that the flow over the convex boundary of our handle agrees satisfies linear dynamics assumptions described in Assumptions \ref{Assump:DiskIntersection}. To this end, let $g: I_{2\pi \rho_{0}} \rightarrow \R$ be a function satisfying the following properties:
\be
\item A Dehn twist by $f_{\rho_{0}}(0) = f_{H, \rho_{0}}(p) + g(p)$ satisfies Assumptions \ref{Assump:DiskIntersection} with $\frac{\partial f_{\rho_{0}}}{\partial p}(0) = (2\pi\rho_{0})^{-1}$.
\item $|g(p)| \leq \half$.
\item $g$ and all of its derivatives vanish along $\partial I_{2\pi \rho_{0}}$.
\ee
Such a choice of $g$ is possible by Equation \eqref{Eq:HandleTwistLinearApproxBound}. According to Proposition \ref{Prop:TrivialTwistRealization}, using 
\begin{equation}
\epsilon_{p} = \epsilon_{z} = 2\pi\rho_{0},\quad \epsilon_{g} = \half
\end{equation}
and $\epsilon_{t}$ arbitrarily large we can modify the contact form within the coordinate system on $M_{H}$ by
\be
\item adding a finite symplectization $([0, e^{\epsilon_{t}}]\times M_{H}, \lambda_{0} = e^{t}(dz + p dq))$ to obtain a handle $W \subset \R^{4}$ containing $W_{H}$,
\item perturbing $\lambda_{0}$ within a proper subset of this region to obtain a contact form $\lambda$ on $W$
\ee
so that the flow over $M_{H}$ in the coordinates $(z, p, q)$ is given by a Dehn twist by $f_{\rho_{0}}$. A schematic for this extension and perturbation is depicted on the right-hand side of Figure \ref{Fig:HandleExtension}.

Now we re-work through Equation \ref{Eq:OverFlowOne} and its consequences this time using the new Reeb vector field $R$. The map
\begin{equation}\label{Eq:OverFlowTwo}
(z, p, q) \mapsto \Flow_{R}^{z + \rho_{0}}\circ \Phi_{-}(-\rho_{0},  p, q)
\end{equation}
will provide us with a coordinate system $(z, p, q)$ on the convex boundary of $W$. Now our attaching map is determined by the composition of the Dehn twists
\begin{equation*}
(p, q) \mapsto (p, q + g(p)),\quad (p, q) \mapsto (p, q + f_{H, \rho_{0}})
\end{equation*}
yielding a Dehn twist by $f_{\rho_{0}}$ as desired.

\subsection{Attaching the handle to finite symplectizations}\label{Sec:HandleAttachment}

To finish our construction, we attach the handle $(W, \lambda)$ to a finite symplectization of $(\SurgL, \alpha_{\epsilon})$. In doing so, we will omit the specific choices of $\rho_{0}$ required, provided that they are determined by $\epsilon$ as described in Section \ref{Def:AlphaEpsilon}. Likewise, we assume that $\LambdaZero$ consists of a single connected component to simplify notation.

We first consider the the case $c = -1$, the map $\Phi_{-}$ provides us with an identification of  standard neighborhood $N_{\epsilon}^{0}$ of $\LambdaZero$. The map $\Phi_{-}$ provides us with an identification of this neighborhood with the convex end of the handle $W$. By considering $\SurgL$ as being contained in the top of a finite symplectization $[-C, 0]\times \SurgL$ we may attach the handle $W$ via this identification to obtain a $4$ manifold along which we set
\begin{equation*}
\lambda_{-1}|_{W} = \lambda,\quad \lambda_{-1}|_{[-C, 0]\times \SurgL} = e^{t}\alpha_{\epsilon}
\end{equation*}
Outside of a neighborhood of the form $\{ \rho(p, z) < \const \} \subsetneq \{ 0 \}\times N_{\epsilon}^{0}$ we may extend by a some $[0, C]\times \SurgL \setminus \{ |z| + |p| < \const \}$ over which we take 
\begin{equation*}
\lambda_{-1}|_{[0, C]\times \SurgL \setminus \{ \rho(p, z)< \const \}} = e^{t}\alpha_{\epsilon}.
\end{equation*}
The constant $C$ may be chosen so that the top of this region coincides with the convex end of the handle $W$. By the fact that the the perturbation of $\lambda$ described in the previous subsection occur away from the attaching locus, we have that $W_{-1}$ is smooth with $\lambda_{-1}$ determining a smooth form, as desired. The disk $\disk_{-1}$ is obtained by taking the intersection of the plane $\{ |x| = 0 \} \subset \R^{4}$ with the handle $W_{H} \subset W$ -- depicted as the red line in Figures \ref{Fig:HandlePerturbation} and \ref{Fig:HandleExtension} -- and then extending through $[-C, 0] \times \SurgL$ by a Lagrangian cylinder $[-C, 0]\times \LambdaZero$.

Now set $c=+1$. In the case our disk $\disk_{+1}$ is taken to be the intersection of the plane $\{ |y| = 0 \}$ with the handle $W$. According to Equation \eqref{Eq:LambdaEFG}, $\lambda|_{\disk_{+1}} = 0$. Using the coordinates $(p, q, z)$ on Equation \eqref{Eq:OverFlowTwo} we may identify a neighborhood of the boundary of this disk with a standard neighborhood of $\LambdaZero$, which we may consider is being contained in the bottom of a finite symplectization $[0, C]\times \SurgL$. We extend the disk by a Lagrangian cylinder over $\LambdaZero$ within $[0, C]\times \SurgL$ so that its boundary lies in $\{ C \}\times \SurgL$. To complete the construction of our Liouville cobordism $(W_{+1}, \lambda_{+1})$ we layer on $[-C, 0]\times \SurgL \setminus \{ |z| + |p| < \const \}$ so that the concave end of the cobordism is smooth and coincides with $(\SurgLPrime, \alpha_{\epsilon})$.

\section{Holomorphic foliations, intersection numbers, and the $\Lambda$ quiver}\label{Sec:FoliationsAndQuivers}

In this section we describe some tools which allow us to frame geometric questions regarding holomorphic curves in the $4$ manifolds relevant to this article -- symplectizations and surgery cobordisms -- as algebraic problems. We will largely be relying on intersection positivity for holomorphic curves in $4$-manifolds \cite[Appendix E]{MS:Curves} and basic algebraic topology.

These tools serve to establish some properties of holomorphic curves in $\R \times \SurgL$ and surgery cobordisms which we believe to be true intuitively but which are more difficult to articulate precisely: Curves with ``high energy'' should look like Legendrian $RSFT$ disks as they pass though the complement of the surgery locus $N_{\epsilon}$ while ``low energy'' curves should be trapped inside of the union of $N_{\epsilon}$ with a neighborhood of the chords of $\Lambda$ and have constrained asymptotics. This will be formalized in Section \ref{Sec:ExposedHidden} as the \emph{exposed/hidden alternative}.

The first three subsections deal with geometry: In Section \ref{Sec:SteinProduct}, we describe special almost complex structures on contactizations and how combinatorial $LRSFT$ disks can be ``lifted'' to holomorphic disks. Section \ref{Sec:NStandard} described how these complex structures $J$ can be used on large open subsets of symplectizations and surgery cobordisms. Next, in Section \ref{Sec:JFoliation} we show that such $J$ endows open subsets of our $4$ manifolds with a foliation by $J$ holomorphic planes. This is another area of analysis which is considerably simplified by working with $\SurgLxi$ rather than $\SurgLxiClosed$.

The remainder of the section is concerned with algebra: Section \ref{Sec:IntersectionNumbers} describes some properties of intersections between these planes and finite energy holomorphic curves asymptotic to chords and orbits of the $R_{\epsilon}$. These intersection numbers are essentially homological invariants of curves. In the event that the intersection numbers all vanish, an alternative book-keeping device can be used to keep track of holomorphic curves -- an object we call the \emph{$\Lambda$ quiver}, $Q_{\Lambda}$. This quiver can be used as an algebraic tool to encode $LCH^{cyc}$ chain complexes -- see Remark 4.1 of \cite{BEE:LegendrianSurgery} -- but we will be most interested in the fact that it is a quotient of a space homotopy equivalent to the complement of the $\C$-foliated region of our $4$-manifold.

\subsection{Model almost complex structures on symplectizations of contactizations of Stein manifolds}\label{Sec:SteinProduct}

Here we review some generalities regarding holomorphic curves in symplectizations of contactizations of Stein manifolds. For the purposes of this paper, we're really only interested in the cotangent bundles of the real line -- for $\Rthree$ is the $1$-jet space of $\R$ -- though the results are no harder to state or prove in fuller generality. The results here are known: For example, they are implicit in the convexity arguments of \cite{CGHH:Sutures} and definitions of $LCH$ moduli spaces in \cite{EES:LegendriansInR2nPlus1}.

Let $W$ be a manifold of dimension $2n$ with complex structure $J$ and suppose that $F \in C^{\infty}(\Sigma)$ is such that 
\begin{equation*}
\beta = -dF \circ J
\end{equation*}
is a Liouville form on $W$. In other words, $(W, J, F)$ is a \emph{Stein manifold} except that we have omitted any requirements regarding transversality between $X_{\beta}$ and $\partial W$. Define a contact $1$-form $\alpha = dz + \beta$ on $\R \times W$ so that
\begin{equation*}
\xi = \{ V - \beta(V)\partial_{z}\ : \ V \in TW\}.
\end{equation*}
for $V \in TW$. We can define a $J'$ adapted to the symplectization of $(\R \times W, \alpha)$ by
\begin{equation*}
J' \partial_{t} = \partial_{z},\quad J'(V - \beta(V)\partial_{z}) = JV - \beta(JV)\partial_{z}.
\end{equation*}

As previously mentioned, we're primarily concerned with the cases $W = \R \times I$ for a $1$-manifold $I$ with $\beta = pdq = -\half d(p^{2})\circ j$. We get $(\R^{2}, -ydx)$ by a change of coordinates.

\begin{lemma}\label{Lemma:HarmonicLift}
If a map $(t, z, u): \Sigma' \rightarrow \R \times \R \times W$ is $(J', j)$ holomorphic then
\be
\item $z$ is harmonic and
\item $u$ is $(J, j)$-holomorphic.
\ee
Moreover if $\Sigma'$ is simply connected and we have $(z, u)$ for which $z$ is harmonic and $u$ is $(J, j)$ holomorphic, then there exists $t: \Sigma' \rightarrow \R$ for which $(t, z, u)$ is $(J', j)$ holomorphic. Such $t$ is unique up to addition by a constant.
\end{lemma}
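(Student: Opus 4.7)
The plan is to unpack the Cauchy--Riemann equation $T(t,z,u)\circ j = J'\circ T(t,z,u)$ coordinate by coordinate using the decomposition $T(\R\times\R\times W) = \R\partial_t \oplus \R\partial_z \oplus TW$, and then exploit the identity $\beta\circ J = -dF\circ J\circ J = dF$ to collapse the system to a single closed $1$-form equation. First I would compute the three components of $J'$ acting on an arbitrary tangent vector $a\partial_t + b\partial_z + U$ with $U \in TW$. Using $J'\partial_t=\partial_z$, $J'\partial_z=-\partial_t$ and $J'(U) = JU - \beta(JU)\partial_z - \beta(U)\partial_t$ (the last formula following from the decomposition $U = (U-\beta(U)\partial_z)+\beta(U)\partial_z$ and the definition of $J'$ on $\xi$), the CR equations split into:
\begin{equation*}
(a)\ dt\circ j = -dz - u^{*}\beta,\qquad (b)\ dz\circ j = dt - u^{*}(\beta\circ J),\qquad (c)\ du\circ j = J\circ du.
\end{equation*}

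Equation (c) immediately says $u$ is $(J,j)$-holomorphic, which is half of the first conclusion. Next I would use $\beta\circ J = dF$ to rewrite (b) as $dz\circ j = dt - d(F\circ u)$, and check that given (c), equation (a) is equivalent to (b): applying $j$ to (b) and using $j^{2}=-\mathrm{Id}$ yields $dt\circ j = -dz + d(F\circ u)\circ j$, while (c) gives $u^{*}\beta = -dF\circ J\circ du = -dF\circ du\circ j = -d(F\circ u)\circ j$, so $-dz - u^{*}\beta = -dz + d(F\circ u)\circ j$, matching (a). Hence the full system reduces to the single equation
\begin{equation*}
dt = dz\circ j + d(F\circ u)
\end{equation*}
together with $u$ being $(J,j)$-holomorphic. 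Since $d(dt)=0$ automatically, differentiating the reduced equation gives $d(dz\circ j) = 0$, i.e.\ $z$ is harmonic; this completes the forward direction.

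For the converse, assume $u$ is $(J,j)$-holomorphic and $z$ harmonic, so the $1$-form $\sigma := dz\circ j + d(F\circ u)$ is closed on $\Sigma'$ (the first summand is closed by harmonicity, the second is exact). If $\Sigma'$ is simply connected then $\sigma$ is exact, say $\sigma = dt$ for some $t\in\Cinfty(\Sigma')$ determined uniquely up to an additive constant. This $t$ satisfies (b) by construction, and (a) follows from (b) and (c) by the computation above, while (c) is the hypothesis on $u$; so $(t,z,u)$ is $(J',j)$-holomorphic. I do not anticipate a serious obstacle: the main subtlety is simply being careful about the identity $\beta\circ J = dF$ and verifying that (a) is automatic once (b) and (c) hold, so that the existence of $t$ only requires integrating one closed $1$-form on a simply-connected surface.
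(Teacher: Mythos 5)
Your proposal is correct and follows essentially the same route as the paper: both unpack the Cauchy--Riemann system into components, use $\beta = -dF\circ J$ to reduce it to $u$ being $(J,j)$-holomorphic together with the single equation $dt = dz\circ j + d(F\circ u)$ (the paper's $dt = (z,u)^{*}\alpha\circ j$), deduce harmonicity of $z$ from $d(dt)=0$, and in the converse integrate this closed $1$-form on the simply connected domain, with uniqueness of $t$ up to a constant. The only cosmetic difference is that you split the tangent space as $\R\partial_{t}\oplus\R\partial_{z}\oplus TW$ and explicitly verify that the $\partial_{t}$-component equation is redundant given the other two, whereas the paper organizes the same computation via the $\partial_{t}$, Reeb, $\xi$ splitting using the projection $\pi_{\alpha}$.
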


\begin{proof}
This is a local calculation: Take coordinates $x, y$ on $\disk$ which we may consider being contained in $\Sigma'$ with $j$ denoting the standard complex structure on $T\disk$. We will be studying Equation \eqref{Eq:DelbarBreakdown}.

Write $\delbar_{J', j}(t, z, u) = \half(T(t, z, u) + J'T(t, z, u)\circ j)$ for the usual Cauchy-Riemann operator. For $V \in TW$, we calculate
\begin{equation*}
\pi_{\alpha}(a\partial_{z} + V) = V - \beta(V)\partial_{z}
\end{equation*}
so that the $\xi$-valued part $\half(\pi_{\alpha} + J'\circ \pi_{\alpha} \circ j)$ of $\delbar_{J', j}(s, t, u)$ depends only on $u$. Then
\begin{equation*}
(\pi_{\alpha} + J'\circ \pi_{\alpha} \circ j)(s, t, u) = \delbar_{J, j}u -\beta \circ (\delbar_{J, j}u)\partial_{z}
\end{equation*}
where $\delbar_{J, j}$ is the Cauchy Riemann operator for $u$. The $TW$ part of this expression vanishes if and only if $u$ is $(J, j)$-holomorphic which would imply that the $\partial_{z}$ part of the expression vanishes as well.

Assuming that $(t, z, u)$ is $(J', j)$-holomorphic, then $u$ is $(J, j)$ holomorphic and $dt = ((z, u)^{\ast}\alpha)\circ j$, implying
\begin{equation*}
\begin{gathered}
u^{*}\beta = -u^{\ast}(dF\circ J) = -d(F\circ u)\circ j \\
(z, u)^{*}\alpha \circ j = dz\circ j + d(F\circ u),\\
d^{2} t = d((z, u)^{*}\alpha \circ j ) = d(dz \circ j) = -\Delta(z) = 0
\end{gathered}
\end{equation*}
where $\Delta$ is the Laplacian. Therefore $z$ is harmonic.

Now provided harmonic $z$ and $(J, j)$ holomorphic $u$ for simply connected $\Sigma'$, the above expression tells us that $(z, u)^{*}\alpha \circ j$ is closed, and so is exact. Therefore we have a function $t$ -- determined uniquely up to addition by scalars -- satisfying $dt = (z, u)^{*}\alpha \circ j$. Then by the above formula and Equation \eqref{Eq:DelbarBreakdown}, we have that $(t, z, u)$ is $(J', j)$ holomorphic.
\end{proof}

\begin{cor}[Drawing-to-disk correspondence]\label{Cor:DrawingDisk}
Suppose that $(W, J, F)$ is a Stein manifold of complex dimension $1$ and $\Lambda$ is a chord generic Legendrian link in $(I \times W, dz - dF\circ J)$. Suppose that 
\begin{equation*}
u: \disk \setminus \{ p_{j} \} \rightarrow W
\end{equation*}
is an orientation-preserving immersion of the disk with a finite set of boundary punctures $\{ p_{k} \}$ removed so that $u(\partial \disk \setminus \{ p_{k} \}) \subset \pi_{W}(\Lambda)$. Then there exists a set $\{ p_{k}' \}$ of boundary punctures on the disk, a diffeomorphism $\phi: \disk \setminus \{ p_{k}' \} \rightarrow \disk \setminus \{ p_{k} \}$, and functions $t, z: \rightarrow \R$ such that
\begin{equation*}
(t, z, u\circ \phi): \disk \setminus \{ p_{k}' \} \rightarrow \R \times \R \times W
\end{equation*}
is $(J', j)$ holomorphic with $(z, u\circ \phi)(\partial \disk \setminus \{ p_{k} \}) \subset  \Lambda$. Provided $\phi$, $z$ is uniquely determined and $t$ is uniquely determined up to addition by a positive constant.
\end{cor}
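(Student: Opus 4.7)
The plan is three-fold: (i) reparameterize $u$ so that it becomes $(j,J)$-holomorphic by pulling back $J$ and uniformizing, (ii) construct $z$ as a harmonic lift satisfying the Legendrian boundary condition, and (iii) invoke the second half of Lemma \ref{Lemma:HarmonicLift} to produce $t$.

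For step (i), since $u$ is an orientation-preserving immersion into the $2$-real-dimensional almost complex manifold $(W,J)$, the pullback $u^{*}J$ is a well-defined almost complex structure on $\disk \setminus \{p_{k}\}$ compatible with the standard orientation, and is automatically integrable in this complex dimension. The map $u$ extends continuously to $\partial\disk$ with each $u(p_{k})$ a double point of $\pi_{W}(\Lambda)$, so $(\overline{\disk}, u^{*}J)$ is a bordered Riemann surface topologically a closed disk with finitely many marked boundary points. The Riemann mapping theorem, combined with Carathéodory's extension to the boundary, supplies a biholomorphism $\phi \colon (\disk, j) \to (\disk, u^{*}J)$; post-composing with a Möbius automorphism of the standard disk, we may arrange that $\phi$ carries a prescribed set $\{p_{k}'\} \subset \partial\disk$ to $\{p_{k}\}$. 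Then $u\circ\phi$ is $(j,J)$-holomorphic by construction.

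For step (ii), observe that $\Lambda$ is a smooth section of $\pi_{I\times W}$ over each strand of $\pi_{W}(\Lambda)$, so on each boundary arc $A_{l}\subset \partial\disk \setminus \{p_{k}'\}$ the composition $u\circ\phi$ admits a unique continuous $z$-lift $z_{l}$ to $\Lambda$. Together these give boundary data $z_{\partial}$ with a jump of $\pm\action(r_{k})$ across each puncture $p_{k}'$, where $r_{k}$ is the Reeb chord sitting above the double point $u(p_{k})$. I would obtain $z$ by solving a Dirichlet problem: near each $p_{k}'$ subtract an explicit arg-type harmonic model $(\pm\action(r_{k})/\pi)\arg(\zeta-p_{k}')$ to absorb the jump, then apply the Poisson integral to solve the resulting Dirichlet problem with continuous boundary data. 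Uniqueness of $z$ (given $\phi$) follows from the maximum principle applied to the difference of two such harmonic extensions, the arg-type behavior at each puncture being dictated by the Legendrian condition. For step (iii), $\disk\setminus\{p_{k}'\}$ is simply connected and $u\circ\phi$ is $(J,j)$-holomorphic, so Lemma \ref{Lemma:HarmonicLift} produces $t$, unique up to an additive real constant, making $(t, z, u\circ\phi)$ a $(J',j)$-holomorphic map whose boundary lands on $\Lambda$ by construction.

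The main obstacle is the analysis at the boundary punctures: specifying the correct angular asymptotics of $z$ at each $p_{k}'$ so that the harmonic lift is well-defined and uniquely determined by $\phi$, and verifying that the arg-type model is compatible with the one-sided boundary data $z_{\partial}$ on both arcs meeting at $p_{k}'$. Once this local analysis is cleanly packaged, the remaining ingredients—uniformization and Lemma \ref{Lemma:HarmonicLift}—fit together essentially formally.
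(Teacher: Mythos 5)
Your proposal is correct and follows essentially the same route as the paper: reparameterize by pulling back $J$ through the immersion and uniformizing, lift the boundary arcs to $\Lambda$ to obtain boundary data $z_{\partial}$, solve the Dirichlet problem for a harmonic $z$ (unique by the maximum principle), and then invoke Lemma \ref{Lemma:HarmonicLift} to produce $t$; the paper handles the jump discontinuities at the punctures by citing Ahlfors' treatment of the Dirichlet problem with bounded piecewise-continuous data, which is an equivalent device to your explicit arg-model subtraction. One small phrasing point: a M\"obius automorphism has only three real degrees of freedom, so you cannot carry a \emph{prescribed} set $\{p_{k}'\}$ to $\{p_{k}\}$ when there are more than three punctures, but none is needed since the statement only asserts the existence of some puncture set, namely $\phi^{-1}(\{p_{k}\})$.
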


\begin{proof}
Because $u$ is an immersion, we can force it to be $(J, j')$ holomorphic for some almost complex structure $j'$ on $\disk$ by defining $j'\partial_{x} = (Tu)^{-1}J (Tu)\partial_{x}$. We can then find a diffeomorphism $\phi$ which is $(j', j)$ holomorphic by the uniformization theorem. 

By the chord genericity and smoothness of $\Lambda$ there exists a unique, bounded, smooth function $z_{\partial}$ on $\partial \disk \setminus \{ p_{k}' \}$ for which 
\begin{equation*}
(z_{\partial}, u\circ \phi) \in \Lambda.
\end{equation*}
Applying \cite[Chapter 6, Section 4.2]{Ahlfors}, there is a function $z:\disk \setminus \{ p_{k}' \} \rightarrow \R$ solving the Direchlet problem
\begin{equation*}
\Delta(f) =0,\quad z|_{\partial \disk \setminus \{ p_{k}' \}} = z_{\partial}
\end{equation*}
which is unique by the maximum principal. By Lemma \ref{Lemma:HarmonicLift}, we can find $t$ for which $(t, z, u\circ \phi)$ is $(J', j)$ holomorphic as desired.
\end{proof}

\subsection{$N$-standard almost complex structures}\label{Sec:NStandard}

As always, let $N_{\epsilon}$ be a tubular neighborhood of $\Lambda$ whose complement we may consider to be a codimension-$0$ submanifold of either $\R^{3}$ or $\SurgL$. Define
\begin{equation*}
\widetilde{N}_{\epsilon} = \pi_{xy}^{-1}(\pi_{xy}(N_{\epsilon}))
\end{equation*}
which we may consider as an open set in either of $\R^{3}$ or $\SurgL$. Denote its complement by $\widetilde{N}_{\epsilon}^{\complement}$. 

\begin{defn}\label{Def:NStandard}
We say that an almost complex structure $J$ on $\R\times \SurgL$ is \emph{$N$-standard} if its restriction to $\SurgXi$ agrees with the standard almost complex structure $J_{0}$ on $\SurgXi$ described by Equation \eqref{Eq:Jstd} on $\widetilde{N}_{\epsilon}^{\complement}$ as well as on a neighborhood
\begin{equation*}
N_{C, \infty} = \{ x^{2} + y^{2} + z^{2} > C \}
\end{equation*}
of the puncture of our $3$-manifold for some $\epsilon, C > 0$. In order that $J$ be adapted to the symplectization, we require $J\partial_{t} = \partial_{z}$ on $\R \times \widetilde{N}_{\epsilon}^{\complement}$.
\end{defn}

We may define $N$-standard for almost complex structures on completions of surgery cobordisms $(W_{c}, \lambda_{c})$ of Section \ref{Sec:SurgeryCobordisms} analogously as the cobordisms contain the symplectizations of $(\SurgL \setminus N_{\epsilon}, \alpha_{std})$.

For an $N$-standard almost complex structure $J$ and a $(J, j)$ holomorphic curve
\begin{equation*}
U: \Sigma' \rightarrow \R \times \SurgL,
\end{equation*}
we have that the along $U^{-1}(\R \times N_{C, \infty})$ we can write $U = (t, z, u)$. By Lemma \ref{Lemma:HarmonicLift} $z$ harmonic and $u$ holomorphic, so that $x\circ u$ and $y \circ u$ are harmonic as well. It follows that $-d(d(z^{2} + |u|^{2})\circ j)$ is non-negative as an area form on $\Sigma'$. Hence for $C' > C$, finite energy curves with punctures asymptotic to chords and orbits of $R_{\epsilon}$ cannot touch spheres of radius $C'$ by the maximum principle.

\subsubsection{Compatibility with perturbation schemes and adaption to symplectizations}

Note that perturbations of almost complex structures required to achieve transversality required to define $\SFT$ curve counts in $\R \times \SurgL$ or $(\overline{W}_{c}, \overline{\lambda}_{c})$ may be defined in arbitrarily small neighborhoods of the orbits of $R_{\epsilon}$ \cite[Section 5]{BH:ContactDefinition} and these orbits are properly contained in open sets unconstrained by the $N$-standard condition. Hence these perturbations may be carried out for $N$-standard almost complex structures while maintaining their defining properties. 

Similarly, the cobordisms $(W_{c}, \lambda_{c})$ of Section \ref{Sec:SurgeryCobordisms} are designed to support $N$-standard almost complex structures which are adapted to their cylindrical ends. For such cobordisms, we'll be additionally interested in studying somewhere injective curves positively asymptotic to chords of the Legendrian boundaries of the disks $\disk_{c, i} \subset W_{c}$ with Lagrangian boundary. See Section \ref{Sec:PlaneBubbling}. In this context, the perturbation scheme of \cite[Section 2]{Ekholm:SurgeryCurves} may be applied, which likewise only deforms Cauchy-Riemann equations in arbitrarily small neighborhoods of chords and orbits. Again, there is no lack of compatibility with the $N$-standard condition.

\begin{assump}\label{Assump:Transversality}
Throughout the remainder of this section, we assume that any almost complex structure $J$ on a symplectization or surgery cobordisms is $N$-standard and that all somewhere injective curves under consideration are regular. When discussing surgery cobordisms, we assume that $J$ is adapted to the cylindrical ends of its completion and that almost complex structures on symplectizations are adapted.
\end{assump}

\subsection{Semi-global foliation by holomorphic planes}\label{Sec:JFoliation}

Here we describe holomorphic foliations by infinite energy planes in symplectizations and surgery cobordisms.

\subsubsection{$\C$-foliations in symplectizations}

Observe that $\widetilde{N}_{\epsilon}^{\complement}$ is foliated by embedded, $\R$-parameterized Reeb orbits of the form $t \rightarrow (t, x_{0}, y_{0})$. Then $\R \times \widetilde{N}_{\epsilon}^{\complement}$ is foliated by holomorphic planes parameterized
\begin{equation*}
(s, t) \mapsto (s, t, x, y)
\end{equation*}
for $(x, y) \in \R^{2} \setminus \pi_{x, y}(N_{\epsilon})$. We denote each such unparameterized plane as $\C_{x, y}$.

\subsubsection{$\C$-foliations in surgery cobordisms}

For the following, we require that $\LambdaZero$ be non-empty. The link $\LambdaPM$ is allowed to be empty, in which case we would have $\SurgLxi = \Rthree$ and set $\alpha_{\epsilon} = dz - ydx$. Let $(W_{c}, \lambda_{c})$ be a surgery cobordism associated to the pair
\begin{equation*}
\LambdaZero \subset \SurgLxi,\quad c \in \{ \pm 1\}
\end{equation*}
as described in the introduction of Section \ref{Sec:SurgeryCobordisms}, with completion $(\overline{W}_{c}, \overline{\lambda}_{c})$. Because the handles are attached along a neighborhood of $\Lambda^{0}$, we can view $\R \times \tilde{N}^{\complement}$ as a subset of $\overline{W}_{c}$ which is also foliated by infinite energy planes $\C_{x, y}$.

\subsection{Intersection numbers}\label{Sec:IntersectionNumbers}

For the following, let $(\Sigma, j)$ be a compact Riemann surface, possibly with boundary, with fixed collections of interior points $p_{k}^{int}$ and boundary points $p_{k}^{\partial}$. As usual we write $\Sigma'$ for $\Sigma$ with all of its marked points removed. When discussing completions $(\overline{W}_{c}, \overline{\lambda}_{c})$ we write 
\begin{equation*}
\overline{\disk}_{c, i} \subset \overline{W}_{c}
\end{equation*}
for the Lagrangian planes obtained by extending the disks $\disk_{c, i}$ of Theorem \ref{Thm:SurgeryCobordisms} by the positive (negative) half-infinite Lagrangian cylinders over their Legendrian boundaries when $c=1$ (respectively, $c=-1$).

\begin{defn}
We say that a holomorphic map $U: \Sigma' \rightarrow \overline{W}_{c}$ is a \emph{$\overline{W}_{c}$ curve} if it its boundary is mapped to the $\overline{\disk}_{c, i}$, its boundary punctures are asymptotic to chords of their Legendrian boundaries, and if all interior punctures are asymptotic to closed Reeb orbits at the convex and concave ends of $\overline{W}_{c}$. 

We say that a holomorphic map $U: \Sigma' \rightarrow \R \times \SurgL$ is a \emph{$\R \times \SurgL$ curve} if the boundary of $\Sigma'$ is mapped to the Lagrangian cylinder over $\LambdaZero$, its boundary punctures are asymptotic to chords of $\LambdaZero$ and its interior punctures are asymptotic to closed orbits of $R_{\epsilon}$.
\end{defn}

We recall -- see \cite[Definition E.2.1]{MS:Curves} -- that provided a pair of maps $u_{i}:\Sigma'_{i} \rightarrow W$, from surfaces $\Sigma'_{i}$, $i=1, 2$ into a $4$-manifold $W$ whose images are disjoint outside of some open sets $S_{i} \subset \Sigma_{i}$ with compact closures outside of which the $\Sigma_{i}$ are disjoint, then we can define a \emph{intersection number} $u_{1}\cdot u_{2} \in \Z$ by perturbing the $u_{i}$ along the $S_{i}$ so that the maps are transverse and counting their intersections with signs.\footnote{We're taking a slight modification of \cite[Definition E.2.1]{MS:Curves} by defining the intersection number to be the sum of the \emph{local intersection numbers} over all points of intersection. This is feasible for holomorphic curves in $4$-manifolds with our hypotheses as distinct curves have isolated intersections \cite[Proposition E.2.2]{MS:Curves}.}

\begin{thm}\label{Thm:IntersectionContinuity}
Suppose that $U$ is a $\overline{W}_{c}$ curves or a $\R \times \SurgL$ curve. Then for $(x, y) \in \R^{2} \setminus \pi_{x, y}(N_{\epsilon})$, the intersection number $\C_{x, y}\cdot U_{s} \in \Z$ is well-defined and non-negative. Furthermore, they are homological invariants in the following sense:
\be
\item \textbf{Boundary-less curves in symplectizations}: Suppose that $U$ is a $\R \times \SurgL$ curve positively asymptotic to a collection $\gamma^{+}$ of Reeb orbits and negatively asymptotic to some $\gamma^{-}$. Then the intersection number $\C_{x, y}\cdot U$ depends only on the relative homology class 
\begin{equation*}
[\pi_{\SurgL}(U)] \in H_{2}(\SurgL, \gamma^{+} \cup \gamma^{-}).
\end{equation*}
\item \textbf{Curves in symplectizations with Lagrangian boundary}: Suppose that $U$ is a $\R \times \SurgL$ curve asymptotic to collections $\gamma^{\pm}$ of Reeb orbits and collections of chords $\kappa^{\pm}$ of $\LambdaZero$. Then the intersection number $\C_{x, y}\cdot U$ depends only on the relative homology class 
\begin{equation*}
[\pi_{\SurgL}(U)] \in H_{2}(\SurgL, \gamma^{+} \cup \gamma^{-} \cup \kappa^{+} \cup \kappa^{-} \cup \LambdaZero)
\end{equation*}
\item \textbf{Boundary-less curves in surgery cobordisms}: Suppose that $U$ is a $\overline{W}_{c}$ curve positively asymptotic to a collection of closed Reeb orbits $\gamma^{+}$ in $\partial^{+} W$ and negatively asymptotic to some collection of closed Reeb orbits $\gamma^{-}$ in $\partial^{-} W$. Then the intersection number $\C_{x, y}\cdot U$ depends only on the relative homology class 
\begin{equation*}
[U] \in H_{2}(W_{c}, \gamma^{+} \cup \gamma^{-}).
\end{equation*}
Here we view $U$ as a cobordism in the compact manifold $W_{c}$ bounding the orbit collections $\gamma^{\pm}$ in its boundary.
\item \textbf{Curves in surgery cobordisms with Lagrangian boundary}: Suppose that $U$ is a $\overline{W}_{c}$ curve positively asymptotic collection of closed Reeb orbits $\gamma^{\pm}$ in $\partial^{+} W$ with boundary punctures asmptotic to some collection $\kappa^{\pm}$ of chords of the Legendrian boundaries of disks $\disk_{k}$. Then the intersection number $\C_{x, y}\cdot U$ depends only on the relative homology class 
\begin{equation*}
[U] \in H_{2}(W_{c}, \gamma^{+} \cup \gamma^{-} \cup \kappa^{+} \cup \kappa^{-} \cup (\cup \overline{\disk}_{c, i})).
\end{equation*}
\ee
\end{thm}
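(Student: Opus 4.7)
The plan is to establish in order: (i) each intersection number is a finite nonnegative integer, and (ii) it descends to a homotopy invariant of the relevant relative homology class.

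For (i), I would first show that the preimage $S := (\pi_{x,y}\circ U)^{-1}(x,y) \subset \Sigma'$ is compact. By Lemma~\ref{Lemma:ExitEntry} every closed orbit of $R_\epsilon$ flows by $\partial_z$ outside $N_\epsilon$, so its $\pi_{x,y}$ image lies in $\pi_{x,y}(N_\epsilon)$; the same holds for every Reeb chord of $\LambdaZero \subset \SurgLxi$ by Theorem~\ref{Thm:ChordsToChords}, and for $\LambdaZero$ and the Lagrangian disks $\overline{\disk}_{c,i}$ (in the cobordism case, by the construction in Section~\ref{Sec:SurgeryCobordisms}). Applying the standard asymptotic analysis for finite-energy $J$-holomorphic curves (cf.~\cite{BH:ContactDefinition}) at each interior and boundary puncture, $\pi_{x,y}(U)$ eventually enters an arbitrarily small tubular neighborhood of the projected asymptotic object, which still avoids $(x,y)$; on $\partial \Sigma'$ the Lagrangian boundary condition forces $\pi_{x,y}(U) \in \pi_{x,y}(N_\epsilon \cup N^{0}_\epsilon)$ directly; and the $N$-standard hypothesis together with the maximum-principle argument following Definition~\ref{Def:NStandard} (applied to $x^2+y^2+z^2$) confines $U$ to a compact subset near the puncture of $\SurgL$, ruling out escape to infinity. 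These four observations force $S$ to be compact, hence finite (since $U$ cannot coincide with $\C_{x,y}$: $U$ has finite $d\alpha_\epsilon$-energy while $\C_{x,y}$ does not). Positivity of intersections for distinct $J$-holomorphic curves~\cite[Appendix~E]{MS:Curves} then both gives well-definedness of the signed count and yields $\C_{x,y}\cdot U \geq 0$.

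For (ii), the key observation is that $\C_{x,y} = \pi_{\SurgL}^{-1}(\ell_{x,y})$, where $\ell_{x,y} := \{(t,x,y) : t \in \R\}$ is a proper line in $\SurgL$ which, by the projection argument above, is disjoint from every closed Reeb orbit, every chord of $\LambdaZero$, and $\LambdaZero$ itself. Thus the oriented count $\C_{x,y}\cdot U$ equals the algebraic intersection of $\ell_{x,y}$ with $\pi_{\SurgL}(U)$, and the latter is a relative intersection pairing that depends only on $[\pi_{\SurgL}(U)]$ in the appropriate relative homology group. The cobordism cases follow identically once one extends $\ell_{x,y}$ through the attaching handles so that $\C_{x,y}$ remains the preimage of a proper arc under the projection $\overline{W}_c \setminus \{\text{surgery locus}\} \to \SurgL$; the disks $\overline{\disk}_{c,i}$ are absorbed into the boundary data exactly as the Lagrangian cylinder over $\LambdaZero$ was absorbed in the symplectization case.

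The main obstacle I anticipate is making the asymptotic step of the compactness argument rigorous: one needs a quantitative statement that near each puncture $\pi_{x,y}(U)$ enters any prescribed tubular neighborhood of the projected asymptote. For interior punctures this is the Hofer--Wysocki--Zehnder exponential convergence, and for boundary punctures it is its Legendrian-boundary analogue (cf.~\cite{EES:LegendriansInR2nPlus1}); in both cases the required non-degeneracy of the asymptotic orbit or chord is provided by Theorem~\ref{Thm:ChordOrbitCorrespondence} together with Theorem~\ref{Thm:Mod2CZ} for all $\epsilon$ sufficiently small.
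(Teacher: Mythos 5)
Your proposal is correct and follows essentially the same route as the paper's proof: well-definedness comes from confining all intersections to a compact region because the asymptotic orbits, chords, $\LambdaZero$, and the Lagrangian disks all project into $\pi_{x,y}(N_{\epsilon})$ (plus a convexity/maximum-principle argument near the puncture of $\SurgL$), non-negativity comes from positivity of intersections, and homological invariance comes from identifying $\C_{x,y}\cdot U$ with the intersection pairing of the vertical line $\{(x,y)\}\times\R$ (equivalently the proper plane $\C_{x,y}$ in the cobordism case) against the relative class of $\pi_{\SurgL}\circ U$, respectively of $U$. The only cosmetic difference is that the paper phrases the last step via the observation that the local intersection sign depends only on the $\partial_{x},\partial_{y}$ component of $TU$, while you phrase it as pairing with $\ell_{x,y}=\pi_{\SurgL}^{-1}$-preimage data; these amount to the same argument.
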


\begin{proof}
To check well-definition, we need to ensure that any intersections between $\C_{x, y}$ and $U(\Sigma')$ occur away from the boundary and punctures of $\Sigma'$ so that intersection numbers are independent of perturbation required in their definition. By our boundary conditions, $U$ must be such that there exists some open neighborhood $S \subset \Sigma'$ of the punctures and boundary of $\Sigma$ which map into the complement of $\R \times (\SurgL \setminus N_{\epsilon})$. The images of the complements of the $S_{s}$ must be contained in some compact set of the form $[-C_{1}, C_{1}]\times (\SurgL \setminus N_{\epsilon})$. Likewise the images of the complements of the $S_{s}$ must be bounded in the $z$ coordinate on $\R^{3}$. Hence all intersections occur within a subset of the form $[-C_{1}, C_{1}]\times [-C_{2}, C_{2}] \times \{ (x, y) \} \subset \C_{x, y}$ implying that the $\C_{x, y}\cdot U_{s} \in \Z$ are well-defined. 

Intersection non-negativity follows from positivity of intersections of holomorphic curves in $4$-manifolds. See, for example \cite[Section E.2]{MS:Curves}. For homological invariance, we will work out the details in the case of boundary-less curves in symplectizations. The other cases follow similar reasoning.

As in the statement of the theorem, we can slightly perturb $U$ near its asymptotic ends to obtain a $2$ cycle in $[-C , C] \times \SurgL$ bounding $\{ C\} \times \gamma^{+} - \{ -C \} \times \gamma^{-}$ for some large $C > 0$. Using the coordinates on $\R \times \R^{3}$ in which we may consider $\C_{x, y}$ to be contained, each intersection between $U$ and $\C_{x, y}$ occurs at some $(t, z, x, y)$. Possibly perturbing $U$ near each such intersection to achieve transversality and isolation of intersections, the sign of each intersection is given by the sign of $T\C_{x, y} \wedge TU$ considered as an oriented ray in the orientation line bundle $\R \partial_{t}\wedge \partial_{z} \wedge \partial_{x} \wedge \partial_{y}$ for $T_{(t, z, x, y)}W$. As $T\C_{x,y} = \Span_{\R}(\partial_{t}, \partial_{z})$, this sign only depends on the $\partial_{x}, \partial_{y}$ part of the tangent map $TU$ of $U$. Hence the intersection number $\C_{x, y}\cdot U$ only depends on $(x, y)$ and $\pi_{\SurgL}\circ U$.
\end{proof}

The ending of the above proof also immediately implies the following:

\begin{lemma}
Suppose that $U:\disk \setminus \{ p_{k} \} \rightarrow \R \times \R^{3}$ is a holomorphic disk determined by an immersion $u: \disk \setminus \{ p_{k} \} \rightarrow \R^{2}$ as in Corollary \ref{Cor:DrawingDisk}. Given a point $(x, y) \in \R^{2} \setminus \pi_{x, y}(N_{\epsilon})$, the intersection number in computed
\begin{equation*}
\C_{x, y} \cdot U = \# u^{-1}\big( (x, y) \big).
\end{equation*}
\end{lemma}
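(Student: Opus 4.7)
The plan is to use Corollary~\ref{Cor:DrawingDisk} to reduce the computation to a count of preimages of the immersion $u$, then verify that each such preimage contributes a single positive intersection via positivity of intersections for $J$-holomorphic curves in $4$-manifolds. By Corollary~\ref{Cor:DrawingDisk}, after composing with the diffeomorphism $\phi$, we may write $U = (T, Z, u\circ\phi): \disk\setminus\{p_k'\} \to \R\times\R^3$ where $u\circ\phi$ is a $(J_0, j)$-holomorphic immersion. The plane $\C_{x,y}$ is the locus where the $\R^3$-coordinates project to the fixed point $(x,y)$, so a point $w \in \disk\setminus\{p_k'\}$ satisfies $U(w) \in \C_{x,y}$ if and only if $(u\circ\phi)(w) = (x,y)$. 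Since $\phi$ is a diffeomorphism, this set is in bijection with $u^{-1}(x,y)$.

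First I would verify that $u^{-1}(x,y)$ is finite and all intersections occur away from the punctures. The punctures $p_k$ are asymptotic to chords of $\Lambda$, whose $(x,y)$-projections are double points of $\pxy(\Lambda)$ and therefore lie in $\pxy(N_\epsilon)$; since $(x,y) \notin \pxy(N_\epsilon)$ by hypothesis, no intersections occur in neighborhoods of the punctures. Away from the punctures, $u$ (and hence $u\circ\phi$) is a proper immersion into a compact neighborhood of $\pxy(\Lambda)$, so preimages of the regular value $(x,y)$ form a compact discrete set, hence are finite.

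Next I would establish transversality and compute the sign of each intersection. At any $w \in (u\circ\phi)^{-1}(x,y)$, the differential $d(u\circ\phi)_w$ has rank $2$ since $u$ is immersed, so the projection of $T_w U$ to the $(x,y)$-plane is surjective. As $T\C_{x,y}$ is contained in the kernel of the $(x,y)$-projection, the subspaces $T_wU$ and $T_{U(w)}\C_{x,y}$ together span $T(\R\times\R^3)$; thus every intersection is transverse. Both $U$ and $\C_{x,y}$ are $J$-holomorphic for an $N$-standard $J$ in the almost complex $4$-manifold $\R\times\R^3$, so by positivity of intersections (\cite[Proposition E.2.2]{MS:Curves}, which was already invoked in Theorem~\ref{Thm:IntersectionContinuity}) each transverse intersection contributes $+1$ to $\C_{x,y}\cdot U$. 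Summing over the finite set $u^{-1}(x,y)$ yields the claimed formula.

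There is no serious obstacle here; the lemma is a straightforward packaging of the structural results already in place. The only point requiring care is the compactness/finiteness step, which depends crucially on the hypothesis $(x,y) \in \R^2 \setminus \pxy(N_\epsilon)$ to push all intersections into a compact region of the domain where the immersion hypothesis takes over.
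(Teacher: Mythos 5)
Your proposal is correct and follows essentially the same route as the paper, which deduces the lemma directly from the end of the proof of Theorem \ref{Thm:IntersectionContinuity}: intersections of $U$ with $\C_{x,y}$ correspond to preimages $u^{-1}(x,y)$, and each contributes $+1$ because the sign is governed by the $(\partial_{x},\partial_{y})$-part of $TU$, i.e.\ by the orientation-preserving immersion $u$. Your only (harmless) variation is justifying the $+1$ at each transverse point via positivity of intersections of $J$-holomorphic curves rather than the paper's direct computation of the sign of $T\C_{x,y}\wedge TU$; your finiteness and transversality checks are fine and consistent with the hypothesis $(x,y)\notin\pi_{xy}(N_{\epsilon})$.
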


\subsection{Bases and energy bounds}\label{Sec:BasesAndEnergy}

Here we'll reduce the information of the $\C_{x, y}$ down to that of a finite collection of planes. Write $\region_{k}$ for the connected components of $\R^{2} \setminus \pi_{x, y}(N_{\epsilon})$ of finite area and write
\begin{equation*}
\energy_{k} = \int_{\region_{k}} dx\wedge dy
\end{equation*}
for their areas. There is also a single connected component $\R^{2} \setminus \pi_{x, y}(N_{\epsilon})$ of infinite area which we will denote by $\region_{\infty}$.

Pick a point $(x_{k}, y_{k})$ within the interior of each $\region_{k}$ as well as a point $(x_{\infty}, y_{\infty}) \in \region_{\infty}$. We'll call such a choice of indices and points a \emph{point basis} for $\Lambda$. Provided a point basis we may abbreviate
\begin{equation*}
\C_{k} = \C_{(x_{k}, y_{k})}.
\end{equation*}

Such a choice allows us to package a simple-to-state energy estimate:

\begin{prop}\label{Prop:EnergyBound}
Let $U$ be a finite energy $\R \times \SurgL$ curve with interior punctures asymptotic to some collections of orbits of $R_{\epsilon}$ and boundary punctures asymptotic to chords of $\LambdaZero \subset \SurgL$. Then
\begin{equation*}
\energy(U) > \sum_{k} \energy_{k} \C_{k}\cdot U.
\end{equation*}
\end{prop}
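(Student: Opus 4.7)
The central idea is to split the energy integral of $U$ over the preimage decomposition of $\SurgL$ induced by $\widetilde{N}_\epsilon$ and its complement, then recognize the contribution over $\widetilde{N}_\epsilon^\complement$ as a degree count governed by the intersection numbers $\C_k\cdot U$. Since $J$ is adapted to the symplectization and $U$ is $J$-holomorphic, the form $U^* d\alpha_\epsilon$ is pointwise a non-negative area form on $\Sigma'$, so both terms in
\[
\energy(U) = \int_{U^{-1}(\R\times \widetilde{N}_\epsilon^\complement)} U^* d\alpha_\epsilon \;+\; \int_{U^{-1}(\R\times \widetilde{N}_\epsilon)} U^* d\alpha_\epsilon
\]
are non-negative; the strict inequality will be recovered from the second term.

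First, I will evaluate the integral over $U^{-1}(\R\times \widetilde{N}_\epsilon^\complement)$ as a degree computation. On $\widetilde{N}_\epsilon^\complement$ we have $\alpha_\epsilon = \alpha_{std}$ and hence $d\alpha_\epsilon = dx\wedge dy = \pi_{x,y}^*(dx\wedge dy)$. Let $f = \pi_{x,y}\circ \pi_{\SurgL}\circ U$, and break $\widetilde{N}_\epsilon^\complement$ into the disjoint union of the sets $\pi_{x,y}^{-1}(\region_k)$ together with $\pi_{x,y}^{-1}(\region_\infty)$. For any regular value $(x_0,y_0)\in \region_k$ of $f$, the points of $f^{-1}(x_0,y_0)$ are exactly the transverse intersections of $U$ with the plane $\C_{(x_0,y_0)}$, each counted positively by intersection positivity in dimension four. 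By Theorem \ref{Thm:IntersectionContinuity} the number $\C_{(x_0,y_0)}\cdot U$ is locally constant in $(x_0,y_0)$ and thus constant on the connected set $\region_k$, equal to $\C_k\cdot U$. A standard area/coarea argument then gives
\[
\int_{U^{-1}(\R\times \pi_{x,y}^{-1}(\region_k))} U^* d\alpha_\epsilon \;=\; \int_{\region_k} \#f^{-1}(x,y)\, dx\wedge dy \;=\; \energy_k\,(\C_k\cdot U).
\]

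Second, I will handle the unbounded component $\region_\infty$. Because $U$ has finite energy, the same degree computation applied to $\region_\infty$ forces $(\C_\infty\cdot U)\energy_\infty < \infty$; since $\energy_\infty = \infty$, we must have $\C_\infty\cdot U = 0$, and the $\region_\infty$ term drops out. Summing over finite $k$ yields the non-strict inequality $\energy(U) \geq \sum_k \energy_k(\C_k\cdot U)$.

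Third, for strictness I will argue that the second integral above is positive. Any closed Reeb orbit asymptote of $U$ enters $N_\epsilon\subset\widetilde{N}_\epsilon$ by Lemma \ref{Lemma:ExitEntry}, and every chord of $\LambdaZero\subset \SurgLxi$ of word length at least two does the same via Theorem \ref{Thm:ChordsToChords}, so $U^{-1}(\R\times \widetilde{N}_\epsilon)$ contains a non-empty open set. Provided $U$ is not a branched cover of a trivial strip or cylinder --- which is the usual convention for a finite-energy ``curve'' and is the only way that $U^* d\alpha_\epsilon$ can vanish identically on an open subset of $\Sigma'$ --- unique continuation for $J$-holomorphic maps forces $U^* d\alpha_\epsilon > 0$ on a dense open subset, making the second integral strictly positive. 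The main obstacle in turning this plan into a complete proof is the degree step: one must verify that $f$ is well-behaved enough over each $\region_k$ (e.g.\ that non-regular values form a null set and that the counts add up correctly near the boundary $\partial\region_k \subset \pi_{x,y}(N_\epsilon)$ where $U$ may enter $\widetilde{N}_\epsilon$), which is exactly the content one needs from intersection positivity and the constancy statement of Theorem \ref{Thm:IntersectionContinuity}.
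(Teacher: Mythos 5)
Your proof is correct and takes essentially the same route as the paper: the paper likewise evaluates the energy over each $U^{-1}(\R\times\R\times\region_{k})$ by noting that $\pi_{xy}\circ\pi_{\SurgL}\circ U$ covers $\region_{k}$ with degree equal to $\C_{k}\cdot U$, so the $dx\wedge dy$ area contributed there is $\energy_{k}\,(\C_{k}\cdot U)$. Your handling of $\region_{\infty}$ and of strictness is somewhat more explicit than the paper's terse assertion of the strict inequality, and your caveat about branched covers of trivial cylinders or strips (where both sides vanish) flags a genuine edge case the paper glosses over.
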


\begin{proof}
For each $k \neq \infty$ for which 
\begin{equation*}
\Sigma'_{k} = U^{-1}(\R\times \R \times \region_{k})
\end{equation*}
is not empty, we have that
\begin{equation*}
\pi_{x, y}\circ \pi_{\SurgL} \circ U: \Sigma'_{k} \rightarrow \region_{k}
\end{equation*}
is a non-constant holomorphic map. By our boundary conditions, each $\Sigma'_{k}$ is disjoint from some neighborhood of the boundary and punctures of $\Sigma'$ and so must be a branched covering. The degree of the associated map
\begin{equation*}
(\overline{\Sigma}'_{k}, \partial \overline{\Sigma}'_{k}) \rightarrow (\overline{\region}_{k}, \partial \overline{\region}_{k})
\end{equation*}
is equal to $\C_{k} \cdot U$ so that our requirement that $\alpha_{\epsilon}$ coincides with $\alpha_{std} = dz - y dx$ on the compliment of $N^{\pm}$ implies
\begin{equation*}
\energy(U) > \sum_{k} \int_{\Sigma'_{k}} d\alpha_{\epsilon} = \sum_{k} \int_{\Sigma'_{k}} dx \wedge dy = \sum_{k} \energy_{k} \C_{k}\cdot U.
\end{equation*}
\end{proof}

\subsection{The $\Lambda$ quiver}\label{Sec:LambdaQuivers}

In the event that all intersection numbers $\C_{k} \cdot U$ are zero for a given curve $U$ we can employ another device to keep track of homolorphic curves and their boundary conditions.

\begin{defn}
The \emph{$\Lambda$ quiver}, denoted $Q_{\Lambda}$ is the directed graph with
\be
\item one vertex $\ell_{i}$ for each connected component $\Lambda_{i}$ of $\Lambda$ and
\item one directed edge for each chord $r_{j}$ of $\Lambda \subset \R^{3}$ starting at the vertex $\ell_{l_{j}^{-}}$ and ending at $\ell_{l_{j}^{+}}$.\footnote{We recall the $l_{j}^{\pm}$ are defined in Section \ref{Sec:ChordNotation}.}
\ee
Also define a graph $Q_{\Lambda}/\ell$ which is the quotient of $Q_{\Lambda}$ obtained by identifying all of its vertices. We write 
\begin{equation*}
\pi_{\ell}: Q_{\Lambda} \rightarrow Q_{\Lambda}/\ell
\end{equation*}
for the quotient map.
\end{defn}

\begin{figure}[h]\begin{overpic}[scale=.7]{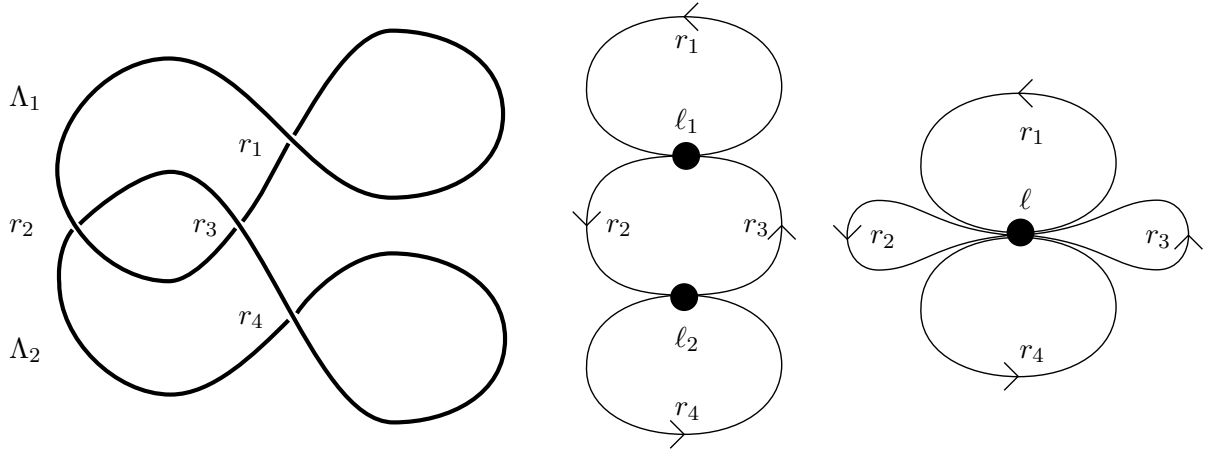}
	\put(16, 26){$r_{1}$}
	\put(-4, 19){$r_{2}$}
	\put(12, 19){$r_{3}$}
	\put(16, 11){$r_{4}$}
	\put(-4, 30){$\Lambda_{1}$}
	\put(-4, 8){$\Lambda_{2}$}

	\put(54, 35){$r_{1}$}
	\put(54, 3){$r_{4}$}
	\put(48, 19){$r_{2}$}
	\put(60, 19){$r_{3}$}	
	\put(54, 28){$\ell_{1}$}
	\put(54, 9){$\ell_{2}$}
	
	\put(84, 27){$r_{1}$}
	\put(84, 8){$r_{4}$}
	\put(71, 18){$r_{2}$}
	\put(95, 18){$r_{3}$}	
	\put(84, 21){$\ell$}
\end{overpic}
\caption{From left to right: a Legendrian Hopf link $\Lambda$ in the Lagrangian projection, the associated quiver $Q_{\Lambda}$, and the quiver $Q_{\Lambda}/\ell$.}
\label{Fig:HopfQuiver}
\end{figure}

An example is provided in Figure \ref{Fig:HopfQuiver}

\subsubsection{Algebraic aspects of $Q_{\Lambda}$ and $Q_{\Lambda}/\ell$}

The primary utility space of the space $Q_{\Lambda}/\ell$ is that its homology has a particularly nice presentation, with $H_{1}$ freely generated by the chords of $\Lambda \subset \Rthree$
\begin{equation*}
H_{1}(Q_{\Lambda}/\ell) = \oplus \Z r_{j},
\end{equation*}
while its fundamental group -- based at its unique vertex, $\ell$ -- is a free group on the chords of $\Lambda$
\begin{equation*}
\pi_{1}(Q_{\Lambda}/\ell, \ell) = \langle r_{j} \rangle.
\end{equation*}

In applications, we'll make use of the following definitions and lemma.

\begin{defn}\label{Def:PositiveLoops}
For an edge $e$ of a directed graph $G$ we define the \emph{collapse map at $e$}, denoted $\pi_{e}: G \rightarrow \Circle$, as the map which takes the quotient by $G \setminus \Int(e)$. The target is naturally pointed and oriented by the direction of $e$. A continuous map $\Phi: \Circle \rightarrow G$ from an oriented circle is \emph{non-negative} if for every edge $e$ of $g$ the composition
\begin{equation*}
\Circle \xrightarrow{\Phi} G \xrightarrow{\pi_{e}} \Circle
\end{equation*}
with the collapse map has non-negative degree. We say that the map is \emph{positive} if it is non-negative and there exists at least one $e \subset G$ for which $\pi_{e}\circ \Phi$ has positive degree.
\end{defn}

\begin{defn}\label{Def:PositiveFreeElements}
Let $\mathcal{S}$ be a set with associated free group $\langle \mathcal{S} \rangle$. We say that an element $x \in \langle \mathcal{S} \rangle$ is \emph{positive} if it can be described as a word
\begin{equation*}
x = x_{1}\cdots x_{n},\ x_{k} \in \mathcal{S}.
\end{equation*}
Alternatively, the set of positive elements in $\langle \mathcal{S} \rangle$ is equivalent to the image of the natural monoid homomorphism from the free monoid on $\mathcal{S}$ into $\langle \mathcal{S} \rangle$.

If $x$ is positive then the above factorization is necessarily unique. We say that two positive elements $x, y$ of $\langle \mathcal{S} \rangle$ are \emph{cyclically equivalent} if their positive factorizations differ by a cyclic rotation. That is, provided a factorization of $x$ as above, there exists $k$ for which
\begin{equation*}
y = x_{k}\cdots x_{n}x_{1}\cdots x_{k-1}.
\end{equation*}

We say that $x \in \langle \mathcal{S} \rangle$ is \emph{negative} if $x^{-1}$ is positive. Two negative elements $x, y$ are \emph{cyclically equivalent} if $x^{-1}$ and $y^{-1}$ are cyclically equivalent.
\end{defn}

Cyclic equivalence is no stronger then conjugacy equivalence.

\begin{lemma}\label{Lemma:CyclicFreeElements}
Suppose that $x, y \in \langle \mathcal{S} \rangle$ are positive and conjugate in $\langle \mathcal{S} \rangle$. Then they are cyclically equivalent.
\end{lemma}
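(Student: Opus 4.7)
The plan is to reduce the claim to the classical fact that two elements of a free group are conjugate if and only if their cyclically reduced forms agree up to cyclic rotation. The bulk of the work is just observing that positive factorizations are automatically already in cyclically reduced form.

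First I would set up notation: write the positive factorizations $x = x_1 \cdots x_n$ and $y = y_1 \cdots y_m$ with each $x_i, y_j \in \mathcal{S}$. Since every letter in these factorizations is a generator (not an inverse), no adjacent pair can be of the form $s s^{-1}$ or $s^{-1} s$; therefore these words are reduced. Moreover, since the first and last letters are both elements of $\mathcal{S}$ (not inverses of generators), the word is cyclically reduced, i.e.\ $x_n \neq x_1^{-1}$ and likewise for $y$. In particular this shows $n$ equals the word length of $x$ in the free group, so $n$ and $m$ are intrinsic to $x$ and $y$ (and the positive factorization is unique, as the statement of Definition \ref{Def:PositiveFreeElements} already asserts).

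Next I would invoke the standard conjugacy criterion in free groups (see, e.g., Magnus--Karrass--Solitar, \emph{Combinatorial Group Theory}, Thm 1.3, or Lyndon--Schupp, Ch.\ I, Prop.\ 2.14): if $u, v \in \langle \mathcal{S}\rangle$ are conjugate and both cyclically reduced, then $v$ is a cyclic permutation of $u$. Applying this to $u = x_1 \cdots x_n$ and $v = y_1 \cdots y_m$ gives $m = n$ and the existence of $k$ such that $y_i = x_{i+k-1 \bmod n}$ for all $i$. This is precisely the statement that $x$ and $y$ are cyclically equivalent in the sense of Definition \ref{Def:PositiveFreeElements}.

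There is essentially no obstacle here beyond quoting the classical result; the only thing to double check is the trivial-element edge case. If $x = 1$ then its positive factorization is the empty word, and any conjugate of $1$ is $1$, so $y = 1$ as well and the statement is vacuous. Otherwise $n, m \geq 1$ and the argument above applies verbatim.
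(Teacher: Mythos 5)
Your proof is correct, but it takes a different route from the paper. You reduce the statement to the classical conjugacy criterion in free groups: a positive word has no adjacent inverse pairs and its first and last letters lie in $\mathcal{S}$, so it is cyclically reduced, and conjugate cyclically reduced words are cyclic permutations of one another (Magnus--Karrass--Solitar or Lyndon--Schupp). The paper instead gives a self-contained induction on the length of a conjugating element $z$ with $zx = yz$: after possibly replacing $z$ by $z^{-1}$ one may assume an end letter of $z$ is positive, that letter must then begin the positive factorization of $y$, and conjugating it away produces a shorter conjugator between two positive elements, so the argument terminates after finitely many steps. Your version is shorter and makes the underlying mechanism (cyclic reduction) transparent, at the cost of importing a standard theorem; the paper's version is elementary and keeps the text self-contained. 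Your handling of the trivial-element edge case is fine, and the observation that positivity forces the word to be reduced and cyclically reduced is exactly the point that makes the classical criterion applicable.
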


\begin{proof}
Suppose there exists some $z$ for which $z x = y z$ and write $z = z_{1}\cdots z_{n}$ with the $z_{k}$ being elements of $\mathcal{S}$ or inverses of such letters. We can assume that at least one of $z_{1}$ or $z_{n}$ is positive. Otherwise we can write $z^{-1} y = x z^{-1}$ to obtain the desired hypothesis by a change of notation.

Suppose that $z_{1}$ is positive. Then the positive factorization of $y$ must start with $z_{1}$. Then $y' = z_{1}^{-1}y z_{1}$ is positive so we can write $z' x = y' z'$ with $z' = z_{2}\cdots z_{n}$. We have reduced the problem to finding a cyclic equivalence between two positive elements $x, y'$ which are conjugate by a word $z'$ of length $n - 1$. A similar argument may be applying in the case that $z_{n}$ is positive.

To complete the proof, loop through this argument $n$ times.
\end{proof}

\subsubsection{Geometric aspects of $Q_{\Lambda}$ and $Q_{\Lambda}/\ell$}

The primary utility of the space $Q_{\Lambda}$ in relation to the present discussion is given by the following result:

\begin{prop}\label{Prop:QProjections}
There exist surjective maps
\begin{equation*}
\begin{gathered}
\SurgL \setminus \widetilde{N}_{\epsilon}^{\complement} \rightarrow Q_{\Lambda}\\
\overline{W}_{c} \setminus \R \times \widetilde{N}_{\epsilon}^{\complement} \rightarrow Q_{\Lambda},
\end{gathered}
\end{equation*}
both of which we will denote by $\pi_{Q}$, such that for each chord $r_{j}$ of $\Lambda$ and each line segment $I$ directed by $\partial_{z}$ connecting $\rect^{ex}_{j}$ to $\rect^{en}_{j}$, the submanifold $\R \times I$ is mapped onto the edge $r_{j}$ of $Q_{\Lambda}$ in a way such that for each $t$ in $\R$, $\{ t \} \times I \rightarrow e_{j}$ is a homeomorphism.\footnote{We recall that the $\rect_{j}^{\ast}$ are defined in Section \ref{Sec:OverlappingRectangles}.}
\end{prop}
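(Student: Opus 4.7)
The plan is to construct $\pi_Q$ piece by piece, decomposing the domain into three types of regions determined by the surgery diagram and then defining the map on each. For each component $\Lambda_i$, let $H_i$ denote the union of $N_{\epsilon,i}$ together with the vertical columns in $\widetilde{N}_\epsilon$ lying above the top of $N_{\epsilon,i}$ and below the bottom of $N_{\epsilon,i}$ that are not separated from $N_{\epsilon,i}$ by some rectangle $\rect^{\ast}_j$. For each chord $r_j$, let $C_j$ denote the open ``connecting column'' consisting of those $(x,y,z)$ with $(x,y) \in \pi_{xy}(\rect_j)$ whose $z$-coordinate lies strictly between the $z$-values of $\rect^{ex}_j$ and $\rect^{en}_j$ in the vertical line over $(x,y)$. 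Using the structure of $\widetilde{N}_\epsilon$ as a union of vertical columns over $\pi_{xy}(N_\epsilon)$ together with Assumption \ref{Assump:StandardNeighborhood}, these regions cover $\SurgL \cap \widetilde{N}_\epsilon$, with overlaps occurring exactly along the rectangles $\rect^{ex}_j = \overline{H}_{l_j^-} \cap \overline{C}_j$ and $\rect^{en}_j = \overline{H}_{l_j^+} \cap \overline{C}_j$.

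Next I would define the map. On each $H_i$ set $\pi_Q \equiv \ell_i$. On each $C_j$, identify the edge $e_{r_j}$ with $[0,1]$ so that $0 \leftrightarrow \ell_{l_j^-}$ and $1 \leftrightarrow \ell_{l_j^+}$, and set
\[
\pi_Q(x,y,z) \;=\; \frac{z - z^-_j(x,y)}{z^+_j(x,y) - z^-_j(x,y)},
\]
where $z^\pm_j(x,y)$ are the $z$-coordinates where the vertical line through $(x,y)$ meets $\rect^{ex}_j$ and $\rect^{en}_j$ respectively. The only continuity check is at the overlap rectangles: as a point in $C_j$ approaches $\rect^{ex}_j$, the formula sends it to $0 \leftrightarrow \ell_{l_j^-}$, which matches $\pi_Q|_{H_{l_j^-}} \equiv \ell_{l_j^-}$, and similarly at $\rect^{en}_j$. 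The formula manifestly sends any $\partial_z$-line $I$ from $\rect^{ex}_j$ to $\rect^{en}_j$ homeomorphically onto $e_{r_j}$, so the claimed property holds. Surjectivity is immediate because every $H_i$ and every $C_j$ is non-empty.

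For the cobordism case, I would use Theorem \ref{Thm:SurgeryCobordisms}, which guarantees that the finite symplectization $[-C,C] \times (\SurgL \setminus N^0_\epsilon)$ sits inside $W_c$. On the symplectization part, extend the three-dimensional $\pi_Q$ by the trivial extension $\pi_Q(t, x, y, z) := \pi_Q(x,y,z)$; this automatically sends each $\R \times I$ slab to $e_j$ with the required $t$-wise homeomorphism property. What remains is to deal with the $\LambdaZero$-handle region in $W_c$ (which contains $N^0_\epsilon$ and, in the cobordism case, possibly new connecting columns for chords with endpoints on $\LambdaZero$, if such chords are to be tracked). By the same construction as in the symplectization case, crush this handle to the vertices corresponding to the components of $\LambdaZero$ and extend linearly across the new $C_j$'s; continuity is checked in exactly the same way.

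The only real subtlety, and the step I would treat most carefully, is the bookkeeping at the overlaps $\rect^{ex}_j, \rect^{en}_j$: one must verify that the combinatorial data $l_j^\pm$ encoded in $Q_\Lambda$ (which component of $\Lambda$ the endpoints of $r_j$ lie on) matches which handle the connecting column $C_j$ is adjacent to on each side. This is immediate from the definitions in Section \ref{Sec:ChordNotation} and the construction of $\rect^{ex}_j, \rect^{en}_j$ in Section \ref{Sec:OverlappingRectangles}, but it is the point where notation does all the work, so I would state the matching explicitly rather than leaving it implicit.
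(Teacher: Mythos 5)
Your decomposition and the fiberwise formula over the crossing columns capture the right local model, but the map you define is not continuous, and continuity is what the proposition is really asserting: in the rest of Section \ref{Sec:LambdaQuivers} and in Section \ref{Sec:ExposedHidden}, $\pi_{Q}$ is composed with holomorphic curves and Reeb orbits to produce cycles, loops and homotopies in $Q_{\Lambda}$ and $Q_{\Lambda}/\ell$, so a discontinuous surjection is of no use. The gap is your claim that the only overlaps are $\rect^{ex}_{j} = \overline{H}_{l_j^-}\cap\overline{C}_j$ and $\rect^{en}_{j} = \overline{H}_{l_j^+}\cap\overline{C}_j$. The closures also meet along the \emph{lateral} walls of the connecting column, i.e.\ over $\partial\,\pi_{xy}(\rect_j)$. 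Take a point just outside the rectangle but still in the band $\pi_{xy}(N_{\epsilon,l_j^-})$: there the upper handle is no longer overhead, so the entire column above the top of $N_{\epsilon,l_j^-}$ is unbounded, lies in your $H_{l_j^-}$, and is sent to the vertex $\ell_{l_j^-}$. Just across the wall, over a point of the rectangle at the same height $z\in(z^-_j(x,y),z^+_j(x,y))$, your affine formula gives an interior point of the edge $e_{r_j}$ — and you cannot taper the formula off near $\partial\,\pi_{xy}(\rect_j)$, because the proposition requires \emph{every} vertical segment over the rectangle to map homeomorphically onto the whole edge. So the values jump across the wall; the same happens at heights above $z^+_j$, where inside the rectangle you are in or above $N_{\epsilon,l_j^+}$ (value $\ell_{l_j^+}$) while immediately outside you assign $\ell_{l_j^-}$.

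The missing idea is that the unbounded columns adjacent to a crossing cannot be crushed to a vertex uniformly; they must interpolate. For instance, fix a collar of $\pi_{xy}(\rect_j)$ inside each band and, over that collar, let the fiberwise map run up part of the edge and retrace, using a cutoff in the band direction to interpolate between the full edge traversal at the wall and the constant map $\ell_i$ outside the collar; this restores continuity without disturbing the required behavior on the segments $I$ over the rectangle. (This interpolation is exactly what is hidden in the paper's assertion that the region is homotopy equivalent to $N_\epsilon$ union the chords with the intervals between $\rect^{en}_j$ and $\rect^{ex}_j$ collapsing onto $r_j$ ``as a fibration,'' after which the solid tori are collapsed to the vertices.) The same correction is needed in your cobordism step, where you propagate the three-dimensional map by $t$-invariance and crush the $\LambdaZero$-handle. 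Your bookkeeping matching $l_j^{\pm}$ with $\rect^{ex}_j$, $\rect^{en}_j$, and the orientation of the edge is correct; it is only the continuity analysis along the lateral walls that fails as written.
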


\begin{proof}
We start with the case in which the domain of $\pi_{Q}$ is $\SurgL \setminus \widetilde{N}_{\epsilon}^{\complement}$. We have that $\SurgL \setminus  \widetilde{N}_{\epsilon}^{\complement}$ is homotopy equivalent the union of $N_{\epsilon}$ with all of the chords $r_{j}$ of $\Lambda$. We can perform this homotopy so that the intervals connecting the $\rect^{en}_{j}, \rect^{ex}_{j}$ (forming a neighborhood of $r_{j}$) collapse onto $r_{j}$ as a fibration. Noting that $N_{\epsilon}$ is a collection of solid tori, so that $N_{\epsilon} \cup \{ r_{j} \}$ is homotopy equivalent to a $1$ dimensional CW complex. If we collapse each connected component $N_{\epsilon, i}$ of $N_{\epsilon}$ to a point $\ell_{i}$, the graph $Q_{\Lambda}$ is obtained.

The proof for $\overline{W}_{c} \setminus \R \times \widetilde{N}_{\epsilon}^{\complement}$ is nearly identical except at the last step, the addition of the surgery handles already provides the effect of attaching $2$-cells along the circles in our $1$-complex corresponding to components of $\LambdaPM$. We then collapse these $2$ cells to points, which has the same effect -- in the homotopy category -- as collapsing the circles corresponding to the components of $\Lambda$ to points.
\end{proof}

\begin{prop}
Suppose that $\gamma(t)$ parameterizes a Reeb orbit in $\SurgL$ or $\partial W_{c}$. Then $\pi_{Q}\circ \gamma$ is positive in the sense of Definition \ref{Def:PositiveLoops}. 

The open string version of this assertion is as follows: Let $U$ be a $\overline{W}_{c}$ or $\R \times \SurgL$ curve with domain $\Sigma'$ having a boundary component $\partial_{i} \Sigma \subset \partial \Sigma$ for which all punctures along $\partial_{i} \Sigma$ have positive asymptotics. Then $\pi_{Q}\circ U|_{\partial_{i}}\Sigma$ is a positive loop. If all punctures along $\partial_{i} \Sigma$ have negative asymptotics, then this loop is negative.
\end{prop}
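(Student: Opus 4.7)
The plan is to treat the two halves of the proposition in parallel, reducing each to the observation that whenever a continuous path in $\SurgL$ (or $\overline{W}_c$) crosses the interior of the chord neighborhood of $r_j$ in the direction of $\partial_{z}$, its $\pi_Q$-image traverses the edge $r_j$ in its preferred orientation. For the closed-orbit assertion, I would first use Theorem \ref{Thm:ChordsToOrbits} to write $\cycword(\gamma)=r_{j_1}\cdots r_{j_n}$, and then track $\pi_Q\circ\gamma$ as $\gamma$ alternates between the chord-complement segments connecting $\rect^{ex}_{j_k}$ to $\rect^{en}_{j_k}$, where $R_{\epsilon}=\partial_{z}$, and the surgery handles $N_{\epsilon,l^{+}_{j_k}}$. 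By Proposition \ref{Prop:QProjections}, the former map homeomorphically to the edge $r_{j_k}$ in the direction $\ell_{l^{-}_{j_k}}\to\ell_{l^{+}_{j_k}}$, while the latter collapse to vertices, so $\pi_Q\circ\gamma$ concatenates positively-oriented traversals of $r_{j_1},\dots,r_{j_n}$. The degree $\deg(\pi_e\circ\pi_Q\circ\gamma)$ then equals the multiplicity of $e$ in the cyclic word, which is non-negative and strictly positive for at least one edge.

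For the open-string case, the plan is to compactify $\pi_Q\circ U|_{\partial_i\Sigma}$ into a continuous loop $\Phi\colon\Circle\to Q_\Lambda$ by choosing, at each puncture $p_k$ asymptotic to $\kappa_k=(r_{k_1}\cdots r_{k_{n_k}})$, a sufficiently large $s_0$ in the half-strip model $[0,\infty)\times[0,1]$ and replacing the two boundary rays $[s_0,\infty)\times\{0,1\}$ by the interior semicircle $\{s_0\}\times[0,1]$. Between punctures, $U$ sits in $\R\times\LambdaZero$ (or on $\overline{\disk}_{c,i}$), and both collapse under $\pi_Q$ to the single vertex $\ell_{a}$ corresponding to the component of $\LambdaZero$ carrying that boundary arc, so the modified loop is concentrated at vertices away from the punctures. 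Near each puncture, Theorem \ref{Thm:ChordsToChords} identifies the $\SurgL$-trace of the semicircle with a slight perturbation of the chord $\kappa_k$; hence it passes through $\rect^{en}_{k_1},\rect^{ex}_{k_2},\dots,\rect^{ex}_{k_{n_k}}$ in order, and its $\pi_Q$-image traverses the edges $r_{k_1},\dots,r_{k_{n_k}}$ in the $\partial_{z}$ direction.

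The hard part will be the orientation check at each puncture: at a positive puncture the boundary orientation of the half-strip induces on the semicircle the same direction as the Reeb parameterization of $\kappa_k$, while at a negative puncture the induced orientation is opposite. This follows from the standard $\SFT$ convention that asymptotic markers at positive (resp.\ negative) punctures run along (resp.\ against) the Reeb direction, but requires care with signs. Granted this, summing the contributions over the punctures on $\partial_i\Sigma$ yields, for each edge $r_j$, a non-negative (resp.\ non-positive) integer degree in the all-positive (resp.\ all-negative) asymptotic case; strict positivity (resp.\ negativity) of at least one such degree follows from the fact that every chord word is non-empty, provided $\partial_i\Sigma$ carries at least one puncture.
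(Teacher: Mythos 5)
Your proposal is correct and follows essentially the same route as the paper, which simply declares the statement ``clear from the construction of the map $\pi_{Q}$'': your closed-orbit argument (chord-complement segments traverse edges in the $\partial_{z}$ direction, handles collapse to vertices) and your compactification of the boundary loop at punctures via the asymptotic chords, with the standard sign convention $+\kappa$ at positive and $-\kappa$ at negative boundary punctures, is exactly the unpacking of that construction. Your caveat that positivity (as opposed to mere non-negativity) requires $\partial_{i}\Sigma$ to carry at least one puncture is a fair point that the paper leaves implicit.
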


This is clear from the construction of the map $\pi_{Q}$. For a parameterization $\gamma$ of a Reeb orbit with cyclic word $r_{j_{1}}\cdots r_{j_{n}}$ we have
\begin{equation*}
[\pi_{\ell}\circ \pi_{Q} \circ \gamma] = \sum_{1}^{n} [r_{j_{k}}] \in H_{1}(Q_{\Lambda}/\ell).
\end{equation*}
Intuitively, the map $\pi_{\ell}\circ \pi_{Q}$ induces a map on homology which abelianizes boundary conditions for holomorphic curves. We can also view $[\pi_{\ell}\circ \pi_{Q} \circ \gamma]$ as a element of the $H_{0}$ of the free loop space of $Q_{\Lambda}/\ell$ which records the word map of $\gamma$.

For a single chord $\kappa$ with boundary on some $\LambdaZero \subset \SurgLxi$, we can view $\pi_{\ell}\circ \pi_{Q} \circ \kappa$ as a pointed map 
\begin{equation*}
(\kappa, \partial \kappa) \rightarrow (Q_{\Lambda}/\ell, \ell)
\end{equation*}
as $\Lambda$ is mapped to $\ell$ by $\pi_{\ell}\circ \pi_{Q}$. In this way $\kappa$ determines a positive element of $\pi_{1}(Q_{\Lambda}/\ell)$ as well as a relative homology class
\begin{equation*}
[\pi_{\ell}\circ \pi_{Q} \circ \kappa] \in H_{1}(Q_{\Lambda}/\ell, \ell).
\end{equation*}
Both the $\pi_{1}$ and $H_{1}$ classes record the word map of $\kappa$.

\subsection{The exposed/hidden alternative}\label{Sec:ExposedHidden}

Assume that $\Lambda$ is equipped with a basis of points $(x_{k}, y_{k}) \in \R^{2} \setminus \pi_{x, y}(N_{\epsilon})$ as described in Section \ref{Sec:BasesAndEnergy}.

\begin{defn}[Exposed/hidden alternative]\label{Def:ExposedHidden}
We say that a $\R \times \SurgL$ or $\overline{W}_{c}$ curve $U$ is \emph{exposed} if there exists at least one $k$ for which $\C_{k} \cdot U > 0$. Otherwise we say that $U$ is \emph{hidden}.
\end{defn}

If a curve $U$ is exposed, then we can use the intersection numbers to keep track of the location of its image within the target manifold. If the curve is hidden then by intersection positivity, its image must be entirely contained in the complement of $\R \times \widetilde{N}_{\epsilon}^{\complement}$, whence we can apply the map $\pi_{\ell}\circ\pi_{Q}$. We state some simple applications, the first few of which tell us that the homology of $Q_{\Lambda}/\ell$ dictates whether a curve is exposed or hidden.

\begin{prop}[Homological mismatches are exposed]\label{Prop:ExposedHOneMismatches}
Suppose that $U$ is a $\R\times \SurgL$ or $\overline{W}_{c}$ curve without boundary components positively asymptotic to some collection $\gamma^{+} = \{ \gamma^{+}_{k}\}$ of closed orbits and negatively asymptotic to some collection $\gamma^{-} = \{ \gamma^{-}_{k} \}$ of Reeb orbits. If the $1$ cycle
\begin{equation*}
\sum [\pi_{\ell}\circ \pi_{Q}\circ \gamma^{+}_{k}] - \sum [\pi_{\ell}\circ \pi_{Q}\circ \gamma^{-}_{k}] \neq 0 \in H_{1}(Q_{\Lambda}/\ell)
\end{equation*}
then $U$ is exposed.
\end{prop}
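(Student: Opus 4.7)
The plan is to prove the contrapositive: assuming $U$ is hidden, we will show that the class $\sum [\pi_\ell \pi_Q \gamma^+_k] - \sum [\pi_\ell \pi_Q \gamma^-_k]$ vanishes in $H_1(Q_\Lambda/\ell)$. First, positivity of intersections for $J$-holomorphic curves in a $4$-manifold, combined with the fact that $\C_{x,y}\cdot U$ is locally constant in $(x,y)$ on each connected component of $\R^2\setminus \pi_{xy}(N_\epsilon)$ -- an instance of the homological invariance assertion in Theorem \ref{Thm:IntersectionContinuity} applied to the continuous family $\{\C_{x,y}\}$ -- upgrades the hypothesis $\C_k\cdot U = 0$ at a single basis point to the global statement $\C_{x,y}\cdot U = 0$ for every $(x,y)\in\R^2\setminus \pi_{xy}(N_\epsilon)$. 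Since any geometric intersection of $U$ with some $\C_{x,y}$ contributes strictly positively, this forces $\pi_{\SurgL}\circ U$ to avoid $\widetilde{N}_\epsilon^\complement$ entirely, so its image lies in $\widetilde{N}_\epsilon$. The argument for $\overline{W}_c$-curves is identical after replacing $\widetilde{N}_\epsilon$ by its analogue in the completed cobordism.

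By Lemma \ref{Lemma:ExitEntry} the closed orbits $\gamma^{\pm}_k$ also lie entirely in $\widetilde{N}_\epsilon$: in the handle complement they travel along $\partial_z$-directed chords, whose $xy$-projections are double points of $\pi_{xy}(\Lambda)\subset\pi_{xy}(N_\epsilon)$. Composing $\pi_{\SurgL}\circ U$ with the projection $\pi_\ell\circ\pi_Q:\widetilde{N}_\epsilon\to Q_\Lambda/\ell$ supplied by Proposition \ref{Prop:QProjections} therefore produces a continuous map $\Phi:\Sigma'\to Q_\Lambda/\ell$ whose restriction to each cylindrical end parameterizes the loop $\pi_\ell\pi_Q\gamma^{\pm}_k$. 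Truncating the ends yields a compact oriented surface $\Sigma_C\subset\Sigma'$; the restricted map $\Phi|_{\Sigma_C}$ is a continuous map from a compact oriented surface into the $1$-dimensional CW complex $Q_\Lambda/\ell$ whose oriented boundary is a singular representative of $\sum[\pi_\ell\pi_Q\gamma^+_k]-\sum[\pi_\ell\pi_Q\gamma^-_k]$. This cycle is thus a singular boundary, hence zero in $H_1(Q_\Lambda/\ell)$, contradicting the hypothesis and establishing that $U$ must be exposed.

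The main obstacle is executing the first step cleanly -- passing from intersection-number data at finitely many basis points to the global containment $\pi_{\SurgL}\circ U\subset\widetilde{N}_\epsilon$. This requires both local constancy of $\C_{x,y}\cdot U$ on each component $\region_k$ (which follows from homological invariance of intersection numbers together with the fact that $\C_{x,y}$ and $\C_{x',y'}$ are homotopic through infinite-energy holomorphic planes whenever $(x,y)$ and $(x',y')$ lie in a common component) and genuine intersection positivity in dimension four. Both ingredients are available from Theorem \ref{Thm:IntersectionContinuity} and the preceding discussion, so the obstacle is conceptual bookkeeping rather than new geometric input.
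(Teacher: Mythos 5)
Your argument is correct and is essentially the paper's own proof: hidden implies (via intersection positivity and the local constancy of $\C_{x,y}\cdot U$ on components of $\R^{2}\setminus\pi_{xy}(N_{\epsilon})$) that the curve stays in the region where $\pi_{\ell}\circ\pi_{Q}$ is defined, and the truncated surface then exhibits $\sum[\pi_{\ell}\circ\pi_{Q}\circ\gamma^{+}_{k}]-\sum[\pi_{\ell}\circ\pi_{Q}\circ\gamma^{-}_{k}]$ as a boundary in the $1$-complex $Q_{\Lambda}/\ell$, contradicting the hypothesis. The only difference is presentational: you argue the contrapositive and spell out the bookkeeping (homotopy of the planes within a component, truncation of ends) that the paper leaves implicit.
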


\begin{proof}
If the curve was hidden then we could apply the map $\pi_{\ell}\circ\pi_{Q}$ to the image of $U$. Our hypotheses on asymptotics imply that we would get a $2$-cycle in $\SurgL \setminus \widetilde{N}_{\epsilon}^{\complement}$ or $W_{c} \setminus \R \times \widetilde{N}_{\epsilon}^{\complement}$ bounding a homologically non-trivial $1$ cycle, providing a contradiction.
\end{proof}

A slight modification applies to chords as well.

\begin{prop}[Exposure of filling curves]\label{Prop:ExposedFillings}
Suppose that $U$ is a $\R \times \SurgL$ or $\overline{W}_{c}$ curve for which all asymptotic chords and orbits are positive. Then $U$ must be exposed.
\end{prop}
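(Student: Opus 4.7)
The plan is to argue by contradiction, following the same template as the proof of Proposition \ref{Prop:ExposedHOneMismatches}. Suppose $U: \Sigma' \to \R \times \SurgL$ (or $\overline{W}_{c}$) is hidden. Then intersection positivity together with hiddenness forces $\im(U)$ into the complement of $\R \times \widetilde{N}_{\epsilon}^{\complement}$, which is precisely the domain of the quiver projection $\pi_{Q}$ of Proposition \ref{Prop:QProjections}. Consequently $\pi_{\ell} \circ \pi_{Q} \circ U$ is a well-defined continuous map from $\Sigma'$ to the $1$-complex $Q_{\Lambda}/\ell$, whose $H_{1}$ is freely generated by the chords $\{ r_{j} \}$.

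First I would compactify the domain by removing small open neighborhoods of every interior puncture and small open half-disks around every boundary puncture, yielding a compact surface with boundary $\Sigma_{\textrm{c}}$. Pushing forward the fundamental class of $\Sigma_{\textrm{c}}$ along $\pi_{\ell}\circ\pi_{Q}\circ U$ gives a singular $2$-chain in the $1$-complex $Q_{\Lambda}/\ell$, so the pushforward of $\partial \Sigma_{\textrm{c}}$ must vanish in $H_{1}(Q_{\Lambda}/\ell)$. The next step is to identify the individual contributions of the boundary components of $\Sigma_{\textrm{c}}$. The boundary circles created around positive interior punctures map to $\pi_{\ell}\circ \pi_{Q}\circ \gamma_{k}^{+}$, which is a positive loop by the proposition preceding Definition \ref{Def:ExposedHidden}. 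Each original boundary component of $\Sigma$ (with small half-disks removed around its boundary punctures) maps under $\pi_{\ell}\circ\pi_{Q}$ to a concatenation of constant paths (the arcs along $\partial\Sigma$ itself land in $\R\times\LambdaZero$ or in some $\overline{\disk}_{c, i}$, all of which collapse to the single vertex $\ell$) alternating with positive chord paths (from the small half-disks around boundary punctures, all assumed positive). By the same positivity proposition applied to boundary components, each such contribution is again a positive loop.

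Writing the total cycle $[\partial \Sigma_{\textrm{c}}]$ in the basis $\{ r_{j}\} \subset H_{1}(Q_{\Lambda}/\ell)$, every summand has non-negative coefficients in every $r_{j}$ and at least one summand (corresponding to any given positive puncture, which must exist by our convention that $\gamma^{+}$ or $\kappa^{+}$ is non-empty) has a strictly positive coefficient in at least one $r_{j}$. Therefore $[\partial \Sigma_{\textrm{c}}]$ is a non-zero element of $\bigoplus \Z r_{j}$, contradicting the fact that it bounds the image $2$-chain.

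The only step that requires genuine care is the orientation bookkeeping in assembling $\partial \Sigma_{\textrm{c}}$: one must verify that, because every puncture is \emph{positive}, all boundary circles/arcs appear with a consistent overall sign so that there is no cancellation when summing the positive loops. This is precisely the uniformity that Proposition \ref{Prop:ExposedHOneMismatches} already relies upon (where positive and negative asymptotics enter with opposite signs), and here the uniformity of sign combined with the non-negativity of each summand is what upgrades the argument from ``non-trivial class'' to ``non-zero sum of non-negative classes'', which I expect to be the only conceptual subtlety.
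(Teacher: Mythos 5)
Your argument is correct and is exactly the approach the paper intends: the paper states this proposition as the ``slight modification'' of the proof of Proposition \ref{Prop:ExposedHOneMismatches}, namely that hiddenness permits applying $\pi_{\ell}\circ\pi_{Q}$, and positivity of all asymptotics forces the boundary of the resulting $2$-chain to be a nonzero positive class in $H_{1}(Q_{\Lambda}/\ell)=\bigoplus\Z r_{j}$, which cannot bound in a $1$-complex. Your write-up simply carries out in detail (truncation of the domain, collapse of the Lagrangian boundary to the vertex $\ell$, sign uniformity) what the paper leaves implicit, so there is nothing to add.
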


\begin{prop}[Homological matches are hidden]\label{Prop:HiddenHOneMatches}
Let $h \in H_{1}(Q_{\Lambda}/\ell)$ be a positive homology class.\footnote{That is, $h$ may be represented as a sum of positive cycles.} Then there exists $\epsilon_{h}$ such that for each $\epsilon < \epsilon_{h}$ and holomorphic curve in $\R \times \SurgL$ positively asymptotics to a collection of orbits $\gamma^{+}$ and negatively asymptotic to a collection $\gamma^{-}$ of $R_{\epsilon}$ orbits with
\begin{equation*}
[\pi_{\ell} \circ \pi_{Q} \circ \gamma^{+}] = [\pi_{\ell} \circ \pi_{Q} \circ \gamma^{-}] = h \in H_{1}(Q_{\Lambda}/\ell)
\end{equation*}
then $U$ is hidden.
\end{prop}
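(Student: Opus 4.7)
The plan is to bound the energy of $U$ from above using the $H_1$-matching hypothesis together with the action estimates of Proposition \ref{Prop:ActionEstimate}, and then to invoke the lower energy bound of Proposition \ref{Prop:EnergyBound} to force every intersection number $\C_k \cdot U$ to vanish once $\epsilon$ is sufficiently small.

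First I would exploit the fact that $H_1(Q_\Lambda/\ell)$ is the free abelian group generated by the chord classes $[r_j]$. If $\cycword(\gamma^+_k) = r_{j_1^k}\cdots r_{j_{n_k}^k}$, then by the construction of $\pi_Q$ we have
\begin{equation*}
\sum_k [\pi_\ell\circ\pi_Q\circ \gamma^+_k] = \sum_k \sum_l [r_{j_l^k}] \in H_1(Q_\Lambda/\ell),
\end{equation*}
and similarly for $\gamma^-$. The hypothesis that both sums equal $h$ therefore means that, as formal multisets of chords, the chords appearing in the cyclic words of $\gamma^+$ and of $\gamma^-$ coincide. Writing $|h|$ for the sum of the coefficients of $h$ in the basis $\{[r_j]\}$, we obtain the uniform bound
\begin{equation*}
\sum_k \wl(\gamma^+_k) = \sum_k \wl(\gamma^-_k) = |h|,
\end{equation*}
and the exact equality $\action(\cycword(\gamma^+)) = \action(\cycword(\gamma^-))$, since both are the sum of $\action_j$ over the same multiset of chords.

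Next I would combine this with Proposition \ref{Prop:ActionEstimate}, which gives
\begin{equation*}
|\action(\gamma^\pm) - \action(\cycword(\gamma^\pm))| < 3\epsilon \sum_k \wl(\gamma^\pm_k) = 3\epsilon |h|.
\end{equation*}
Subtracting and using $\energy(U) = \action(\gamma^+) - \action(\gamma^-)$ together with the equality of the word-action sums yields
\begin{equation*}
\energy(U) < 6\epsilon |h|.
\end{equation*}
On the other hand, Proposition \ref{Prop:EnergyBound} provides
\begin{equation*}
\energy(U) > \sum_k \energy_k (\C_k \cdot U).
\end{equation*}
Define $\energy_{\min} = \min_k \energy_k > 0$, the minimum of the finite areas of the bounded components $\region_k$ of $\R^2\setminus \pi_{x,y}(N_\epsilon)$ (this minimum exists and is positive as soon as at least one bounded region exists; otherwise the statement is vacuous). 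Then setting $\epsilon_h$ to be any positive constant with $6\epsilon_h |h| \leq \energy_{\min}$ forces, for all $\epsilon < \epsilon_h$,
\begin{equation*}
\sum_k \energy_k(\C_k \cdot U) < \energy_{\min},
\end{equation*}
which combined with $\C_k \cdot U \in \Z_{\geq 0}$ (Theorem \ref{Thm:IntersectionContinuity}) forces every $\C_k \cdot U$ to vanish. Hence $U$ is hidden.

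The only mild subtlety is to verify that the constant $|h|$, and therefore $\epsilon_h$, can indeed be chosen to depend only on $h$ and not on the particular orbit collections $\gamma^\pm$; this follows at once from the freeness of the basis $\{[r_j]\}$ for $H_1(Q_\Lambda/\ell)$, which forces each $r_j$ to appear with a fixed total multiplicity in any decomposition of $h$. There is no analytic difficulty beyond that already packaged in Propositions \ref{Prop:ActionEstimate} and \ref{Prop:EnergyBound}.
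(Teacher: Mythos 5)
Your proposal is correct and follows essentially the same route as the paper's proof: the freeness of $H_{1}(Q_{\Lambda}/\ell)$ on the chord classes forces the word-actions of $\gamma^{+}$ and $\gamma^{-}$ to cancel, the action estimates of Proposition \ref{Prop:ActionEstimate} then bound $\energy(U)$ by a multiple of $\epsilon$, and Proposition \ref{Prop:EnergyBound} together with intersection non-negativity forces all $\C_{k}\cdot U$ to vanish for $\epsilon$ small. You merely make the constants ($6\epsilon|h|$ versus $\min_{k}\energy_{k}$) explicit where the paper writes $\bigO(3\epsilon\sum\wl(\gamma^{+}_{k}))$ and notes that the $\energy_{k}$ only grow as $\epsilon \to 0$.
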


\begin{proof}
By the action estimates of Section \ref{Sec:ActionEstimates}, we have 
\begin{equation*}
\energy(U) = \bigO\bigg( 3\epsilon \sum \wl(\gamma^{+}_{k}) \bigg).
\end{equation*}
For $\epsilon$ sufficiently small, we could guarantee that this quantity is less that the energies $\energy_{k}$ of the regions $\region_{k}$ (which grow slightly as $\epsilon$ tends to $0$ with $N_{\epsilon}$ shrinking). Therefore the energy bound of \ref{Prop:EnergyBound} would imply that $U$ must be hidden.
\end{proof}

\begin{prop}[Cyclic order preservation of open-closed interpolations]\label{Prop:CyclicOrderPreservationOC}
Suppose that $U$ is a hidden $\overline{W}_{c}$ curve whose domain is a disk with a single interior puncture and any number of boundary punctures. We require that:
\be
\item if $c = +1$, the boundary punctures are positively asymptotic to chords of $\LambdaZero$ with words $w_{1}, \dots, w_{n}$.
\item if $c = -1$, the boundary punctures are negatively asymptotic to chords of $\LambdaZero$ with words $w_{1}, \dots, w_{n}$.
\ee
Here indices follow the counter clockwise cyclic ordering of the punctures around $\partial \disk$. Then interior puncture of $U$ asymptotic to the orbit $(w_{1}\cdots w_{n})$.
\end{prop}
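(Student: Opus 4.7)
The plan is to apply the retraction $\pi_{\ell}\circ \pi_{Q}$ from Proposition \ref{Prop:QProjections} to the image of $U$, reducing the geometric question to a computation in the free fundamental group $\pi_{1}(Q_{\Lambda}/\ell, \ell) = \langle r_{j} \rangle$. Because $U$ is hidden, its image lies in the domain of $\pi_{Q}$, so $F = \pi_{\ell}\circ \pi_{Q}\circ U \colon \disk' \to Q_{\Lambda}/\ell$ is a well-defined continuous map. My first step is to verify that $F$ sends each arc of $\partial \disk$ between consecutive boundary punctures to the basepoint $\ell$: each such arc is mapped by $U$ into a Lagrangian plane $\overline{\disk}_{c, i}$, which is contractible with boundary on a component of $\LambdaZero$, and by the construction of $\pi_{Q}$ in Proposition \ref{Prop:QProjections} it retracts onto the corresponding vertex of $Q_{\Lambda}$, which $\pi_{\ell}$ then identifies with $\ell$. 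Thus $F$ descends to a map of pointed pairs $(\disk', \partial \disk' \setminus \{ p_{k}^{\partial} \}) \to (Q_{\Lambda}/\ell, \ell)$.

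The second step is to evaluate $F_{\ast}$ on distinguished loops. Let $\lambda_{k}$ denote the loop in $\disk'$ given by a small counterclockwise detour around the $k$-th boundary puncture along $\partial \disk$, and let $\mu$ be a small counterclockwise loop around the interior puncture. In $\pi_{1}(\disk \setminus \{ p_{0}^{int} \}, \ast)$ the standard disk relation reads $\mu = \lambda_{1}\lambda_{2}\cdots \lambda_{n}$ up to basepoint conjugation. Since $\pi_{Q}$ sends each Reeb flow segment crossing a standard neighborhood at the chord $r_{j}$ to the edge $r_{j}$ of $Q_{\Lambda}$, the chord-to-chord correspondence of Theorem \ref{Thm:ChordsToChords} shows that $F_{\ast}(\lambda_{k})$ is the positive element $w_{k}$ when $c = +1$ and the negative element $w_{k}^{-1}$ when $c = -1$; analogously $F_{\ast}(\mu)$ is conjugate to $w_{\gamma}^{\pm 1}$, where $w_{\gamma}$ is the cyclic word of the interior asymptotic orbit and the sign is determined by that of the interior puncture.

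Combining the two steps, $F_{\ast}(\mu)$ is conjugate in $\langle r_{j}\rangle$ to the purely positive element $w_{1}w_{2}\cdots w_{n}$ when $c = +1$ and to the purely negative element $w_{n}^{-1}\cdots w_{1}^{-1}$ when $c = -1$; positivity (resp.~negativity) then forces the interior puncture to carry the asymptotic sign that makes $F_{\ast}(\mu)$ positive (resp.~negative), and in either case $w_{\gamma}$ and $w_{1}\cdots w_{n}$ are positive elements of $\langle r_{j}\rangle$ that are conjugate. Lemma \ref{Lemma:CyclicFreeElements} then upgrades this conjugacy to a cyclic equivalence, which is exactly the statement that the interior puncture is asymptotic to the orbit $(w_{1}\cdots w_{n})$. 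The main obstacle I anticipate is the bookkeeping of orientation conventions: matching the counterclockwise loops $\lambda_{k}$ and $\mu$ in $\disk'$ with positive versus negative elements of $\pi_{1}(Q_{\Lambda}/\ell, \ell)$ under $F$ requires reconciling the $\SFT$ asymptotic-marker conventions for positive and negative punctures with the orientation of edges of $Q_{\Lambda}$ induced by the Reeb flow, and this is what I expect to take the most care to get right.
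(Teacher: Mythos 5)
Your argument is correct and is essentially the paper's own proof: both apply $\pi_{\ell}\circ\pi_{Q}$ to the hidden curve, use the resulting homotopy between the boundary loop and the loop at the interior puncture to get conjugacy in $\langle r_{j}\rangle$ of two positive elements, and conclude with Lemma \ref{Lemma:CyclicFreeElements}. Your phrasing via based $\pi_{1}$ and the disk relation $\mu = \lambda_{1}\cdots\lambda_{n}$ is just a pointed restatement of the paper's homotopy through circles of varying radii in the free loop space of $Q_{\Lambda}/\ell$, with your explicit check that boundary arcs land at $\ell$ and your sign bookkeeping being welcome but not a different method.
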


The $c=-1$ curves described are those used to determine homomorphisms from linearized contact homology to a cyclic version of Legendrian contact homology when performing a contact $-1$ surgery in \cite{BEE:LegendrianSurgery, Ekholm:SurgeryCurves, EkholmNg}.\footnote{We're ignoring anchors which can be avoided in some settings such as \cite{EkholmNg}.} We'll see some of the $c=1$ curves shortly in Theorem \ref{Thm:PlaneBubbling}.

\begin{proof}
Consider the map $\pi_{\ell}\circ \pi_{Q} \circ U$ from the punctured disk to the graph $Q_{\Lambda}/\ell$. Then $\partial \disk$ -- compactified appropriately -- will give us an element of the free loop space of $Q_{\Lambda}/\ell$. It is clear from the construction of the map $\pi_{Q}$ that the connected component of the free loop space of $Q_{\Lambda}/\ell$ containing this loop is indexed by $w_{1}\cdots w_{n}$. Looking at circles of varying radii in $\disk$ provides a homotopy between this loop and the one provided by the interior puncture. Again by the construction of $\pi_{Q}$, observe that if the orbit to which the puncture is asymptotic has cyclic word $r_{j_{1}}\cdots r_{j_{n}}$, then this word must also index the component of of the free loop space of $Q_{\Lambda}/\ell$ to which the puncture is associated. The connected components of the free loop space of $Q_{\Lambda}/\ell$ are in bijective correspondence with conjugacy classes on $\langle r_{j} \rangle$ so that the expressions $r_{j_{1}}\cdots r_{j_{n}}$ and $w_{1}\cdots w_{n}$ are conjugate by the existence of the aforementioned homotopy. They are also both positive in the sense of Definition \ref{Def:PositiveFreeElements} and so differ by a cyclic permutation of their letters by Lemma \ref{Lemma:CyclicFreeElements}.
\end{proof}

\begin{prop}[Triviality of hidden cylinders and strips]\label{Prop:CyclicOrderPreservationCyl}
Suppose that $U$ is a hidden holomorphic cylinder in $\R \times \SurgL$. Then $U$ is a trivial cylinder. 

If $U$ has domain $\R \times I_{C}$ for some $C > 0$, is hidden, with boundary on the Lagrangian cylinder over $\LambdaZero \subset \SurgLxi$, and with punctures asymptotic to chords of $\LambdaZero$, then $U$ is a trivial strip.
\end{prop}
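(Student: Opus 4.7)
The plan is to reduce the geometric question to an algebraic one about positive elements in the free group $\pi_1(Q_\Lambda/\ell) = \langle r_j \rangle$. First I would verify that a hidden curve $U$ has image contained in $\R \times \widetilde{N}_\epsilon$: since the point basis of Section \ref{Sec:BasesAndEnergy} includes a point $(x_\infty, y_\infty) \in \region_\infty$ in addition to the finite regions, the vanishing $\C_k \cdot U = 0$ for every index $k$ combined with intersection positivity for $J$-holomorphic curves in $4$-manifolds (Assumption \ref{Assump:Transversality}) upgrades the algebraic vanishings to geometric disjointness from $\R \times \R \times (\R^{2} \setminus \pi_{xy}(N_\epsilon))$. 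For multiply-covered $U$ the same conclusion follows by passing to the underlying simple curve and then pulling back. Consequently the projection $\pi_{\SurgL}\circ U$ factors through the domain of the retraction $\pi_Q$ from Proposition \ref{Prop:QProjections}, and I may post-compose with $\pi_\ell$ to obtain a continuous map $\Phi = \pi_\ell\circ\pi_Q\circ\pi_{\SurgL}\circ U$ into $Q_\Lambda/\ell$.

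For the cylinder statement, write $w^\pm = \cycword(\gamma^\pm)$ for the asymptotic orbits. By the defining property of $\pi_Q$ the loops $\Phi|_{\{s\}\times\Circle}$ traverse the positive word $w^+$ for $s\gg 0$ and $w^-$ for $s \ll 0$, and the cylinder furnishes a free homotopy between them. Hence $w^+$ and $w^-$ represent conjugate positive elements of $\pi_1(Q_\Lambda/\ell) = \langle r_j\rangle$, and Lemma \ref{Lemma:CyclicFreeElements} upgrades the conjugacy to cyclic equivalence; that is, $w^+$ and $w^-$ agree as cyclic words. Theorem \ref{Thm:ChordsToOrbits} then identifies $\gamma^+ = \gamma^-$ (multiplicity included), so
\begin{equation*}
\energy(U) = \action(\gamma^+) - \action(\gamma^-) = 0.
\end{equation*}
A finite-energy holomorphic curve in a symplectization with $U^\ast d\alpha_\epsilon \equiv 0$ is a branched cover of a trivial cylinder, and Riemann--Hurwitz forbids branch points on a cylinder mapping onto a cylinder, so $U$ is the trivial cylinder over $\gamma^+$.

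The strip case proceeds along the same lines, except that the Lagrangian condition $U(\partial(\R \times I_C)) \subset \R\times\LambdaZero$ together with $\pi_Q(\LambdaZero) = \ell$ forces $\Phi$ to send $\partial(\R \times I_C)$ to the basepoint $\ell$. Then $\Phi$ provides a \emph{based} homotopy between the loops representing $w^\pm \in \pi_1(Q_\Lambda/\ell, \ell)$ coming from the asymptotic chords $\kappa^\pm = (w^\pm)$, so $w^+ = w^-$ in the free group; being positive, they agree as literal words rather than only up to cyclic permutation, and Theorem \ref{Thm:ChordsToChords} yields $\kappa^+ = \kappa^-$. The energy vanishes and $U$ is the trivial strip over $\kappa^+$. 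The main technical step is the first one -- cleanly extracting image containment in $\R \times \widetilde{N}_\epsilon$ from the hidden hypothesis, in particular the role of $(x_\infty, y_\infty)$ and of intersection positivity for multiply-covered curves -- because once $\Phi$ is defined the rest is bookkeeping in $\langle r_j\rangle$, driven by Lemma \ref{Lemma:CyclicFreeElements} and the uniqueness in the chord-to-orbit and chord-to-chord correspondences.
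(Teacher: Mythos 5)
Your argument is correct and follows essentially the same route as the paper: hidden implies the image lies where $\pi_{\ell}\circ\pi_{Q}$ applies, a free homotopy (for the cylinder) or based homotopy (for the strip) matches the positive words via Lemma \ref{Lemma:CyclicFreeElements} and the correspondences of Theorems \ref{Thm:ChordsToOrbits} and \ref{Thm:ChordsToChords}, and zero energy forces triviality. Your added details (the role of $(x_{\infty}, y_{\infty})$ and the Riemann--Hurwitz step ruling out branching) are fine elaborations of steps the paper leaves implicit, not a different proof.
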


\begin{proof}
If $U$ is positively asymptotic to some orbit $(r_{j_{1}}\cdots r_{j_{n}})$ then we can follow the proof of Proposition \ref{Prop:CyclicOrderPreservationOC} verbatim to conclude that $U$ is negatively asymptotic to $(r_{j_{1}}\cdots r_{j_{n}})$. Hence the energy of $U$ is zero and it must be a trivial cylinder.

The case of a holomorphic strip is even easier. Suppose the strip is parameterized $s \in \R, t \in I_{C}$ and consider the family of paths $\gamma_{s}(t) = \pi_{\SurgL}\circ U(s, t)$ with boundary on $\LambdaZero \subset \SurgL$. Then we may consider the $\pi_{\ell}\circ \pi_{Q} \circ \gamma_{s}$ as an $\R$ family of based loops in $Q_{\Lambda}/\ell$. As $s \rightarrow \infty$, the $\pi_{1}(Q_{\Lambda}/\ell)$ element recorded by this based loop is the word map of the chord to which $U$ is positively asymptotic. As $s \rightarrow -\infty$, the element recorded is the word map of the chord to which $U$ is negatively asymptotic. Hence the asymptotics are equivalent by our chord-to-chord correspondence (Theorem \ref{Thm:ChordsToChords}), the energy of $U$ is zero, and $U$ is a trivial strip.
\end{proof}

\section{Applications}\label{Sec:Applications}

In this section we apply our computational tools to study the contact homology of various contact manifolds. A summary of the results are as follows:
\be
\item In Section \ref{Sec:CHUnknot} we compute the contact homology of contact $\pm 1$ surgeries on the $\tb =-1, \rot=0$ unknot in $\R^{3}$.
\item In Section \ref{Sec:PlaneBubbling} we use the results of Section \ref{Sec:FoliationsAndQuivers} to prove a general existence result for holomorphic planes in $\R \times \SurgL$ when $\LambdaPlus \neq \emptyset$.
\item In Section \ref{Sec:OTSurgery}, we use the existence of these holomorphic planes to provide a new proof of the vanishing of $CH$ for overtwisted contact structures.
\item In Section \ref{Sec:IntersectionGrading} we state how the intersection numbers of Section \ref{Sec:FoliationsAndQuivers} can be used to define a grading $\Igrading$ on the $CH$ chain complex for $\alpha_{\epsilon}$.
\item In Sections \ref{Sec:Trefoil} we compute the homology classes and Conley-Zehnder indices of $R_{\epsilon}$ orbits appearing after application of contact surgeries to the $\tb = 1$, right-handed trefoil.
\item In Section \ref{Sec:TrefoilProof}, we combine computations of Section \ref{Sec:Trefoil} with the results of Sections \ref{Sec:PlaneBubbling} and \ref{Sec:IntersectionGrading} to prove Theorem \ref{Thm:Trefoil}.
\ee

For notational simplicity, we will ignore mention of specific contact forms $\alpha_{\epsilon}$ assuming that each contact manifold $\SurgLxi$ is equipped with such a contact form with $\epsilon$ small enough to guarantee that all orbits under consideration are hyperbolic and that Theorem \ref{Thm:IntegralCZ} may be applied. Assumptions \ref{Assump:Transversality} are also in effect. When working with symplectizations of $\Rthree$, we assume that we're using the standard almost complex structure $J_{0}$.

\subsection{Surgeries on the standard unknot}\label{Sec:CHUnknot}

Let $\Lambda$ be the Legendrian unknot with $\tb = -1$ and $\rot = 0$, depicted as a figure-8 in the Lagrangian projection in Figure \ref{Fig:UnknotPushouts}. Performing contact $-1$ surgery will produce the standard contact lens space $L(2, 1)$ -- the unit cotangent bundle of $S^{2}$, or alternatively the unit circle bundle associated to the line bundle $\bigO(-2) \rightarrow \Proj^{1}$. We'll denote this contact lens space by $(L(2, 1), \xi_{std})$. Performing $+1$ produced the standard contact $\Circle \times S^{2}$ -- see Theorem \ref{Thm:SurgeryOverview} -- denoted $(S^{1} \times S^{2}, \xi_{std})$.

\begin{figure}[h]\begin{overpic}[scale=.8]{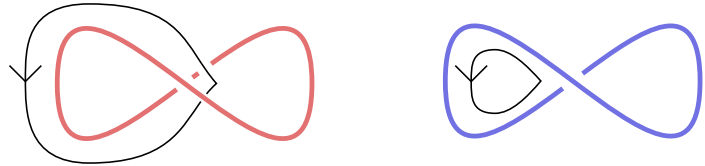}
\end{overpic}
\caption{Contact surgeries on the $\tb = -1$ unknot with push-outs of their unique embedded Reeb orbits. A $-1$ ($+1$) surgery is applied on the left (right) subfigure.}
\label{Fig:UnknotPushouts}
\end{figure}

We can arrange that the Lagrangian projection of $\Lambda$ has a single crossing corresponding to a Reeb chord we denote by $r$, so that after performing a contact $\pm 1$ surgery there is only a single embedded orbit $(r)$ with cyclic word $r$. Push-outs of $(r)$ using a choice of capping path are shown in Figure \ref{Fig:UnknotPushouts}. As $\rot(\Lambda) = 0$, the framing $(X, Y)$ described in Section \ref{Sec:Framing} is nowhere vanishing. For either choice of surgery coefficient, the first homology $H_{1}$ is generated by a meridian $\mu$ of $\Lambda$ with
\begin{equation*}
H_{1}(L(2, 1)) = (\Z / 2\Z)\mu ,\quad H_{1}(\Circle\times S^{2}) = \Z \mu.
\end{equation*}

\begin{thm}
The Conley-Zehnder gradings $|\ast|_{X, Y}$ on $\widehat{CH}(L(2, 1), \xi_{std})$ and $\widehat{CH}(S^{1} \times S^{2}, \xi_{std})$ are canonical in the sense of Proposition \ref{Prop:CanonicalZGrading}. We compute
\begin{equation*}
\begin{gathered}
\widehat{CH}(L(2, 1), \xi_{std}) = \Q[z_{0},z_{2},\dots,z_{2k},\dots] \\
|z_{2k}|_{X, Y} = 2k,\quad [z_{2k}] = \mu \in H_{1}
\end{gathered}
\end{equation*}
for the lens space and
\begin{equation*}
\begin{gathered}
\widehat{CH}(\Circle \times S^{2}, \xi_{std}) = \wedge_{k=1}^{\infty} \Q z_{2k-1}\\
|z_{2k-1}|_{X, Y} = 2k-1,\quad [z_{2k-1}] = 0 \in H_{1}
\end{gathered}
\end{equation*}
for $\Circle \times S^{2}$.
\end{thm}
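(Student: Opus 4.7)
I would begin by applying the main structural theorems to the figure-8 Lagrangian projection of the $\tb=-1$, $\rot=0$ Legendrian unknot $\Lambda$, which carries a single Reeb chord $r$ with crossing sign $\sgn_r = -1$. Theorem \ref{Thm:ChordOrbitCorrespondence} identifies the closed orbits of $R_\epsilon$ with the cyclic words $(r^k)$ for $k \geq 1$. The main geometric input is a direct computation of the capping path: with $\Lambda$ appropriately oriented, the good-position hypothesis forces $\theta_{r,r} = 3\pi/2$ and so $\rot_{r,r} = 1$, and since the interior of $\eta_{r,r}$ meets no chord endpoints we have $\cross_{r,r} = 0$ and $\cross_r = (c-1)\mu$.

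Feeding these into Theorems \ref{Thm:H1}, \ref{Thm:IntegralCZ}, and \ref{Thm:Mod2CZ} I would read off the invariants of each orbit. For $c=-1$ this gives $[(r^k)] = k\mu \pmod{2}$ and $\CZ_{X,Y}((r^k)) = k$, so $(r)$ is negative hyperbolic and the good orbits are the odd iterates $(r^{2j+1})$, sitting in degree $2j$ with $H_1$-class $\mu$. For $c=+1$ it gives $[(r^k)] = 0$ and $\CZ_{X,Y}((r^k)) = 2k$, so $(r)$ is positive hyperbolic, every iterate is good, and each sits in degree $2k-1$ with trivial $H_1$-class. Since $\rot(\Lambda) = 0$, Theorem \ref{Thm:FramingSummary} yields $c_1(\xi) = 0$; combined with the fact that $L(2,1)$ is a $\Q$-homology sphere and that every $\Circle\times S^2$ generator lies in the $H_1 = 0$ subalgebra, Proposition \ref{Prop:CanonicalZGrading} produces the canonical $\Z$-gradings stated in the theorem, and the generators match the $z_i$.

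It remains to show $\partialCH = 0$. The $L(2,1)$ case is essentially automatic: all generators sit in even degrees, so the supercommutative algebra $CC(\alpha_\epsilon)$ is concentrated in even degrees, and a differential that lowers degree by one must vanish identically. The hard part is the $\Circle\times S^2$ case: the generators lie in odd degrees, but products of an even number of distinct generators occupy even degrees, leaving room \emph{a priori} for terms such as $\partialCH z_{2k-1} = c\, z_1 z_{2k-3} + \cdots$ once $k \geq 3$. To rule these out I would combine the Liouville fillability of $(\Circle\times S^2, \xi_{std})$ provided by the Lagrangian disk bounded by $\Lambda$ (cf.\ Theorem \ref{Thm:SurgeryOverview}) with the machinery of Section \ref{Sec:FoliationsAndQuivers}: Liouville functoriality in Theorem \ref{Thm:CHHatOverview} supplies an augmentation of $\widehat{CH}$ forcing $\partialCH z_1 = 0$, while the energy estimate in Proposition \ref{Prop:EnergyBound} shows that for sufficiently small $\epsilon$ any curve contributing to $\partialCH z_{2k-1}$ must be hidden in the sense of Definition \ref{Def:ExposedHidden}, after which the $Q_\Lambda$-based order-preservation arguments constrain the possible configurations so severely that the remaining signed counts cancel. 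Once this vanishing is established the supercommutative algebra structure gives $\widehat{CH}(\alpha_\epsilon) = CC(\alpha_\epsilon)$, which is a polynomial algebra on even-degree generators in the $L(2,1)$ case and an exterior algebra on odd-degree generators in the $\Circle\times S^2$ case, matching the stated formulas.
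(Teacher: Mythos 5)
Most of your argument coincides with the paper's: the identification of the orbits $(r^{k})$, the computations $\CZ_{X,Y}((r^{k}))=k$ or $2k$ and $[(r^{k})]=k\mu$ or $0$, the appeal to Proposition \ref{Prop:CanonicalZGrading} for the canonical $\Z$-grading, the even-degree argument killing the differential for $L(2,1)$, and the use of fillability plus Theorem \ref{Thm:CHHatOverview} to rule out $\partialCH (r) = \pm 1$ are exactly what the paper does (the paper uses the explicit Liouville domain $(S^{1}\times \disk^{3}, xdy-ydx+zd\theta)$ rather than a Lagrangian disk, but that is cosmetic).

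The gap is in the $c=+1$ case for the higher iterates. Degree and energy considerations (which you correctly set up) reduce $\partialCH(r^{k})$, $k>1$, to counts of index-$1$ pairs of pants negatively asymptotic to $(r^{k_{0}}),(r^{k_{1}})$ with $k_{0}+k_{1}=k$; these have energy zero and hence are branched covers of the trivial cylinder over $(r)$. Your proposed mechanism for discarding them --- Proposition \ref{Prop:EnergyBound} to show such curves are hidden, followed by ``$Q_{\Lambda}$-based order-preservation arguments'' forcing the signed counts to cancel --- does not do the job. The quiver/hidden-curve machinery of Section \ref{Sec:FoliationsAndQuivers} only constrains the topology and asymptotics of hidden curves, and its only vanishing-type statement, Proposition \ref{Prop:CyclicOrderPreservationCyl}, applies to cylinders and strips, not to pairs of pants; here the asymptotics are already completely determined, and the problem is precisely that the moduli of branched covers of the trivial cylinder is nonempty, so no purely topological or homological argument can show their contribution to $\partialCH$ is zero. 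Whether these covers contribute is an obstruction-bundle/perturbation question (they are not somewhere injective, cf. \cite{HT:GluingI}), and the paper resolves it by citing Fabert's computation \cite{Fabert:Pants} that the contact homology differential is strictly action-decreasing, which forces all zero-energy contributions to vanish. Without this input (or an equivalent argument), your proof of $\partialCH(r^{k})=0$ for $k>1$, and hence of the exterior-algebra description of $\widehat{CH}(\Circle\times S^{2},\xi_{std})$, is incomplete.
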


\begin{proof}
For either choice of surgery coefficient $c = \pm 1$, we may compute Conley-Zehnder indices of $(r)$ using a capping path $\eta$. We see that the rotation angle of $\eta$ is $\frac{3\pi}{2}$ so that  its rotation number $1$. We conclude that
\begin{equation*}
\CZ_{X, Y}((r^{k})) = \begin{cases}
k & c = -1 \\
2k & c = +1
\end{cases}
\end{equation*}
Here and throughout the remainder of the proof, $(r^{k}) = (r\cdots r)$ is the $k$-fold cover of the embedded orbit $(r)$ for $k > 0$. To sanity check our index computations against known results, we may 
\be
\item compare the case $c=-1$ with \cite[Section 7.1]{BEE:LegendrianSurgery} in which contact $-1$ surgery is applied to $\Lambda$.
\item compare the case $c=+1$ with \cite[Lemma 4.2]{EkholmNg} in which a contact $1$-handle is attached to $\Rthree$ to obtain $\widehat{(\Circle \times S^{2}, \xi_{std})}$.
\ee
In each case a single closed, embedded orbit is produced with Conley-Zehnder index as described in the present scenario.

For the homology classes of orbits, we may apply Theorem \ref{Thm:H1}, or simply look at the push-outs depicted in Figure \ref{Fig:UnknotPushouts} to compute
\begin{equation*}
[(r)] = \begin{cases}
\mu & c = -1 \\
0 & c = +1.
\end{cases}
\end{equation*}
As the framing $(X, Y)$ is non-vanishing, we conclude that $\widehat{CH}$ is canonically $\Z$-graded for either choice of surgery coefficient, for when $c=-1$ we have a $\Q$ homology sphere and when $c=+1$, all orbits are homologically trivial.

When $c = -1$, an orbit $(r^{k})$ is bad exactly when $k \bmod_{2} = 0$. Write $z_{2k}$ for the orbit $(r^{2k - 1})$. Then the $\widehat{CH}$ chain algebra is freely generated by the $z_{2k}$ with gradings as described in the statement of the theorem. As the $\CZ_{X, Y}$ grading is even, $\partial_{CH}$ must vanish. The theorem is now complete in the case $c=-1$.

When $c = +1$, all of the $(r^{k})$ are good orbits which we will denote by $z_{2k - 1}$. These are graded as described in the statement of the theorem. As $(r)$ is the unique orbit of index $1$, $\partial_{CH}(r)$ must be a count of holomorphic planes. If this count was non-zero, then the unit in $\widehat{CH}$ would be exact. This is impossible, as $(\Circle \times S^{2}, \xi_{std})$ bounds the Liouville domain
\begin{equation*}
(S^{1} \times \disk^{3}, x dy - y dx + z d\theta)
\end{equation*}
implying that $CH(\Circle \times S^{2}, \xi_{std}) \neq 0$ and so $\widehat{CH}(\Circle \times S^{2}, \xi_{std}) \neq 0$ by Theorem \ref{Thm:CHHatOverview}. We conclude $\partial_{CH}(r) = 0$.

For $c=+1, k > 1$, the contact homology differential of $(r^{k})$ is determined by counts of pairs of pants $\Proj^{1} \setminus \{ 0, 1, \infty \}$ with 
\be
\item $\infty$ positively asymptotic to $(r^{k})$
\item $0$ negatively asymptotic to some $(r^{k_{0}})$,
\item $1$ negatively asymptotic to some $(r^{k_{1}})$, and
\item $k = k_{0} + k_{1}$ as required by the index formula, Equation \eqref{Eq:DelbarIndex}.
\ee
The energies of any such curves must be $0$ indicating that these curves must be branched covers of the trivial cylinder over $(r)$. According to calculations of Fabert \cite{Fabert:Pants}, the contact homology differential must be strictly action decreasing, implying that the counts of such curves are $0$. We conclude $\partial_{CH}(r^{k}) = 0$ completing the proof.
\end{proof}

\subsection{Bubbling planes in surgery diagrams}\label{Sec:PlaneBubbling}

In this section we use the results of Section \ref{Sec:FoliationsAndQuivers} to count holomorphic curves in completed surgery cobordisms $(\overline{W}_{+1}, \overline{\lambda}_{+1})$ determined by certain $LRSFT$ disks on Legendrian links in $\Rthree$ with only positive punctures. The arguments can be generalized to Legendrians $\LambdaZero$ in arbitrary punctured contact manifolds $\SurgLxi$, with additional notation and hypothesis. We consider $LRSFT$ disks with arbitrary numbers of positive punctures although in the applications of Section \ref{Sec:OTSurgery} and \ref{Sec:Trefoil} we'll only need to look at disks with a single positive puncture. 

As mentioned in the introduction, the inspiration for our construction is Hofer's bubbling argument \cite{Hofer:OTWeinstein}, used to prove the Weinstein conjecture -- that every Reeb vector field on a given contact manifold has a closed orbit -- for certain contact $3$-manifolds. We also have in mind the holomorphic curves in contact $-1$ surgery cobordisms of \cite{BEE:LegendrianSurgery, Ekholm:SurgeryCurves} positively asymptotic to closed orbits and negatively asymptotic to chords of a Legendrian link. In the case of $+1$ surgery, we will see some curves for which these boundary conditions have been flipped upside-down, allowing us to interpolate between chords of Legendrian links and Reeb orbits appearing after contact $+1$ surgery.

\begin{figure}[h]\begin{overpic}[scale=.7]{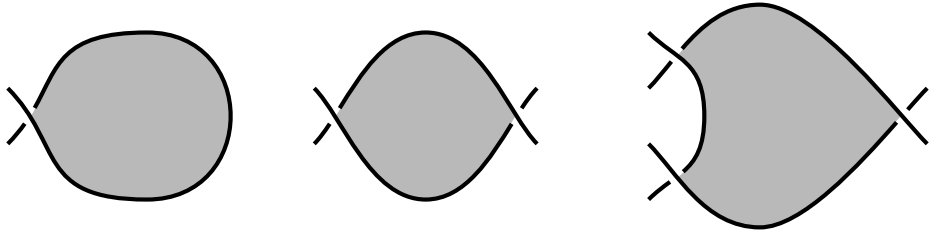}
	\end{overpic}
	\caption{Some $RSFT$ disks with only positive punctures.}
	\label{Fig:ModelPositivePunctureCurves}
\end{figure}

Suppose that $\LambdaZero \subset \Rthree$ has an immersed $LRSFT$ disk $u: \disk\setminus \{ p_{k}\} \rightarrow \R^{2}$ for some boundary punctures $\{ p_{k} \}$ as in Figure \ref{Fig:ModelPositivePunctureCurves}. Specifically, we assume that $u$ is an embedding with only positive punctures, completely covering a connected component of $\R^{2} \setminus \pi_{x,y}(\Lambda_{0})$. Write $r_{j_{1}},\dots, r_{j_{n}}$ for the chords associated to the punctures of the disk indexed in a counterclockwise fashion along its boundary and write 
\begin{equation*}
U: \disk\setminus \{ p_{k}\} \rightarrow \R \times \Rthree
\end{equation*}
for the associated holomorphic curve with boundary mapping  to $\R \times \LambdaZero$ determined by the drawing-to-disk correspondence \ref{Cor:DrawingDisk}.

Let $(x_{k}, y_{k})$ be a basis of points for $\LambdaZero$, indexed so that $(x_{1}, y_{1})$ lies in the interior of the image of $u$. Then by our hypothesis on $u$,
\begin{equation}\label{Eq:BubblingHypothesis}
\C_{k}\cdot U = \begin{cases}
1 & k=1 \\
0 & k\neq 1
\end{cases}.
\end{equation}

Consider the completed cobordism $(\overline{W}_{+1}, \overline{\lambda}_{+1})$ obtained by performing contact $+1$ surgery on $\LambdaZero$ as described by Theorem \ref{Thm:SurgeryCobordisms}. Then we may consider $U$ as having $\overline{W}_{+1}$ as its target with boundary on an embedded union of Lagrangian planes $\overline{\disk}_{+1, i}$ -- as described in Section \ref{Sec:SurgeryCobordisms} -- whose intersection with the positive end of $\overline{W}_{+1}$ is $[0, \infty) \times \LambdaZero$. We simply write $\overline{\disk}_{+1}$ for this union of planes. We may consider the planes $\C_{k}$ as being contained in any of $\R\times \R^{3}$, $\overline{W}_{+1}$, or $\R \times \SurgLPrime$.

We consider the following moduli spaces:
\be 
\item $\mathcal{M}_{\R^{3}}$ is the moduli space of holomorphic disks in $\R \times \R^{3}$ with positive punctures asymptotic to the $r_{1},\dots,r_{n}$ and boundary on $\R \times \LambdaZero$ satisfying \eqref{Eq:BubblingHypothesis}.
\item $\mathcal{M}_{\overline{W}_{+1}}$ is the moduli space of holomorphic disks in $\overline{W}_{+1}$ with positive punctures asymptotic to the $r_{1},\dots,r_{n}$ and boundary on $\overline{\disk}_{+1}$ satisfying \eqref{Eq:BubblingHypothesis}.
\item $\mathcal{M}_{\SurgLPrime}$ is the moduli space of holomorphic planes in $\R \times \SurgLPrime$ positively asymptotic to the closed orbit $(r_{j_{1}}\cdots r_{j_{n}})$ and satisfying \eqref{Eq:BubblingHypothesis}.
\ee

Within the positive end of the completed cobordism, we can translate $U$ positively in the $\R$ direction determining a half-infinite ray $[0, \infty) \subset \mathcal{M}_{\overline{W}_{+1}}$. The index of $U$ is equal to $1$ so that these curves are regular. Following the analogy with \cite{Hofer:OTWeinstein}, these disks will serve as our Bishop family.

\begin{figure}[h]\begin{overpic}[scale=.7]{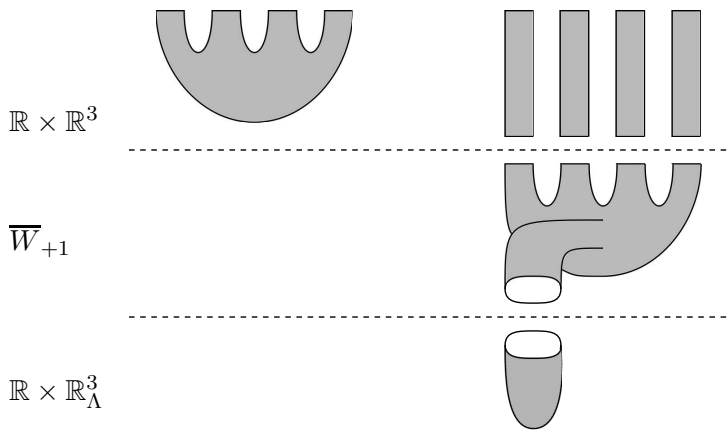}
    \put(-20, 50){$\R \times \R^{3}$}
    \put(-20, 30){$\overline{W}_{+1}$}
    \put(-20, 5){$\R \times \SurgLPrime$}
\end{overpic}
\caption{Elements of $\partial \overline{\mathcal{M}}_{\overline{W}_{+1}}$.}
\label{Fig:PlaneBreaking}
\end{figure}

\begin{thm}\label{Thm:PlaneBubbling}
The boundary of the $\SFT$ compactification $\overline{\mathcal{M}}_{\overline{W}_{+1}}$ of the moduli space $\mathcal{M}_{\overline{W}_{+1}}$ consists of two points (when curves in symplectizations are considered equivalent modulo $\R$-translation). One point is given by $\R$-translations of the curve $U$, considered as living in $\R\times \R^{3}$. The other point is given by a height $3$ $\SFT$ building consisting of
\be
\item A collection of trivial strips over the $r_{j_{k}}$ in $\R \times \R^{3}$.
\item A hidden curve $U^{o}_{c}$ in $\overline{W}_{+1}$ from a disk with $n$ boundary punctures positively asymptotic to the $r_{j_{k}}$ -- preserving the cyclic ordering of the $r_{j_{k}}$ -- and a single interior puncture negatively asymptotic to the closed Reeb orbit $(r_{j_{1}}\cdots r_{j_{n}})$.
\item A curve $U^{c}_{\emptyset}\in \mathcal{M}_{\SurgLPrime}$.
\ee
The algebraic count of such $U^{o}_{c}$ is $\pm 1$ and the algebraic count of points in $\mathcal{M}_{\SurgLPrime}$ is also $\pm 1$.
\end{thm}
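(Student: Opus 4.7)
The plan is to run a Hofer-style bubbling argument on the Bishop family given by $\R$-translations of $U$ in the positive cylindrical end of $\overline{W}_{+1}$. By Theorem \ref{Thm:IndexGeneral} combined with Theorem \ref{Thm:MaslovComputation} and $\chi(\disk \setminus \{p_k\}) = 1 - n$, the expected dimension of $\mathcal{M}_{\overline{W}_{+1}}$ is $1$; regularity of somewhere injective curves is arranged as in Assumptions \ref{Assump:Transversality}. Translates of $U$ give an embedding $[0,\infty) \hookrightarrow \mathcal{M}_{\overline{W}_{+1}}$, and letting the translation parameter tend to $+\infty$ produces the first boundary point of the $\SFT$ compactification $\overline{\mathcal{M}}_{\overline{W}_{+1}}$: the curve $U$ itself, now viewed as living in $\R \times \R^3$.

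\textbf{Classification of the other boundary point.} Any other sequence in $\mathcal{M}_{\overline{W}_{+1}}$ $\SFT$-converges to a multilevel building $V = V_{\R^3} \sqcup V_{W} \sqcup V_{\SurgLPrime}$ distributed across the levels $\R\times\R^3$, $\overline{W}_{+1}$, and $\R\times\SurgLPrime$. The intersection numbers $\C_k \cdot (\cdot)$ are preserved in the limit by positivity and Theorem \ref{Thm:IntersectionContinuity}, so the total multiplicities satisfy $\C_k \cdot V = \delta_{k,1}$; in particular a unique irreducible component of $V$ is exposed with $\C_1$-multiplicity $1$, and every other component is hidden. I would then invoke Propositions \ref{Prop:ExposedFillings}, \ref{Prop:CyclicOrderPreservationOC}, and \ref{Prop:CyclicOrderPreservationCyl}: any component in $\R \times \R^3$ whose only punctures are positive boundary punctures on $\R \times \LambdaZero$ must be exposed; hidden cylinders and strips in symplectizations are trivial; and a hidden $\overline{W}_{+1}$ disk with positive boundary punctures $r_{j_1},\dots,r_{j_n}$ in cyclic order and one negative interior puncture must be asymptotic to $(r_{j_1}\cdots r_{j_n})$. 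Combining with index additivity $\sum \ind = 1$, $\ind \geq 0$ on each somewhere injective component, and matching of asymptotics at every breaking circle, the only surviving configuration is a building consisting of trivial strips over the $r_{j_k}$ in $V_{\R^3}$, a single hidden disk $U^o_c$ of index $0$ in $V_W$ of the type described above, and a single exposed plane $U^c_\emptyset \in \mathcal{M}_{\SurgLPrime}$ of index $1$ in $V_{\SurgLPrime}$.

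\textbf{Signed count.} Since $\overline{\mathcal{M}}_{\overline{W}_{+1}}$ is a compact oriented $1$-manifold, its signed boundary count vanishes. The first boundary point contributes $\pm 1$ coming from the isolated curve $U$, and the second contributes the product $\#\{U^o_c\} \cdot \#\mathcal{M}_{\SurgLPrime}$ via the standard $\SFT$ gluing theorem applied to breaking along $(r_{j_1}\cdots r_{j_n})$, with the trivial strips contributing trivially. Hence $\#\{U^o_c\} \cdot \#\mathcal{M}_{\SurgLPrime} = \pm 1$, forcing both integer counts to equal $\pm 1$. \emph{The main obstacle I expect} is the exhaustive exclusion step in the middle paragraph: ruling out buildings containing higher-genus components, multi-level breakings inside the cobordism layer, or middle-level disks whose negative interior punctures are asymptotic to orbits whose cyclic words are only partial cyclic rotations of $r_{j_1}\cdots r_{j_n}$. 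This requires simultaneously leveraging the hard intersection constraint $\C_k \cdot V = \delta_{k,1}$, the chord-to-orbit correspondence of Theorem \ref{Thm:ChordOrbitCorrespondence}, and a careful book-keeping of the index additivity across all levels.
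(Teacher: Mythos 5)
Your proposal is correct and follows essentially the same argument as the paper's: the exposed/hidden alternative via the intersection numbers $\C_{k}\cdot U$, Propositions \ref{Prop:ExposedFillings}, \ref{Prop:CyclicOrderPreservationOC}, and \ref{Prop:CyclicOrderPreservationCyl}, and the vanishing signed count of the boundary of the compactified $1$-dimensional moduli space. The exclusion step you flag as the main obstacle is exactly what the paper carries out, sorting the possible buildings into the three index distributions $(1,\emptyset,\emptyset)$, $(1,0,\emptyset)$, $(0,0,1)$ and eliminating the middle one because its symplectization level would consist of hidden, hence trivial, strips of total index $0$.
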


The two buildings in $\partial \overline{\mathcal{M}}_{\overline{W}_{+1}}$ are shown in Figure \ref{Fig:PlaneBreaking}. The notation $U^{o}_{c}$ indicates that the curves interpolates between open and closed strings -- that is, between chords and orbits -- and this curve is shown in the center-right of Figure \ref{Fig:PlaneBreaking}. The curve $U^{c}_{\emptyset}$ is shown in the bottom-right of the figure.

\begin{proof}
The space $\partial \overline{\mathcal{M}}_{\overline{W}_{+1}}$ consists of multi-level $\SFT$ buildings such that when their levels are glued together, an index $1$ curve obeying the topological hypotheses on $\mathcal{M}_{\overline{W}_{+1}}$ is obtained. Subject to these conditions, such buildings may be of any of the following configurations:
\be
\item \textbf{Case $(1, \emptyset, \emptyset)$}: A $3$-level building consisting of an index $1$ curve in $\R \times \R^{3}$, an empty curve in $\overline{W}_{+1}$, and an empty curve in the symplectization of the surgered manifold $\R \times \SurgLPrime$.
\item \textbf{Case $(1, 0, \emptyset)$}: A $3$-level building consisting of an index $1$ curve in $\R \times \Rthree$, a collection of index $0$ curves in $\overline{W}_{+1}$, and an empty curve in $\R \times \SurgL$.
\item \textbf{Case $(0, 0, 1)$}: A $3$-level building consisting of a collection of index $0$ curves in $\R \times \R^{3}$, a collection of index $0$ curves in $\overline{W}_{+1}$, and an index $1$ curve in $\R \times \SurgL$.
\ee
The buildings are required to recover the boundary conditions of $U$ when glued in the obvious way. Buildings of height greater than $3$ are ruled out by presumption of transversality for somewhere injective curves in Assumptions \ref{Assump:Transversality}, index additivity, and the fact that all closed orbits of $R_{\epsilon}$ at the negative end of $\overline{W}_{+1}$ are assumed hyperbolic, so that there cannot be levels consisting of branched covers of trivial cylinders with $\ind \leq 0$ as described in \cite[Section 1]{HT:GluingI}.

We will show, using the intersections with the $\C_{k}$, that
\be
\item $U$ is the only possibility for the case $(1, 0, \emptyset)$, 
\item there are no curves in the case $(1, 0, \emptyset)$, and 
\item the second configuration described in the statement of the proposition -- appearing in the right-hand side of Figure \ref{Fig:PlaneBreaking} -- is the only possibility for case $(0, 0, 1)$. 
\ee

\noindent \textbf{Case $(1, \emptyset, \emptyset)$}: For the case $(1, \emptyset, \emptyset)$, our assumptions on the immersion $u$ indicate that $U$ is the only disk in $\R \times \R^{3}$ satisfying Equation \eqref{Eq:BubblingHypothesis}. We conclude that $U$ is then the only possibility in this case.
\newline \newline
\noindent \textbf{Case $(1, 0, \emptyset)$}: Next, suppose we have a holomorphic building satisfying conditions of the case $(1, 0, \emptyset)$ and note that the middle level -- a union of curves in $\overline{W}_{+1}$ we'll denote $U_{\overline{W}_{+1}}$ -- must be positively asymptotic to some number of chords and have no negative asymptotics. Hence each connected component of $U_{\overline{W}_{+1}}$ must be exposed by Proposition \ref{Prop:ExposedFillings}.\footnote{By connected component we intend that nodal configuration, such as those appearing in the appendix of \cite{CL:SFTStringTop} are broken up into their irreducible pieces, with any removable boundary singularities filled in. We maintain this convention throughout the remainder of the proof.} The conditions on intersection numbers of Equation \eqref{Eq:BubblingHypothesis} then indicates that $U_{\overline{W}_{+1}}$ must consist of a single component and that the upper level of this building $U_{\R\times \R^{3}}$ must be hidden. 

For each component of $U_{\R\times \R^{3}}$ the number of positive punctures must match the number of negative punctures, as otherwise \ref{Prop:ExposedHOneMismatches} would indicate that this component is exposed. If any component had more than a single negative puncture, then $U_{\overline{W}_{+1}}$ would have more than a single connected component in violation of the above arguments. We conclude that $U_{\R\times \R^{3}}$ must be a union of hidden strips, which are then trivial by Proposition \ref{Prop:CyclicOrderPreservationCyl}.

Since $U_{\R\times \R^{3}}$ is a collection of trivial strips, it must then have $\ind = 0$ in violation of our hypothesis. We conclude that no buildings of type $(1, 0, \emptyset)$ can exist.
\newline \newline
\noindent \textbf{Case $(0, 0, 1)$}: Finally, we address configurations of type $(0, 0, 1)$. Suppose that we have such a height $3$ building whose levels -- going from top to bottom -- will be denoted $U_{\R\times \R^{3}}$, $U_{\overline{W}_{+1}}$, and $U_{\R \times \SurgLPrime}$. By Proposition \ref{Prop:ExposedFillings}, $U_{\R \times \SurgLPrime}$ must be exposed and so by Equation \eqref{Eq:BubblingHypothesis}, both $U_{\R \times \R^{3}}$ and $U_{\overline{W}_{+1}}$ must be hidden. Then $U_{R \times \SurgLPrime}$ must consist of a holomorphic plane positively asymptotic to some orbit $\gamma$. The curve $U_{\overline{W}_{+1}}$ must then consist of a single connected component negatively asymptotic to $\gamma$, as any additional components would necessarily have trivial negative asymptotics and therefore be exposed by Proposition \ref{Prop:ExposedFillings}. As its index is zero, $U_{\R \times \R^{3}}$ must be a collection of trivial strips. We conclude that $U_{\overline{W}_{+1}}$ must consist of a punctured disk exactly as described in the statement of the proposition. We know that the negative puncture of $U_{\overline{W}_{+1}}$ must be asymptotic to $(r_{j_{1}}\cdots r_{j_{n}})$ by Proposition \ref{Prop:CyclicOrderPreservationOC}.
\newline \newline
Apart from the statement regarding algebraic counts, our proof is complete. To prove this last statement, observe that $\partial\overline{\mathcal{M}}_{\overline{W}_{+1}}$ has a count of $0$ points when taking into account some choice of orientation as it is the boundary of a $1$ manifold. We can also write
\begin{equation*}
\# \partial\overline{\mathcal{M}}_{x_{0}, y_{0}} = \#(\text{$(1, \emptyset, \emptyset)$ buildings}) + \#(\text{$(1, 0, \emptyset)$ buildings}) + \#(\text{$(0, 0, 1)$ buildings})
\end{equation*}
where the $\#(\cdots)$ are counted with signs. We know that the set of $(1, \emptyset, \emptyset)$ buildings consists of a single element yielding a count of $\pm1$ and that the set of $(1, 0, \emptyset)$ buildings must be empty by our previous arguments providing a count of $0$. Hence the number of $(0, 0, 1)$ buildings must be $\mp 1$. But this number is equal to $\# (U_{\overline{W}_{+1}})\cdot \# (U_{\R \times \SurgLPrime})$, so that both numbers must have absolute value $1$. Observing that $\# (U_{\R \times \SurgLPrime})$ coincides with a count of points in the moduli space $\mathcal{M}_{\R \times \SurgLPrime}$, the proof is complete.
\end{proof}

\subsection{Vanishing invariants of overtwisted contact manifolds}\label{Sec:OTSurgery}

Here we use the holomorphic planes of Section \ref{Sec:PlaneBubbling} to prove that the contact homologies of overtwisted contact $3$-manifolds are $0$. Throughout, we write $\MxiOT$ for a closed, overtwisted contact $3$-manifold.

\begin{thm}[\cite{Yau:VanishingCH}]\label{Thm:OTCH} $\widehat{CH}\MxiOT = CH\MxiOT = 0$.
\end{thm}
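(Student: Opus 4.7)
The plan is to construct, for any overtwisted $\MxiOT$, a single Reeb orbit $\gamma$ of $R_{\epsilon}$ with $\partialCH \gamma = \pm 1$, thereby forcing the unit to be exact. By Theorem \ref{Thm:SurgeryOverview}(4), $\MxiOT$ is obtained by contact $+1$ surgery on a stabilized Legendrian in some contact manifold $\Mxi$. Writing $\Mxi$ as the result of a contact surgery on a Legendrian link $\Lambda' \subset \Rthree$ via Ding--Geiges and representing the stabilized knot by a Legendrian-isotopic copy $\Lambda_{0}$ in the complement of $\Lambda'$ inside $\R^{3}$, we obtain
\begin{equation*}
\MxiOT = \SurgLxiClosed,\quad \LambdaPM = \Lambda' \cup \Lambda_{0},
\end{equation*}
where $\Lambda_{0}$ is a stabilized component of $\LambdaPlus$.

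The next step is to locate an embedded $LRSFT$ disk filling the stabilization. After shrinking the zigzag of $\Lambda_{0}$ as much as desired, the Lagrangian projection exhibits an arbitrarily small teardrop region $\region_{1}$ bounded by an arc of $\pxy(\Lambda_{0})$ with a single Reeb chord $r$ at its apex; its inclusion is an embedded immersion of a disk with one positive puncture at $r$, exactly filling a connected component of $\R^{2}\setminus \pxy(\Lambda)$. Choosing the stabilization type and orientation of $\Lambda_{0}$ so that $\eta_{r, r}$ traverses the teardrop counter-clockwise arranges $\theta_{r, r} = 3\pi/2$, whence Theorem \ref{Thm:IntegralCZ} yields $\CZ_{X, Y}(\gamma) = \rot_{r, r} + \delta_{1, +1} = 2$ for the closed orbit $\gamma = (r)$ of $R_{\epsilon}$ in $\widehat{\MxiOT}$. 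Thus $\gamma$ is good with $|\gamma| = 1$, matching the degree parity of the unit.

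Applying the extension of Theorem \ref{Thm:PlaneBubbling} to $\LambdaZero = \Lambda_{0}$ sitting inside the surgered $\R^{3}_{\Lambda'}$ (indicated in the introduction to Section \ref{Sec:PlaneBubbling}), the teardrop determines a Bishop family whose $\SFT$ compactification has a single interior breaking into a three-level building. The bottom level is a rigid $J$-holomorphic plane in $\R \times \widehat{\MxiOT}$ positively asymptotic to $\gamma$ with intersection profile $\C_{1}\cdot U = 1$ and $\C_{k}\cdot U = 0$ for $k \neq 1$, contributing $\pm 1$ to the count.

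To argue that no further $\ind=1$ planes asymptotic to $\gamma$ contribute to $\partialCH\gamma$, shrink the stabilization so that $\action(\gamma) < \action(\gamma')$ strictly for every other closed orbit $\gamma'$. The strict action decrease across $\partialCH$ then leaves only the scalar term $c\cdot 1$. By Proposition \ref{Prop:ExposedHOneMismatches}, any plane asymptotic to $\gamma$ is exposed because $[r] \neq 0 \in H_{1}(Q_{\Lambda}/\ell)$, so $\sum_{k}\C_{k}\cdot U \geq 1$; combined with the area identity $\energy_{1} = \action(r)$ and Proposition \ref{Prop:EnergyBound}, shrinking the stabilization further so that $\action(r) < \energy_{k}$ for $k \neq 1$ forces the intersection profile $\C_{1}\cdot U = 1$, $\C_{k}\cdot U = 0$ for $k \neq 1$, which is the unique profile counted by Theorem \ref{Thm:PlaneBubbling}. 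Hence $\partialCH \gamma = \pm 1$, making $1$ exact in $\widehat{CH}\MxiOT$; thus $\widehat{CH}\MxiOT = 0$, and $CH\MxiOT = 0$ follows from the unit-preserving algebra homomorphism of Theorem \ref{Thm:CHHatOverview}.

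The principal obstacle is confirming the required generalization of Theorem \ref{Thm:PlaneBubbling} to Legendrians $\LambdaZero$ sitting inside a genuinely surgered $\SurgLxi$: the algebraic ingredients (intersection positivity with the $\C_{k}$, Proposition \ref{Prop:EnergyBound}, and the exposed/hidden dichotomy of Section \ref{Sec:ExposedHidden}) are already set up in Section \ref{Sec:FoliationsAndQuivers} for $\LambdaZero \subset \SurgLxi$, so what remains is an essentially routine translation of the compactness, indexing, and sign-counting arguments of the proof of Theorem \ref{Thm:PlaneBubbling} to this wider setting.
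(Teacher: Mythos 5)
Your route differs from the paper's at the reduction step, and the difference matters. The paper uses Eliashberg's classification to write $\MxiOT \simeq \MxiOT \# (S^{3},\xi_{-1}) \# (S^{3},\xi_{1})$ and the connected-sum formula of Theorem \ref{Thm:CHHatOverview} to reduce everything to the single model $(S^{3},\xi_{1})$, presented by contact $+1$ surgery on a stabilized unknot in $\Rthree$; this places the bubbling argument inside the literal hypotheses of Theorem \ref{Thm:PlaneBubbling}, where $\LambdaZero \subset \Rthree$, so the top level of any limiting $\SFT$ building lives in $\R\times\Rthree$, which has no closed Reeb orbits at all, forcing hidden index-$0$ pieces to be trivial strips and excluding anchors. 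You instead keep a general presentation $\LambdaPM = \Lambda' \cup \Lambda_{0}$ with the stabilized knot inside the already-surgered $\R^{3}_{\Lambda'}$, which takes you outside the proven scope of Theorem \ref{Thm:PlaneBubbling}.

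That is the genuine gap: your last paragraph invokes ``the extension of Theorem \ref{Thm:PlaneBubbling}'' and calls the translation essentially routine, but the paper only remarks that the arguments ``can be generalized \dots with additional notation and hypothesis'' and never carries this out --- indeed it organizes this very proof so as not to need it. In your setting both the convex end $\R^{3}_{\Lambda'}$ and the surgery cobordism carry closed orbits of $R_{\epsilon}$ built from chords of $\Lambda'$, so the case analysis acquires configurations absent from the proof as written: upper-level curves with interior punctures asymptotic to ambient orbits, cobordism-level curves anchored by planes bounding such orbits, and hidden components that need not be strips. These can plausibly be excluded (for instance, the chords of $\Lambda'$ have action bounded below by a fixed constant while the teardrop chord $r$ can be made arbitrarily small, so Stokes/energy positivity should kill any level carrying an ambient orbit), but that argument must actually be supplied; as it stands you are citing a theorem outside its hypotheses, which is exactly the step the paper's connected-sum reduction is designed to avoid. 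Two smaller points: Theorem \ref{Thm:SurgeryOverview}(4) hands you a stabilized knot whose stabilization sign is not yours to choose (only the orientation of $\Lambda_{0}$ is free), so the claim ``choosing the stabilization type and orientation \dots arranges $\theta_{r,r} = 3\pi/2$'' needs to be replaced by a check that the teardrop orbit and its filling disk work for either sign (or one can note that the precise value of $\CZ_{X,Y}$ is not really needed once the count of index-$1$ planes is pinned down by the bubbling argument); and the isotopy placing $\Lambda_{0}$ in $\R^{3}\setminus N_{\epsilon}$ while keeping it stabilized deserves at least a sentence.
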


\begin{proof}
Applying Eliashberg's theorem \cite{Eliash:OTClassification, Huang:OTClassification} which asserts that isotopy classes of overtwisted contact structures on a given contact $3$-manifold are classified by the homotopy classes of their underlying oriented $2$-plane fields, we know that for each $n \in \Z$, there exists a unique overtwisted contact structure $\xi_{n}$ on $S^{3}$ whose $d_{3}$ invariant is $n - \half$. For the tight contact structure $\Sthree$ on $S^{3}$, have have $d_{3}(\xi_{std}) = -\half$.\footnote{See \cite[Section 11.3]{OS:SurgeryBook} for an overview of $d_{3}$ invariants (which we will be following in this proof) as defined by Gompf in \cite[Section 4]{Gompf:Handlebodies}.} Denoting contact-connected-sum by $\#$ and isotopic contact structures as $\simeq$,
\begin{equation*}
\MxiOT \simeq \MxiOT \# \Sthree \simeq \MxiOT \# (S^{3},\xi_{-1}) \# (S^{3}, \xi_{1}).
\end{equation*}
By the connected-sum formula of Theorem \ref{Thm:CHHatOverview}, then we only need to show that $\widehat{CH}(S^{3}, \xi_{1})=0$.

\begin{figure}[h]\begin{overpic}[scale=.9]{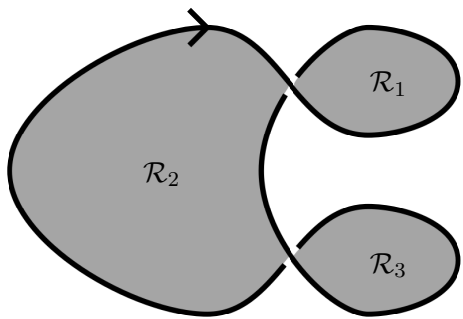}
		\put(80, 50){$\region_{1}$}
		\put(30, 30){$\region_{2}$}
		\put(80, 10){$\region_{3}$}
	\end{overpic}
	\caption{A basis for the $\tb=-2, \rot = 1$ unknot.}
	\label{Fig:Rot1UnknotDisks}
\end{figure}

A contact surgery diagram for $(S^{3}, \xi_{1})$ is provided by a contact $+1$ surgery on a $\tb = -1, \rot = 1$ unknot. See \cite[Lemma 11.3.10]{OS:SurgeryBook}. A Lagrangian resolution of this knot $\Lambda$ -- shown in Figure \ref{Fig:Rot1UnknotDisks} -- has two chords, say $r_{1}, r_{2}$. Perturbing $\Lambda$ as necessary, we may assume that the actions of the chords are distinct and that $r_{1}$ has the least action of the two chords with $r_{1}$ corresponding to the positive puncture of the disk determined by the region $\region_{1}$ of Figure \ref{Fig:Rot1UnknotDisks}. Applying the Conley-Zehnder index calculations of Theorem \ref{Thm:IntegralCZ} to the figure, we see that the Reeb orbit $(r_{1})$ has $\CZ_{X, Y} = 2$. Moreover, the orbit is contractible as can be seen by considering a push-out by the orbit string $\eta_{1, 1}$. 

As the action of $(r_{1})$ is the least among all orbits $R_{\epsilon}$ according to our chords-to-orbits correspondence (Theorem \ref{Thm:ChordOrbitCorrespondence}) and the action estimates of Proposition \ref{Prop:ActionEstimate}, $\partial_{CH}(r_{1})$ and $\partial_{SFT}(r_{1})$ are counts of planes bounding $(r_{1})$. Using the notation of Section \ref{Sec:BasesAndEnergy}, write $\energy_{i}$ for the areas of the regions $\region_{i} \subset \R^{2} \setminus \pi_{xy}(N_{\epsilon})$ shown in Figure \ref{Fig:Rot1UnknotDisks}. By taking the $\epsilon$ parameter in $\alpha_{\epsilon}$ to be sufficiently small, we may assume that $\energy_{2}, \energy_{3} > \action((r_{1}))$. Likewise by Stokes' theorem $\action((r_{1})) - \energy_{1}$ is positive. and may be assumed arbitrarily small by taking $\epsilon$ to be arbitrarily small. Then by the action-energy bound of Proposition \ref{Prop:EnergyBound} and the exposure of filling curves (Proposition \ref{Prop:ExposedFillings}), we must have that any plane $U: \C \rightarrow \R \times \SurgLPrime$ bounding $(r_{1})$ must satisfy
\begin{equation*}
\C_{k} \cdot U = \begin{cases}
	1 & k = 1 \\
	0 & k\neq 1
\end{cases}
\end{equation*}
We can view $\region_{1}$ as determining a disk with a positive puncture at the chord $r_{1}$, apply Theorem \ref{Thm:PlaneBubbling} to obtain a holomorphic plane bounding $(r_{1})$, and conclude that the count of such planes is $\pm 1$. Hence
\begin{equation*}
\partial_{CH}(r_{1}) = \pm 1 \in \Q
\end{equation*}
so that the unit in $\widehat{CH}$ is zero. This implies that $CH\MxiOT$ must also be zero by Theorem \ref{Thm:CHHatOverview}.
\end{proof}

\subsection{Intersection gradings on $\widehat{CH}$ chain complexes}\label{Sec:IntersectionGrading}

Here we describe how the intersections of finite energy curves with the planes $\C_{k}$ of section \ref{Sec:IntersectionNumbers} can define gradings on the $CC_{\ast, 0}(\alpha_{\epsilon})$ chain complexes of punctured $\Q$-homology spheres which take values in a free $\Z$-module. As described in the introduction, this is simply a variation of the transverse knot filtrations of \cite[Section 7.2]{CGHH:Sutures}.

It will be clear from their construction that analogous gradings -- which depend on a surgery presentation of our punctures contact manifold -- can be constructed for holomorphic curve invariants of $\Q$-homology spheres $\SurgLxi$ such as $\widehat{ECH}$ and the $\widehat{SFT}$. It will also be clear that the assumption that $H_{2}(M) = 0$ may be dropped by considering $\Q[H_{2}(M)]$ coefficient systems as described in \cite{Bourgeois:ContactIntro}. Likewise, such gradings can be extended to all of $CC_{\ast, \ast}$ using $\Q[H_{2}(M)]$ coefficients and spanning surfaces bounding unions of closed orbits and fixed representatives of homology classes as in \cite{Bourgeois:ContactIntro}. In Section \ref{Sec:TrefoilProof} we will use this grading to prove Theorem \ref{Thm:Trefoil}, in which case we will only need the $CC_{\ast, 0}$ version of this construction for $\Q$ homology spheres.

Let $\SurgLxi$ be a contact manifold determined by a contact surgery diagram $\LambdaPM$ with $\SurgL$ a $\Q$ homology sphere. Let $(x_{k}, y_{k})$, $k=1,\dots,K$ be a point basis for the surgery diagram determining a finite collection of infinite energy holomorphic planes $\C_{k}$ as described in Section \ref{Sec:BasesAndEnergy}.

Suppose $\gamma = \{ \gamma_{k} \}$ is a collection of Reeb orbits for which $[\gamma] = 0 \in H_{1}(\SurgL)$ and let $S_{\gamma}$ be a surface in $\SurgL$ with $\partial S_{\gamma} = \gamma$. To the surface $S_{\gamma}$ and each point $(x_{k}, y_{k})$ we define
\begin{equation*}
I_{k}(\gamma) = \big( \{ (x_{k}, y_{k}) \}  \times \R \big) \cdot S_{\gamma} \in \Z.
\end{equation*}

By Theorem \ref{Thm:IntersectionContinuity} and the fact that $H_{2}(\SurgL) = 0$, the numbers $I_{k}(\gamma)$ are independent of choice of spanning surface $S_{\gamma}$ for $\gamma$. We collect all of these numbers as monomials
\begin{equation*}
\Igrading(\gamma) = \sum_{1}^{K} I_{k}(\gamma)\iota_{k} \in \Z^{K}
\end{equation*}
for formal variables $\iota_{k}$, $k=1,\dots, K$. It follows from this definition that provided two homologically trivial collections $\gamma_{1}$, $\gamma_{2}$ of closed Reeb orbits we have
\begin{equation*}
\Igrading(\gamma_{1} \cup \gamma_{2}) = \Igrading(\gamma_{1}) + \Igrading(\gamma_{2}).
\end{equation*}
We set $\Igrading(\emptyset) = 0 \in \Z^{K}$. Then $\Igrading$ determines a $\Z^{K}$-valued grading on the $H_{1}=0$ subalgebra $CC_{\ast, 0}$ of the chain algebra $CC$ for the contact homology associated to the contact form $\alpha_{\epsilon}$ of $\SurgL$.

Now suppose that $\gamma^{+}$ and $\gamma^{-}$ are two homologically trivial collections of closed orbits and that $U$ is a map from a surface with boundary into $\SurgL$ for which $\partial U = \gamma^{+} - \gamma^{-}$. Then relative to its boundary, we have
\begin{equation*}
\bigr( \{ (x_{k}, y_{k}) \}\times \R \bigr) \cdot U = I_{k}(\gamma^{+}) - I_{k}(\gamma^{-}) \in \Z.
\end{equation*}
In particular, if $(t, U): \Sigma' \rightarrow \R \times \SurgL$ is a holomorphic curve positively asymptotic to the $\gamma^{+}$ and negatively asymptotic to the $\gamma^{-}$ then
\begin{equation}\label{Eq:IGradingDelta}
\Igrading(\gamma^{+}) - \Igrading(\gamma^{-}) = \sum \bigg( \bigr( \{ (x_{k}, y_{k}) \}\times \R \bigr) \cdot U \bigg) \iota_{k} = \sum \bigg( \C_{k} \cdot (t, U) \bigg)\iota_{k} \in \Z_{\geq 0}^{K}.
\end{equation}

In summary, the $\Igrading$ allows us to make \emph{a priori} computions of intersection numbers between holomorphic curves asymptotic to orbits with leaves of the foliation described in Section \ref{Sec:FoliationsAndQuivers}. In particular, if 
\begin{equation}\label{Eq:IGradingObstruction}
\Igrading(\gamma^{+}) - \Igrading(\gamma^{-}) \notin \Z_{\geq 0}^{K}
\end{equation}
then the coefficient of $\gamma^{-}$ in $\partial_{CH}(\gamma^{+})$ must be zero.\footnote{Here it is implicit that if the collection $\gamma^{+}$ contains more than a single orbit that a holomorphic map $(t, U)$ as above contributing to $\partial_{CH}$ will consist of a connected index $1$ holomorphic curve positively asymptotic to some orbit in $\gamma^{+}$ together with a union of trivial cylinders over the remaining orbits in the collection. This deviation from convention allows us to associate cobordisms to differentials of monomials consisting of $\gamma^{+}$ containing more than one orbit.}

\subsection{Surgery on a trefoil}\label{Sec:Trefoil}

Take $\Lambda$ to be the trefoil depicted in Figure \ref{Fig:TrefoilImmersions} with chords $r_{1},\dots, r_{5}$. This is a reproduction of Figure \ref{Fig:LagrangianResolutionEx} with a point basis shown in the right-hand side of the figure. This trefoil is the unique nondestabilizeable $m(3_{1})$ by \cite{EtnyreHonda:Knots}.

\begin{figure}[h]\begin{overpic}[scale=.9]{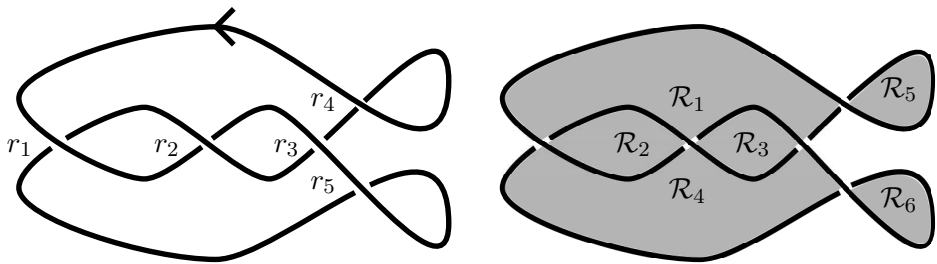}
    \put(-1, 12){$r_{1}$}
    \put(15, 12){$r_{2}$}
    \put(28, 12){$r_{3}$}
    \put(32, 17){$r_{4}$}
    \put(32, 8){$r_{5}$}
    \put(71, 17){$\region_{1}$}
    \put(65, 12){$\region_{2}$}
    \put(78, 12){$\region_{3}$}
    \put(71, 7){$\region_{4}$}
    \put(94, 18){$\region_{5}$}
    \put(94, 6){$\region_{6}$}
\end{overpic}
\caption{A Legendrian trefoil with $\tb=-1$ and $\rot=0$ in the Lagrangian projection together with a basis for $\R^{2} \setminus N$.}
\label{Fig:TrefoilImmersions}
\end{figure}

\subsubsection{Ambient geometry}

According to Theorem \ref{Thm:H1}, the first homology of $\SurgL$ is generated by the meridian $\mu$ with
\begin{equation*}
H_{1}(\SurgL) = \begin{cases}
\Z/2\Z\mu & c=1\\
\Z\mu & c=-1.
\end{cases}
\end{equation*}

Since $\Lambda$ is smoothly fibered, with fiber a punctured torus, the closed manifold obtained by contact $-1$ surgery -- a topological $0$ surgery with respect to the Seifert framing -- is a torus bundle over $\Circle$. This manifold is Liouville fillable, hence tight, and so is a torus bundle covered by the classification in \cite[Section 2]{Honda:Tight2}.

Performing $+1$-contact surgery produces a tight but non-fillable contact manifold studied in \cite{LS:TightI} -- see also the exposition \cite[Theorem 1.3.4]{OS:SurgeryBook} -- which is a Breiskorn sphere with reversed orientation, $-\Sigma(2, 3, 4)$. Non-fillability may also be viewed as a consequence of the fact that the trefoil is not slice by \cite{PlusOneFilling} as mentioned in Theorem \ref{Thm:SurgeryOverview}.

\subsubsection{Rotation numbers and crossing monomials}

Here we compute rotation numbers and crossing monomials for the trefoil which will allow us to compute Conley-Zehnder indices and homology classes of the orbits in the surgered manifolds be applying Theorems  \ref{Thm:IntegralCZ} and \ref{Thm:H1}, respectively. 

To compute the rotation numbers, we first find the rotation angles $\theta_{j_{1}, j_{2}}$ which we see are all either $\frac{\pi}{2}$, $\frac{3\pi}{2}$, or $\frac{5\pi}{2}$, producing the following table.

\begin{center}
\begin{tabular}{|c|c|c|c|c|c|}
\hline
Chord & $\rot_{j, 1}$ & $\rot_{j, 2}$ & $\rot_{j, 3}$ & $\rot_{j, 4}$ & $\rot_{j, 5}$ \\
\hline
$r_{1}$ & $0$ & $0$ & $0$ & $0$ & $1$ \\ [0.5ex] 
$r_{2}$ & $0$ & $0$ & $0$ & $0$ & $1$ \\ [0.5ex] 
$r_{3}$ & $0$ & $0$ & $0$ & $0$ & $1$ \\ [0.5ex] 
$r_{4}$ & $1$ & $1$ & $1$ & $1$ & $2$ \\ [0.5ex] 
$r_{5}$ & $0$ & $0$ & $0$ & $0$ & $1$ \\ [0.5ex] 
\hline
\end{tabular}
\end{center}

For the computation of the crossing monomials, there is only a single $\mu_{i}$ so that Remark \ref{Rmk:CrossingConnectedLambda} is applicable. The following table lists the $\mu$ coefficients of the relevant crossing monomials.

\begin{center}
\begin{tabular}{ |c|c|c|c|c|c|c|c|c| }
\hline
 Chord & $\sgn$ & $\cross_{j}: c=1$ & $\cross_{j}: c=-1$ & $\cross_{j, 1}$ & $\cross_{j, 2}$ & $\cross_{j, 3}$ & $\cross_{j, 4}$ & $\cross_{j, 5}$\\ [0.5ex] 
\hline
$r_{1}$ & $1$ & $2$ & $0$ & $0$ & $0$ & $2$ & $3$ & $1$ \\ [0.5ex]
$r_{2}$ & $1$ & $2$ & $0$ & $0$ & $0$ & $0$ & $1$ & $1$ \\ [0.5ex]
$r_{3}$ & $1$ & $2$ & $0$ & $-2$ & $0$ & $0$ & $1$ & $-1$ \\ [0.5ex]
$r_{4}$ & $-1$ & $0$ & $-2$ & $1$ & $1$ & $3$ & $4$ & $2$ \\ [0.5ex]
$r_{5}$ & $-1$ & $0$ & $-2$ & $-1$ & $1$ & $1$ & $2$ & $0$ \\ [0.5ex]
\hline
\end{tabular}
\end{center}

\subsubsection{Homology classes and indices of orbits after surgery}

Using the above computations, we can produce the following table of homology classes and Conley-Zehnder indices of Reeb orbits with word length $\leq 2$ using Theorems \ref{Thm:H1GammaGeneral} and \ref{Thm:IntegralCZ}. Multiply covered orbits have been omitted. Coefficients for $\mu$ in the case $c = 1$ are taken modulo $2$.

\begin{center}
\begin{tabular}{ |c|c|c|c|c| }
\hline
 $\cycword(\gamma)$ & $\mu: c=1$ & $\CZ_{X, Y}: c=1$ & $\mu: c=-1$ & $\CZ_{X, Y}: c=-1$\\ [0.5ex] 
\hline
$r_{1}$ & $1$ & $1$ & $0$ & $0$\\ [0.5ex]
$r_{2}$ & $1$ & $1$ & $0$ & $0$\\ [0.5ex]
$r_{3}$ & $1$ & $1$ & $0$ & $0$\\ [0.5ex]
$r_{4}$ & $0$ & $2$ & $1$ & $1$\\ [0.5ex]
$r_{5}$ & $0$ & $2$ & $-1$ & $1$\\ [0.5ex]
$r_{1}r_{2}$ & $0$ & $2$ & $0$ & $0$\\ [0.5ex]
$r_{1}r_{3}$ & $0$ & $2$ & $0$ & $0$\\ [0.5ex]
$r_{1}r_{4}$ & $1$ & $3$ & $1$ & $1$\\ [0.5ex]
$r_{1}r_{5}$ & $1$ & $3$ & $-1$ & $1$\\ [0.5ex]
$r_{2}r_{3}$ & $0$ & $2$ & $0$ & $0$\\ [0.5ex]
$r_{2}r_{4}$ & $0$ & $3$ & $0$ & $1$\\ [0.5ex]
$r_{2}r_{5}$ & $0$ & $3$ & $0$ & $1$\\ [0.5ex]
$r_{3}r_{4}$ & $1$ & $3$ & $1$ & $1$\\ [0.5ex]
$r_{3}r_{5}$ & $1$ & $3$ & $-1$ & $1$\\ [0.5ex]
$r_{4}r_{5}$ & $0$ & $4$ & $0$ & $2$\\ [0.5ex]
\hline
\end{tabular}
\end{center}

\subsection{Proof of Theorem \ref{Thm:Trefoil}}\label{Sec:TrefoilProof}

In this section, we prove Theorem \ref{Thm:Trefoil} by computing $\partial_{CH}(r_{4})$.

\subsubsection{The subalgebra $C_{0, 0}$ and intersection gradings}

As the rotation numbers of capping paths on $\Lambda$ are bounded below by $0$, Theorem \ref{Thm:IntegralCZ} tells us that the Conley-Zehnder indices of all orbits of are bounded below by their word-lengths. We conclude that $\partial_{CH}(r_{4})$ must be an element of $CC_{0, 0}$ which is a commutative algebra on generators
\begin{equation*}
    1,\ (r_{1})^{2},\ (r_{2})^{2},\ (r_{3})^{2}, (r_{1})(r_{2}),\ (r_{1})(r_{3}),\ (r_{2})(r_{3}).
\end{equation*}

We'll compute the $\Igrading$ gradings on $CC_{0, 0}$ using points $(x_{k}, y_{k})$ appearing in the centers of the regions $\region_{k}$ of Figure \ref{Fig:TrefoilImmersions}.

\begin{center}
\begin{tabular}{|c|c|c|c|c|c|c|}
\hline
$CC_{\ast, 0}$ monomial & $I_{1}$ & $I_{2}$ & $I_{3}$ & $I_{4}$ & $I_{5}$ & $I_{6}$\\ [0.5ex] 
\hline
$(r_{4})$ & $0$ & $0$ & $0$ & $0$ & $1$ & $0$ \\ [0.5ex]
$(r_{1})^{2}$ & $-1$ & $-1$ & $-2$ & $-1$ & $1$ & $1$ \\ [0.5ex] 
$(r_{2})^{2}$ & $1$ & $2$ & $2$ & $1$ & $-1$ & $-1$ \\ [0.5ex] 
$(r_{3})^{2}$ & $-1$ & $-2$ & $-1$ & $-1$ & $1$ & $1$ \\ [0.5ex] 
$(r_{1})(r_{2})$ & $0$ & $1$ & $0$ & $0$ & $0$ & $0$ \\ [0.5ex] 
$(r_{1})(r_{3})$ & $-1$ & $-1$ & $-1$ & $-1$ & $1$ & $1$ \\ [0.5ex] 
$(r_{2})(r_{3})$ & $0$ & $0$ & $1$ & $0$ & $0$ & $0$ \\ [0.5ex] 
\hline
\end{tabular}
\end{center}

To establish the calculations appearing in the above table we construct surfaces bounding $(r_{1})(r_{2})$, $(r_{2})(r_{3})$, and $(r_{2})(r_{2})$, filling in the remainder of the table using arithmetic. Such surfaces will be constructed out of simple cobordisms build out of homotopies and skein operations. For $(r_{4})$ we have an obvious disk bounding a push-out along $\overline{\eta}_{4}$ obtained by perturbing $\region_{5}$.

\begin{figure}[h]\begin{overpic}[scale=.8]{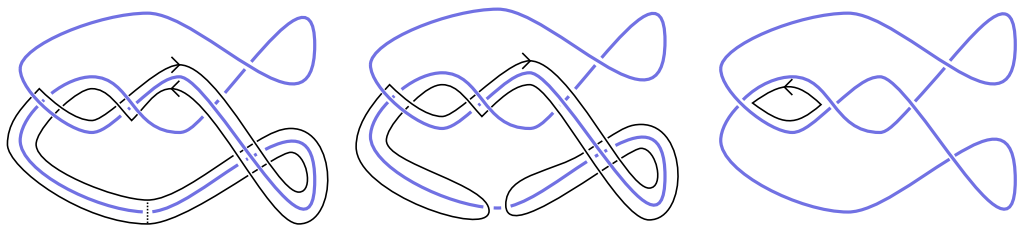}
\end{overpic}
\caption{An annulus bounding $(r_{1}) \cup (r_{2})$.}
\label{Fig:TrefoilR1R2SpanningSurface}
\end{figure}

In figure Figure \ref{Fig:TrefoilR1R2SpanningSurface} we construct a spanning surface for the union of the orbits $(r_{1}) \cup (r_{2})$. We begin by homotoping the union of orbits into the complement of $N_{\epsilon}$ as described in Section \ref{Sec:PushOutDefn}. The result -- associated to capping paths $\eta_{1}$ and $\overline{\eta}_{2}$ -- is shown on the left-most subfigure. To get from the left column of the figure to the center, we apply a skein cobordism along the dashed arc, resulting in a pair of pants cobordism. The resulting knot can be homotoped to the Reeb orbit $(r_{1}r_{2})$ as shown in the right hand side of the figure. So far our surface has avoided passing through any of the lines $\{ (x, y) = (x_{k}, y_{k})\} \subset \SurgL$. To complete our cobordism, we fill in the knot shown in the right-most subfigure using the obvious disk which is a perturbation of the disk $\region_{2}$. The union of our pair of pants with this disk provides us with an annular filling of $(r_{1}) \cup (r_{2})$ which intersects the link $\{ (x, y) = (x_{2}, y_{2})\}$ exactly once with positive sign. We conclude that
\begin{equation*}
\Igrading((r_{1})(r_{2})) = \iota_{2}.
\end{equation*}

A similar construction can be carried out to find an annular filling of $(r_{2}) \cup (r_{3})$: We start with a push-out corresponding to capping paths $\overline{\eta}_{2}$ and $\eta_{3}$, apply a skein cobordism giving us a pair of pants with boundary $(r_{2}) \cup (r_{3}) - (r_{2}r_{3})$, and then fill in $(r_{2}r_{3})$ with a perturbation of the disk $\disk_{3}$. We conclude that 
\begin{equation*}
\Igrading((r_{2})(r_{3})) = \iota_{3}.
\end{equation*}

Now we construct a spanning surface for $(r_{2}) \cup (r_{2})$. The construction is more complicated in this case: We construct two cobordisms from $(r_{2})$ from a positive and negative meridian of $\Lambda$ which can then be patched together to give us a surface with boundary $(r_{2}) \cup (r_{2})$.

\begin{figure}[h]\begin{overpic}[scale=.8]{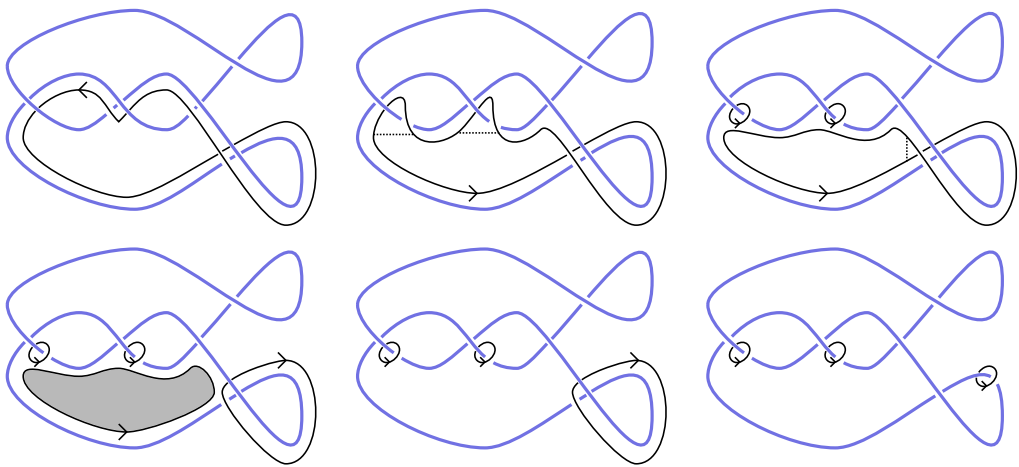}
\end{overpic}
\caption{A cobordism with boundary $(r_{2}) + \mu$.}
\label{Fig:TrefoilR2SpanningSurface}
\end{figure}

We break down the construction of one such cobordism whose boundary is $(r_{2}) + \mu$ into a sequence of elementary cobordisms as shown in Figure \ref{Fig:TrefoilR2SpanningSurface}.
\be
\item We start with a push-out of $(r_{2})$ using the capping path $\overline{\eta}_{2}$ as shown in the top-left subfigure.
\item Going from the top-left to top-center, we homotop our knot across the disks $\region_{2}$ and $\region_{3}$. Along the way we pick up two intersections with the lines associated to the points $(x_{2}, y_{2})$ and $(x_{3}, y_{3})$ with positive signs.
\item Going from the top-center to the top-right we apply skein cobordisms along the dashed arcs appearing in the top-center.
\item Going from the top-right to the bottom-left we apply another skein cobordism along the dashed arc appearing in the top-right yielding a $4$ component link.
\item Going from the bottom-left to the bottom-center we fill in one of the components of our link with a disk which is a perturbation of the disk $\region_{4}$. In doing so, we pick up a positive intersection with the line over the point $(x_{4}, y_{4})$.
\item Going from the bottom-center to the bottom-right we homotop one component of our knot over $-\region_{6}$ to a $-\mu$
\ee

Combining all of the above steps, we've constructed a homotopy from $(r_{2})$ to a collection of meridians. We can cancel a pair of them with a tube as shown in Figure \ref{Fig:MuTube}. The end result is a cobordism with boundary $(r_{2}) + \mu$ passing through the lines associated to the points $(x_{2}, y_{2}), (x_{3}, y_{3})$, and $(x_{4}, y_{4})$ once each with positive intersection number and passing through the line over $(x_{6}, y_{6})$ with negative intersection number.

\begin{figure}[h]\begin{overpic}[scale=.6]{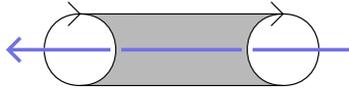}
	\end{overpic}
	\caption{A tube bounding $\mu - \mu$.}
	\label{Fig:MuTube}
\end{figure}

We can also construct a cobordism with boundary $(r_{2}) - \mu$ by flipping Figure \ref{Fig:TrefoilR2SpanningSurface} about a horizontal line, starting with a push-out of $\eta_{2}$. The resulting cobordism passes through the lines associated to the points $(x_{1}, y_{1}), (x_{2}, y_{2})$, and $(x_{3}, y_{3})$ once each with positive intersection number and passing through the line over $(x_{5}, y_{5})$ with negative intersection number.

We can connect the two cobordisms with another tube bounding $\mu - \mu$ to obtain a spanning surface for $(r_{2}) \cup (r_{2})$. By the above counts of intersections, we have
\begin{equation*}
\Igrading((r_{2})^{2}) = \iota_{1} + 2\iota_{2} + 2\iota_{3} + \iota_{4} - \iota_{5} - \iota_{6}.
\end{equation*}
Using our calculations of $\Igrading((r_{1})(r_{2})), \Igrading((r_{2})(r_{3}))$, and $\Igrading((r_{2})^{2})$, we can fill out the remainder of the above table by computing
\begin{equation*}
\begin{gathered}
\Igrading((r_{1})^{2}) = 2\Igrading((r_{1})(r_{2})) - \Igrading((r_{2})^{2}),\\
\Igrading((r_{3})^{2}) = 2\Igrading((r_{2})(r_{3})) - \Igrading((r_{2})^{2}),\\
\Igrading((r_{1})(r_{3})) = \Igrading((r_{1})(r_{2})) +  \Igrading((r_{2})(r_{3}))- \Igrading((r_{2})^{2}).
\end{gathered}
\end{equation*}

\subsubsection{Intersection numbers of curves positively asymptotic to $(r_{4})$}

Now suppose that we have a holomorphic curve $U$ positively asymptotic to $(r_{4})$ and negatively asymptotic to a collection of generators $\gamma^{-}$ from $C_{0, 0}$. Writing $\gamma^{-}$ as a monomial in $C_{0, 0}$, there are non-negative constants $C_{i, j}$ for which
\begin{equation*}
\gamma^{-} = (r_{1})^{2C_{1, 1}}(r_{2})^{2C_{2, 2}}(r_{3})^{2C_{3, 3}}((r_{1})(r_{2}))^{C_{1, 2}}((r_{1})(r_{3}))^{C_{1, 3}}((r_{2})(r_{3}))^{C_{2, 3}}.
\end{equation*}

We'll use the intersection grading to show that all of the $C_{i, j}$ must be zero so that $U$ cannot have any negative asymptotics. We can count the intersection of $U$ with the planes $\C_{k}$ as the coefficients of the $\iota_{k}$ in the expression $\Igrading((r_{4})) - \Igrading(\gamma^{-})$ as described in Equation \eqref{Eq:IGradingDelta}. Using the table above, we compute
\begin{equation*}
\begin{aligned}
\Igrading((r_{4})) - \Igrading(\gamma^{-}) &= (C_{1, 1} - C_{2, 2}  + C_{3, 3} + C_{1, 3})\iota_{1}\\
&+ (C_{1, 1} - 2C_{2, 2} + 2C_{3, 3} - C_{1, 2} + C_{1, 3})\iota_{2}\\
&+ (2C_{1, 1} - 2C_{2, 2} + C_{3, 3} + C_{1, 3} - C_{2, 3})\iota_{3}\\
&+ (C_{1, 1} - C_{2, 2} + C_{3, 3} + C_{1, 3})\iota_{4}\\
&+ (1 - C_{1, 1} + C_{2, 2} - C_{3, 3} - C_{1, 3})\iota_{5}\\
&+ (-C_{1, 1} + C_{2, 2} - C_{3, 3} - C_{1, 3})\iota_{6}
\end{aligned}
\end{equation*}
All of the $\iota_{k}$ coefficients above must be non-negative by intersection positivity.

As the $\iota_{4}$ and $\iota_{6}$ coefficients are the same with opposite sign, both must be zero so that 
\begin{equation*}
C_{2, 2} = C_{1, 1} + C_{3, 3} + C_{1, 3}. 
\end{equation*}
Therefore, we must have
\begin{equation*}
\begin{aligned}
\Igrading((r_{4})) - \Igrading(\gamma^{-}) &= (-C_{1, 1} - C_{1, 2} - C_{1, 3})\iota_{2}\\
&+ (- C_{3, 3} - C_{1, 3} - C_{2, 3})\iota_{3}\\
&+ \iota_{5}
\end{aligned}
\end{equation*}
implying that the remaining $C_{i, j}$ are all zero.

\subsubsection{Completion of the proof}

The above analysis implies that if $U$ is an index $1$ holomorphic curve contributing to $\partial_{CH}(r_{4})$ then it cannot have any negative asymptotics and must satisfy
\begin{equation}\label{Eq:R4IntersectionConstraints}
\C_{k}\cdot U = \begin{cases}
1 & k=5\\
0 & k\neq 5
\end{cases}
\end{equation}
Such a curve must be parameterized by $\C$ as per the definition of $\partial_{CH}$. To complete our proof we analyze to moduli space of finite energy curves
\begin{equation*}
\mathcal{M}_{4, 5} = \{ \C \xrightarrow{U} \R \times \SurgL\ : \text{$U$ asymptotic to $(r_{4})$, satisfying \eqref{Eq:R4IntersectionConstraints}} \}/\text{reparameterization}.
\end{equation*}
By the above analysis, $\partial_{CH}((r_{4})) = \#(\mathcal{M}_{4, 5})1$, counting points algebraically. This moduli space exactly describes the lowest levels $U^{c}_{\emptyset}$ of the height $3$ $\SFT$ buildings studied in Theorem \ref{Thm:PlaneBubbling}, which when applied to the disk $\region_{5}$ tell us that $\#(\mathcal{M}_{4, 5}) = \pm 1$. 

The proof of Theorem \ref{Thm:Trefoil} is then complete in the case of the $\tb = 1$ trefoil shown in Figure \ref{Fig:TrefoilImmersions}. By the classification torus knots in $\Rthree$ \cite{EtnyreHonda:Knots}, all other right-handed trefoils are stabilizations of this one -- contact $+1$ surgeries on these stabilized knots will be overtwisted and so will have $CH = 0$. The proof is now complete in the case that $\LambdaPlus$ consists of a single component. In the case that $\LambdaPlus = \cup_{i}^{n} \LambdaPlus_{i}$ has multiple components, we have -- as described in Section \ref{Sec:SurgeryCobordisms} -- a Liouville cobordism $(W, \lambda)$ whose convex end $(M^{+}, \xi^{+}) = (\partial^{+} W, \ker(\lambda)|_{\partial^{+}W})$ is given by contact $+1$ surgery on $\LambdaPlus_{1}$ and whose concave end $(M^{-}, \xi^{+}) = (\partial^{-} W, \ker(\lambda)|_{\partial^{-}W})$ is given by contact surgery on $\LambdaPlus$. If we index the components of $\LambdaPlus$ so that $\LambdaPlus_{1}$ is a right handed trefoil. Then $CH(M^{+}, \xi^{+}) = 0$, and so by Liouville functoriality $CH(M^{-}, \xi^{-}) = 0$ as well. The proof is now complete for all right-handed trefoils and all contact surgery surgery coefficients $\frac{1}{k}$ with $k > 0$.


\begin{thebibliography}{}

\bibitem[Al79]{Ahlfors}
L. Ahlfors, \textit{Complex Analysis: An Introduction to the Theory of Analytic Functions of One Complex Variable}, New York: McGraw-Hill, 1979.

\bibitem[Av11]{Avdek:ContactSurgery}
R. Avdek, \textit{Contact surgery and supporting open books}, Algebr. Geom. Topol. vol. 13, p.1613-1660, 2013.

\bibitem[BH15]{BH:ContactDefinition}
E. Bao and K. Honda, \textit{Semi-global Kuranishi charts and the definition of contact homology}, arXiv:1512.00580, 2015.

\bibitem[BH18]{BH:Cylindrical}
E. Bao and K. Honda, \textit{Definition of cylindrical contact homology in dimension three}, J. Topology, vol. 11, p.1002-1053, 2018.

\bibitem[B02]{Bourgeois:Thesis}
F. Bourgeois, \textit{A Morse-Bott approach to contact homology}, PhD thesis, Stanford University, 2002.

\bibitem[B03]{Bourgeois:ContactIntro} 
F. Bourgeois, \textit{Introduction to contact homology}, lecture notes available at \url{https://www.imo.universite-paris-saclay.fr/~bourgeois/papers/Berder.pdf}, 2003.

\bibitem[BEE12]{BEE:LegendrianSurgery}
F. Bourgeois, T. Ekholm, and Y. Eliashberg, \textit{Effect of Legendrain Surgery}, Geom. Topol., vol. 16, p.301-389, 2012.

\bibitem[BEHW03]{SFTCompactness}
F. Bourgeois, Y. Eliashberg, H. Hofer, and K. Wysocki, \textit{Compactness results in symplectic field theory}, Geom. Topol., vol. 7, p.799-888, 2003.

\bibitem[BN10]{AlgebraicallyOvertwisted}
F. Bourgeois and K. Niederkr{\"u}ger, \textit{Towards a good definition of algebraically overtwisted}, Expo. Math., vol. 28, 85–100, 2010.

\bibitem[C02]{Chekanov:LCH}
Y. Chekanov, \textit{Differential algebra of Legendrian links}, Invent. Math., vol. 150, p.441–483, 2002.

\bibitem[CE12]{SteinToWeinstein} 
K. Cieliebak and Y. Eliashberg, \textit{From Stein to Weinstein and back}, vol. 59 of American
Mathematical Society Colloquium Publications. American Mathematical Society, Providence, RI,
2012.

\bibitem[CL07]{CL:SFTStringTop}
K. Cieliebak and J. Latschev, \textit{The role of string topology in symplectic field theory},  New perspectives and challenges in symplectic field theory, CRM Proc. Lecture Notes, Amer. Math. Soc., vol. 49, p. 113–146, 2009.

\bibitem[CGH11]{CGH:HFequalsECH}
V. Colin, P. Ghiggini and K. Honda, \textit{Equivalence of Heegaard Floer homology and embedded contact homology via open book decompositions}, PNAS,
vol. 108, p.8100-8105, 2011

\bibitem[CGHH10]{CGHH:Sutures}
V. Colin, P. Ghiggini, K. Honda, and M. Hutchings, \textit{Sutures and contact homology I}, Geom. Topol., vol. 15, p.1749-1842, 2011.

\bibitem[CET19]{PlusOneFilling}
J. Conway, J. Etnyre, and B. Tosun, \textit{Symplectic fillings, contact surgeries, and Lagrangian disks}, IMRN, 2019.

\bibitem[DG04]{DG:Surgery}
F. Ding and H. Geiges, \textit{A Legendrian surgery presentation of contact 3-manifolds}, Math. Proc. Cambridge Philos. Soc.,
vol. 136, p.583-598, 2004.

\bibitem[Ek08]{Ekholm:Z2RSFT}
T. Ekholm, \textit{Rational symplectic field theory over $\Z_{2}$ for exact Lagrangian cobordisms}, J. Eur. Math. Soc. (JEMS) 10, p.641–704, 2008.

\bibitem[Ek19]{Ekholm:SurgeryCurves}
T. Ekholm, \textit{Holomorphic curves for Legendrian surgery}, arXiv:1906.07228, 2019.

\bibitem[EES05]{EES:LegendriansInR2nPlus1}
T. Ekholm, J. Etnyre, and M. Sullivan, \textit{The contact homology of Legendrian submanifolds of $\R^{2n+1}$}, J. Diff. Geom., vol. 71, p.177-305, 2005.

\bibitem[EkN15]{EkholmNg}
T. Ekholm and L. Ng, \textit{Legendrian contact homology in the boundary of a subcritical Weinstein 4-manifold}, J. Diff. Geom., vol. 101, p.67-157, 2015.

\bibitem[EGH00]{EGH:SFTIntro}
Y. Eliashberg, A Givental, H. Hofer, \textit{Introduction to symplectic field theory}, Geom. Funct. Anal., Special vol., Part II, p.560-673, 2000.

\bibitem[El89]{Eliash:OTClassification}
Y. Eliashberg, \textit{Classification of overtwisted contact structures on 3-manifolds}, Invent. Math., vol. 98, p.623-637, 1989.

\bibitem[El91]{Eliash:Filling}
Y. Eliashberg, \textit{On symplectic manifolds with some contact properties}, J. Diff. Geom., vol. 33, p.233–238, 1991.

\bibitem[Et05]{Etnyre:KnotNotes}
J. Etnyre, \textit{Legendrian and Transversal Knots}, Handbook of Knot Theory Elsevier B. V., p.105-185, 2005.

\bibitem[EH01]{EtnyreHonda:Knots}
J. Etnyre and K. Honda, \textit{Knots and Contact Geometry I: Torus Knots and the Figure Eight Knot}, J. Symplectic Geom., vol. 1, p.63-120, 2001.

\bibitem[EtN18]{EtnyreNg:LCHSurvey}
J. Etnyre and L. Ng, \textit{Legendrian contact homology in $\R^{3}$}, arXiv:1811.10966v3, 2018.

\bibitem[EO08]{EO:OBInvariants}
J. Etnyre and B. Ozbagci, \textit{Invariants of contact structures from open books}, Trans. Amer. Math. Soc., vol. 360, p.3133–3151, 2008.

\bibitem[F11]{Fabert:Pants}
O. Fabert, \textit{Obstruction bundles over moduli spaces with boundary and the action filtration in symplectic field theory}, Mathematische Zeitschrift, vol. 269, p.325–372, 2011.

\bibitem[FH13]{FH:Anosov}
P. Foulon and B. Hasselblatt, \textit{Contact Anosov flows on hyperbolic 3–manifolds}, Geom. Topol., vol. 17, p.1225–1252, 2013.

\bibitem[GZ13]{GZ:FourBall}
H. Geiges and K. Zehmisch, \textit{How to recognize a 4-ball when you see one}, M{\"u}nster J. Math., vol. 6, p.525–554, 2013.

\bibitem[Go98]{Gompf:Handlebodies}
R. Gompf, \textit{Handlebody construction of Stein surfaces}, Ann. of Math., vol. 148,
p.619-693, 1998.

\bibitem[Gr85]{Gromov:JCurves}
M. Gromov, \textit{Pseudoholomorphic curves in symplectic manifolds}, Invent. Math, vol. 82, p.307–347, 1985.

\bibitem[Ha02]{Hatcher:AlgebraicTopology}
A. Hatcher, \textit{Algebraic topology}, Cambridge University Press, 2002.

\bibitem[Hi03]{Hind:Filling}
R. Hind, \textit{Stein fillings of lens spaces}, Commun. Contemp. Math., vol. 5, p.967–982, 2003.

\bibitem[Hof93]{Hofer:OTWeinstein}
H. Hofer, \textit{Pseudoholomorphic curves in symplectizations with applications to the Weinstein conjecture in dimension three}, Inv. Math., vol. 114, p.515-563, 1993.

\bibitem[Hon00]{Honda:Tight2}
K. Honda, \textit{On the classification of tight contact structures II}, J. Diff. Geom., vol. 55, p.83-143, 2000.

\bibitem[Hon02]{Honda:OTSurgery}
K. Honda, \textit{Gluing tight contact structures}, Duke Math. J.
vol. 115, p.435-478, 2002.

\bibitem[HKM09]{HKM:ContactClass}
K. Honda, W. Kazez, and G. Mati'{c}, \textit{On the contact class in Heegaard Floer homology}, J. Diff. Geom., vol. 83, p.289-311, 2009.

\bibitem[Hua13]{Huang:OTClassification}
Y. Huang, \textit{A proof of the classification theorem of overtwisted contact structures via convex surface theory}, J. Symplectic Geom., vol. 11, p.563-601, 2013.

\bibitem[Hut14]{Hutchings:ECHNotes}
M. Hutchings, \textit{Lecture Notes on Embedded Contact Homology}, in Contact and Symplectic Topology, Bolyai Society Mathematical Studies, vol. 26, Springer, p.389-484, 2014.

\bibitem[HT07]{HT:GluingI}
M. Hutchings and C. Taubes, \textit{Gluing pseudoholomorphic curves along branched covered cylinders I}, J. Symplectic Geom., vol. 5, p.43–137, 2007.

\bibitem[KLT10]{KLT:HFSW}
C. Kutluhan, Y.-J. Lee, and C. Taubes, \textit{HF=HM I: Heegaard Floer homology and Seiberg-Witten Floer homology}, arXiv:1007.1979, 2010.

\bibitem[LS04]{LS:TightI}
P. Lisca and A. I. Stipsicz, \textit{Ozsváth–Szábo invariants and tight contact three-manifolds I}, Geom. Topol., vol. 8, p.925-945, 2004.

\bibitem[M90]{McDuff:RationalRuled}
D. McDuff, \textit{The structure of rational and ruled symplectic 4-manifolds}, J. Amer. Math. Soc., vol. 3, p.679–712, 1990.

\bibitem[M91]{McDuff:Filling}
D. McDuff, \textit{Symplectic manifolds with contact type boundaries}, Invent. Math., vol. 103, p.651–671, 1991.

\bibitem[MS99]{MS:SymplecticIntro}
D. McDuff and D. Salamon, \textit{Introduction to symplectic topology}, Second edition. Oxford Mathematical Monographs. The Clarendon Press, Oxford University Press, New York, 1998.

\bibitem[MS04]{MS:Curves}
D. McDuff and D. Salamon, \textit{J-holomorphic curves and symplectic topology}, American Mathematical Society, 2004.

\bibitem[MZ21]{MZ:RSFT}
A. Moreno and Z. Zhou, \textit{A landscape of contact manifolds via rational SFT}, 	arXiv:2012.04182, 2021.

\bibitem[N03]{Ng:ComputableInvariants}
L. Ng, \textit{Computable Legendrian invariants}, Topology, vol. 42, p.55–82, 2003.

\bibitem[N10]{Ng:RSFT}
L. Ng, \textit{Rational symplectic field theory for Legendrian knots}, Invent. math., vol. 182, Issue 3, p.451–512, 2010.

\bibitem[O05]{Ozbagci:Stabilization}
B. Ozbagci, \textit{A note on contact surgery diagrams}, Int. J. Math., vol. 16, p.87-99, 2005.

\bibitem[OzbSt04]{OS:SurgeryBook}
B. Ozbagci and A. I. Stipsicz, \textit{Surgery on contact 3-manifolds and Stein surfaces}, Bolyai Society Mathematical Studies, Springer-Verlag, vol. 13, 2004.

\bibitem[OzvSz05]{OS:ContactClass}
P. Ozsv{\'a}th and Z. Szab{\'o}, \textit{Heegaard Floer homology and contact structures}, Duke Math. J., vol. 129, p.39–61, 2005.

\bibitem[P19]{Pardon:Contact}
J. Pardon, \textit{Contact homology and virtual fundamental cycles}, J. Amer. Math. Soc. vol. 32, p.825-919, 2019.

\bibitem[RS93]{RS:Index}
J. Robbin and D. Salamon, \textit{The Maslov index for paths}, Topology, vol. 32, p.827–844, 1993.

\bibitem[Ro19]{Rooney:ECH}
J. Rooney, \textit{Cobordism maps in embedded contact homology}, arXiv:1912.01048, 2019.

\bibitem[Ru97]{Rudolph}
L. Rudolph, \textit{The slice genus and the Thurston–Bennequin invariant of a knot}, Proc. Amer. Math. Soc., vol. 125, p.3049–3050, 1997.

\bibitem[S95]{Schwarz:Thesis}
M. Schwarz, \textit{Cohomology operations from $\Circle$ cobordisms in Floer homology}, PhD thesis, ETH Zurich, 1995.

\bibitem[S07]{BiasedSH}
P. Seidel, \textit{A biased view of symplectic cohomology}, arXiv:math/0704.2055, 2007.

\bibitem[Wei91]{Weinstein:Handles}
A. Weinstein, \textit{Contact surgery and symplectic handlebodies}, Hokkaido Math. J., vol. 20, p.241-251, 1991.

\bibitem[Wen10]{Wendl:Foliations}
C. Wendl, \textit{Strongly fillable contact manifolds and J-holomorphic foliations}, Duke Math. J., vol. 151, p.337-384, 2010.

\bibitem[Wen13]{Wendl:NonExact}
C. Wendl, \textit{Non-exact symplectic cobordisms between contact 3-manifolds}. J. Diff. Geom., vol. 95, p.121–182, 2013.

\bibitem[Wen15]{Wendl:Signs}
C. Wendl, \textit{Signs (or how to annoy a symplectic topologist)}, blog post available at \url{https://symplecticfieldtheorist.wordpress.com/2015/08/23/signs-or-how-to-annoy-a-symplectic-topologist/}, 2015.

\bibitem[Wen16]{Wendl:SFTNotes}
C. Wendl, \textit{Lectures on Symplectic Field Theory}, arXiv:1612.01009, 2016.

\bibitem[Y06]{Yau:VanishingCH}
M. L. Yau, \emph{Vanishing of the contact homology of overtwisted contact 3-
manifolds, with an appendix by Y. Eliashberg}, Bull. Inst. Math. Acad. Sin., vol. 1, p.211–229, 2006.

\end{thebibliography}
\end{document}